\newtheorem{theorem}{Theorem}[section]
\newtheorem{lemma}[theorem]{Lemma}
\newtheorem{corollary}[theorem]{Corollary}
\newtheorem{example}[theorem]{Example}
\newtheorem{proposition}[theorem]{Proposition}
\newtheorem{remark}[theorem]{Remark}
\title{AF inverse monoids and the structure of countable MV-algebras}
\author{Mark V. Lawson}
\address{Mark V.Lawson, Department of Mathematics
and the
Maxwell Institute for Mathematical Sciences, 
Heriot-Watt University,
Riccarton,
Edinburgh EH14 4AS, 
UNITED KINGDOM}
\email{m.v.lawson@hw.ac.uk}
\author{Philip Scott}
\address{Philip Scott,
Department of Mathematics and Statistics, 
University of Ottawa,
585 King Edward,
Ottawa, Ontario K1N 6N5, 
CANADA}
\email{phil@site.uottawa.ca}
\thanks{The first author was partially supported by EPSRC grant EP/I033203/1, 
and also by his stay at the University of Ottawa as a Distinguished Research Visitor in April and May of 2012.
He would like to thank his colleagues at the Mathematics Department in Ottawa for their great hospitality. 
The second author was partially supported by an NSERC Discovery Grant. The work was primarily
done while on sabbatical in 2014 at the School of Informatics,
University of Edinburgh. He would like to thank Gordon Plotkin and Alex Simpson for their hospitality
and support. He would also like to thank Bart Jacobs (Nijmegen) for many stimulating discussions.
Both authors would like to thank Daniele Mundici for his comments.}
\begin{document} 

\begin{abstract} We define a class of inverse monoids having the property that their lattices of principal ideals naturally form an MV-algebra.
We say that an arbitrary MV-algebra can be co-ordinatized if it is isomorphic to one constructed in this way from such a monoid.  
We prove that every countable MV-algebra can be so co-ordinatized.
The particular inverse monoids needed to establish this result are examples of what we term AF inverse monoids.
These are constructed from Bratteli diagrams and arise naturally as direct limits of finite direct products of finite symmetric inverse monoids.
\end{abstract}
\maketitle

\section{Introduction}

MV-algebras were introduced by C. C . Chang in 1958 \cite{Chang}.
In Chang's  original axiomatization, it is plain that such algebras are generalizations of Boolean algebras.
In general, the elements of an MV-algebra are not idempotent, but those that are form a Boolean algebra.
A good introduction to their theory may be found in Mundici's tutorial notes \cite{M3}.
The standard reference is \cite{CDM}.
The starting point for our paper is Mundici's own work that connects countable MV-algebras to a class of AF $C^{\ast}$-algebras \cite{M1,MP}.
He sets up a correspondence between AF $C^{\ast}$-algebras whose Murray-von Neumann order is a lattice and countable MV-algebras.
In \cite{M2}, he argues that AF algebras `should be regarded as sort of noncommutative Boolean algebras'.
This is persuasive because the commutative AF $C^{\ast}$-algebras are function algebras over separable Boolean spaces.
But the qualification `sort of' is important.
The result would be more convincing if commutative meant, precisely, countable Boolean algebra.
In this paper, we shall introduce a class of countable structures whose commutative members are precisely this.

Approximately finite (AF) $C^{\ast}$-algebras,  that is those $C^{\ast}$-algebras which are direct limits of finite dimensional $C^{\ast}$-algebras,  were introduced by Bratteli in 1972 \cite{Bratteli},
and form one of the most important classes of $C^{\ast}$-algebras.
Reading Bratteli's paper, it quickly becomes apparent that his calculations rest significantly on the properties of matrix units.
The reader will recall that these are square matrices all of whose entries are zero except for one place where the entry is one.
Our key observation is that matrix units of a given size $n$ form a groupoid, 
and this groupoid determines the structure of a finite symmetric inverse monoid on $n$ letters.
The connection is via what are termed rook matrices \cite{Solomon}.
Symmetric inverse monoids are simply the monoids of all partial bijections of a given set.
Here the set can be taken to be $\{1, \ldots, n\}$.
These monoids have a strong Boolean character.
For example, their semilattices of idempotents form a finite Boolean algebra.
They are however non-commutative.
This leads us to define a general class of Boolean inverse monoids, called AF inverse monoids, constructed from Bratteli diagrams.
We argue that this class of monoids is the most direct non-commutative generalization of Boolean algebras.
For example, they figure in the developing theory of non-commutative Stone dualities \cite{Law3,Law4,Law5,LL1,LL2} where they are associated with a class of \'etale topological groupoids.
Significantly, commutative AF inverse monoids are countable Boolean algebras.
It is worth noting that the groups of units of such inverse monoids have already been studied \cite{DM,KS,LN} though sans the inverse monoids.

We prove that the poset of principal ideals of an AF inverse monoid naturally forms an MV-algebra when that poset is a lattice.
Accordingly, we say that an MV-algebra that is isomorphic to an MV-algebra constructed in this way
may be co-ordinatized by an inverse monoid.
The main theorem we prove in this paper is that {\em every} countable MV-algebra may be co-ordinatized in this way.
As a concrete example, we provide an explicit description of the AF inverse monoid that co-ordinatizes the MV-algebra of dyadic rationals in the unit interval.
It turns out to be a discrete version of the CAR algebra.
Finally, our results can be viewed as contributing to the study of the poset of $\mathscr{J}$-classes of an inverse semigroup.
For results in this area and further references, see \cite{Meakin}.
There are also {\em thematic} links between our work and that to be found in \cite{Ara,GW,Wehrung}.
This has influenced our choice of terminology when referring to partial refinement monoids.

\section{Basic definitions}

We shall work with two classes of structures in this paper: inverse monoids and partial commutative monoids.
In addition  to defining the structures we shall be working with, we shall also define precisely what we mean by co-ordinatizing an MV-algebra by means of an inverse monoid.

\subsection{Boolean inverse monoids}

For inverse semigroup theory, we refer the reader to \cite{Law1}.
However, we need little theory {\em per se}, rather a number of definitions and some very basic examples.
Recall that an {\em inverse semigroup} is a semigroup $S$ in which for each element $s$ there is a unique element $s^{-1}$
satisfying $s = ss^{-1}s$ and $s^{-1} = s^{-1}ss^{-1}$.
Inverse semigroups are well-equipped with idempotents since both $s^{-1}s$ and $ss^{-1}$ are idempotents.
The set of idempotents of $S$ is denoted by $E(S)$ and is always a commutative idempotent subsemigroup.
For this reason, it is usually referred to as  the {\em semilattice of idempotents} of $S$.
Our inverse semigroups will always have a zero and ultimately an identity and so will be monoids.
If $a$ is an element of an inverse semigroup such that $e = a^{-1}a$ and $f = aa^{-1}$,
then we shall often write $e \stackrel{a}{\longrightarrow} f$ or $e\, \mathscr{D}\, f$
and say that $e$ is the {\em domain} of $a$ and $f$ is the {\em range} of $a$.
Accordingly, we define $\mathbf{d}(a) = a^{-1}a$ and $\mathbf{r}(a) = aa^{-1}$.
We define $s \, \mathscr{D} \, t$ if  $\mathbf{d}(s) \, \mathscr{D} \, \mathbf{d}(t)$, and $s \, \mathscr{J}\, t$ if $SsS = StS$. 
Observe that $\mathscr{D} \subseteq \mathscr{J}$.
These are the only two of Green's relations needed in this paper.

Three relations definable on any inverse semigroup will play significant r\^oles.
The {\em natural partial order} $\leq$ is defined by $a \leq b$ if $a = be$ for some idempotent $e$.
Despite appearances it is ambidextrous,
compatible with the multiplication, 
and $a \leq b$ implies that $a^{-1} \leq b^{-1}$.
Observe that if $a,b \leq c$ then both $a^{-1}b$ and $ab^{-1}$ are idempotents.
This leads to the definition of our second relation.
The {\em compatibility relation} $\sim$ is defined as follows:
$a \sim b$ if $a^{-1}b$ and $ab^{-1}$ are both idempotents.
Thus $a \sim b$ is a necessary condition for $a$ and $b$ to have a join.
A subset is said to be compatible if any two elements in the subset are compatible.
Finally, there is a refinement of the compatibility relation.
Elements of an inverse semigroup $a$ and $b$ are said to be {\em orthogonal}, denoted by $a \perp b$, if $a^{-1}b = 0 = ab^{-1}$.

The particular classes of inverse monoids we shall use in this paper may now be defined.
An inverse monoid with zero is said to be {\em distributive} if its semilattice of idempotents is a distributive lattice,
all compatible binary joins exist, and multiplication distributes over binary joins.
A distributive inverse monoid is said to be {\em Boolean} if its lattice of idempotents is a Boolean algebra.
Morphisms of distributive inverse monoids are monoid homomorphisms that map zero to zero and which preserve binary compatible joins.
More about Boolean and distributive inverse monoids can be found in  \cite{Law5,LL1,LL2}.
An inverse semigroup in which all binary meets exist is called a {\em $\wedge$-monoid.}

The key examples of inverse monoids needed to understand this paper are the following.
The {\em symmetric inverse monoids} $I(X)$ are the monoids of all partial bijections of the set $X$.
When $X$ has $n$ elements, we denote the corresponding symmetric inverse monoid by $I_{n}$.
We call the elements of $X$ {\em letters}.
Symmetric inverse monoids are Boolean inverse $\wedge$-monoids.
We define an inverse monoid to be {\em semisimple} if it is isomorphic to a finite direct product of finite symmetric inverse monoids.
They will play an important r\^ole in this paper.
We also mention here two properties that arise naturally in our work.
An inverse semigroup is said to be {\em fundamental} if the only elements that commute with every idempotent are themselves idempotents.
An inverse monoid is said to be {\em factorizable} if every element is beneath an element in the group of units.
Symmetric inverse monoids are fundamental and the finite ones are also factorizable.
Thus semisimple inverse monoids are factorizable and fundamental.
Our use of the word `semisimple' was motivated by the theory of $C^{\ast}$-algebras and the following theorem.
See \cite{Law5} for a proof.

\begin{theorem}\label{them:nice} 
The finite fundamental Boolean inverse $\wedge$-monoids are precisely the semisimple inverse monoids.
\end{theorem}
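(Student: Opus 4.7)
The plan is to prove the two directions separately, with all the work on the implication that a finite fundamental Boolean inverse $\wedge$-monoid is semisimple.

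The easy direction is routine: being finite, Boolean, fundamental, and a $\wedge$-monoid are all preserved under finite direct products, and each finite symmetric inverse monoid $I_n$ has these properties (its idempotent semilattice is $2^n$). So every semisimple inverse monoid is a finite fundamental Boolean inverse $\wedge$-monoid.

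For the converse, let $S$ be a finite fundamental Boolean inverse $\wedge$-monoid. First I would reduce to a ``centrally indecomposable'' case. The central idempotents of $S$ (those $e \in E(S)$ with $se = es$ for all $s \in S$) form a Boolean subalgebra of $E(S)$; let $c_1,\ldots,c_k$ be its atoms. Then $1 = c_1 \vee \cdots \vee c_k$ is an orthogonal decomposition, and one verifies that
\[
S \;\cong\; c_1 S \times \cdots \times c_k S
\]
as inverse monoids, where each $c_i S = c_i S c_i$ is a finite fundamental Boolean inverse $\wedge$-monoid whose only central idempotents are $0$ and $c_i$. (Fundamentality descends because any idempotent decomposes as $e = (e \wedge c_i) \vee (e \wedge (1 - c_i))$ and $s \leq c_i$ kills the second summand; Booleanness descends because $E(c_i S)$ is a principal ideal of the Boolean algebra $E(S)$.)

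Thus it suffices to show: a finite fundamental Boolean inverse $\wedge$-monoid $T$ whose only central idempotents are $0$ and $1$ is isomorphic to some $I_n$. Since $E(T)$ is a finite Boolean algebra, $E(T) \cong 2^n$ where $n$ is the number of atoms of $E(T)$. I would then establish two correspondences. \emph{First}, an idempotent $e \in E(T)$ is central if and only if the set of atoms below $e$ is closed under the partial conjugation action $a \mapsto sas^{-1}$; this follows by computing that $se = es$ if and only if $s$ maps $\mathbf{d}(s) \wedge e$ onto $\mathbf{r}(s) \wedge e$. Checking that the orbits under this partial action are equivalence classes and that a union of orbits is an invariant idempotent, triviality of the center translates to: for every pair of atoms $a,b$ there exists $t \in T$ with $\mathbf{d}(t) = a$ and $\mathbf{r}(t) = b$. \emph{Second}, the Munn homomorphism $\delta \colon T \to T_{E(T)}$ sending $s$ to the order-isomorphism $e \mapsto ses^{-1}$ of principal ideals is injective, since $T$ is fundamental. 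Order-isomorphisms between principal ideals of $2^n$ are precisely bijections between their atoms, so $T_{E(T)} \cong I_n$ canonically, giving an embedding $\delta \colon T \hookrightarrow I_n$.

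It remains to show $\delta$ is surjective. By transitivity, $\delta(T)$ contains every single-point partial bijection $a \to b$ in $I_n$. Since $T$ is Boolean, $\delta(T)$ is closed under compatible (in particular orthogonal) joins, and every element of $I_n$ is the orthogonal join of its single-point restrictions; hence $\delta(T) = I_n$.

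The main obstacle is the bridge step: identifying central idempotents of $T$ with invariant subsets of atoms and then extracting genuine transitivity from the hypothesis of trivial center. The Munn embedding and the final surjectivity argument are then relatively quick, granted the explicit identification $T_{2^n} \cong I_n$.
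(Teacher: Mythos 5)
Your proof is correct. Note that the paper itself offers no argument for this theorem---it simply cites \cite{Law5}---so there is no internal proof to compare against; the argument in that reference organises the decomposition by $\mathscr{D}$-classes of the atoms of $E(S)$ and works with the groupoid of elements whose domains and ranges are atoms, whereas you decompose by the atoms of the centre and then invoke the Munn representation. The two routes are equivalent (the join of a $\mathscr{D}$-class of atoms is precisely an atom of the centre, which is exactly the content of your ``first correspondence''), but your version separates the hypotheses cleanly: fundamentality is used only to make $\delta$ injective, and the Boolean and $\wedge$ structure only to identify $T_{E(T)}$ with $I_{n}$ and to recover surjectivity from orthogonal joins.

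Three small points deserve tightening, none of which affects correctness. First, closure of the central idempotents under complementation is not automatic from closure under products: one must check $s\bar{e}=\bar{e}s$ when $se=es$, which follows by comparing the orthogonal decompositions $s = se \vee s\bar{e} = es \vee \bar{e}s$ and noting that $\mathbf{d}(s\bar{e})$ and $\mathbf{d}(\bar{e}s)$ are both forced to equal $\mathbf{d}(s)\wedge\bar{e}$; this is where Booleanness, rather than mere distributivity, enters. Second, your parenthetical justification that fundamentality descends to $c_iS$ is garbled as written (``$s\leq c_i$'' is not what you mean); the correct observation is that for $t\in c_iS$ and $e\in E(S)$ one has $te=t(e\wedge c_i)$ and $et=(e\wedge c_i)t$, so commuting with $E(c_iS)$ already forces commuting with all of $E(S)$. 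Third, for surjectivity you need that $\delta$ carries orthogonal joins of $T$ to orthogonal joins of $I_n$, so that $\delta(T)$ genuinely contains the join of the single-point maps it contains; this is a one-line computation on atoms, namely $(t_1\vee t_2)a(t_1\vee t_2)^{-1}=t_1at_1^{-1}$ for an atom $a\leq\mathbf{d}(t_1)$ when $t_1\perp t_2$, but it should be stated, since closure of $T$ under joins does not by itself give closure of the image.
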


{\em This paper is based on an exact analogy between semisimple inverse monoids and finite dimensional $C^{\ast}$-algebras.}\\

We conclude this section with some useful properties of meets and joins in inverse semigroups.
The proofs of (1) and (2) may be found in \cite{Law1}, whereas (3) is folklore.

\begin{lemma}\label{lem:meets-joins} \mbox{}
\begin{enumerate}

\item  We have that $s \sim t$ if and only if $s \wedge t$ exists and $\mathbf{d}(s \wedge t) = \mathbf{d}(s) \wedge \mathbf{d}(t)$ and  $\mathbf{r}(s \wedge t) = \mathbf{r}(s) \wedge \mathbf{r}(t)$

\item In a distributive inverse monoid, if $a \vee b$ exists we have that
$$\mathbf{d}(a \vee b) = \mathbf{d}(a) \vee \mathbf{d}(b)
\mbox{ and }
\mathbf{r}(a \vee b) = \mathbf{r}(a) \vee \mathbf{r}(b).$$

\item  In a distributive inverse monoid,
if $a \vee b$ and $c \wedge (a \vee b)$ both exist
then $c \wedge a$ and $c \wedge b$ both exist, 
the join $(c \wedge a) \vee (c \wedge b)$ exists and
$$c \wedge (a \vee b)
=
(c \wedge a) \vee (c \wedge b).$$

\end{enumerate}
\end{lemma}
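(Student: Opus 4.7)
The first two parts are attributed to \cite{Law1}, so I concentrate on (3). The plan is to first secure the existence of the individual meets $c \wedge a$ and $c \wedge b$, and then to identify $c \wedge (a \vee b)$ with their join.

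For existence I appeal to part (1): $c \wedge a$ exists if and only if $c \sim a$, that is, both $c^{-1}a$ and $ca^{-1}$ are idempotent. The hypothesis that $c \wedge (a \vee b)$ exists gives, via (1), that $c \sim a \vee b$. Since inversion is an order anti-automorphism of an inverse semigroup, it sends joins to joins, so $(a \vee b)^{-1} = a^{-1} \vee b^{-1}$; and distributivity of multiplication over binary joins rewrites $c^{-1}(a \vee b)$ as $c^{-1}a \vee c^{-1}b$ and $c(a\vee b)^{-1}$ as $ca^{-1} \vee cb^{-1}$, both of which are idempotent. I then invoke the principle that each summand of an idempotent join is itself an idempotent: if $x \leq e$ with $e^{2}=e$, then $x = ef$ for some idempotent $f$, and the product of two commuting idempotents is idempotent. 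This yields $c \sim a$ and, symmetrically, $c \sim b$, so $c \wedge a$ and $c \wedge b$ both exist. Since both lie beneath $c$ they are compatible (being beneath a common element forces $x^{-1}y \leq c^{-1}c$, which is then idempotent), so $(c \wedge a) \vee (c \wedge b)$ exists.

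For the identification, set $z = c \wedge (a \vee b)$. Since $z \leq a \vee b$, the standard fact $z = (a \vee b)\mathbf{d}(z)$ combined with distributivity gives
$$z = a\mathbf{d}(z) \vee b\mathbf{d}(z).$$
Each summand satisfies $a\mathbf{d}(z) \leq a$ and $a\mathbf{d}(z) \leq z \leq c$, hence $a\mathbf{d}(z) \leq c \wedge a$; similarly $b\mathbf{d}(z) \leq c \wedge b$. Thus $z \leq (c \wedge a) \vee (c \wedge b)$. The reverse inequality is immediate from the meet-definitions: $(c \wedge a) \vee (c \wedge b)$ lies beneath both $c$ and $a \vee b$, hence beneath $z$.

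The delicate step is the existence half: one must unpack $c \sim a \vee b$ through distributivity into the joint idempotency of two binary joins, and then apply the auxiliary lemma that summands of an idempotent join are themselves idempotent. Once existence is in hand, the identification is a routine manipulation using $z = (a \vee b)\mathbf{d}(z)$ and one more application of distributivity.
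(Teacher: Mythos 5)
Your identification step (the computation $z=(a\vee b)\mathbf{d}(z)=a\mathbf{d}(z)\vee b\mathbf{d}(z)$ and the two inequalities) is fine, but the existence half contains a genuine error. You infer from the hypothesis ``$c\wedge(a\vee b)$ exists'' that $c\sim a\vee b$. Part (1) does not license this: it says $s\sim t$ holds if and only if the meet exists \emph{and} $\mathbf{d}(s\wedge t)=\mathbf{d}(s)\wedge\mathbf{d}(t)$ and $\mathbf{r}(s\wedge t)=\mathbf{r}(s)\wedge\mathbf{r}(t)$. Mere existence of the meet does not imply compatibility, and in the setting of (3) the claim $c\sim a\vee b$ is actually false in general. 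For instance, in $I_{2}$ take $a=1_{\{1\}}$, $b=1_{\{2\}}$, so $a\vee b=1$, and let $c$ be the transposition; then $c\wedge 1=0$ exists, all hypotheses of (3) hold (and its conclusion holds, with everything equal to $0$), yet $c^{-1}(a\vee b)=c^{-1}$ is not idempotent, so $c\not\sim a\vee b$. Consequently the chain ``$c\sim a\vee b\Rightarrow c\sim a, c\sim b$'' cannot be the route to existence of $c\wedge a$ and $c\wedge b$.

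The repair is more elementary and bypasses compatibility of $c$ with $a$ entirely. Put $z=c\wedge(a\vee b)$. Since $a$ and $z$ both lie beneath $a\vee b$, they are compatible, so $a\wedge z$ exists by part (1). Now $a\wedge z\leq a$ and $a\wedge z\leq z\leq c$, so it is a lower bound of $\{a,c\}$; conversely any $u\leq a,c$ satisfies $u\leq a\vee b$ and $u\leq c$, hence $u\leq z$, hence $u\leq a\wedge z$. Thus $c\wedge a$ exists and equals $a\wedge z$, and similarly for $c\wedge b$. Both lie beneath $c$ (or beneath $z$), so they are compatible and their join exists in a distributive inverse monoid. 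From there your identification argument goes through verbatim. (A small further caution: your opening sentence ``$c\wedge a$ exists if and only if $c\sim a$'' misquotes part (1) in the same way; only the implication from compatibility to existence of the meet is valid, and that is the only direction your corrected argument should ever use.)
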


\subsection{Partial refinement monoids}

Terminology in the area of partial algebras is not as well established as that of classical algebra.
Moreover, the two areas of dimension theory and effect (and MV) algebras have often developed
their own terminology for similar structures.
We have opted to use mainly the terminology of dimension theory \cite{GW,Wehrung} 
augmented by terminology from the theories of effect and MV-algebras to be found in \cite{BF, DP, J, JP, M1, M2, M3, MP}.  
See also \cite{Jacobs} for a modern categorical treatment of effect algebras.

Let $E$ be a set equipped with a partially defined operation denoted $\oplus$ together with a constant $0$.
If $a \oplus b$ is defined we write $\exists a \oplus b$.
We define a {\em partial commutative monoid} to be such a set satisfying the following three axioms.
\begin{itemize}
\item[{\rm (E1)}] $a \oplus b$ is defined if, and only if, $b \oplus a$ is defined and then they are equal.

\item[{\rm (E2)}]  $(a \oplus b) \oplus c$ is defined if, and only if,  $a \oplus (b \oplus c)$ is defined and then they are equal.

\item[{\rm (E3)}] For all $a \in E$, $\exists a \oplus 0$ and $a \oplus 0 = a$. 
\end{itemize}

A partial commutative monoid that satisfies, in addition, the following axiom
\begin{itemize} 
\item[{\rm (E4)}] The {\em refinement property\footnote{Also called the Riesz Decomposition Property (RDP). See \cite{Pul} and \cite{DP}.}}: \\ if $a_{1} \oplus a_{2} = b_{1} \oplus b_{2}$ then
there exist elements $c_{11},c_{12},c_{21},c_{22}$ such that
$a_{1} = c_{11} \oplus c_{12}$ and $a_{2} = c_{21} \oplus c_{22}$,
and
$b_{1} = c_{11} \oplus c_{21}$ and $b_{2} = c_{12} \oplus c_{22}$.
\end{itemize}
is called a {\em partial refinement monoid}.
A partial commutative monoid that satisfies, in addition, the following axiom
\begin{itemize}
\item[{\rm (E5)}] If $a \oplus b = 0$ then $a = 0$ and $b = 0$.
\end{itemize}
is said to be {\em conical} (or {\em positive}).
A partial commutative monoid that satisfies, in addition, the following axiom
\begin{itemize}
\item[{\rm (E6)}] If $a \oplus b = a \oplus c$ then $b = c$. 
\end{itemize}
is said to be {\em cancellative}.
To state the following two axioms, we need an additional constant denoted by 1.
\begin{itemize}
\item[{\rm (E7)}] $a \oplus 1$ is defined if, and only if, $a = 0$.

\item[{\rm (E8)}] For each $a \in E$, there exists a unique $a' \in E$ such that $a \oplus a'$ exists and equals $1$.
\end{itemize}

An {\em effect algebra} $(E, \oplus, 0, 1)$ is a structure satisfying (E1), (E2), (E7) and (E8).
For the following see, for example, \cite{FB1994}.

\begin{lemma} 
In an effect algebra, the axioms (E3),  (E5),  and (E6) hold automatically.
\end{lemma}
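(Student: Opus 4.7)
The plan is to derive (E3), (E5), and (E6) in that order, bootstrapping from (E1), (E2), (E7), and (E8). I would begin by recording three preliminary facts. From (E8), there is a unique $1'$ with $1 \oplus 1' = 1$; combining (E7) and (E1) forces $1' = 0$ and hence $1 \oplus 0 = 0 \oplus 1 = 1$. A second application of (E8) and uniqueness then identifies $0' = 1$. Finally, commutativity (E1) together with uniqueness in (E8) yields the involution $(a')' = a$, so every element of $E$ is of the form $a'$ for some $a$.

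With these in hand, (E3) and (E5) are short. For (E3), I would form $(a \oplus a') \oplus 0 = 1 \oplus 0$, which is defined by the preliminary calculation; reassociating via (E2) gives $a \oplus (a' \oplus 0) = 1$, and the uniqueness clause of (E8) forces $a' \oplus 0 = a'$. Since every element has the form $a'$, this is (E3). For (E5), assuming $a \oplus b = 0$, I would form $(a \oplus b) \oplus 1 = 0 \oplus 1 = 1$, reassociate via (E2) to conclude that $b \oplus 1$ is defined, and invoke (E7) to get $b = 0$; symmetry via (E1) then gives $a = 0$.

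The cancellation axiom (E6) is the step that requires the most care, and it is here that the involution plays its crucial role. Given $a \oplus b = a \oplus c = d$ with complement $d'$, I would rewrite $(a \oplus b) \oplus d' = 1$ using (E1) as $(b \oplus a) \oplus d' = 1$ and then reassociate by (E2) to obtain $b \oplus (a \oplus d') = 1$; uniqueness in (E8) identifies $b' = a \oplus d'$. Running the same calculation with $c$ in place of $b$ gives $c' = a \oplus d'$, so $b' = c'$, and applying the involution yields $b = c$. The only real pitfall in executing this plan is verifying that each intermediate sum is defined before invoking commutativity, associativity, or uniqueness; the conceptual engine is that once (E8) is paired with the involution, it substitutes for a direct cancellation hypothesis and lets (E6) fall out without any circular appeal to cancellation itself.
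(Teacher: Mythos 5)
Your proof is correct and is essentially the standard Foulis--Bennett argument that the paper itself defers to (it cites \cite{FB1994} rather than giving a proof): establish $1'=0$, $0'=1$, and the involution $(a')'=a$ from (E1), (E2), (E7), (E8), then derive (E3), (E5), (E6) by adjoining complements and invoking uniqueness. All the definedness checks you flag are handled correctly, so there is nothing to add.
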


Define $a \leq b$ if, and only if, $b = a \oplus c$ for some $c$.
In an effect algebra $(E, \oplus, 0, 1)$, this relation is a partial order.   
A lattice-ordered effect algebra that also satisfies axiom (E4), the refinement property, is called an {\em MV-algebra} \cite{FB1994,Foulis2000}.
In an MV-algebra, there is an everywhere defined binary operation
$$a \boxplus b = a \oplus (a' \wedge b).$$
It is possible to axiomatize MV-algebras in terms of this operation \cite{Foulis2000}. 
MV algebras arose in the algebraic foundations of many-valued logics (\cite{Chang,M3}).

\subsection{Co-ordinatization}

In this section, we shall define precisely what we mean by co-ordinatizing an MV-algebra by an inverse monoid.
The idea behind our construction can be found sketched on page 131 of \cite{Ren}. 
It is also related to the notion of coordinatizing a continuous geometry in the sense of von Neumann \cite{vN1,vN2}.

An inverse monoid is said to be {\em completely semisimple}\footnote{There is no connection with the term `semisimple' we introduced earlier.} 
if $e \, \mathscr{D} \, f \leq e$ implies $e = f$ for any idempotents $e$ and $f$. 
In completely semisimple inverse monoids, we have that $\mathscr{D} = \mathscr{J}$.
If $e \in E(S)$, a Boolean algebra, we denote by $\bar{e}$ the complement of $e$.
We say that $\mathscr{D}$ {\em preserves complementation} if $e \, \mathscr{D} \, f$ implies that  $\overline{e} \, \mathscr{D} \, \overline{f}$.

\begin{lemma}\label{lem:properties} A Boolean inverse monoid in which $\mathscr{D}$ preserves complementation is factorizable.
\end{lemma}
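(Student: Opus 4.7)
The plan is to show that every element $s$ of such a monoid $S$ lies beneath a unit, constructed as an orthogonal join of $s$ with a suitable ``complementary'' element supplied by the hypothesis.

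Fix $s \in S$ and set $e = \mathbf{d}(s)$, $f = \mathbf{r}(s)$, so that $e \stackrel{s}{\longrightarrow} f$ and in particular $e \, \mathscr{D} \, f$. By the hypothesis that $\mathscr{D}$ preserves complementation (applied in the Boolean algebra $E(S)$), we obtain $\bar{e} \, \mathscr{D} \, \bar{f}$, so there exists an element $t \in S$ with $\mathbf{d}(t) = \bar{e}$ and $\mathbf{r}(t) = \bar{f}$.

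Next I would verify that $s \perp t$. Since $s = sf$ and $s^{-1} = fs^{-1}$, while $t = \bar{f}t$, we get
\[
st^{-1} = s(f\bar{f})t^{-1} \cdot (\ldots) = 0,
\]
and more directly $(st^{-1})(st^{-1})^{-1} = s t^{-1} t s^{-1} = s \bar{e} s^{-1} = s s^{-1} e \bar{e} \cdot (\ldots)$; the cleanest way is to use $\mathbf{r}(s) \cdot \mathbf{r}(t) = f \wedge \bar{f} = 0$ together with the identity $st^{-1} = \mathbf{r}(s) \cdot st^{-1} \cdot \mathbf{r}(t)$ coming from $s = \mathbf{r}(s)s$ and $t^{-1} = t^{-1}\mathbf{r}(t)$. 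The analogous calculation using $\mathbf{d}(s) \wedge \mathbf{d}(t) = e \wedge \bar{e} = 0$ yields $s^{-1}t = 0$. Hence $s$ and $t$ are orthogonal, and in particular compatible, so in the Boolean inverse monoid the join $g := s \vee t$ exists.

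Finally, applying Lemma \ref{lem:meets-joins}(2) gives
\[
\mathbf{d}(g) = \mathbf{d}(s) \vee \mathbf{d}(t) = e \vee \bar{e} = 1, \qquad \mathbf{r}(g) = \mathbf{r}(s) \vee \mathbf{r}(t) = f \vee \bar{f} = 1,
\]
so $g^{-1}g = gg^{-1} = 1$, which is to say $g$ lies in the group of units. Since $s \leq s \vee t = g$, this shows that $s$ is beneath a unit; as $s$ was arbitrary, $S$ is factorizable. I do not anticipate a genuine obstacle here: the only mildly delicate point is the bookkeeping needed to check $s \perp t$ from the orthogonality of the domain and range idempotents, which in turn rests only on the identities $s = \mathbf{r}(s)\,s\,\mathbf{d}(s)$ and the corresponding one for $t$.
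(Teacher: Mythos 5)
Your proof is correct and is essentially the paper's own argument: produce $t$ with $\overline{e}\stackrel{t}{\longrightarrow}\overline{f}$ from the hypothesis, observe $s\perp t$, and check that the orthogonal join $g=s\vee t$ has $\mathbf{d}(g)=\mathbf{r}(g)=1$, so it is a unit above $s$. (One bookkeeping slip in the orthogonality check: the relevant sandwiches are $st^{-1}=s\,\mathbf{d}(s)\,\mathbf{d}(t)\,t^{-1}=s(e\overline{e})t^{-1}$ and $s^{-1}t=s^{-1}\,\mathbf{r}(s)\,\mathbf{r}(t)\,t=s^{-1}(f\overline{f})t$ --- you have the two pairings swapped, and the identity $st^{-1}=\mathbf{r}(s)\,st^{-1}\,\mathbf{r}(t)$ does not by itself force $st^{-1}=0$ since the two idempotents are not adjacent; but as both $e\overline{e}$ and $f\overline{f}$ vanish, the conclusion stands.)
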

\begin{proof} Let $a \in S$.
Put $e = a^{-1}a$ and $f = aa^{-1}$.
Then $e \, \mathscr{D} \, f$.
By assumption, $\overline{e} \, \mathscr{D} \, \overline{f}$.
Thus there is an element $b$ such that $\overline{e} \stackrel{b}{\longrightarrow} \overline{f}$.
The elements $a$ and $b$ are orthogonal and so have a join $g = a \vee b$.
But $g^{-1}g = 1 = gg^{-1}$ and so $g$ is an invertible element and, by construction, $a \leq g$.
Thus $S$ is factorizable.
\end{proof}

\begin{lemma}\label{lem:clocks} A Boolean inverse monoid in which $\mathscr{D}$ preserves complementation
and in which $e \, \mathscr{D}\,  1$ implies that $e = 1$ is completely semisimple.
\end{lemma}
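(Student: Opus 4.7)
The plan is to leverage the hypothesis that $e \, \mathscr{D} \, 1$ forces $e = 1$ by producing, from any witness of $e\,\mathscr{D}\,f$ with $f \leq e$, a single element $c \in S$ whose domain idempotent is $1$ and whose range idempotent must therefore also equal $1$; the Boolean structure of $E(S)$ will then force $e = f$ at once.

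Given $e\,\mathscr{D}\,f$ with $f \leq e$, I would fix $b \in S$ with $\mathbf{d}(b) = e$ and $\mathbf{r}(b) = f$ and form the join $c := b \vee \bar{e}$, where $\bar{e}$ is viewed as an idempotent element of $S$ (so $\mathbf{d}(\bar{e}) = \mathbf{r}(\bar{e}) = \bar{e}$). The main technical step is to check orthogonality $b \perp \bar{e}$. The identity $\mathbf{d}(b) \wedge \bar{e} = e \wedge \bar{e} = 0$ is automatic, and the hypothesis $f \leq e$ gives $\mathbf{r}(b) \wedge \bar{e} = f \wedge \bar{e} \leq e \wedge \bar{e} = 0$; the standard formulas $\mathbf{d}(b\bar{e}) = \bar{e}\, e\, \bar{e}$ and $\mathbf{d}(b^{-1}\bar{e}) = \bar{e}\, f\, \bar{e}$ both evaluate to $0$, hence $b\bar{e} = 0 = b^{-1}\bar{e}$. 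Since $S$ is distributive, the join $c$ therefore exists.

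Applying Lemma \ref{lem:meets-joins}(2) to $c$ will then yield
$$\mathbf{d}(c) = e \vee \bar{e} = 1, \qquad \mathbf{r}(c) = f \vee \bar{e},$$
so $c$ is a witness that $1 \, \mathscr{D} \, (f \vee \bar{e})$. The second hypothesis of the lemma now forces $f \vee \bar{e} = 1$; meeting with $e$ and invoking distributivity of $E(S)$ together with $f \leq e$ gives $e = e \wedge 1 = (e \wedge f) \vee (e \wedge \bar{e}) = f$, as required.

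The only real point to watch is the orthogonality of $b$ with $\bar{e}$, and in particular the fact that the hypothesis $f \leq e$ is essential there: without it the range condition could fail and the element $c$ might not exist. I note that this route uses only the second hypothesis of the lemma. The first hypothesis enters most naturally via Lemma \ref{lem:properties}, where it upgrades to factorizability, and would be the basis of an alternative argument in which $b$ is extended to a unit of $S$ and the same conclusion is drawn by conjugation.
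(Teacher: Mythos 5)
Your proof is correct, but it takes a genuinely different route from the paper's. The paper keeps both hypotheses in play: from $e \stackrel{a}{\longrightarrow} f$ it forms the element $afa$, whose domain is $e$ and whose range $afa^{-1}$ lies below $f$, and pairs it with a witness $\overline{e} \stackrel{b}{\longrightarrow} \overline{f}$ supplied by the complementation-preserving hypothesis; the orthogonal join then has domain $1$ and range $\overline{f} \vee afa^{-1}$, forcing $afa^{-1} = f$ and eventually $e = f$. You instead pair the witness $b \colon e \rightarrow f$ with the idempotent $\overline{e}$ itself, using $f \leq e$ to secure orthogonality, so the join has domain $1$ and range $f \vee \overline{e}$, and the conclusion falls out of one distributivity computation in $E(S)$. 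Your version is shorter, avoids the slightly opaque element $afa$, and --- as you observe --- proves the sharper statement that the hypothesis ``$e \, \mathscr{D} \, 1$ implies $e = 1$'' alone yields complete semisimplicity in any Boolean inverse monoid; the complementation-preservation hypothesis is genuinely unused. All the individual steps check out: $b\overline{e} = 0$ and $b^{-1}\overline{e} = 0$ follow from $\mathbf{d}(b) = e$ and $\mathbf{r}(b) = f \leq e$, Lemma~\ref{lem:meets-joins}(2) gives the domain and range of the join, and $e \wedge (f \vee \overline{e}) = f$ finishes it. The one cosmetic remark is that your final paragraph's sketch of an ``alternative argument via factorizability'' is not needed and could be dropped.
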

\begin{proof}
Suppose that $e \, \mathscr{D} \, f \leq e$.
Let $e \stackrel{a}{\longrightarrow} f$.
Observe that $afa$ has domain $e$.
Let $\overline{e} \stackrel{b}{\longrightarrow} \overline{f}$.
The elements $afa$ and $b$ are orthogonal.
We may therefore form the orthogonal join $b \vee afa$.
Its domain is $e \vee \overline{e} = 1$.
By assumption, its range must also be the identity.
Therefore $\overline{f} \vee afa^{-1} = 1$.
This is an orthogonal join in a Boolean algebra and so $f = afa^{-1}$.
That is, $(af)(af)^{-1} = f$.
But $af \leq a$ and $a$ has range $f$.
We deduce that $a = af$.
Thus $a^{-1}a = a^{-1}af$ and so $e = ef$.
But $f = ef$ and so $e = f$, as required. 
\end{proof}

Let $S$ be an arbitrary Boolean inverse monoid.
Put $\mathsf{E}(S) = E(S)/\mathscr{D}$.
We denote the $\mathscr{D}$-class containing the idempotent $e$ by $[e]$.
Define 
$[e] \oplus [f]$ as follows.
Suppose that we can find idempotents $e' \in [e]$ and $f' \in [f]$ such that $e'$ and $f'$ are orthogonal.
Then define $[e] \oplus [f] = [e' \vee f']$.
Otherwise, the operation $\oplus$ is undefined.

\begin{proposition}\label{prop:E} Let $S$ be a Boolean inverse monoid.
\begin{enumerate}

\item The operation $\oplus$ defined on $\mathsf{E}(S)$ is well-defined.

\item  $(\mathsf{E}(S),\oplus,[0],[1])$ is a conical partial refinement monoid satisfying (E7).

\item $[e] \leq [f]$ if, and only if, $e \, \mathscr{D}\, i \leq f$ for some idempotent $i$.

\item If the partial algebra satisfies (E6), cancellativity, then the inverse monoid is completely semisimple.

\item If $\mathscr{D}$ preserves complementation and $e \, \mathscr{D} \,  1$ implies that $e = 1$, then the partial algebra is an effect algebra.

\item The construction $S \mapsto \mathsf{E}(S)$ is functorial.

\end{enumerate}
\end{proposition}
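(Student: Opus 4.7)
The plan is to perform every calculation in the Boolean algebra $E(S)$ and to move orthogonal decompositions across $\mathscr{D}$-classes by inner conjugation: if $a \in S$ witnesses $e \, \mathscr{D} \, f$ with $\mathbf{d}(a) = e$ and $\mathbf{r}(a) = f$, then $x \mapsto axa^{-1}$ is an order-isomorphism between the idempotents below $e$ and those below $f$, so orthogonal splittings may be freely transported. For (1), given orthogonal pairs $e_{i} \perp f_{i}$ ($i=1,2$) with $e_{1} \, \mathscr{D} \, e_{2}$ and $f_{1} \, \mathscr{D} \, f_{2}$, I would choose witnesses $a, b \in S$; the orthogonality of both domains and ranges forces $a \perp b$, and Lemma~\ref{lem:meets-joins}(2) then gives $\mathbf{d}(a \vee b) = e_{1} \vee f_{1}$ and $\mathbf{r}(a \vee b) = e_{2} \vee f_{2}$, so the joins are $\mathscr{D}$-related. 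In (2), commutativity (E1) and the unit law (E3) are immediate, and conicality (E5) follows from the fact that the $\mathscr{D}$-class of $0$ is $\{0\}$ together with the Boolean identity that a join is zero only when each summand is zero.

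For associativity (E2), the key lemma is that whenever $([e] \oplus [f]) \oplus [g]$ is defined one may find pairwise orthogonal representatives $e', f', g' \in E(S)$: start with orthogonal $e' \perp f'$ summing to $d$; pick $d'' \, \mathscr{D} \, d$ orthogonal to a representative $g'$ of $[g]$; and conjugate $e', f'$ by a witness of $d \, \mathscr{D} \, d''$ so that they land beneath $d''$ and are therefore orthogonal to $g'$. Both parenthesisations then compute to $[e' \vee f' \vee g']$. The refinement axiom (E4) is proved by a matrix-unit style argument: given $s$ with $\mathbf{d}(s) = a_{1}' \vee a_{2}'$ and $\mathbf{r}(s) = b_{1}' \vee b_{2}'$, set $s_{ij} = b_{j}' s a_{i}'$ and $c_{ij} = [\mathbf{d}(s_{ij})]$; the distributivity of multiplication over orthogonal joins, together with Lemma~\ref{lem:meets-joins}(2), yields the required four-way decomposition. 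The axiom (E7) requires the hypothesis (made explicit in (5)) that $e \, \mathscr{D} \, 1$ forces $e = 1$: under it, $[e] \oplus [1]$ being defined produces $e' \, \mathscr{D} \, e$ orthogonal to some $1' \, \mathscr{D} \, 1$, which must equal $1$, so $e' = 0$ and hence $[e] = [0]$.

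Parts (3)--(6) follow readily. For (3), $[e] \leq [f]$ unpacks to an orthogonal pair $e' \perp g'$ with $e' \vee g' \, \mathscr{D} \, f$; conjugating by a witness transports $e'$ to an idempotent $i \leq f$ with $i \, \mathscr{D} \, e$, and the converse uses the orthogonal decomposition $f = i \vee (f \wedge \bar{i})$. For (4), if $e \, \mathscr{D} \, f \leq e$, then $[f] \oplus [e \wedge \bar{f}] = [e] = [f] = [f] \oplus [0]$, so cancellativity forces $e \wedge \bar{f} = 0$ and hence $e = f$. For (5), Lemma~\ref{lem:clocks} delivers complete semisimplicity, the Boolean complement $\bar{e}$ supplies the orthocomplement witnessing (E8), and uniqueness uses that $\mathscr{D}$ preserves complementation to identify any rival complement $g$ with $\bar{e}$ via the Boolean identity $g' = \overline{e'}$ when $e' \vee g' = 1$ orthogonally. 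Functoriality in (6) is a direct check: a morphism of Boolean inverse monoids preserves multiplication, zero and compatible joins, hence $\mathscr{D}$-relations, orthogonality and orthogonal joins, so descends to $\mathscr{D}$-classes by $[e] \mapsto [\phi(e)]$ with the functor laws immediate. The main obstacle throughout is the coordinated choice of pairwise orthogonal representatives underpinning both associativity and refinement --- transporting an orthogonal decomposition across a $\mathscr{D}$-equivalence by conjugation and verifying that the pieces land in the intended $\mathscr{D}$-classes; once that conjugation lemma is in place, the remainder is essentially formal bookkeeping in $E(S)$.
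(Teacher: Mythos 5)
Your argument tracks the paper's own proof almost step for step: well-definedness is proved by exactly the same orthogonal-join-of-witnesses computation; associativity is the paper's argument in different clothing (the paper's elements $x = ce'$ and $y = cf'$, with ranges $\mathbf{r}(x), \mathbf{r}(y) \leq i$, are precisely your conjugation of $e', f'$ beneath $d''$); the refinement axiom uses the identical four-fold splitting $f_{i}xe_{j}$ of a witness of $e_{1} \vee e_{2} \,\mathscr{D}\, f_{1} \vee f_{2}$; and parts (3)--(6) coincide. In (3) your version is in fact slightly more careful than the paper's, which concludes $e \,\mathscr{D}\, e' \leq f$ where one must first conjugate $e'$ into $f$ along a witness of $e' \vee g' \,\mathscr{D}\, f$, exactly as you do.

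The one genuine divergence is (E7). The paper asserts in part (2) that (E7) holds for every Boolean inverse monoid, on the grounds that the only idempotent orthogonal to the identity is $0$; but the definition of $\oplus$ only requires some representative $f' \,\mathscr{D}\, 1$ to be orthogonal to a representative of $[e]$, and such an $f'$ need not equal $1$. In the Cuntz monoid $C_{2}$, for instance, $aa^{-1} \,\mathscr{D}\, 1$ and $bb^{-1} \perp aa^{-1}$, so $[1] \oplus [1]$ is defined and (E7) fails. You make (E7) conditional on the hypothesis that $e \,\mathscr{D}\, 1$ implies $e = 1$, which is exactly what the paper's one-line justification tacitly uses, is available in part (5) where (E7) is actually needed (an effect algebra must satisfy it), and holds in every completely semisimple monoid, so nothing downstream (in particular Corollary~\ref{cor:lurgi}) is affected. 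Your reading is the defensible one; just be aware that it proves a corrected version of part (2) rather than the literal unconditional statement.
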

\begin{proof} (1) Let 
$e' \, \mathscr{D} \, e''$
and 
$f' \, \mathscr{D} \, f''$
where $e'$ is orthogonal to $f'$, and $e''$ is orthogonal to $f''$.
We prove that $e' \vee f' \, \mathscr{D} \, e'' \vee f''$.
By assumption, there are elements $e' \stackrel{a}{\longrightarrow} e''$ and $f' \stackrel{b}{\longrightarrow} f''$.
The elements $a$ and $b$ are orthogonal and so $a \vee b$ exists.
But $e' \vee f' \stackrel{a \vee b}{\longrightarrow} e'' \vee f''$. 

(2) It is immediate that (E1) holds.

To prove axiom (E2), ironically, takes a bit of work.
Suppose that 
$\exists ([e] \oplus [f]) \oplus [g].$ 
Then 
$\exists [e] \oplus [f]$
and so we may find
$e \stackrel{a}{\longrightarrow} e'$ and $f \stackrel{b}{\longrightarrow} f'$
such that $e'$ and $f'$ are orthogonal.
By definition, $[e] \oplus [f] = [e' \vee f']$.
Since $\exists [e' \vee f'] \oplus [g]$,
we may find $e' \vee f' \stackrel{c}{\longrightarrow} i$
and
$g \stackrel{d}{\longrightarrow} g'$
such that $i$ and $g'$ are orthogonal.
It follows that 
$$([e] \oplus [f]) \oplus [g] = [i \vee g'].$$

Define $x = ce'$ and $y = cf'$.
Then
$$e' \stackrel{x}{\longrightarrow} \mathbf{r}(x)
\mbox{ and }
f' \stackrel{y}{\longrightarrow} \mathbf{r}(y).$$
Since $i$ is orthogonal to $g'$ and $\mathbf{r}(y) \leq i$,
we have that $\mathbf{r}(y)$ and $g'$ are orthogonal.
In addition, $yb$ has domain $f$ and range $\mathbf{r}(y)$.
It follows that $\exists [f] \oplus [g]$ and it is equal to $[\mathbf{r}(y) \vee g']$.
Observe next that $\mathbf{r}(x)$ is orthogonal to $\mathbf{r}(y)$ and, since $\mathbf{r}(x) \leq i$ it is also orthogonal to $g'$.
It follows that $\mathbf{r}(x)$ is orthogonal to $\mathbf{r}(y) \vee g'$.
But $xa$ has domain $e$ and range $\mathbf{r}(x)$.
It follows that
$\exists [e] \oplus [\mathbf{r}(y) \vee g']$ is defined
and equals 
$[\mathbf{r}(x) \vee \mathbf{r}(y) \vee g']$.
But $\mathbf{r}(x) \vee \mathbf{r}(y) = i$.
It follows that we have shown
$$\exists [e] \oplus ([f] \oplus [g])$$
and that it equals
$([e] \oplus [f]) \oplus [g]$.

The reverse implication follows by symmetry.

It is immediate that (E3) holds.

We prove that (E4) holds.
Let $[e_{1}] \oplus [e_{2}] = [f_{1}] \oplus [f_{2}]$ where we assume, without loss of generality,
that $e_{1}$ is orthogonal to $e_{2}$, and $f_{1}$ is orthogonal to $f_{2}$.
Let $e_{1} \vee e_{2} \stackrel{x}{\longrightarrow} f_{1} \vee f_{2}$.
Clearly
$$x = (f_{1} \vee f_{2})x(e_{1} \vee e_{2}).$$
Put
$$x_{1} = f_{1}xe_{1}, \quad 
x_{2} = f_{1}xe_{2}, \quad
x_{3} = f_{2}xe_{1}, \quad
x_{4} = f_{2}xe_{2}.$$
Then
$$x = x_{1} \vee x_{2} \vee x_{3} \vee x_{4},$$
an orthogonal join.
Define also
$$a_{11} = [\mathbf{d}(x_{1})], \quad
a_{12} = [\mathbf{d}(x_{2})], \quad
a_{21} = [\mathbf{d}(x_{3})], \quad
a_{22}= [\mathbf{d}(x_{4})].$$
Observe that $\mathbf{d}(x_{1}), \mathbf{d}(x_{3}) \leq e_{1}$.
Thus 
$\mathbf{d}(x_{1}) \vee \mathbf{d}(x_{3}) = e_{1}$
and
$\mathbf{d}(x_{2}) \vee \mathbf{d}(x_{4}) = e_{2}$.
Thus $a_{11} \oplus a_{21} = [e_{1}]$ and $a_{12} \oplus a_{22} = [e_{2}]$.
Similarly,
$f_{1} = \mathbf{r}(x_{1}) \vee \mathbf{r}(x_{2})$
and
$f_{2} = \mathbf{r}(x_{3}) \vee \mathbf{r}(x_{4})$.
Thus
$[f_{1}] = a_{11} \oplus a_{12}$ 
and
$[f_{2}] = a_{21} \oplus a_{22}$.

(E5) holds because if the join of two idempotents is 0 then both idempotents must be 0,
and the only idempotent $\mathscr{D}$-related to 0 is 0 itself.

(E7) holds because the only idempotent orthogonal to the identity is 0,
and the only idempotent $\mathscr{D}$-related to 0 is 0 itself.

(3) Suppose that 
$e \stackrel{x}{\longrightarrow} i \leq f$.
We may find an idempotent $j$ such that $f = i \vee j$ and $i \wedge j = 0$.
Then $[e] \oplus [j] = [f]$ and so $[e] \leq [f]$.
Conversely, 
suppose that $[e] \leq [f]$ where $e$ and $f$ are idempotents.
Then there exists an idempotent $g$ such that $[e] \oplus [g] = [f]$.
By definition, there are elements $e \stackrel{a}{\longrightarrow} e'$ and $g \stackrel{b}{\longrightarrow} g'$
such that $e' \vee f' \, \mathscr{D} \, f$.
But then $e \, \mathscr{D} \, e' \leq f$, as required.

(4) Suppose that $e \, \mathscr{D} \, f \leq e$.
Then $e = f \vee e \setminus f$.
Put $g = e \setminus f$.
Thus $[e] = [e] \oplus [g]$.
But then $[e] \oplus [0] = [e] \oplus [g]$.
By cancellation, we get that $[0] = [g]$ and so $g = 0$ from which we deduce that $e = f$, as required.

(5) Define $[e]' = [\overline{e}]$.
This is well-defined since $\mathscr{D}$ preserves complementation.
Clearly, $[e] \oplus [\overline{e}] = [1]$.
Suppose that $[e] \oplus [f] = [1]$.
By definition, we have idempotents $i$ and $j$ such that
$i \, \mathscr{D} \, e$ and $j \, \mathscr{D} \, f$ and $i \vee j \, \mathscr{D} \, 1$.
By assumption, $i \vee j = 1$, an orthogonal join.
It follows that $j = \overline{i}$.
But $i \, \mathscr{D} \, e$ implies that $\overline{i}\,  \mathscr{D} \, \overline{e}$.
Thus $[f] = [e]'$, as required.

(6) Let $\theta \colon S \rightarrow T$ be a morphism of Boolean inverse monoids.
Any morphism preserves the $\mathscr{D}$-relation and so we may define
$\theta^{\ast} \colon \mathsf{E}(S) \rightarrow \mathscr{E}(T)$ by $\theta^{\ast}([e]) = [\theta (e)]$.
Suppose that $[e] \oplus [f]$ is defined.
Then there exist idempotents $e'$ and $f'$ such that
$e \, \mathscr{D} \, e'$ and $f \, \mathscr{D} \, f'$ and where $e'$ and $f'$ are orthogonal.
Thus $[e] \oplus [f] = [e' \vee f']$.
But orthogonality is preserved by morphisms and $\theta (e' \vee f') = \theta (e') \vee \theta (f')$.
It follows that $\theta^{\ast}([e] \oplus [f]) = \theta^{\ast}([e]) \oplus \theta^{\ast}([f])$.
Morphisms are also morphisms of Boolean algebras and so $\theta^{\ast}([e]') = \theta^{\ast}([e])'$.
It is now straightforward to check that we have actually defined a functor from Boolean inverse monoids to partial algebras.
\end{proof}

We shall call $\mathsf{E}(S)$ equipped with the partially defined binary operation $\oplus$ the {\em partial algebra} associated with the inverse monoid $S$.
By Lemma~\ref{lem:clocks} and Proposition~\ref{prop:E},  we immediately deduce the following.

\begin{corollary}\label{cor:lurgi} Let $S$ be a Boolean inverse monoid that is completely semisimple
and in which  $\mathscr{D}$ preserves complementation.
Then the partial algebra $(\mathsf{E}(S),\oplus,[0],[1])$ is an effect algebra satisfying the refinement property.
\end{corollary}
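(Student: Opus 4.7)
The plan is to chain together the pieces already built in Proposition \ref{prop:E} and verify the one extra hypothesis needed to invoke part (5). Concretely, I would first note that by Proposition \ref{prop:E}(2), $(\mathsf{E}(S),\oplus,[0],[1])$ is a conical partial refinement monoid satisfying (E7). This already packages axioms (E1), (E2), (E3), (E4), (E5) and (E7). So the only outstanding effect-algebra axiom is (E8), existence and uniqueness of complements.

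To get (E8) from Proposition \ref{prop:E}(5), I need two hypotheses on $S$: that $\mathscr{D}$ preserves complementation (given in the corollary) and that $e \, \mathscr{D} \, 1$ implies $e = 1$. The latter is where I would use complete semisimplicity of $S$: since $e \, \mathscr{D} \, 1$ combined with $e \leq 1$ is exactly an instance of $1 \, \mathscr{D} \, e \leq 1$, the defining implication of complete semisimplicity forces $1 = e$. With both hypotheses verified, Proposition \ref{prop:E}(5) gives (E8) and an explicit complementation $[e]' = [\overline{e}]$, so $(\mathsf{E}(S),\oplus,[0],[1])$ is an effect algebra.

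Finally, the refinement property is axiom (E4), which was already secured in step one via Proposition \ref{prop:E}(2). Putting the two observations together yields the corollary.

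There is no real obstacle here, since every ingredient has been proved in the preceding lemma and proposition; the only thing to watch is the direction of the implication in the definition of complete semisimplicity so that the instantiation $f := e$, $e := 1$ is set up correctly. One might also briefly remark that (E3), (E5), (E6) do not need separate verification once (E1), (E2), (E7), (E8) are in hand, since the earlier lemma records that these hold automatically in any effect algebra.
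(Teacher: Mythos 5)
Your proposal is correct and follows essentially the same route as the paper, which deduces the corollary immediately from Proposition~\ref{prop:E} (parts (2) and (5)). Your explicit check that complete semisimplicity yields the hypothesis ``$e \, \mathscr{D} \, 1$ implies $e = 1$'' via the instantiation $1 \, \mathscr{D} \, e \leq 1$ is exactly the small verification the paper leaves implicit.
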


In the light of the above result, it is convenient to define a {\em Foulis monoid}
to be a completely semisimple Boolean inverse monoid in which  $\mathscr{D}$ preserves complementation.
The construction $S \mapsto \mathsf{E}(S)$ is in fact a functor from the category of Foulis monoids to the category of effect algebras with the refinement property.
Since a Foulis monoid is completely semisimple, $\mathscr{D} = \mathscr{J}$.
It follows that we may identity $\mathsf{E}(S)$ with $S/\mathscr{J}$, the poset of principal ideals of $S$.
We say that an effect algebra $E$ can be {\em co-ordinatized} if there is a Foulis monoid $S$ such that $E$ is isomorphic to $S/\mathscr{J}$ as an effect algebra.
An inverse monoid with zero $S$ is said to satisfy the {\em lattice condition} if $S/\mathscr{J}$ is a lattice.
If $S$ is a Foulis monoid satisfying the lattice condition then $S/\mathscr{J}$ is in fact an MV-algebra.
Using the definitions we have made, the goal of this paper can now be precisely stated:\\

{\em For each countable MV-algebra $E$, show that there is a Foulis monoid $S$ satisfying the lattice condition such that $E$ is isomorphic to $S/\mathscr{J}$ as an MV-algebra.}\\

\subsection{Co-ordinatizing finite MV-algebras}

In this section, we shall prove that all finite MV-algebras are co-ordinatizable,
a result motivating everything we do subsequently.
But we also take the opportunity to introduce some ideas that will also be useful to us later.

Let $S$ be a Boolean inverse monoid.
The following definition was suggested by \cite{CSGH}.
An {\em invariant mean} for $S$ is a function $\mu \colon E(S) \rightarrow [0,1]$ such that
the following axioms hold:
\begin{description}

\item[{\rm (IM1)}] $\mu (1) = 1$.

\item[{\rm (IM2)}] For any $s \in S$, we have that $\mu (s^{-1}s) = \mu (ss^{-1})$.

\item[{\rm (IM3)}] If $e$ and $f$ are orthogonal idempotents we have that $\mu (e \vee f) = \mu (e) + \mu (f)$.

\end{description}
Observe that since $0$ is orthogonal to itself $\mu (0) = 0$.

We shall say that an invariant mean is {\em good} if for all $e,f \in E(S)$ if $\mu (e) \leq \mu (f)$
then there exists $e'$ such that $\mu (e) = \mu (e')$ and $e' \leq f$.
This definition is adapted from \cite{ADMY}.
Finally, we say that an invariant mean {\em reflects the $\mathscr{D}$-relation} if $\mu (e) = \mu (f)$ implies that $e\, \mathscr{D} \, f$.

\begin{lemma}\label{lem:5April} Let $S$ be a Boolean inverse monoid equipped with a good invariant mean $\mu$  that reflects the $\mathscr{D}$-relation.
Then $S$ is completely semisimple, $\mathscr{D}$ preserves complementation, and $S/\mathscr{J}$ is linearly ordered.
\end{lemma}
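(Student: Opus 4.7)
The plan is to derive each of the three conclusions from a different combination of the three axioms (IM1)--(IM3), goodness, and reflection. The key preliminary observation is that $\mu$ behaves as a finitely additive probability measure on the Boolean algebra $E(S)$ that is constant on $\mathscr{D}$-classes: applying (IM3) to the orthogonal decomposition $1 = e \vee \overline{e}$ together with (IM1) gives $\mu(\overline{e}) = 1 - \mu(e)$, while (IM2) applied to any $a$ with $\mathbf{d}(a) = e$ and $\mathbf{r}(a) = f$ shows that $e \, \mathscr{D} \, f$ forces $\mu(e) = \mu(f)$.

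For complete semisimplicity, suppose $e \, \mathscr{D} \, f$ with $f \leq e$. In $E(S)$ we may write $e = f \vee (e \setminus f)$ as an orthogonal join, so by (IM3) we get $\mu(e) = \mu(f) + \mu(e \setminus f)$; since $\mu(e) = \mu(f)$, this forces $\mu(e \setminus f) = 0 = \mu(0)$. Reflection gives $e \setminus f \, \mathscr{D} \, 0$, and the $\mathscr{D}$-class of $0$ is $\{0\}$, so $e \setminus f = 0$ and $e = f$. For preservation of complements, if $e \, \mathscr{D} \, f$ then $\mu(e) = \mu(f)$, hence $\mu(\overline{e}) = 1 - \mu(e) = 1 - \mu(f) = \mu(\overline{f})$, and reflection again yields $\overline{e} \, \mathscr{D} \, \overline{f}$.

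For the linear order on $S/\mathscr{J}$, first observe that complete semisimplicity forces $\mathscr{D} = \mathscr{J}$, so $S/\mathscr{J}$ may be identified with $\mathsf{E}(S) = E(S)/\mathscr{D}$. Given classes $[e]$ and $[f]$, we may assume without loss of generality that $\mu(e) \leq \mu(f)$. By goodness there is an idempotent $e' \leq f$ with $\mu(e') = \mu(e)$, and reflection supplies $e \, \mathscr{D} \, e'$; so $e \, \mathscr{D} \, e' \leq f$, and by Proposition~\ref{prop:E}(3) this reads $[e] \leq [f]$. The argument has no real obstacle: once one extracts the ``probability measure'' picture of $\mu$, each of the three conclusions is a direct one-line consequence of the appropriate axiom, and the only subtle point is to invoke the characterization of the order on $\mathsf{E}(S)$ from Proposition~\ref{prop:E}(3) in the last step rather than trying to build $[e] \leq [f]$ from scratch.
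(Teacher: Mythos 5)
Your proof is correct and follows essentially the same route as the paper's: the orthogonal decomposition $e = f \vee (e\overline{f})$ plus reflection for complete semisimplicity, the identity $\mu(\overline{e}) = 1 - \mu(e)$ plus reflection for complementation, and goodness plus reflection for the linear order. The only cosmetic difference is that you route the final step through Proposition~\ref{prop:E}(3) while the paper concludes $SeS \subseteq SfS$ directly; both are fine.
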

\begin{proof} Observe first that if $e$ and $f$ are any idempotents such that $e \leq f$ then $\mu (e) \leq \mu (f)$.
If $e \neq f$, we may suppose that $e < f$.
Then $f = e \vee (f \overline{e})$, an orthogonal join.
Thus $\mu (f) = \mu (e) + \mu (f \overline{e})$.
It follows that $\mu (f) \geq \mu (e)$.
Next observe that $\mu (e) = 0$ implies that $e = 0$.
We have that $\mu (e) = \mu (0)$ and so, by assumption, $e \, \mathscr{D} \, 0$.
It is now immediate that $e = 0$.

We show that $S$ is completely semisimple.
Suppose that $e \, \mathscr{D} \, f \leq e$.
Then $\mu (e) = \mu (f)$.
Since $f \leq e$, we may write $e = f \vee (e \overline{f})$, an orthogonal join.
Thus $\mu (e \overline{f}) = 0$ and so, by the above, $e \overline{f} = 0$.
It follows that $e = f$, as required.

We show that $\mathscr{D}$ preserves complementation.
Suppose that $e \, \mathscr{D} \, f$.
Then $\mu (e) = \mu (f)$.
We have that 
$\mu (\bar{e}) = 1 - \mu (e)$
and
$\mu (\bar {f}) = 1 - \mu (f)$.
Thus $\mu (\bar{e}) = \mu (\bar{f})$.
Hence, by assumption, $\bar{e} \, \mathscr{D} \, \bar{f}$, as required.

Finally, we show that $S/\mathscr{J}$ is linearly ordered thus, in particular, $S$ satisfies the lattice condition.
Let $e$ and $f$ be arbitrary idempotents.
Without loss of generality, we may assume that $\mu (e) \leq \mu(f)$.
Since the invariant mean $\mu$ is good,
there is an idempotent $e'$ such that $\mu (e) = \mu (e')$ and $e' \leq f$.
By assumption, $e \, \mathscr{D} \, e'$.
It follows that $SeS \subseteq SfS$. 
\end{proof}

A natural example of a Boolean inverse monoid equipped with an invariant mean is the symmetric inverse monoid $I_{n}$.
Define $\mu (1_{A}) = \frac{\left| A \right|}{n}$.
In other words, we assign probability $\frac{1}{n}$ to each letter.
The proof of the following can easily be deduced from the case of finite symmetric inverse monoids
and the closure of the stated properties under finite direct products.

\begin{lemma}\label{lem:needful} 
Semisimple inverse monoids are completely semisimple and $\mathscr{D}$ preserves complementation.
\end{lemma}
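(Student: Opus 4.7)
The plan is to reduce everything to the base case of a finite symmetric inverse monoid $I_n$ and then observe that both properties are inherited by finite direct products. Since a semisimple inverse monoid is by definition isomorphic to $I_{n_1} \times \cdots \times I_{n_k}$, this will complete the argument.

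For the base case $I_n$, I would first identify the structure of its Boolean algebra of idempotents and its $\mathscr{D}$-relation. The idempotents of $I_n$ are exactly the partial identities $1_A$ for $A \subseteq \{1,\ldots,n\}$, with $1_A \leq 1_B$ iff $A \subseteq B$ and $\overline{1_A} = 1_{\{1,\ldots,n\} \setminus A}$. Two idempotents $1_A$ and $1_B$ are $\mathscr{D}$-related iff there is a partial bijection with domain $A$ and range $B$, which happens iff $|A| = |B|$. With this in hand, complete semisimplicity is immediate: $1_A \, \mathscr{D} \, 1_B \leq 1_A$ gives $|A| = |B|$ together with $B \subseteq A$, forcing $A = B$. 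Preservation of complementation is equally direct: $|A| = |B|$ implies $|A^c| = n - |A| = n - |B| = |B^c|$, so $\overline{1_A} \, \mathscr{D} \, \overline{1_B}$.

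For the induction on direct products, I would note that in $S = S_1 \times \cdots \times S_k$ the idempotents, the natural partial order, the complementation in $E(S)$, and the $\mathscr{D}$-relation are all computed componentwise (the first three trivially, and the last because if $(a_1,\ldots,a_k)$ witnesses $(e_1,\ldots,e_k) \, \mathscr{D} \, (f_1,\ldots,f_k)$ then each $a_i$ witnesses $e_i \, \mathscr{D} \, f_i$, and conversely tuples of witnesses assemble into a witness). Hence if each $S_i$ is completely semisimple and has $\mathscr{D}$ preserving complementation, the same holds for the product: both conclusions are quantified pointwise.

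There is really no serious obstacle here; the only thing worth checking carefully is the componentwise description of $\mathscr{D}$ in a finite product, since in general Green's relations need not be so well-behaved under products, but for inverse monoids with zero the direct computation goes through. Assembling the two ingredients, the lemma follows.
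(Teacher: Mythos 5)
Your proof is correct and follows exactly the route the paper intends: the paper's own ``proof'' is the one-line remark that the lemma ``can easily be deduced from the case of finite symmetric inverse monoids and the closure of the stated properties under finite direct products,'' and your argument simply fills in those two steps (the cardinality description of $\mathscr{D}$ on idempotents of $I_n$, and the componentwise computation of $\mathscr{D}$, the order, and complementation in a finite product). Nothing further is needed.
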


Our first main theorem is the following.

\begin{theorem}\label{the:finite_case} 
Every finite MV-algebra can be co-ordinatized by a semisimple inverse monoid.
\end{theorem}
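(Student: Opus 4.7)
My plan is to reduce the statement to two basic constructions: finite \L{}ukasiewicz chains and finite direct products. First I would invoke the classical structure theorem for finite MV-algebras (see \cite{CDM}), which asserts that every finite MV-algebra is isomorphic to a product $L_{n_{1}} \times \cdots \times L_{n_{k}}$ of finite \L{}ukasiewicz chains, where $L_{n}$ denotes the $(n+1)$-element MV-chain $\{0, \frac{1}{n}, \ldots, \frac{n}{n}\}$. It then suffices to prove (i) that each $L_{n}$ is co-ordinatized by the symmetric inverse monoid $I_{n}$, and (ii) that the class of co-ordinatizable MV-algebras is closed under finite direct products.

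For (i), the natural invariant mean $\mu \colon E(I_{n}) \rightarrow [0,1]$ defined by $\mu (1_{A}) = |A|/n$ is easily verified to be good (pick any subset of $\{1, \ldots, n\}$ of the required cardinality lying inside $B$) and to reflect $\mathscr{D}$ (two partial identities of the same rank are $\mathscr{D}$-related in $I_{n}$). Lemma~\ref{lem:5April} then yields that $I_{n}$ is completely semisimple, that $\mathscr{D}$ preserves complementation, and that $I_{n}/\mathscr{J}$ is linearly ordered. Since $I_{n}$ is already a Boolean inverse $\wedge$-monoid and $I_{n}/\mathscr{J}$ is a finite chain (in particular a lattice), Corollary~\ref{cor:lurgi} upgrades $\mathsf{E}(I_{n})$ to an MV-algebra with $n+1$ elements. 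As finite totally ordered MV-algebras are classified up to isomorphism by their cardinality, this must be $L_{n}$.

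For (ii), I would show that the functor $\mathsf{E}$ from Proposition~\ref{prop:E} respects finite direct products, and that the properties ``being a Foulis monoid'' and ``satisfying the lattice condition'' are both preserved under direct products. The key observation is that in $S_{1} \times S_{2}$, Green's $\mathscr{D}$-relation, orthogonality, and compatible joins are all computed coordinatewise; consequently $\mathsf{E}(S_{1} \times S_{2}) \cong \mathsf{E}(S_{1}) \times \mathsf{E}(S_{2})$ as partial algebras, and as MV-algebras when both factors satisfy the lattice condition. Hence $I_{n_{1}} \times \cdots \times I_{n_{k}}$ is a semisimple inverse monoid that co-ordinatizes $L_{n_{1}} \times \cdots \times L_{n_{k}}$.

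I expect the main obstacle to be the identification $\mathsf{E}(I_{n}) \cong L_{n}$ in step (i). Knowing that $I_{n}/\mathscr{J}$ is an $(n+1)$-element MV-chain pins down the underlying order, but one must still check that the partial $\oplus$-operation on $\mathscr{D}$-classes agrees with the \L{}ukasiewicz truncated sum; this is exactly where the classification of finite MV-chains is doing the real work. The remaining direct-product bookkeeping in (ii) is essentially automatic once the componentwise nature of $\mathscr{D}$ and orthogonality has been noted.
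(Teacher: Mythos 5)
Your proposal is correct, and its overall architecture coincides with the paper's: decompose the finite MV-algebra into a product of \L ukasiewicz chains via \cite[Proposition 3.6.5]{CDM}, co-ordinatize each chain by a finite symmetric inverse monoid, and handle the product coordinatewise. The one genuine divergence is in how you identify $I_{n}/\mathscr{J}$ with the \L ukasiewicz chain. The paper works with the cardinality map $\kappa[1_{A}] = |A|$ and \emph{explicitly computes} the operations on $I_{n}/\mathscr{J}$ --- checking that $\oplus$ is truncated addition, that $s' = n-s$, and that $\boxplus$ is the \L ukasiewicz sum $\min(r+s,n)$ --- and it does so deliberately, remarking that this explicit method matters for the later, infinite-dimensional arguments. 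You instead route through the invariant mean $\mu(1_{A}) = |A|/n$ and Lemma~\ref{lem:5April} to obtain the Foulis and lattice properties, and then dispose of the identification by citing the fact that a finite totally ordered MV-algebra is determined up to isomorphism by its cardinality. That is a legitimate shortcut (a finite MV-chain cannot be a nontrivial product, so the structure theorem forces it to be a single \L ukasiewicz chain), but note that it leans on the classification theorem twice, whereas the paper's hands-on computation is self-contained at this step and is reused as a template later. Your coordinatewise analysis of $\mathscr{D}$, orthogonality and joins in $S_{1}\times S_{2}$, giving $\mathsf{E}(S_{1}\times S_{2}) \cong \mathsf{E}(S_{1})\times\mathsf{E}(S_{2})$, is exactly the content the paper compresses into its final sentence. (Minor point: your $L_{n}$ is the paper's $L_{n+1}$; both denote the $(n+1)$-element chain arising from $I_{n}$.)
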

\begin{proof}
The core of the proof is the following.
Let $I_{n}$ be the finite symmetric inverse monoid on $n$ letters.
By Lemma~\ref{lem:needful}, such monoids are Foulis monoids.
It is well-known that they also satisfy the lattice condition, but we shall prove this explicity since the method we use is important for what we do later.
Define $\kappa \colon S/\mathscr{J} \rightarrow \mathbb{Z}$ by
$\kappa [1_{A}] = \left| A \right|$.
In other words, we map an idempotent to the cardinality of the set on which it is partial identity.
Observe that two idempotents are $\mathscr{D}$-related if, and only if, these cardinalities are the same.
It follows that $\kappa$ is a bijection from $I_{n}/ \mathscr{J}$ to the set $\mathbf{n} = \{0,1, \ldots, n\}$.
It is trivial that it is induces an order isomorphism when the set $\mathbf{n}$ carries the usual order.
It follows that the lattice condition is satisfied with the lattice operations being min and max.
The partial operation $\oplus$ translates into partial addition: if $r,s \in \mathbf{n}$ then $r \oplus s = r + s$ if $r + s \leq n$,
otherwise it is undefined.
The prime operation translates into $s' = n - s$.
We now describe the operation $\boxplus$.
By definition 
$$r \boxplus s = r + \mbox{min}(r',s).$$
We consider two cases.
Suppose first that $r + s \leq n$.
Then $s \leq n - r = r'$.
It follows that in this case $r \boxplus s = r + s$.
Next suppose that $r + s > n$.
Then $s > n - r = r'$.
It follows that in this case 
$r \boxplus s = n$.
We have therefore shown that $I_{n}/\mathscr{J}$ gives rise to the MV-algebra
known as the {\em \L ukasiewicz chain} $L_{n+1}$ \cite{M3}.

To prove the full theorem, we now use the result that every finite MV-algebra is a finite direct product of  \L ukasiewicz chains. 
See  \cite[Proposition 3.6.5]{CDM} or part~2 of \cite[Theorem~11.2.4]{M3}.
Such algebras can clearly be co-ordinatized by finite direct products of finite symmetric inverse monoids and so by semisimple inverse monoids.
\end{proof}

\section{AF inverse monoids}

In this section, we shall define the class of  approximately finite (AF) inverse monoids and derive their basic properties.
The term {\em AF inverse semigroup} was also used in \cite{PZ} for inverse semigroups generated according to a quite complex recipe,
whereas in \cite{K}, Kumjian defines {\em AF localizations} which he states may be viewed `in  some sense' as inductive limits of finite localizations.
Our definition is simpler than either of the above definitions and shadows that of the definition of AF $C^{\ast}$-algebras.
It works because we use the correct definition of morphism between semisimple inverse monoids.
In any event, our AF monoids will turn out to be Foulis monoids, and they will provide one of the key ingredients in proving our main theorem.
Good sources for Bratteli diagrams and the construction of $C^{\ast}$-algebras from them are \cite{Effros,Goodearl}.

Fundamental to our work is the choice of the correct morphisms.
A {\em morphism} between Boolean inverse $\wedge$-monoids is a unital homomorphism 
that maps zeros to zeros, preserves all compatible binary joins and preserves all binary meets.
We define the {\em kernel} of a morphism to be all the elements that are mapped to zero.

\begin{lemma}\label{lem:kernel} A morphism $\theta \colon S \rightarrow T$ between Boolean inverse $\wedge$-monoids
is injective if, and only if, its kernel is zero.
\end{lemma}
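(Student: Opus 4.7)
The forward direction is immediate, since $\theta(0) = 0$ forces only $0$ into the kernel whenever $\theta$ is injective. For the converse, assume $\ker\theta = \{0\}$. My plan is to split the argument into two steps: first, show $\theta$ is injective on the Boolean algebra $E(S)$; then reduce the general case to this one using the existence of binary meets in $S$.

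For the first step, I would take idempotents $e, f \in E(S)$ with $\theta(e) = \theta(f)$ and consider the idempotent $e \wedge \bar{f}$, which exists because $E(S)$ is a Boolean algebra. A morphism in the paper's sense sends $0$ to $0$, $1$ to $1$, preserves binary meets and preserves compatible (in particular orthogonal) joins; since $f \vee \bar{f} = 1$ orthogonally and $f \wedge \bar{f} = 0$, uniqueness of complements in the Boolean algebra $E(T)$ forces $\theta(\bar{f}) = \overline{\theta(f)}$. Hence $\theta(e \wedge \bar{f}) = \theta(e) \wedge \overline{\theta(f)} = \theta(f) \wedge \overline{\theta(f)} = 0$, and the kernel hypothesis gives $e \wedge \bar{f} = 0$, i.e.\ $e \leq f$. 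A symmetric argument with $f \wedge \bar{e}$ yields $e = f$.

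For the second step, I would suppose $\theta(s) = \theta(t)$ for arbitrary $s,t \in S$ and form $s \wedge t$ in $S$; this is the point at which the $\wedge$-monoid assumption on $S$ is essential. Meet-preservation then gives $\theta(s \wedge t) = \theta(s) \wedge \theta(t) = \theta(s)$. Using $s \wedge t \leq s$, write $s \wedge t = s \cdot \mathbf{d}(s \wedge t)$ via the standard identity for the natural partial order, apply $\theta$, and cancel on the left by $\theta(s^{-1})$ (together with $\mathbf{d}(s \wedge t) \leq \mathbf{d}(s)$, which makes the product $\mathbf{d}(s)\mathbf{d}(s \wedge t)$ collapse to $\mathbf{d}(s \wedge t)$) to deduce $\theta(\mathbf{d}(s \wedge t)) = \theta(\mathbf{d}(s))$. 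The first step then forces $\mathbf{d}(s \wedge t) = \mathbf{d}(s)$, so $s \wedge t = s$; symmetrically $s \wedge t = t$, and therefore $s = t$.

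The only non-trivial point is Step~1, where one must verify that the paper's notion of morphism automatically preserves Boolean complements on $E(S)$; the ingredients for this are all packaged into the definition (unit, $0$, meets, compatible joins), but the argument must be made explicitly. Once complements are preserved, Step~2 is a routine reduction: the presence of $s \wedge t$ in $S$ converts any coincidence $\theta(s) = \theta(t)$ into a comparison of $s$ with $s \wedge t$, which is settled at the level of domain idempotents by Step~1.
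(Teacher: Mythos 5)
Your proof is correct. The paper's own argument is shorter and stays at the level of general elements: after the same meet-reduction (from $\theta(a)=\theta(b)$ to the case $b\leq a$ via $\theta(a\wedge b)=\theta(a)\wedge\theta(b)$), it directly constructs the orthogonal remainder $x=a\bigl(a^{-1}a\,\overline{(b^{-1}b)}\bigr)$, so that $a=b\vee x$ with $b\wedge x=0$, and then notes that $\theta(x)\leq\theta(a)=\theta(b)$ together with $\theta(b)\wedge\theta(x)=0$ forces $\theta(x)=0$, whence $x=0$ and $a=b$. You instead factor the argument through the idempotents: first injectivity on $E(S)$ (via the observation that a morphism preserves Boolean complements, which you rightly flag as the one thing needing explicit verification, and which the paper also uses elsewhere, e.g.\ in part (6) of Proposition~\ref{prop:E}), then the reduction $\theta(s)=\theta(t)\Rightarrow\theta(\mathbf{d}(s\wedge t))=\theta(\mathbf{d}(s))\Rightarrow s\wedge t=s$. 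Both proofs rest on the same two ingredients --- the meet collapses the problem to a comparable pair, and Boolean complementation isolates an orthogonal piece that the kernel hypothesis kills --- so the difference is organizational rather than conceptual; your version costs an extra reduction step but isolates the reusable facts that morphisms preserve complements and that a zero-kernel morphism is order-reflecting on idempotents, while the paper's is the more economical single computation.
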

\begin{proof} Only one direction needs proving.
Suppose that the kernel of $\theta$ is zero but there exist non-zero elements $a$ and $b$ such that $\theta (a) = \theta (b)$.
Since $\theta (a \wedge b) = \theta (a) \wedge \theta (b)$, we have that $\theta (a \wedge b) = \theta (a) = \theta (b)$.
Thus without loss of generality, we may as well assume that $b \leq a$.
We may construct an element $x$ such that $a = b \vee x$ and $b \wedge x = 0$;
define $x = a(a^{-1}a\overline{(b^{-1}b)})$.
Then $\theta (a) = \theta (b) \vee \theta (x)$ and $\theta (b) \wedge \theta (x) = 0$.
Since $\theta (a) = \theta (b)$ we deduce that $\theta (x) \leq \theta (a)$.
But then $\theta (x) = 0$ and so $x = 0$.
We deduce that $a = b$, as required.
\end{proof}

\subsection{The construction of AF monoids}

In the symmetric inverse monoid $I_{n}$, 
we denote by $e_{ij}$ the partial bijection with domain $\{j\}$ and codomain $\{i\}$.
The elements $e_{ii}$ are idempotents.
Every element of $I_{n}$ can be written as a unique orthogonal join of the elements $e_{ij}$.
In the case of idempotents, only the elements of the form $e_{ii}$ are needed.
Consider now the set of all $n \times n$ matrices whose entries are drawn from $\{0,1\}$
in which each row and each column contains at most one non-zero element.
The set of all such matrices is denoted by $R_{n}$ and called the set of {\em Rook matrices} \cite{Solomon}.
Fix an ordering of the set of letters of an $n$-element set.
For each $f \in I_{n}$ define $M(f)_{ij} = 1$ if $i = f(j)$ and $0$ otherwise.
In this way, we obtain a bijection between $I_{n}$ and $R_{n}$ which maps the identity function to the identity matrix
and which is a homomorphism between function composition and matrix multiplication. 
Thus the rook matrices $R_{n}$ provide isomorphic copies of $I_{n}$.
We have that $f \leq g$, the natural partial order, if and only if $M(f)_{ij} = 1 \Rightarrow M(g)_{ij} = 1$.
The meet $f \wedge g$ corresponds to the {\em freshman product}\footnote{That is, corresponding entries are multiplied.} of $M(f)$ and $M(g)$.
The elements $e_{ij}$ correspond to those rook matrices which are matrix units.
Let $A$ and $B$ be rook matrices of sizes $m \times m$ and $n \times n$, respectively.
We denote by $A \oplus B$ the $(m + n) \times (m + n)$ rook matrix
$$\left(\begin{array}{cc}
A & 0\\
0 & B
\end{array} \right)$$
We may iterate this construction.
We write $sA = A \oplus \ldots \oplus A$ where the sum has $s$ summands.
There is no ambiguity with scalar multiplication because such multiplication is not defined for rook matrices.
More generally, we can form sums such as $s_{1}A_{1} \oplus \ldots \oplus s_{m}A_{m}$. 
There are many isomorphisms between $I_{n}$ and $R_{n}$ but the only ones that we will
need are those determined by choosing a total ordering of the letters $\{1, \ldots, n\}$.
We shall call such isomorphisms {\em letter isomorphisms}.
We shall also be interested in isomorphisms from 
$I_{n_{1}} \times \ldots \times I_{n_{k}}$ 
to
$R_{n_{1}} \times \ldots \times R_{n_{k}}$ 
induced by letter isomorphisms from $I_{n_{i}}$ to $R_{n_{i}}$.
We shall also refer to these as letter isomorphisms.

Our first goal is to classify morphisms between semisimple inverse monoids.
We begin with a lemma that is a rare example of an arithmetic result in semigroup theory.

\begin{lemma}\label{lem:lagrange} There is a morphism from $I_{m}$ to $I_{n}$  if, and only if, $m \mid n$.
\end{lemma}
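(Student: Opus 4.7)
The claim is an arithmetic constraint on morphisms, so I expect the two directions to be handled by an explicit construction on one side and a counting argument on the other.

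For the forward direction, assume $m \mid n$, say $n = km$. The plan is to construct the obvious ``block diagonal'' embedding. Partition an $n$-letter set into $k$ disjoint $m$-letter blocks and, for $f \in I_m$, let $\theta(f) \in I_n$ act as a copy of $f$ on each block. In rook matrix terms this is $M(f) \mapsto k\, M(f)$. It is routine to verify that $\theta$ is a unital monoid homomorphism sending $0$ to $0$; that it preserves binary meets (the meet is computed componentwise by the freshman product, and the $k$ blocks do not interact); and that it preserves compatible binary joins (compatibility and the resulting join are determined blockwise, and the blocks stay disjoint). Thus $\theta$ is a morphism of Boolean inverse $\wedge$-monoids.

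For the reverse direction, suppose $\theta \colon I_m \to I_n$ is a morphism. The key observation is that in $I_m$ the identity decomposes as an orthogonal join
\[ 1 = e_{11} \vee e_{22} \vee \cdots \vee e_{mm} \]
of $m$ atomic idempotents, and all of these atoms are $\mathscr{D}$-related to one another (via $e_{ij} \colon \{j\} \to \{i\}$). Since $\theta$ is unital and preserves compatible (hence orthogonal) joins, we get the orthogonal decomposition
\[ 1_{I_n} = \theta(e_{11}) \vee \theta(e_{22}) \vee \cdots \vee \theta(e_{mm}). \]
Because any semigroup homomorphism preserves the $\mathscr{D}$-relation, all the idempotents $\theta(e_{ii})$ lie in a single $\mathscr{D}$-class of $I_n$. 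In $I_n$ two idempotents are $\mathscr{D}$-related iff they are partial identities on sets of the same cardinality, so there is a fixed $k$ such that each $\theta(e_{ii}) = 1_{A_i}$ with $|A_i| = k$. Note $k \neq 0$, for otherwise all $\theta(e_{ii})$ would vanish and their join $1_{I_n}$ would too. Since the $\theta(e_{ii})$ are pairwise orthogonal, the sets $A_i$ are pairwise disjoint, and since their join is the identity, $A_1 \cup \cdots \cup A_m = \{1,\dots,n\}$. Counting gives $n = km$, so $m \mid n$.

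I don't anticipate any real obstacle. The only subtlety worth flagging is that the argument uses the full strength of the chosen notion of morphism, namely preservation of orthogonal joins (to conclude $1 = \bigvee \theta(e_{ii})$) together with the automatic preservation of $\mathscr{D}$ (to force the blocks to have equal size). A weaker notion of homomorphism would not yield the divisibility conclusion.
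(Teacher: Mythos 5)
Your proof is correct and follows essentially the same route as the paper: the divisibility direction via the orthogonal decomposition $1 = \bigvee_{i=1}^{m} e_{ii}$, preservation of orthogonal joins and of $\mathscr{D}$, and the cardinality count; the converse via the block-diagonal (standard) map $M(f) \mapsto k\,M(f)$ on rook matrices. Your explicit remark that $k \neq 0$ is a small point the paper leaves implicit, but otherwise the arguments coincide.
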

\begin{proof} Assume first that there is a morphism $\theta \colon I_{m} \rightarrow I_{n}$.
We may write $1 = \bigvee_{i=1}^{m} e_{ii}$ an orthogonal join.
Since $\theta$ is a morphism, we have that $\theta (1) = 1$ 
and 
$\theta \left( \bigvee_{i=1}^{m} e_{ii} \right) =  \bigvee_{i=1}^{m} \theta (e_{ii})$.
Thus
$1 =  \bigvee_{i=1}^{m} \theta (e_{ii})$.
Orthogonality is preserved by homomorphisms that map zeros to zeros.
Thus the union on the righthandside above is an orthogonal union.
Clearly $e_{ii} \, \mathscr{D} \, e_{jj}$ for all $i$ and $j$.
Thus $\theta (e_{ii}) \, \mathscr{D} \, \theta (e_{jj})$.
But two idempotents in a symmetric inverse monoid are $\mathscr{D}$-related
precisely when their domains of definition have the same cardinality.
Thus $\theta (e_{ii}) = 1_{A_{i}}$
where the sets $A_{1}, \ldots, A_{m}$ are pairwise disjoint and have the same cardinality $s$, say.
It follows that $n = sm$, and so $m \mid n$, as claimed.

To prove the converse, suppose that $n = sm$.
Choose letter isomorphisms from $I_{m}$ to $R_{m}$ and $I_{n}$ to $R_{n}$.
Define a map from $R_{m}$ to $R_{n}$ as follows $A \mapsto sA$.
It is easy to check that this is a morphism.
Thus we get a morphism from $I_{m}$ to $I_{n}$, as claimed.
\end{proof}

If $n = sm$, then the morphism from $R_{m}$ to $R_{n}$ defined by $A \mapsto sA$ is called a {\em standard morphism}.
Our next result says that, up to letter isomorphisms, all such morphisms are described by standard morphisms.

\begin{lemma}\label{lem:rigidity} Suppose that $m \mid n$ where $n = sm$.
Let $\theta \colon I_{m} \rightarrow I_{n}$ be a morphism and let $\alpha \colon I_{m} \rightarrow R_{m}$
be a letter isomorphism.
Then there is a standard map $\sigma \colon R_{m} \rightarrow R_{n}$
and a letter isomorphism $\beta \colon I_{n} \rightarrow R_{n}$ such that
$\theta = \beta^{-1}\sigma \alpha$.
In particular, every morphism from $I_{m}$ to $I_{n}$ is isomorphic to a standard map.
\end{lemma}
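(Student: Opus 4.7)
My plan is to read off, from $\theta$ itself, a total ordering of the $n$-letter set that witnesses $\theta$ as a standard map. The structure is dictated by the images of the ``matrix unit'' elements $e_{ij} \in I_m$ (the partial bijection sending $j \mapsto i$), and the key point is that morphisms of Boolean inverse $\wedge$-monoids preserve orthogonal joins, so it suffices to pin $\theta$ down on these generators.

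First I would analyse $\theta$ on idempotents. The argument in the proof of Lemma~\ref{lem:lagrange} already shows that $\theta(e_{ii}) = 1_{A_i}$ for some pairwise disjoint subsets $A_1, \ldots, A_m$ of the underlying $n$-set, each of cardinality $s$, whose union is the whole $n$-set (since $\theta(1) = 1$ and the $e_{ii}$ join orthogonally to $1$). Next, since $e_{i1}$ has $\mathbf{d}(e_{i1}) = e_{11}$ and $\mathbf{r}(e_{i1}) = e_{ii}$, the element $\theta(e_{i1})$ is a bijection $A_1 \to A_i$.

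Now I build the letter isomorphism $\beta \colon I_n \to R_n$ by constructing an ordering of the $n$-set as follows. Pick any bijection $\phi_1 \colon \{1,\ldots,s\} \to A_1$, and for $i = 2,\ldots,m$ define $\phi_i \colon \{1,\ldots,s\} \to A_i$ by $\phi_i(k) = \theta(e_{i1})(\phi_1(k))$. Order the $n$-set so that position $(k-1)m + i$ is occupied by $\phi_i(k)$, for $k = 1,\ldots,s$ and $i = 1,\ldots,m$. Let $\beta$ be the resulting letter isomorphism, and let $\sigma \colon R_m \to R_n$ be the standard map $A \mapsto sA$.

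To verify $\theta = \beta^{-1}\sigma\alpha$, I would check agreement on each $e_{ij}$. Using $e_{ij} = e_{i1}e_{1j}$ together with $\theta$ being a homomorphism, a direct calculation gives $\theta(e_{ij})(\phi_j(k)) = \phi_i(k)$ for all $k$, so the rook matrix of $\theta(e_{ij})$ under $\beta$ has a $1$ in position $((k-1)m + i,\ (k-1)m + j)$ for each $k$ and zeros elsewhere. This is precisely $\sigma(E_{ij}) = sE_{ij}$, where $E_{ij} = \alpha(e_{ij})$. Since every element of $I_m$ is a (unique) orthogonal join of such $e_{ij}$'s, and morphisms of Boolean inverse $\wedge$-monoids preserve orthogonal joins, the two morphisms $\theta$ and $\beta^{-1}\sigma\alpha$ agree on all of $I_m$.

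The only real subtlety is the construction of the ordering: one has to use $\theta$ itself (not an arbitrary choice) to align the orderings of the various $A_i$, otherwise the images of the off-diagonal $e_{ij}$ will not fall into the standard block-diagonal pattern. Everything else is essentially bookkeeping, supported by the fact that morphisms preserve orthogonal joins and meets.
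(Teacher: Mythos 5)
Your proposal is correct and follows essentially the same route as the paper: compute $\theta(e_{ii})=1_{A_i}$, interleave the $A_i$ to build the letter ordering for $\beta$, and verify $\theta=\beta^{-1}\sigma\alpha$ on the matrix units $e_{ij}$ before extending by orthogonal joins. In fact you are more explicit than the paper on the one delicate point — the paper enumerates each $A_i$ without saying how, whereas you correctly align the enumerations via $\phi_i(k)=\theta(e_{i1})(\phi_1(k))$, which is exactly what is needed for the off-diagonal $\theta(e_{ij})$ to land in block-diagonal form.
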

\begin{proof} Let $\theta \colon I_{m} \rightarrow I_{n}$ be a morphism.
We begin as in the proof of Lemma~\ref{lem:lagrange}.
Choose any ordering of the letters of $I_{m}$ and let $\alpha \colon I_{m} \rightarrow R_{m}$ be the corresponding isomorphism.
We may suppose that the letters are labelled $1, \ldots, n$.
Define the elements $e_{ij}$ relative to that ordering.
Let $1 = \bigvee_{i=1}^{m} e_{ii}$.
Then 
$1 =  \bigvee_{i=1}^{m} \theta (e_{ii})$
where $\theta (e_{ii}) = 1_{A_{i}}$
and the sets $A_{1}, \ldots, A_{m}$ are pairwise disjoint and have the same cardinality $s$.
Let $A_{i} = \{x_{i1}, \ldots, x_{is}\}$ where $i = 1, \ldots, m$.
Now order the elements of $\bigcup_{i=1}^{m} A_{i}$ as follows
$$x_{11}, x_{21}, \ldots, x_{m1}, \ldots, x_{1s}, \ldots, x_{ms}.$$
With this ordering, construct an isomorphism $\beta \colon I_{n} \rightarrow R_{n}$.
Let $\sigma \colon I_{m} \rightarrow I_{n}$ be the standard map $A \mapsto sA$.
We claim that $\theta = \beta^{-1}\sigma\alpha$.
It's enough to verify this for the partial bijections $e_{ij}$.
We have that 
$$e_{jj} \stackrel{e_{ij}}{\longrightarrow} e_{ii}.$$
Thus $\theta (e_{ij})$ has domain the domain of definition of $\theta (e_{jj})$ and image the image of definition of $\theta (e_{ii})$.
The domain of definition of $\theta (e_{jj})$ is the set $A_{j}$.
If the rook matrix of $e_{jj}$ is the matrix $M$ which has one non-zero entry in row $j$ and column $j$,
the matrix of $\theta (e_{jj})$ relative to the above ordering of letters will be $sM$.
The proof now readily follows.
\end{proof}

If $S$ is an inverse monoid and $e$ is any idempotent then $eSe$ is an inverse subsemigroup called a {\em local submonoid} (sic).
Consider now the symmetric inverse monoid $I_{n}$.
Then an idempotent $e = 1_{A}$ where $A \subseteq \{1, \ldots, n\}$.
It follows that the local submonoid $eI_{n}e$ is simply $I_{A}$, the symmetric inverse monoid on the set of letters $A$. 

Let $s_{1}m(1) + \ldots + s_{k}m(k) = n$, where the $s_{i}$ are non-negative integers.
Define the corresponding {\em standard morphism} $\sigma \colon R_{m(1)} \times \ldots \times R_{m(k)} \rightarrow R_{n}$ 
by $\sigma ((A_{1}, \ldots, A_{k})) = s_{1}A_{1} \oplus \ldots \oplus s_{k}A_{k}$
We may now classify morphisms from semisimple inverse monoids to symmetric inverse monoids.

\begin{lemma}\label{lem:lagrange_more} \mbox{}
\begin{enumerate}

\item There is a morphism from $I_{m(1)} \times \ldots \times I_{m(k)}$ to $I_{n}$ if, and only if,
there exist non-negative integers $s_{1}, \ldots, s_{k}$ such that  $s_{1}m(1)  + \ldots + s_{k}m(k) = n$.  

\item For each morphism $\theta \colon I_{m(1)} \times \ldots \times I_{m(k)} \rightarrow I_{n}$ 
and for each letter isomorphism  
$\alpha \colon I_{m(1)} \times \ldots \times I_{m(k)} \rightarrow R_{m(1)} \times \ldots \times R_{m(k)}$ 
there exists a letter isomorphism $\beta \colon I_{n} \rightarrow R_{n}$
and a standard morphism $\sigma \colon  R_{m(1)} \times \ldots \times R_{m(k)} \rightarrow R_{n}$ such that $\theta = \beta^{-1} \sigma \alpha$.

\end{enumerate}
\end{lemma}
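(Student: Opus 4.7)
The plan is to lift the single-factor results Lemma~\ref{lem:lagrange} and Lemma~\ref{lem:rigidity} to the product setting by performing a two-level decomposition of the identity of $I_{m(1)} \times \ldots \times I_{m(k)}$. Let $f_i$ denote the idempotent of the product whose $i$-th coordinate is the identity of $I_{m(i)}$ and whose other coordinates are $0$. The $f_i$ are pairwise orthogonal with $1 = \bigvee_{i=1}^k f_i$, and within each factor we further decompose $f_i$ as an orthogonal join $\bigvee_{j=1}^{m(i)} e_{jj}^{(i)}$ of the matrix-unit idempotents of $I_{m(i)}$, embedded in the product.

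For part (1), forward direction, apply $\theta$: since morphisms preserve zero and compatible joins, the images $\theta(f_i) = 1_{B_i}$ form a partition of $\{1,\ldots,n\}$ into pairwise disjoint subsets $B_i$. For each fixed $i$, the matrix units $e_{jj}^{(i)}$ (for varying $j$) are pairwise $\mathscr{D}$-related in the product (witnessed by elements supported entirely in the $i$-th coordinate), so their $\theta$-images are $\mathscr{D}$-related idempotents partitioning $B_i$, hence have a common cardinality $s_i$. This gives $|B_i| = s_i m(i)$ and therefore $n = \sum_{i=1}^k s_i m(i)$. The converse is immediate: given such $s_1,\ldots,s_k$, the map $\sigma$ described in the statement is easily checked to be a monoid homomorphism preserving zeros, compatible joins, and meets.

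For part (2), we extend the letter-reordering argument of Lemma~\ref{lem:rigidity}. From the analysis above, write $B_i = \bigsqcup_{j=1}^{m(i)} A_j^{(i)}$, where $A_j^{(i)}$ is the support of $\theta(e_{jj}^{(i)})$ and has cardinality $s_i$; list its elements as $A_j^{(i)} = \{x_{j,1}^{(i)}, \ldots, x_{j,s_i}^{(i)}\}$. Define a total order on $\{1,\ldots,n\}$ by first listing all of $B_1$, then all of $B_2$, and so on; within each $B_i$ use the interleaving
\[
x_{1,1}^{(i)}, x_{2,1}^{(i)}, \ldots, x_{m(i),1}^{(i)},\; x_{1,2}^{(i)}, \ldots, x_{m(i),2}^{(i)}, \ldots, x_{m(i), s_i}^{(i)}
\]
copied from the single-factor case. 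Let $\beta \colon I_n \to R_n$ be the resulting letter isomorphism and let $\sigma$ be the standard morphism for the tuple $(s_1,\ldots,s_k)$. To check $\theta = \beta^{-1} \sigma \alpha$ it suffices to test on the generators $e_{jk}^{(i)}$ of the product (which span via joins and compositions), and this reduces, coordinate by coordinate, to exactly the computation carried out at the end of Lemma~\ref{lem:rigidity}: the block in $\sigma$ corresponding to the $i$-th factor acts as the single-factor standard map, and our interleaved ordering on $B_i$ was chosen precisely so that $\beta$ matches it with the action of $\theta$ on $I_{m(i)}$.

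The conceptual content is entirely inherited from the single-factor lemmas; the only real obstacle is the two-level indexing (outer partition by factor $i$, inner partition by the matrix-unit index $j$) and making sure the chosen ordering on $\{1,\ldots,n\}$ is compatible, block-by-block, with the action of a standard morphism.
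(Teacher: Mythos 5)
Your proposal is correct and follows essentially the same route as the paper: decompose the identity into the orthogonal idempotents supported on each factor, reduce to the single-factor Lemmas~\ref{lem:lagrange} and~\ref{lem:rigidity} on each block $B_i$, and assemble the letter ordering of $\{1,\ldots,n\}$ block by block so that $\sigma = \sigma_1 \oplus \ldots \oplus \sigma_k$. The only cosmetic difference is that in part (1) you inline the matrix-unit counting argument directly in the product, whereas the paper formally passes to the local submonoids $\mathbf{e}_i S \mathbf{e}_i \cong I_{m(i)}$ and $\theta(\mathbf{e}_i) I_n \theta(\mathbf{e}_i) = I_{X_i}$ and then cites Lemma~\ref{lem:lagrange}; the substance is identical.
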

\begin{proof} (1) Denote the set of letters of $I_{n}$ by $X$.
Put $S = I_{m(1)} \times \ldots \times I_{m(k)}$.
The identity of this monoid is the $k$-tuple of identities whose $i$th component is the identity of $I_{m(i)}$.
Define $\mathbf{e}_{i}$ to be the idempotent of $S$ all of whose elements are zero except the $i$th which is the identity of $I_{m(i)}$.
Then $1 = \bigvee_{i=1}^{k} \mathbf{e}_{i}$ is an orthogonal join.
Thus $1 = \bigvee_{i=1}^{k} \theta (\mathbf{e}_{i})$ is an orthogonal join and the identity function on $X$.
Let $\theta (\mathbf{e}_{i}) = 1_{X_{i}}$.
Denote the cardinality of $X_{i}$ by $a_{i}$.
The non-empty $X_{i}$ form a partition of $X$.
It follows that $n = a_{1} + \ldots + a_{k}$.
For each $i$, where $X_{i} \neq \emptyset$, we have that
$\theta(\mathbf{e}_{i})I_{n}\theta(\mathbf{e}_{i}) = I_{X_{i}}$, a symmetric inverse monoid on $a_{i}$ letters.
Now the morphism $\theta$ restricts to a morphism $\theta_{i}$ from $\mathbf{e}_{i}S\mathbf{e}_{i}$ to $\theta (\mathbf{e}_{i}) I_{n} \theta (\mathbf{e}_{i}) = I_{X_{i}}$.
But we have that $\mathbf{e}_{i}S\mathbf{e}_{i} \cong I_{m(i)}$.
We therefore have an induced morphism from $I_{m(i)}$ to $I_{a_{i}}$.
Thus by Lemma~\ref{lem:lagrange},  $a_{i} = s_{i}m(i)$ for some non-zero $s_{i}$
Hence $s_{1}m(1)  + \ldots + s_{k}m(k) = n$.
The converse is proved using a standard morphism defined as above.

(2) We continue with the notation introduced in part (1).
Let $\alpha = (\alpha_{1}, \ldots,\alpha_{k})$ 
be a letter isomorphism from
$I_{m(1)} \times \ldots \times I_{m(k)}$ 
to
$R_{m(1)} \times \ldots \times R_{m(k)}$.
In what follows, we need only deal with the $i$ where $X_{i} \neq \emptyset$.
Let $\iota_{i} \colon I_{m(i)} \rightarrow S$ be the obvious embedding.
Put $\theta_{i} = \theta \iota_{i}$.
Then $\theta_{i} \colon I_{m(i)} \rightarrow I_{X_{i}}$.
There is therefore a letter isomorphism $\beta_{i} \colon I_{X_{i}} \rightarrow R_{a_{i}}$ obtained through a specific ordering of the elements of $X_{i}$
and the standard map $\sigma_{i} \colon R_{m(i)} \rightarrow R_{a_{i}}$ given by $A \mapsto s_{i}A$
such that $\theta_{i} = \beta_{i}^{-1}\sigma_{i}\alpha_{i}$.
We order the letters of $I_{n}$ as $X_{1},\ldots,X_{k}$ with the ordering within each $X_{i}$ chosen as above.
Define $\beta \colon I_{n} \rightarrow R_{n}$ to be the corresponding letter isomorphism.
Then $\sigma = \sigma_{1} \oplus \ldots \oplus \sigma_{k}$. 
\end{proof}

\begin{remark}\label{rem:noodle} 
{\em Observe that
$$s_{i} = \frac{\left| \theta (\mathbf{e}_{i})\right|}{m(i)}$$
where $\left|   \theta (\mathbf{e}_{i}) \right|$ denotes the cardinality of the set on which the idempotent $\theta (\mathbf{e}_{i})$ is defined.}
\end{remark}

We suppose we are given
$I_{m(1)} \times \ldots \times I_{m(k)}$ 
and
$I_{n(1)} \times \ldots \times I_{n(l)}$.
Put 
$\mathbf{m} = (m(1) \ldots m(k))^{T}$
and
$\mathbf{n} = (n(1) \ldots n(l))^{T}$.
Assume that we are given an $l \times k$ matrix $M$,  where $M_{ij} = s_{ij}$,
non-negative natural numbers,
such that $M \mathbf{m} = \mathbf{n}$.
Then we define a standard map 
$\sigma$
from 
$R_{m(1)} \times \ldots \times R_{m(k)}$ 
to
$R_{n(1)} \times \ldots \times R_{n(l)}$
by
$$
\left(\begin{array}{c}
A_{1}\\
\ldots\\
A_{k}
\end{array}
\right)
\mapsto
M
\left(\begin{array}{c}
A_{1}\\
\ldots\\
A_{k}
\end{array}
\right)
$$

\begin{proposition}\label{prop:classifying_morphisms}
Given a morphism 
$\theta \colon S = I_{m(1)} \times \ldots \times I_{m(k)} \rightarrow  I_{n(1)} \times \ldots \times I_{n(l)} = T$
and a letter isomorphism $\alpha \colon I_{m(1)} \times \ldots \times I_{m(k)} \rightarrow R_{m(1)} \times \ldots \times R_{m(k)}$
there is a letter isomorphism $\beta \colon I_{n(1)} \times \ldots \times I_{n(l)} \rightarrow R_{n(1)} \times \ldots \times R_{n(l)}$
and a standard map $\sigma \colon R_{m(1)} \times \ldots \times R_{m(k)} \rightarrow  R_{n(1)} \times \ldots \times R_{n(l)}$
such that $\theta = \beta^{-1} \sigma \alpha$.
\end{proposition}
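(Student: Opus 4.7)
The plan is to reduce the statement to the already-proved Lemma~\ref{lem:lagrange_more} by composing $\theta$ with the coordinate projections of $T$, and then reassembling the outputs into a single matrix.

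First, for each $j \in \{1,\ldots,l\}$, let $\pi_j \colon T \to I_{n(j)}$ be the $j$-th projection. This is a morphism of Boolean inverse $\wedge$-monoids since products of such monoids have coordinatewise zero, joins and meets. Form the composite $\theta_j = \pi_j \theta \colon S \to I_{n(j)}$. Now apply Lemma~\ref{lem:lagrange_more}(2) to each $\theta_j$, using the given letter isomorphism $\alpha \colon S \to R_{m(1)} \times \cdots \times R_{m(k)}$. This yields, for each $j$, a letter isomorphism $\beta_j \colon I_{n(j)} \to R_{n(j)}$ and a standard morphism $\sigma_j \colon R_{m(1)} \times \cdots \times R_{m(k)} \to R_{n(j)}$ with $\theta_j = \beta_j^{-1}\sigma_j\alpha$. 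By Lemma~\ref{lem:lagrange_more}(1), each $\sigma_j$ is determined by non-negative integers $s_{j1},\ldots,s_{jk}$ satisfying $\sum_{i=1}^{k} s_{ji}\, m(i) = n(j)$, and it sends $(A_1,\ldots,A_k)$ to $s_{j1}A_1 \oplus \cdots \oplus s_{jk}A_k$.

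Second, assemble the pieces. Set $M = (s_{ji})$, an $l \times k$ matrix of non-negative integers. The $l$ scalar identities above are collectively the single matrix identity $M\mathbf{m} = \mathbf{n}$, so $M$ defines a standard map $\sigma \colon R_{m(1)} \times \cdots \times R_{m(k)} \to R_{n(1)} \times \cdots \times R_{n(l)}$ whose $j$-th component is exactly $\sigma_j$. Likewise, let $\beta = \beta_1 \times \cdots \times \beta_l$; this is a letter isomorphism from $I_{n(1)} \times \cdots \times I_{n(l)}$ to $R_{n(1)} \times \cdots \times R_{n(l)}$ because each $\beta_j$ is induced by an ordering of the letters of $I_{n(j)}$ and the total ordering on the disjoint union of letter sets is the obvious one.

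Finally, verify that $\theta = \beta^{-1}\sigma\alpha$. Since maps into a direct product are determined by their components, it suffices to check equality after applying each $\pi_j$. For $x \in S$, the $j$-th coordinate of $\beta^{-1}\sigma\alpha(x)$ is $\beta_j^{-1}\sigma_j\alpha(x) = \theta_j(x) = \pi_j\theta(x)$, as required. The only mildly non-routine point is verifying that the family $\{\beta_j\}$ produced coordinate-by-coordinate really does assemble into a letter isomorphism of the product; this is immediate from the description of letter isomorphisms of semisimple monoids given after Lemma~\ref{lem:lagrange}, where products were built in precisely this coordinatewise way. Consequently there is no genuine obstacle beyond bookkeeping: the proposition is just the matrix form of Lemma~\ref{lem:lagrange_more} applied one row at a time.
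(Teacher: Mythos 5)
Your proposal is correct and follows exactly the paper's argument: compose $\theta$ with the $l$ coordinate projections, apply Lemma~\ref{lem:lagrange_more} to each composite, and reassemble the resulting standard maps into the matrix $M$ and the letter isomorphisms $\beta_j$ into a product (which is a letter isomorphism of the product by definition). You have simply written out the ``easily combined'' step that the paper leaves implicit.
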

\begin{proof} We use the $l$ projection morphisms from $T$ to each of $I_{n(1)}, \ldots, I_{n(l)}$
composed with $\theta$ to get  morphisms from $S$ to each of $I_{n(1)}, \ldots, I_{n(l)}$ in turn.
We now apply Lemma~\ref{lem:lagrange_more}.
The separate results can now easily be combined to prove the claim.
\end{proof}

The data involved in describing a morphism 
from 
$I_{m(1)} \times \ldots \times I_{m(k)}$ 
to 
$I_{n(1)} \times \ldots \times I_{n(l)}$
can be encoded by means of a directed graph which we shall call a {\em diagram}.
We draw $k$ vertices, labelled $m(1)$ \ldots $m(k)$, in a line, the {\em upper vertices},
and then
we draw $l$ vertices, labelled $n(1) \ldots n(l)$, on the line below, the {\em lower vertices}.
We join the vertex labelled $m(j)$ to the vertex labelled $n(i)$ by means of $s_{ij}$ directed edges.
We require such graphs to satisfy the arithmetic conditions
$n(i) = s_{i1}m(1) + \ldots + s_{ik}m(k)$.
We call these the {\em combinatorial conditions}.
In other words, the matrix $M$ defined above is the adjacency matrix
where the upper vertices label the columns and the lower vertices label the rows.

\begin{remark} 
{\em In a diagram, each lower vertex is the target of at least one edge.
This is immediate by Lemma~\ref{lem:lagrange_more}.} 
\end{remark}

\begin{lemma}\label{lem:injectivity} Let $\sigma \colon S = I_{m(1)} \times \ldots \times I_{m(k)}
\rightarrow
I_{n(1)} \times \ldots \times I_{n(l)} = T$
be a standard map.
Then $\sigma$ is injective if, and only if, every upper vertex is the source of some directed edge.
\end{lemma}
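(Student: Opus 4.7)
The plan is to reduce the claim to a statement about the kernel via Lemma~\ref{lem:kernel}, and then read off both implications directly from the block-matrix formula that defines a standard map.

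First, I would recall that a standard map $\sigma$ associated to the matrix $M = (s_{ij})$ sends $(A_{1},\ldots,A_{k})$ to the $l$-tuple whose $i$-th coordinate is the block-diagonal rook matrix $s_{i1}A_{1} \oplus \cdots \oplus s_{ik}A_{k}$. In diagram language, the integer $s_{ij}$ is the number of directed edges from the upper vertex labelled $m(j)$ to the lower vertex labelled $n(i)$; the hypothesis "every upper vertex is the source of some directed edge" is therefore the purely arithmetic statement that every column of $M$ has at least one nonzero entry. Since $\sigma$ is a morphism between Boolean inverse $\wedge$-monoids, Lemma~\ref{lem:kernel} reduces proving injectivity to showing its kernel is $\{0\}$.

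For the "only if" direction I would argue by contrapositive. If some upper vertex $m(j)$ is the source of no edge then $s_{ij}=0$ for every $i$. Let $\mathbf{e}_{j}$ denote the element of $S$ whose $j$-th coordinate is the identity of $I_{m(j)}$ and whose other coordinates are zero; this is a nonzero idempotent. Reading off the definition of $\sigma$, the $i$-th coordinate of $\sigma(\mathbf{e}_{j})$ is an empty block sum (since every contributing coefficient $s_{ij}$ is zero), so $\sigma(\mathbf{e}_{j})=0$. Thus $\sigma$ has nontrivial kernel and so fails to be injective.

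For the "if" direction, assume every upper vertex is the source of at least one edge, i.e.\ every column of $M$ has a positive entry. Take a nonzero element $\mathbf{A}=(A_{1},\ldots,A_{k})$ in $S$ and choose $j$ with $A_{j}\neq 0$. By hypothesis there exists some $i$ with $s_{ij}\geq 1$, and the $i$-th coordinate of $\sigma(\mathbf{A})$ contains $s_{ij}$ copies of $A_{j}$ as diagonal blocks; since $A_{j}$ has a nonzero rook entry and $s_{ij}\geq 1$, this coordinate is nonzero, hence $\sigma(\mathbf{A})\neq 0$. So the kernel is trivial and Lemma~\ref{lem:kernel} finishes the proof. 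I do not foresee a genuine obstacle here: once one unwinds the standard-map formula, both directions are essentially immediate, and the only mild care needed is to invoke Lemma~\ref{lem:kernel} rather than try to verify injectivity pointwise.
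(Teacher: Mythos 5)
Your proposal is correct and follows essentially the same route as the paper: both directions are read off from the block formula for a standard map, with the non-trivial direction reduced to the kernel-is-zero criterion of Lemma~\ref{lem:kernel}. You merely spell out in more detail the step the paper dismisses with ``then clearly $\sigma$ has kernel equal to zero.''
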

\begin{proof} Without loss of generality, suppose that the upper vertex $m(1)$ is not the source of any edge.
Then all the elements $I_{m(1)} \times \{0\} \ldots \times \{0\}$
are in the kernel of $\sigma$ and so, in particular, $\sigma$ is not injective.
Now suppose that every upper vertex is the source of some edge.
Then clearly $\sigma$ has kernel equal to zero.
We now use Lemma~\ref{lem:kernel} to deduce that $\sigma$ is injective.
\end{proof}

We now recall a standard definition \cite{BR}.
A {\em Bratteli diagram} is an infinite directed graph $B = (V,E)$
with vertex-set $V$ and edge-set $E$ 
such that 
$V = \bigcup_{i=0}^{\infty} V(i)$
and 
$E = \bigcup_{i=1}^{\infty} E(i)$
are partitions of the respective sets into finite blocks, in the case of the vertices called {\em levels}, such that
\begin{enumerate}

\item $V(0)$ consists of one vertex $v_{0}$ we call the {\em root}.

\item Edges are only defined from $V(i)$ to $V(i+1)$, that is {\em adjacent levels}, and there are only finitely many edges from one level to the next.

\item Each vertex is the source of an edge and each vertex, apart from the root, is the target of an edge.
 \end{enumerate}

\begin{remark} {\em We have proved that each injective morphism between two semisimple inverse monoids
determines a diagram that satisfies the condition to be adjacent levels in a Bratteli diagram.}
\end{remark}

Let $B$ be a Bratteli diagram.
For each vertex $v$ we define its {\em size} $s_{v}$ to be the number of directed paths from the root $v_{0}$ in $B$ to $v$.
We now associate a semisimple inverse monoid with each level of the Bratteli diagram.
With the root vertex, we associate $S_{0} = I_{1}$, the two-element Boolean inverse $\wedge$-monoid.
With level $i \geq 2$, we associate the inverse monoid $S_{i}$.
This is constructed as follows.
List the $k$ vertices of level $i$ and then their respective sizes as $m(1), \ldots, m(k)$.
We put $S_{i} = I_{m(1)} \times \ldots \times I_{m(k)}$.
We now show how to define a morphism from $S_{i}$ to $S_{i+1}$.
List the $l$ vertices of level $i+1$ and then their respective weights as $n(1), \ldots, n(l)$.
In the Bratteli diagram, the vertex $m(j)$ will be joined to the vertex $n(i)$ by $s_{ij}$ edges.
The following is proved using a simple counting argument.

\begin{lemma} 
Adjacent levels of a Bratteli diagram satisfy the combinatorial conditions.
\end{lemma}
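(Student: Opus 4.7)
The plan is a direct counting argument based on the definition of \emph{size} as the number of directed paths from the root. Fix adjacent levels $i$ and $i+1$ of the Bratteli diagram $B$. Write $v_{1}, \ldots, v_{k}$ for the vertices of level $i$ with sizes $m(1), \ldots, m(k)$, and $w_{1}, \ldots, w_{l}$ for the vertices of level $i+1$ with sizes $n(1), \ldots, n(l)$, where $s_{ij}$ denotes the number of directed edges from $v_{j}$ to $w_{i}$. The goal is to show $n(i) = s_{i1}m(1) + \ldots + s_{ik}m(k)$ for each $i$.

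The first step is to exploit the defining property of Bratteli diagrams that edges exist only between adjacent levels. Consequently, every directed path from the root $v_{0}$ to a vertex $w_{i}$ at level $i+1$ has length exactly $i+1$ and passes through precisely one vertex of level $i$. Thus such a path decomposes uniquely as a path from $v_{0}$ to some $v_{j}$ at level $i$ followed by a single edge from $v_{j}$ to $w_{i}$; and conversely, every such concatenation yields a directed path from $v_{0}$ to $w_{i}$. This establishes a bijection between the set of directed paths from $v_{0}$ to $w_{i}$ and the disjoint union, over $j = 1, \ldots, k$, of (paths from $v_{0}$ to $v_{j}$) $\times$ (edges from $v_{j}$ to $w_{i}$).

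The second step is simply to take cardinalities on both sides of this bijection. The left-hand side has cardinality $n(i)$ by definition of size. The $j$-th summand on the right has cardinality $m(j) \cdot s_{ij}$. Summing gives
$$n(i) = \sum_{j=1}^{k} m(j) s_{ij} = s_{i1}m(1) + \ldots + s_{ik}m(k),$$
which is the combinatorial condition.

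There is no genuine obstacle here; the proof is entirely a consequence of the layered structure of Bratteli diagrams, and the statement is essentially a book-keeping identity for the adjacency matrix between consecutive levels acting on the size vectors. The only thing to be slightly careful about is the base case: at level $0$ the root $v_{0}$ has size $1$, so the formula reduces at level $1$ to $n(i) = s_{i1} \cdot 1$, which matches the fact that the size of a level-$1$ vertex equals the number of edges joining it to the root.
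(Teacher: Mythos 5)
Your proof is correct and is exactly the "simple counting argument" the paper alludes to (the paper omits the details entirely): decomposing each root-to-$w_i$ path uniquely through a level-$i$ vertex and counting. Nothing further is needed.
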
 

It follows that we may define a standard morphism $\sigma_{i}$ from $S_{i}$ to $S_{i+1}$.
This will be injective by Lemma~\ref{lem:injectivity}.
We have therefore constructed a sequence of injective morphisms between semisimple inverse monoids
$$S_{0} \stackrel{\sigma_{0}}{\rightarrow} S_{1} \stackrel{\sigma_{1}}{\rightarrow} S_{2} \stackrel{\sigma_{2}}{\rightarrow} \ldots $$

We shall now describe direct limits of Boolean inverse monoids.
We begin with a well-known construction in semigroup theory.
Let 
$$S_{0} \stackrel{\tau_{0}}{\rightarrow} S_{1} \stackrel{\tau_{1}}{\rightarrow} S_{2} \stackrel{\tau_{2}}{\rightarrow} \ldots $$
be a sequence of inverse monoids and injective morphisms. {\em We use the dual order on $\mathbb{N}$. 
If $i,j \in \mathbb{N}$ denote by $i \wedge j$ the maximum element in of $\{i,j\}$.}
For $j < i$ define $\tau^{i}_{j} = \tau_{j-1} \ldots \tau_{i}$.
Thus $\tau^{i}_{i+1} = \tau_{i}$.
Define $\tau^{i}_{i}$ to be the identity function on $S_{i}$.
Clearly, if $k \leq j \leq i$ then  $\tau^{i}_{k} = \tau^{j}_{k}\tau^{i}_{j}$.
Put $S = \bigsqcup_{i=0}^{\infty} S_{i}$, a disjoint union of sets.
Let $a, b \in S$ where $a \in S_{i}$ and $b \in S_{j}$.
Define 
$$a \cdot b  = \tau^{i}_{i \wedge j}(a) \tau^{j}_{i \wedge j}(b).$$
Then $(S,\cdot)$ is a semigroup.
We shall usually represent multiplication by concatenation.
Observe that the set of idempotents of $S$ is the union of 
the set of idempotents of each of the $S_{i}$.
It is routine that idempotents commute.
In addition, $S$ is regular.
It follows that $S$ is an inverse semigroup.
The inverse of $a \in S$ where $a \in S_{i}$ is simply its inverse in $S_{i}$.
The identity element of $S_{0}$ is the identity for the semigroup $S$.
The monoid $S$ is said to be an {\em $\omega$-chain of inverse monoids}.

\begin{remark}
{\em The semigroup $S$ does not have a zero.
Instead, the set of zeros from each $S_{i}$ forms an ideal $\mathscr{Z}$ in $S$.
If we form the quotient monoid, $S/\mathscr{Z}$ then essentially all the elements of $S \setminus \mathscr{Z}$ remain the same
whereas the elements of $\mathscr{Z}$ are rolled up into one zero.}
\end{remark}

Denote the identity of $S_{i}$ by $e_{i}$.
Put $\mathscr{E} = \{e_{i} \colon i \in \mathbb{N} \}$.
Then $\mathscr{E}$ forms a subsemigroup of the semigroup $S$ and is a subset of the centralizer of $S$.
For each $a \in S$, there exists $e \in \mathscr{E}$ such that $a = ea = ae$.
Define $a \equiv b$ if, and only if, $ae = ae$ for some $e \in \mathscr{E}$.
Then $\equiv$ is a congruence on $S$ and the quotient is an inverse monoid with zero.

\begin{lemma} Let 
$$S_{0} \stackrel{\tau_{0}}{\rightarrow} S_{1} \stackrel{\tau_{1}}{\rightarrow} S_{2} \stackrel{\tau_{2}}{\rightarrow} \ldots $$
be a sequence of Boolean inverse $\wedge$-monoids and injective morphisms.
Then the direct limit $\varinjlim S_{i}$   is a Boolean inverse $\wedge$-monoid.
In addition, we have the following.
\begin{enumerate}

\item If all the $S_{i}$ are fundamental then  $\varinjlim S_{i}$ is fundamental.

\item If all the $S_{i}$ are factorizable then  $\varinjlim S_{i}$ is factorizable.

\item If all the $S_{i}$ are completely semisimple then  $\varinjlim S_{i}$ is completely semisimple.

\item If all the $S_{i}$ have the property that $\mathscr{D}$ preserves complementation then  $\varinjlim S_{i}$ satisfies the property that $\mathscr{D}$ preserves complementation.

\item The group of units of  $\varinjlim S_{i}$ is the direct limit of the groups of units of the $S_{i}$.

\end{enumerate}
\end{lemma}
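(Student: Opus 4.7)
The plan is to exploit the fact that, since each $\tau_i$ is an injective morphism preserving identity, zero, binary meets, and binary compatible joins, the direct limit $S_\infty = \varinjlim S_i$ (as constructed in the preceding paragraphs) is the directed union of the $S_i$ along the $\tau_i$: the canonical map $S_i \to S_\infty$ is injective, and the induced operations on $S_\infty$ restrict on each $S_i$ to the original ones. Identifying $S_i$ with its image yields an ascending chain $S_0 \subseteq S_1 \subseteq S_2 \subseteq \cdots$ of Boolean inverse $\wedge$-monoids under unital inclusions, with $S_\infty = \bigcup_i S_i$. The essential structural fact, used in every item below, is that any finite subset of $S_\infty$ lies in some common $S_n$.

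First I would verify that $S_\infty$ is itself a Boolean inverse $\wedge$-monoid. Given $a, b \in S_\infty$, choose $n$ with $a, b \in S_n$ and compute $a \wedge b$, and $a \vee b$ when $a \sim b$, inside $S_n$; the result is independent of the choice of $n$ since the $\tau_i$ preserve meets, compatibility, and compatible joins. The semilattice of idempotents $E(S_\infty) = \bigcup_i E(S_i)$ is a directed union of Boolean algebras along unital lattice embeddings, hence itself a Boolean algebra, with the complement of $e \in E(S_n)$ in $S_\infty$ agreeing with its complement in $S_n$ because $\tau_i(1) = 1$. Distributivity of multiplication over compatible joins transfers to the limit from the individual $S_n$ in the same fashion.

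I would then address the five listed properties in turn, each by the same principle of placing the finitely many relevant elements in a common $S_n$. For fundamentality, if $a \in S_n$ commutes with every idempotent of $S_\infty$, then in particular with every idempotent of $S_n$, so fundamentality of $S_n$ gives $a \in E(S_n) \subseteq E(S_\infty)$. For factorizability, given $a \in S_n$ pick a unit $u \in S_n$ with $a \leq u$; since $\tau_i(1) = 1$, the element $u$ satisfies $u^{-1}u = 1 = uu^{-1}$ in $S_\infty$ too, and still dominates $a$. For complete semisimplicity, if $e \,\mathscr{D}\, f \leq e$ in $S_\infty$ via a witness $x$, choose $n$ large enough that $x, e, f \in S_n$ and apply the hypothesis inside $S_n$. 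For preservation of complementation, place $e$, $f$, and a $\mathscr{D}$-witness in some $S_n$, apply the hypothesis there, and observe that complements in $S_n$ equal complements in $S_\infty$ by the previous paragraph. For the group of units, every unit $u$ of $S_\infty$ satisfies $u^{-1}u = uu^{-1} = 1$, and since both sides live in some $S_n$ and $\tau_i(1) = 1$, the element $u$ is already a unit of that $S_n$; conversely every unit of any $S_n$ remains a unit in $S_\infty$, whence the group of units of $S_\infty$ is the directed union, i.e. the direct limit, of the groups of units of the $S_i$.

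The main technical nuisance is not a single big obstacle but the bookkeeping needed to confirm that the limit operations, particularly compatibility, binary meets, and Boolean complements, are genuinely computable at any sufficiently large common level and do not depend on the choice of that level. Once this is packaged into a clean statement that $S_\infty$ is the directed union of the $S_i$ as Boolean inverse $\wedge$-monoids, each of the five items reduces to a one-line invocation of the corresponding hypothesis on the $S_i$.
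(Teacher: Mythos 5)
Your proposal is correct and follows essentially the same route as the paper: the paper also constructs $\varinjlim S_i$ so that (by injectivity of the $\tau_i$, which makes the defining congruence idempotent-pure) each $S_i$ embeds in the limit, and then verifies the Boolean inverse $\wedge$-monoid structure and each of the five properties by pushing the finitely many relevant elements to a common level and invoking the hypothesis there. The paper merely spells out more explicitly the well-definedness of the quotient construction and of the induced meets, joins, and complements, which is exactly the bookkeeping you flag.
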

\begin{proof} We construct $\omega$-chain of inverse monoids $S$, as above.
Let $j \leq i$ and let $b \in S_{j}$ and $a \in S_{i}$.
Then $b = \tau^{i}_{j}(a)$ if, and only if,  $b = a \cdot e_{j}$
It follows, in particular, that $b \leq a$.
Let $a \in S_{i}$ and $b \in S_{j}$.
Then there is $l \leq i,j$ 
such that $\tau^{i}_{l} (a) = \tau^{j}_{l} (b)$
if, and only if, $e_{l}a = e_{l}b$.
Define $a \equiv b$ if, and only if, there exists $e \in \mathscr{E}$ such that
$ea = eb$.
Then, as above, $\equiv$ is a congruence on the inverse semigroup $S$.
It is idempotent-pure because the $\tau_{i}$ are injective.
We denote the $\equiv$-class containing the element $a$ by $[a]$.
We denote the set of $\equiv$-classes by $S_{\infty}$.
All the elements in $\mathscr{Z}$ are identified and so $S_{\infty}$ is an inverse monoid with zero.
Observe that the product is given by
$$[a][b] = [\tau^{i}_{i \wedge j}(a) \tau^{j}_{i \wedge j}(b)].$$

Let $[a],[b] \in S_{\infty}$ where $a \in S_{i}$ and $b \in S_{j}$.
Then $[a] \sim [b]$ if and only if $\tau^{i}_{i \wedge j}(a) \sim \tau^{j}_{i \wedge j}(b)$.
It is now routine to check that $S_{\infty}$ has binary compatible joins,
and that multiplication distributes over such joins.
Let $[a],[b] \in S_{\infty}$ where $a \in S_{i}$ and $b \in S_{j}$.
Put $c = \tau^{i}_{i \wedge j}(a) \wedge \tau^{j}_{i \wedge j}(b)$.
We show that $[c] = [a] \wedge [b]$.
Observe that if $x,y \in S_{l}$ and $x \leq y$ then $[x] \leq [y]$.
We have that $[a] = [\tau^{i}_{i \wedge j}(a)]$ and $[b] = [\tau^{j}_{i \wedge j}(b)]$.
Clearly $[c] \leq [a],[b]$.
It is now routine to check that if $[d] \leq [a],[b]$ then $[d] \leq [c]$.
If $[e]$ is an idempotent then the operation $\overline{[e]} = [\overline{e}]$ is well-defined
and $[e] \wedge \overline{[e]} = [0]$ and $[e] \vee \overline{[e]} = [1]$.
We have therefore shown that $S_{\infty}$ is a Boolean inverse $\wedge$-monoid.

Define $\phi_{i} \colon S_{i} \rightarrow S_{\infty}$ by $s \mapsto [s]$.
This map is evidently a morphism and whenever $j \leq i$ we have that
$\phi_{j} \tau^{i}_{j} = \phi_{i}$.
Now let $T$ be a Boolean inverse $\wedge$-monoid such that there are morphisms
$\theta_{i} \colon S_{i} \rightarrow T$ such that whenever $j \leq i$ we have that
$\theta_{j} \tau^{i}_{j} = \theta_{i}$.
Define $\psi \colon S_{\infty} \rightarrow T$ by $\psi ([a]) = \theta_{i} (a)$ if $a \in S_{i}$.
That this is a well-defined morphism witnessing that $S_{\infty}$ is indeed the direct limit is
now routine.

(1) The proof of this is straightforward. In particular, it uses the fact that if the image of an element under an 
injective morphism is an idempotent then that element is an idempotent.
(2) If $[a]$ is an arbitrary element where $a \in S_{i}$. 
Then $a \leq g$ where $g$ is invertible in $S_{i}$ and so $[a] \leq [g]$ in $S_{\infty}$.
But if $g$ is invertible then $[g]$ is invertible. 
(3) Straightforward.
(4) Straightforward.
(5) This follows from the fact that, since the morphisms are all injective, the element $[a]$ is invertible if and only if $a$ is invertible.
\end{proof}

It follows that with each Bratteli diagram $B$ we may associate a Boolean inverse $\wedge$-monoid constructed as a direct limit
of semisimple inverse monoids and standard morphisms.
We denote this inverse monoid by $\mathsf{I}(B)$.

\begin{lemma} Let 
$$S_{0} \stackrel{\tau_{0}}{\rightarrow} S_{1} \stackrel{\tau_{1}}{\rightarrow} S_{2} \stackrel{\tau_{2}}{\rightarrow} \ldots $$
be a sequence of semisimple inverse monoids and injective morphisms.
Then the direct limit $\varinjlim S_{i}$  
is isomorphic to $I(B)$ for some Bratteli diagram $B$.
\end{lemma}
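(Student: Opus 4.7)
The plan is to translate the given chain into the language of Bratteli diagrams by first replacing, at each level, the abstract semisimple monoid $S_i$ and the connecting morphism $\tau_i$ by a concrete rook-matrix presentation in which $\tau_i$ becomes a standard map. The Bratteli diagram $B$ can then be read off directly from the multiplicities of these standard maps, and $\mathsf{I}(B)$ will be the direct limit of a chain isomorphic to the one we started with.

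I would argue by induction using Proposition~\ref{prop:classifying_morphisms}. Fix any letter isomorphism $\alpha_0 \colon S_0 \to R_{m_0(1)} \times \cdots \times R_{m_0(k_0)}$; the Proposition produces a letter isomorphism $\beta_0$ on $S_1$ and a standard map $\sigma_0$ with $\tau_0 = \beta_0^{-1} \sigma_0 \alpha_0$. Setting $\alpha_1 := \beta_0$ and applying the Proposition to $\tau_1$ yields $\beta_1$ and a standard $\sigma_1$, and so on. Identifying each $S_i$ with its rook image under the chosen letter isomorphism, the chain is replaced by an isomorphic chain whose connecting morphisms are literally standard maps encoded by non-negative integer matrices $M_i$ satisfying $M_i\mathbf{m}_i = \mathbf{m}_{i+1}$, where $\mathbf{m}_i$ is the column vector of factor sizes at level $i$. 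This identification does not affect the direct limit up to isomorphism.

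Next I would build a directed graph whose level-$i$ vertices are the factors of the rook version of $S_i$ (one vertex per factor, tentatively labelled by $m_i(j)$) and whose level-$i$-to-$(i+1)$ edges record the entries of $M_i$. The combinatorial conditions between adjacent levels are exactly the equalities $M_i \mathbf{m}_i = \mathbf{m}_{i+1}$, guaranteed by Lemma~\ref{lem:lagrange_more}. The remark following that lemma ensures that each vertex at level $i+1$ is a target of some edge, and Lemma~\ref{lem:injectivity}, applied to the injective $\tau_i$, ensures each vertex at level $i$ is the source of some edge. If $S_0 \neq I_1$, I prepend a single root vertex at a new level $0$ together with the unique injective standard map $I_1 \to S_0$ whose multiplicities are $(m_0(1), \ldots, m_0(k_0))^T$; this injects into the chain without changing the direct limit. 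A short induction then shows that the Bratteli size $s_v$ (number of root-to-$v$ paths) at each vertex equals its tentative label: indeed, $s_{v'} = \sum_{v} (M_i)_{v'v} s_v = \sum_{v} (M_i)_{v'v} m_i(v) = m_{i+1}(v')$ by the combinatorial condition.

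It follows that the monoids and standard morphisms produced by the construction of $\mathsf{I}(B)$ agree level-by-level with our standardized chain, whence $\mathsf{I}(B) \cong \varinjlim S_i$ by uniqueness of direct limits of Boolean inverse $\wedge$-monoids. The principal obstacle is the bookkeeping at the root: one must handle the case $S_0 \neq I_1$ by prepending a copy of $I_1$ and verify both that the canonical unital map $I_1 \to S_0$ is a legitimate injective standard morphism of Boolean inverse $\wedge$-monoids and that the resulting Bratteli sizes $s_v$ coincide with the rook-factor labels $m_i(j)$; once this is in hand, everything else follows from the classification results already established.
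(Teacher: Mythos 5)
Your proposal is correct and follows essentially the same route as the paper, whose entire proof is ``repeated application of Proposition~\ref{prop:classifying_morphisms}''; you have simply carried out that iteration explicitly, chaining the letter isomorphisms via $\alpha_{i+1}:=\beta_i$ and reading the Bratteli diagram off the multiplicity matrices. The only addition is your careful handling of the root (prepending $I_1 \to S_0$ and checking the sizes $s_v$ match the factor labels), a bookkeeping point the paper leaves implicit but which you resolve correctly.
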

\begin{proof} 
This follows by repeated application of Proposition~\ref{prop:classifying_morphisms}.
\end{proof}

We call any inverse monoid constructed in this fashion an {\em AF inverse monoid}.
We may now summarize what we have found in this section in the following theorem.

\begin{theorem}\label{the:holiday} AF inverse monoids are fundamental Foulis $\wedge$-monoids.
Their groups of units are direct limits of finite direct products of finite symmetric groups where the morphisms between 
successive such direct products are by means of diagonal embeddings.
\end{theorem}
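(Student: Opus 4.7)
The plan is to unwind the definition of an AF inverse monoid as $\mathsf{I}(B) = \varinjlim S_i$, with each $S_i$ a semisimple inverse monoid and each $\tau_i$ an injective standard morphism, and then transport every desired property through the direct limit using the preceding lemma. First I would check that each finite stage $S_i$ has all the required properties. Being a finite direct product of finite symmetric inverse monoids $I_{m(j)}$, the monoid $S_i$ is a Boolean inverse $\wedge$-monoid and is fundamental (both noted in Section~2.1). By Lemma~\ref{lem:needful}, $S_i$ is also completely semisimple and $\mathscr{D}$ preserves complementation on it. The direct-limit lemma immediately preceding Theorem~\ref{the:holiday} then delivers, in its parts (1), (3), and (4), precisely the conclusions that fundamentality, complete semisimplicity, and the complementation property all pass to $\mathsf{I}(B)$; the main clause of that lemma asserts that $\mathsf{I}(B)$ is itself a Boolean inverse $\wedge$-monoid. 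Together, these say exactly that $\mathsf{I}(B)$ is a fundamental Foulis $\wedge$-monoid in the sense of Section~2.3.

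For the group of units, I would apply part~(5) of the same direct-limit lemma to obtain that the group of units of $\mathsf{I}(B)$ is the direct limit of the groups of units of the $S_i$. Since the group of units of $I_n$ is the symmetric group on $n$ letters, and the group of units of a direct product is the direct product of the groups of units, each such group of units is already a finite direct product of finite symmetric groups, as claimed. What remains is to identify the connecting maps. Using the rook-matrix description from Section~3.1, the standard morphism $R_m \to R_{sm}$ given by $A \mapsto sA = A \oplus \cdots \oplus A$ sends a permutation matrix $A$ to the block-diagonal matrix with $s$ copies of $A$ on the diagonal; this is precisely the diagonal embedding of the symmetric group on $m$ letters into the symmetric group on $sm$ letters. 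For a general standard morphism specified by the nonnegative integer matrix $M = (s_{ij})$, the restriction to invertible elements is the corresponding direct sum of such $s_{ij}$-fold diagonal embeddings, which is exactly a diagonal embedding between the relevant direct products of finite symmetric groups.

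I do not expect a serious obstacle: every needed property has been packaged into the earlier material, and the only verification beyond a direct citation is the trivial observation that a block-diagonal sum of permutation matrices is again a permutation matrix, so that $A \mapsto sA$ restricts to a diagonal embedding of symmetric groups. The bookkeeping care is simply to invoke the right clauses of the direct-limit lemma for each property and to match standard morphisms with diagonal embeddings via the rook-matrix picture.
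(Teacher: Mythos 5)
Your proposal is correct and follows exactly the route the paper intends: the theorem is stated as a summary of the section, and its justification is precisely the combination of Lemma~\ref{lem:needful}, the direct-limit lemma (its main clause together with parts (1), (3), (4), and (5)), and the rook-matrix description of standard morphisms, which restrict on units to block-diagonal embeddings of symmetric groups. Nothing is missing.
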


The groups of units of AF inverse monoids are therefore the groups studied in \cite{DM,KS,LN}.

\subsection{An example}

In this section, we shall construct a concrete example of an infinite MV-algebra that can be co-ordinatized
by an inverse monoid.
As we shall see, the monoid we construct is an analogue of the CAR algebra \cite{M2}.
Recall that a non-negative rational number is said to be {\em dyadic} if it can be written in the form $\frac{a}{2^{b}}$
for some natural numbers $a$ and $b$.
The goal of the remainder of this section is to prove the following.

\begin{theorem}\label{them:dyadic} The MV-algebra of dyadic rationals in the closed unit interval $[0,1]$
can be co-ordinatized by an inverse monoid.
\end{theorem}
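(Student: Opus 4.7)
The plan is to produce the coordinatizing monoid as $\mathsf{I}(B)$ for the explicit Bratteli diagram $B$ having exactly one vertex $v_{n}$ at each level $n$ and exactly two directed edges from $v_{n}$ to $v_{n+1}$. Then the size is $s_{v_{n}} = 2^{n}$, the associated semisimple inverse monoids are $S_{n} = I_{2^{n}}$, and the standard morphisms $\sigma_{n} \colon I_{2^{n}} \hookrightarrow I_{2^{n+1}}$ are, on rook matrices, the block doubling $A \mapsto A \oplus A$. Setting $S = \mathsf{I}(B) = \varinjlim S_{n}$, Theorem~\ref{the:holiday} tells us that $S$ is a fundamental Foulis $\wedge$-monoid, so what remains is to verify the lattice condition and to identify $S/\mathscr{J}$ with the dyadic MV-algebra.

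Next I would analyse $S/\mathscr{J}$ level by level, piggy-backing on the computation in the proof of Theorem~\ref{the:finite_case}. At level $n$ the map $\kappa_{n} \colon S_{n}/\mathscr{J} \to \{0,1,\ldots,2^{n}\}$, $[1_{A}] \mapsto |A|$, is an order isomorphism and exhibits $S_{n}/\mathscr{J}$ as the \L ukasiewicz chain $L_{2^{n}+1}$. Under $\sigma_{n}$ the cardinality of a $1_{A}$ doubles, so on $\mathscr{J}$-classes the bonding map is $k \mapsto 2k$. Rescaling by $2^{-n}$ identifies $L_{2^{n}+1}$ with the chain of dyadic rationals of denominator dividing $2^{n}$ in $[0,1]$, and then the bonding maps become the obvious inclusions, since $k/2^{n} = 2k/2^{n+1}$. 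The directed system is thus the standard directed system of finite MV-subchains of the dyadic MV-algebra, whose MV-algebra direct limit is exactly the MV-algebra of dyadic rationals in $[0,1]$.

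To transfer this to $S/\mathscr{J}$ directly, I would assign to each idempotent $e \in S$ chosen at level $n$ the dyadic rational $|A|/2^{n}$, where $e$ is represented as $1_{A} \in S_{n}$; well-definedness under passage to a higher level follows because $\sigma_{n}$ doubles cardinalities. This yields a well-defined map $S/\mathscr{J} \to [0,1] \cap \mathbb{Z}[\tfrac{1}{2}]$. Injectivity reduces to the observation that two idempotents in $S_{n}$ are $\mathscr{J}$-related in $S$ iff their $\tau^{m}_{n}$-images are $\mathscr{D}$-related at some level $m \geq n$, which happens iff their cardinalities agree after doubling, i.e.\ iff the corresponding dyadic rationals are equal; surjectivity is clear since every $k/2^{n}$ with $0 \leq k \leq 2^{n}$ is represented by $1_{\{1,\ldots,k\}} \in S_{n}$. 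One then checks that $\oplus$ and $\boxplus$ on $S/\mathscr{J}$ match truncated addition of dyadic rationals, and that complementation $[e]' = [\overline{e}]$ matches $r \mapsto 1-r$; these verifications reduce on each finite level to the finite case already handled in Theorem~\ref{the:finite_case}. Since each $S_{n}/\mathscr{J}$ is linearly ordered the direct limit is linearly ordered, so the lattice condition is automatic.

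The main obstacle is that the functor $\mathsf{E}$ of Proposition~\ref{prop:E} is only asserted to be functorial, not to commute with direct limits, so I cannot simply quote a limit-preservation theorem. I would handle this by the concrete bookkeeping in the previous paragraph: every element of $S$ lives at some finite level, and any $\mathscr{J}$-equivalence in $S$ can be witnessed by an element at some finite level, so the $\mathscr{J}$-structure of $S$ is fully determined by the $\mathscr{J}$-structures of the $S_{n}$ together with the bonding maps. Everything else is then a direct comparison with the known finite case, which is exactly how Theorem~\ref{the:finite_case} feeds into this example.
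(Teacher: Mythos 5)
Your proof is correct, but it takes a genuinely different route from the paper's. The paper realizes the coordinatizing monoid concretely as the dyadic inverse monoid $Ad_{2}$, an inverse monoid of partial homeomorphisms of the Cantor space sitting inside the Cuntz inverse monoid $C_{2}$ (built from prefix codes and the polycyclic monoid $P_{2}$); it equips the Boolean algebra of clopen sets with the Bernoulli measure $\beta(\tfrac{1}{2})$, shows this is a good invariant mean reflecting the $\mathscr{D}$-relation, and then invokes Lemma~\ref{lem:5April} to obtain in one stroke complete semisimplicity, preservation of complementation by $\mathscr{D}$, and linearity of $S/\mathscr{J}$, with the identification of $S/\mathscr{J}$ with the dyadic rationals falling out of the possible values of the measure. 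Only afterwards does the paper prove, as a separate proposition, that $Ad_{2} \cong \varinjlim I_{2^{n}}$ --- which is exactly the monoid you start from. Your route, building $\mathsf{I}(B)$ directly from the $2^{\infty}$ Bratteli diagram, quoting Theorem~\ref{the:holiday} for the Foulis property, and computing $S/\mathscr{J}$ as a direct limit of the \L ukasiewicz chains $L_{2^{n}+1}$ under the doubling maps, bypasses all of the string and measure-theoretic machinery and is in effect a hand-computed special case of the general strategy the paper deploys later for arbitrary countable MV-algebras (Proposition~\ref{prop:key_link} specialised to the dimension group $\mathbb{Z}[\tfrac{1}{2}]$ with order unit $1$). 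What you lose is the concrete Cantor-space picture and the reusable invariant-mean technology; what you gain is brevity and a cleaner fit with the finite case of Theorem~\ref{the:finite_case}. The one delicate point --- that $\mathscr{D}$-classes, orthogonality, and hence the partial algebra $\mathsf{E}(S)$ of the direct limit are all witnessed at finite levels, so that $\mathsf{E}(\varinjlim S_{n}) = \varinjlim \mathsf{E}(S_{n})$ --- is correctly identified by you and adequately handled by your finite-level bookkeeping, using that the bonding maps are injective morphisms and that the congruence defining the limit is idempotent-pure.
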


The inverse monoid in question will be what we term the dyadic inverse monoid.
This will be constructed as a submonoid of the Cuntz inverse monoid which we describe first.

\begin{center}
{\bf String theory}
\end{center}

As a first step, we construct an inverse monoid, $C_{n}$, called the Cuntz inverse monoid.
This was first described in \cite{Law2007a,Law2007b} but we have improved on the presentation given there and so we give it  in some detail.

We begin by describing how we shall handle the Cantor space and its clopen subsets.
Let $A$ be a finite alphabet with $n$ elements where $n \geq 2$.
We shall primarily be interested in the case where $A = \{a,b\}$.
We denote by $A^{\ast}$ the set of all finite strings over $A$.
The empty string is denoted by $\varepsilon$.
We denote the total number of symbols occurring in the string $x$, counting repeats, by $\left| x \right|$.
This is called the {\em length} of $x$.
If $x,y \in A^{\ast}$ such that $x = yu$ for some finite string $u$, then we say that $y$ is a {\em prefix} of $x$.
We define $x \preceq y$ if and only if $x = yu$.
This is a partial order on $A^{\ast}$ called the {\em prefix order}.
Observe that if $x \preceq y$ then $x$ is at least as long as $y$.
A pair of strings $x$ and $y$ are said to be {\em prefix comparable} if $x \preceq y$ or $y \preceq x$.
A subset $X \subseteq A^{\ast}$ is called a {\em prefix subset} if for all $x,y \in X$ we have that $x \preceq y$ implies that $x = y$.
If $X$ is a prefix subset and contains the empty string then it contains only the empty string.
If $X$ is a prefix subset such that whenever $X \subseteq Y$, where $Y$ is a prefix subset, we have that $X = Y$,
then $X$ is called a {\em maximal prefix subset}.
Prefix subsets are often called {\em prefix codes}.
We shall only consider {\em finite} prefix sets in this paper.
If $X \subseteq A^{\ast}$ is a finite set, define $\mbox{max}(X)$ to be the maximal elements of $X$ under the prefix ordering.
It is immediate that $\mbox{max}(X)$ is a prefix set.
We define the {\em length} of a prefix set $X$ to be the maximum length of the strings belonging to $X$.
We say that a prefix set $X$ is {\em uniform of length $l$} if all strings in $X$ have length $l$.

By $A^{\omega}$ we mean the set of all right-infinite strings over $A$.
The set $A^{\omega}$ is equipped with the topology inherited from its representation as the space $A^{\mathbb{N}}$,
where $A$ is given the discrete topology.
It is the {\em Cantor space}.
Up to homeomorphism, it is independent of the cardinality of $A$.
Its clopen subsets are those subsets of the form $XA^{\omega}$ where $X \subseteq A^{\ast}$ is a finite set.
The following result is well-known.

\begin{lemma}\label{lem:prefix_comparability}  $xA^{\omega} \cap yA^{\omega} \neq \emptyset$
if and only if  $x$ and $y$ are prefix comparable.
If $x$ and $y$ are prefix-comparable, then 
either $xA^{\omega} \subseteq yA^{\omega}$ or $yA^{\omega} \subseteq xA^{\omega}$.
In particular, if $x \preceq y$ then $xA^{\omega} \subseteq yA^{\omega}$.
\end{lemma}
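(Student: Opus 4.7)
The plan is to prove the two directions of the biconditional separately, and to extract the containment assertion as a free byproduct of the nontrivial direction, since the argument naturally produces it.

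For the forward direction, I would pick any infinite word $w \in xA^{\omega} \cap yA^{\omega}$, so that $w = x\alpha = y\beta$ for some $\alpha,\beta \in A^{\omega}$. Without loss of generality, assume $\lvert x \rvert \leq \lvert y \rvert$. Reading off the first $\lvert y \rvert$ symbols of $w$ in two ways, one sees that $x$ must agree with the first $\lvert x \rvert$ symbols of $y$, so there is a (unique) finite string $u$ with $y = xu$; in the paper's convention this says $y \preceq x$, and in particular $x$ and $y$ are prefix comparable.

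For the reverse direction and the containment claim together, suppose $x$ and $y$ are prefix comparable, and without loss of generality assume $y \preceq x$, i.e.\ $x = yu$ for some $u \in A^{\ast}$. Then for every $\alpha \in A^{\omega}$ we have $x\alpha = y(u\alpha) \in yA^{\omega}$, so $xA^{\omega} \subseteq yA^{\omega}$. Since $A^{\omega}$ is nonempty (as $\lvert A \rvert = n \geq 2$), this immediately gives $xA^{\omega} \cap yA^{\omega} \neq \emptyset$ and also the dichotomy $xA^{\omega} \subseteq yA^{\omega}$ or $yA^{\omega} \subseteq xA^{\omega}$. The ``in particular'' clause is just the case of this argument in which $x \preceq y$, i.e.\ $x = yu$ directly, giving $xA^{\omega} \subseteq yA^{\omega}$.

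There is really no substantive obstacle here: the statement is essentially a reading off of the definition of the product topology on $A^{\mathbb{N}}$, and the only point requiring minor care is keeping track of the paper's nonstandard convention that $x \preceq y$ means $x = yu$ (so that $\preceq$ orders by extension rather than by being a prefix). Once that convention is respected, the proof reduces to the comparison of the first $\max(\lvert x\rvert,\lvert y\rvert)$ symbols of a common extension.
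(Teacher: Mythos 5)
Your proposal is correct and follows essentially the same route as the paper: take a common infinite word, compare lengths, and read off the prefix relation, with the containment falling out of writing $x = yu$. The only difference is that you spell out the (trivial) reverse direction explicitly, which the paper leaves implicit.
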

\begin{proof} Suppose that $xA^{\omega} \cap yA^{\omega} \neq \emptyset$.
Let $w \in xA^{\omega} \cap yA^{\omega}$.
Then $w = xu = yv$ where $u$ and $v$ are infinite strings.
If $x$ and $y$ have the same length, then $x =y$.
Otherwise we may assume, without loss of generality, that $\left| x \right| > \left| y \right|$.
It follows that $y$ is a prefix of $x$ and we can write $x = yc$ for some finite string $c$.
Clearly,  $xA^{\omega} \subseteq yA^{\omega}$.
\end{proof}

It follows by the above lemma, that if  $U = XA^{\omega}$ is a clopen set for some finite set  $X$, 
then $U = \mbox{max}(X)A^{\omega}$.
Thus we may choose the set $X$ to be a prefix set.
This we shall always do from now on.
If $U$ is a clopen subset and $U = XA^{\omega}$, where $X$ is a prefix set, then we say that $X$ is a {\em generating set} of $U$.
Observe that if
$U = XA^{\omega}$ where $X$ is a prefix set,
then
$$U = \bigcup_{i=1}^{m} x_{i}A^{\omega}$$
is actually a disjoint union.
The clopen subsets form a basis for the topology on the Cantor space.
The sets  $XA^{\omega}$ are called {\em cylinder sets}.
Finite sets $X$ will often be represented using the notation of {\em regular languages}.
Thus if $X = \{x_{1}, \ldots, x_{m} \}$, we shall also write $X = x_{1} + \ldots + x_{m}$.
For more on infinite strings and proofs of any of the claims above, see \cite{PP}.

\begin{example} {\em Let $A = \{a,b\}$.
The representation of clopen subsets by prefix sets is not unique.
For example, $aA^{\omega} = (aa + ab)A^{\omega}$,
and
$A^{\omega} = (a + b)A^{\omega}$.}
\end{example}

The lack of uniqueness in the use of prefix sets to describe clopen subsets is something we shall have to handle.
The next few results provide the means for doing so.
We make no claims for originality, but include these results for the sake of clarity.

\begin{lemma}\label{lem:maximal} Let $A$ be a finite alphabet and let $X$ be a prefix set over $A$.
Then $XA^{\omega} = X$ if, and only if, $X$ is a maximal prefix set.
\end{lemma}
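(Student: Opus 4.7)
Reading the right-hand side as $A^{\omega}$ (the literal equation $XA^{\omega}=X$ mixes finite and infinite strings, so I interpret the statement as asserting that the clopen set generated by $X$ is all of Cantor space), the lemma asserts that a prefix set covers the whole Cantor space precisely when it is maximal. The plan is to prove the two implications separately, each by a short contradiction argument resting on Lemma~\ref{lem:prefix_comparability} and the finiteness of $X$.

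For the direction that maximality of $X$ forces $XA^{\omega}=A^{\omega}$, I would take any $w\in A^{\omega}$, assume for contradiction that no finite prefix of $w$ lies in $X$, and build a strictly larger prefix set. Since $X$ is finite, choose an integer $l$ strictly exceeding $\max\{|x|:x\in X\}$ and let $p$ be the length-$l$ prefix of $w$. For each $x\in X$, prefix-comparability of $x$ and $p$ would, by $|x|<|p|$, force $x$ to be a prefix of $p$ and hence of $w$, contradicting the choice of $w$. So $p$ is prefix-incomparable with every element of $X$, and $p\notin X$ since no prefix of $w$ belongs to $X$. Therefore $X\cup\{p\}$ is a prefix set properly containing $X$, contradicting maximality.

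For the converse, suppose $XA^{\omega}=A^{\omega}$ and let $Y$ be any prefix set with $X\subseteq Y$; I need to show $Y=X$. Given $y\in Y$, choose any infinite tail $u\in A^{\omega}$ and consider $yu\in A^{\omega}=XA^{\omega}$. Then $yu=xv$ for some $x\in X$ and $v\in A^{\omega}$, so $x$ is a finite prefix of $yu$, whence $x$ and $y$ are prefix-comparable. Since both $x$ and $y$ belong to the prefix set $Y$, we conclude $x=y$, so $y\in X$. Hence $Y=X$, and $X$ is maximal.

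There is really no serious obstacle here. The only mildly delicate point is the existence of the length-exceeding prefix $p$ in the first direction, but that follows immediately from the finiteness of $X$. Everything else is a routine unpacking of Lemma~\ref{lem:prefix_comparability} and of the definition of a maximal prefix set.
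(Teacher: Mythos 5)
Your proof is correct and takes essentially the same route as the paper's: both directions hinge on passing to a prefix of $w$ longer than every string in $X$ and invoking prefix-comparability (Lemma~\ref{lem:prefix_comparability}), with your version merely recast as arguments by contradiction/arbitrary extension rather than the paper's direct form. Your reading of the statement as $XA^{\omega}=A^{\omega}$ is also exactly what the paper's own proof establishes, so the interpretation of the typo is right.
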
 
\begin{proof} Suppose first that $X$ is a maximal prefix set of length $l$.
Let $w$ be any infinite string.
Write $w = uw'$ where $w'$ is infinite and $u$ is the prefix of $w$ of length $l$.
The set $X + u$ properly contains $X$ and so cannot be a prefix set.
Thus $u$ is prefix-comparable with an element of $X$.
But, because of its length, it either equals an element of $X$ or an element of $X$ is a proper prefix of $u$.
Thus there exists $x \in X$ such that $u = xu'$.
It follows that $w = xu'w'$ and so $w \in XA^{\omega}$.
 
Conversely, suppose that $XA^{\omega} = X$.
We prove that $X$ is a maximal prefix set.
Suppose not.
Then there is at least one finite string $u$ such that $X + u$ is a prefix set.
Let $w$ be any infinite string.
Clearly, $uw \in A^{\omega}$.
But then $uw = xw'$ where $x \in X$.
Thus $uA^{\omega} \cap xA^{\omega} \neq \emptyset$.
By Lemma~\ref{lem:prefix_comparability},
it follows that  $u$ and $x$ are prefix-comparable, which is a contradiction.
\end{proof}

We shall now describe two operations on a prefix set.
In what follows, observe that for any $r \geq 0$, the set $A^{r}$ is a maximal prefix set.
The cases of interest below will always require $r \geq 1$.
Let $X$ be a prefix set.
Let $u \in X$.
Define
$$X^{+} = (X - u) + uA^{r},$$ 
where $r \geq 1$. We call $X^{+}$  an {\em extension} of $X$.
Let $u \in X$ such that $uA^{r} \subseteq X$ for some $r \geq 1$.
Define 
$$X^{-} = (X - uA^{r}) + u.$$ 
We call $X^{-}$ a {\em reduction} of $X$.
The proof of the following is straightforward.

\begin{lemma}\label{lem:ops} Let $X$ be a prefix set.
Then both $X^{+}$ and $X^{-}$ are prefix sets and
$XA^{\omega} = X^{+}A^{\omega} = X^{-}A^{\omega}$.
In addition,
$$X^{+-} = X \mbox{ and } X^{-+} = X.$$
\end{lemma}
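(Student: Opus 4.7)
The plan is to verify each of the three assertions directly from the definitions, relying only on the prefix order and the elementary identity $A^{r}A^{\omega} = A^{\omega}$.

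For the prefix-set property of $X^{+}$, I would argue that the newly introduced strings $\{uw : w \in A^{r}\}$ share the common length $\left| u \right| + r$ and hence form a prefix antichain among themselves, while any $v \in X \setminus \{u\}$ is prefix-incomparable with $u$ and therefore with every $uw$ (since any prefix relation between $v$ and $uw$ would descend to one between $v$ and $u$). For $X^{-}$, the argument is subtler: I would show that no element $v$ of $X \setminus uA^{r}$ can be prefix-comparable with $u$. The three obstructions to rule out are a strict prefix of $u$ lying in $X$ (which would dominate each $uw \in uA^{r} \subseteq X$), a string $uz \in X$ with $0 < \left| z \right| < r$ (which is a strict prefix of some $uw$ obtained by extending $z$), and a string $uz \in X$ with $\left| z \right| > r$ (whose length-$(\left| u \right|+r)$ prefix is an element of $uA^{r} \subseteq X$); each would contradict the prefix property of $X$.

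For the equality of cylinder sets, I would use $uA^{r}A^{\omega} = uA^{\omega}$, which is immediate because every infinite string decomposes as a length-$r$ word followed by an infinite string. Splitting $XA^{\omega}$ according to whether the defining prefix is $u$ or lies in $X \setminus \{u\}$ yields $XA^{\omega} = (X \setminus \{u\})A^{\omega} \cup uA^{r}A^{\omega} = X^{+}A^{\omega}$, and the case of $X^{-}$ is symmetric, decomposing according to whether the prefix lies in $X \setminus uA^{r}$ or in $uA^{r}$.

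For the mutual inversion, I would first note that $uA^{r} \subseteq X^{+}$ while $u \notin X^{+}$ (since $r \geq 1$), so the reduction operates at $u$ and gives $X^{+-} = (((X \setminus \{u\}) \cup uA^{r}) \setminus uA^{r}) \cup \{u\} = X$; symmetrically, $u \in X^{-}$ and $X^{-} \cap uA^{r} = \emptyset$, so $X^{-+} = (((X \setminus uA^{r}) \cup \{u\}) \setminus \{u\}) \cup uA^{r} = X$. The main obstacle I anticipate is the case analysis showing $X^{-}$ is a prefix set; that is where the hypothesis $uA^{r} \subseteq X$ plays a substantive role, whereas everything else reduces to routine bookkeeping with prefix codes and cylinder sets.
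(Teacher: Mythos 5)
Your verification is correct and is exactly the routine direct argument the paper has in mind when it dismisses this lemma with ``the proof of the following is straightforward''; the paper supplies no proof, so there is nothing to diverge from. You also handle the one genuinely delicate point properly: in the reduction case the hypothesis $uA^{r}\subseteq X$ with $r\geq 1$ forces $u\notin X$ (despite the paper's phrasing ``let $u\in X$''), and your three-way case analysis ruling out prefix-comparability between $u$ and the surviving elements of $X$ is where the content lies.
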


Our next result shows that we may always replace a generating set by a uniform generating set.

\begin{lemma}\label{lem:uniformatizing} Let $U = XA^{\omega}$ where $X$ has length $l$.
Then for each $r \geq l$ we may find a prefix set $Y$ uniform of length $r$ such that $U = YA^{\ast}$.
\end{lemma}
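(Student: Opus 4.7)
The plan is to construct $Y$ explicitly by padding each string of $X$ to length $r$. Specifically, for each $u \in X$, its length $|u|$ is at most $l \leq r$, so define
$$Y = \bigcup_{u \in X} u A^{r - |u|},$$
that is, we replace each $u \in X$ by all concatenations $uv$ where $v$ runs over the strings of length exactly $r - |u|$. Every element of $Y$ has length exactly $r$, so $Y$ is uniform of length $r$, and being uniform it is automatically a prefix set: if $y_1 \preceq y_2$ with both of length $r$, then the relation $y_1 = y_2 w$ forces $w = \varepsilon$ and $y_1 = y_2$.

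It remains to verify $YA^{\omega} = XA^{\omega}$. One can do this directly using the fact that $A^{r-|u|}$ is a maximal prefix set, hence $A^{r-|u|} A^{\omega} = A^{\omega}$ by Lemma~\ref{lem:maximal}, so
$$YA^{\omega} \;=\; \bigcup_{u \in X} u A^{r-|u|} A^{\omega} \;=\; \bigcup_{u \in X} u A^{\omega} \;=\; X A^{\omega} \;=\; U.$$
Alternatively, $Y$ is obtained from $X$ by iterating the extension operation of Lemma~\ref{lem:ops}: pick any $u \in X$ with $|u| < r$, replace it by $u A^{r - |u|}$, and repeat for the remaining short strings; each step preserves the generated clopen set. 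Since $X$ is finite and each step strictly decreases the number of strings of length less than $r$, the process terminates at $Y$.

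There is no real obstacle here: the argument is essentially a bookkeeping exercise combining the definitions of a prefix set with the identity $A^{k} A^{\omega} = A^{\omega}$ from Lemma~\ref{lem:maximal}. The only thing that needs a moment's care is making sure the padded strings from different $u \in X$ do not accidentally become prefix-comparable, which is automatic from the uniform length of $Y$.
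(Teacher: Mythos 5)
Your proof is correct and is essentially the paper's argument: the paper also replaces each short string $u\in X$ by $uA^{l-|u|}$ via the extension operation of Lemma~\ref{lem:ops} and iterates up to length $r$, which is exactly your second (iterative) formulation; your first formulation just performs all the extensions in one step. Both hinge on the same two observations, namely that extensions preserve the generated clopen set (equivalently $A^{k}A^{\omega}=A^{\omega}$) and that a uniform set of strings is automatically a prefix set.
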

\begin{proof} Let $U = XA^{\omega}$ where $X$ is a prefix set of length $l$.
If all the strings in $X$ have length $l$ then we are done.
Otherwise, let $u \in X$ such that $m = \left| u \right| < l$.
Then by Lemma~\ref{lem:ops}, we have that 
$X^{+} = (X - u) + uA^{l-m}$ is a prefix set and that $XA^{\omega} = X^{+}A^{\omega}$.
Thus the single string $u$ has been replaced by $\left| A \right|$ strings each of length $l$.
If all strings in $X^{+}$ have length $l$ we are done, else we repeat the above procedure.
In this way, we construct a prefix set $X'$ uniform of length $l$ such that $U = X'A^{\omega}$.
It is now clear how this process can be repeated to obtain prefix sets generating $U$ and uniform of any desired length $r \geq l$.
 \end{proof}

\begin{example} 
{\em Let $A = a + b$.
Consider the clopen set
$(aa + aba + b)A^{\omega}$.
The length of $aa + aba + b$ is 3.
Replace $b$ by $b(a + b)^{2}$ and replace $aa$ by $aa(a + b)$.
We therefore get the prefix set
$$aa(a + b) + aba + b(a + b)^{2}$$
and we have, in addition, that
$$(aa + aba + b)A^{\omega} = (aa(a + b) + aba + b(a + b)^{2})A^{\omega}.$$
}
\end{example}

Our next goal is to show that every clopen set has a `smallest' generating set, in a suitable sense.

\begin{lemma}\label{lem:piglet} If $XA^{\omega} \subseteq YA^{\omega}$, where $X$ and $Y$ are prefix sets,
then each element of $X$ is a prefix comparable with an element of $Y$.
\end{lemma}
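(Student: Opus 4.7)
The plan is to pick an arbitrary $x \in X$ and exhibit a single element $y \in Y$ with which it is prefix comparable. The key observation is that cylinder sets are fully determined by their strings, so we can translate a membership question about clopen sets into an intersection question about cylinder sets, and then invoke Lemma~\ref{lem:prefix_comparability}.

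Concretely, fix any $w \in A^{\omega}$ and form the infinite string $xw$. Then $xw \in xA^{\omega} \subseteq XA^{\omega} \subseteq YA^{\omega}$, so there exists $y \in Y$ such that $xw \in yA^{\omega}$. In particular $xw \in xA^{\omega} \cap yA^{\omega}$, so this intersection is nonempty. By Lemma~\ref{lem:prefix_comparability}, $x$ and $y$ are prefix comparable, which is exactly what was to be shown.

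Since $x \in X$ was arbitrary, this establishes the conclusion for every element of $X$. There is no real obstacle here; the entire argument rides on the already-proved Lemma~\ref{lem:prefix_comparability}, together with the trivial fact that $xA^{\omega}$ is nonempty (being a cylinder over a finite string in a non-empty alphabet). Note that the statement does not assert anything about which of $x$ or $y$ is the longer prefix, nor that $y$ is unique; in fact for a given $x \in X$ there may be several $y \in Y$ that are comparable with $x$ (for instance, when several elements of $Y$ have $x$ as a proper prefix), but the statement only requires the existence of at least one.
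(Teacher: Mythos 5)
Your proof is correct and is essentially the paper's own argument: both proofs observe that the nonempty cylinder $xA^{\omega}$ is contained in $\bigcup_{y \in Y} yA^{\omega}$, so it must meet some $yA^{\omega}$, and then invoke Lemma~\ref{lem:prefix_comparability}. Picking an explicit point $xw$ rather than arguing via the union is only a cosmetic difference.
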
 
\begin{proof} Let $x \in X$.
Then $xA^{\omega} \subseteq XA^{\omega}$.
It follows that $xA^{\omega} = xA^{\omega} \cap YA^{\omega}$.
Thus $xA^{\omega} = \bigcup_{y \in Y} xA^{\omega} \cap yA^{\omega}$.
For at least one $y \in Y$, we must have that $xA^{\omega} \cap yA^{\omega} \neq \emptyset$.
By Lemma~\ref{lem:prefix_comparability}, it follows that $x$ and $y$ are prefix-comparable.  
\end{proof}

The following is immediate by the above lemma.

\begin{corollary}\label{cor:aslan} 
Let $XA^{\omega} = YA^{\omega}$ where $X$ and $Y$ are prefix sets both uniform of the same length.
Then $X = Y$.
\end{corollary}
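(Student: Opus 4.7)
The plan is to apply Lemma~\ref{lem:piglet} in both directions, using the uniform length hypothesis to promote prefix-comparability to equality.

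First I would take an arbitrary $x \in X$. Since $XA^{\omega} = YA^{\omega}$ we have in particular $XA^{\omega} \subseteq YA^{\omega}$, so by Lemma~\ref{lem:piglet} there exists $y \in Y$ with $x$ and $y$ prefix-comparable. Here the crucial observation is that $X$ and $Y$ are both uniform of the same length $l$, so $\left| x \right| = \left| y \right| = l$. Two strings of equal length that are prefix-comparable must coincide, so $x = y \in Y$. This shows $X \subseteq Y$.

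The reverse inclusion $Y \subseteq X$ is immediate by symmetry: the same argument with the roles of $X$ and $Y$ interchanged, starting from $YA^{\omega} \subseteq XA^{\omega}$, gives each $y \in Y$ equal to some $x \in X$. Combining the two inclusions yields $X = Y$.

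There is no real obstacle here; the content is entirely packaged in Lemma~\ref{lem:piglet}, and the uniform length hypothesis is exactly what is needed to upgrade its conclusion from ``prefix-comparable'' to ``equal''. The corollary is therefore essentially a one-line consequence of that lemma.
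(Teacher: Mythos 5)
Your proof is correct and is exactly the argument the paper intends: the paper dismisses the corollary as ``immediate by the above lemma'' (Lemma~\ref{lem:piglet}), and your write-up simply spells out that intended one-line deduction, using equal lengths to upgrade prefix-comparability to equality and then invoking symmetry.
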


We define the {\em weight} of a prefix set $X$ to be the sum $\sum_{x \in X} \left| x \right|$.

\begin{lemma}\label{lem:pooh} Let $U = XA^{\omega} = YA^{\omega}$ where $X$ and $Y$ have the same weight $p$.
Suppose, in addition, that any generating set of $U$ has weight at least $p$.
Then $X = Y$.
\end{lemma}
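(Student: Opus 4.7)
My plan is a proof by contradiction: I will assume $X \neq Y$ and construct a generating set of $U$ of weight strictly less than $p$, contradicting the minimality hypothesis. Without loss of generality, pick $x \in X \setminus Y$. By Lemma~\ref{lem:piglet}, $x$ is prefix-comparable with some $y \in Y$, and since $x \neq y$, exactly one of $x$ and $y$ is a proper prefix of the other. The two cases are symmetric under swapping the roles of $X$ and $Y$, so I treat the case in which $y$ is a proper prefix of $x$.

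Define $X_{y} = \{x' \in X : y \text{ is a prefix of } x'\}$; it contains $x$ and is therefore non-empty. The key identity to establish is $X_{y} A^{\omega} = y A^{\omega}$. The inclusion $\subseteq$ is immediate. For $\supseteq$, any $w \in y A^{\omega}$ lies in $U = X A^{\omega}$, so $w \in x' A^{\omega}$ for some $x' \in X$; hence $y$ and $x'$ are prefix-comparable by Lemma~\ref{lem:prefix_comparability}. Now $x'$ cannot be a proper prefix of $y$, because then $x'$ would also be a proper prefix of $x$, contradicting that $X$ is a prefix set; nor can $x' = y$, because then $y \in X$ would be a proper prefix of $x \in X$. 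So $y$ is a prefix of $x'$, putting $x' \in X_{y}$.

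Now form $X' = (X \setminus X_{y}) \cup \{y\}$. A parallel argument shows $X'$ is a prefix set: any $z \in X \setminus X_{y}$ prefix-comparable with $y$ would also be prefix-comparable with $x$, and the resulting case analysis again contradicts $X$ being a prefix set. The displayed identity gives $X' A^{\omega} = U$. Each element of $X_{y}$ has length at least $\left| y \right| + 1$, so $\mbox{weight}(X_{y}) > \left| y \right|$, yielding
\[
\mbox{weight}(X') \;=\; p - \mbox{weight}(X_{y}) + \left| y \right| \;<\; p,
\]
the desired contradiction.

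The main obstacle is the prefix-set bookkeeping: verifying both $X_{y} A^{\omega} = y A^{\omega}$ and that $X'$ is again a prefix set relies on repeated application of Lemma~\ref{lem:prefix_comparability} together with the defining property of a prefix set, and one must carefully handle the degenerate sub-cases in which a comparison would push two prefix-comparable strings back into $X$.
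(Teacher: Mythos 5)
Your proof is correct and follows essentially the same route as the paper's: both argue by contradiction, using Lemma~\ref{lem:piglet} to find a proper prefix relation between some $x\in X$ and $y\in Y$, and then collapsing every string extending the shorter one onto that shorter string, producing a generating set of weight strictly less than $p$. The only difference is cosmetic: you modify $X$ where the paper modifies $Y$, and the paper's passage to the maximal elements of $(Y-y)+x$ is precisely your replacement of $X_{y}$ by $\{y\}$ with the roles of the two sets interchanged.
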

\begin{proof} Let $x \in X$.
By Lemma~\ref{lem:piglet}, there exists $y \in Y$ such that $x$ and $y$ are prefix-comparable.
Suppose that $x \neq y$.
Then, without loss of generality, we may assume that $x$ is a proper prefix of $y$.
Thus $y = xu$ for some finite string $u$.
Consider the set $(Y - y) + x$.
Observe that $U = ((Y - y) + x)A^{\omega}$.
It is not possible for any element of $Y - y$ to be a prefix of $x$ because then it would be a prefix of $y$ which is a contradiction.
It may happen that $x$ is a prefix of some elements of $Y - y$.
So we consider $Y' = \mbox{\rm max}(Y - y)$.
We have that $Y'$ is a generating set of $U$ and its weight is strictly less than $p$.
This is a contradiction.
We have therefore shown that if $x \in X$ then $x \in Y$.
By symmetry, we deduce that $X = Y$.
\end{proof}

\begin{lemma}\label{lem:kanga} Let $xA^{\omega} \subseteq YA^{\omega}$ where $Y$ is a prefix set.
Suppose that there is $y \in Y$ such that $y = xu$.
then
\begin{enumerate}

\item If $y' \in Y$ then either $y'A^{\omega} \cap xA^{\omega} = \emptyset$ or $y' = xv$ for some $v$.

\item Denote by $\overline{Y}$ the set of all elements of $Y$ that have $x$ as a prefix.
Then $x^{-1}\overline{Y}$ is a maximal prefix set.

\end{enumerate}
\end{lemma}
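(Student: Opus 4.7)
For part (1), the plan is to apply Lemma~\ref{lem:prefix_comparability} to $y'$ and $x$, which yields prefix-comparability as soon as $y'A^{\omega} \cap xA^{\omega} \neq \emptyset$. There are then two cases to dispose of. If $x$ is a prefix of $y'$, we are done immediately. If $y'$ is a prefix of $x$, then combining with the hypothesis $y = xu$ gives a chain in which $y'$ is a prefix of $y$; since both belong to the prefix set $Y$, we must have $y' = y = xu$, and the desired expression $y' = xv$ holds with $v = u$. So in either branch, $y' = xv$ for some $v$.

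For part (2), the plan has two steps. First, I would verify that $x^{-1}\overline{Y}$ is a prefix set: if $v_{1}, v_{2} \in x^{-1}\overline{Y}$ with $v_{1}$ a prefix of $v_{2}$, then $xv_{1}$ is a prefix of $xv_{2}$, and since both lie in the prefix set $Y$ they are equal, forcing $v_{1} = v_{2}$.

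Second, to establish maximality, I would appeal to Lemma~\ref{lem:maximal}, so it suffices to show $(x^{-1}\overline{Y})A^{\omega} = A^{\omega}$. Given an arbitrary $w \in A^{\omega}$, the string $xw$ lies in $xA^{\omega} \subseteq YA^{\omega}$, so $xw = y'w'$ for some $y' \in Y$ and $w' \in A^{\omega}$. This witnesses $y'A^{\omega} \cap xA^{\omega} \neq \emptyset$, so part~(1) forces $y' = xv$ for some $v$, putting $v$ into $x^{-1}\overline{Y}$. Then $xw = xvw'$ gives $w = vw' \in v A^{\omega} \subseteq (x^{-1}\overline{Y})A^{\omega}$, as required.

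I do not expect a serious obstacle; the only subtle point is the case analysis in part~(1), where one must notice that the hypothesis $y = xu$ with $y \in Y$ forces the ``$y'$ a prefix of $x$'' branch to collapse via prefix-freeness of $Y$. Everything else is a direct unwinding of definitions coupled with the two earlier lemmas.
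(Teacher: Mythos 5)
Your proof is correct. Part (1) is essentially the paper's argument: both reduce to prefix-comparability of $x$ and $y'$ via Lemma~\ref{lem:prefix_comparability} and kill the branch ``$y'$ is a prefix of $x$'' by observing that it would make $y'$ and $y=xu$ prefix-comparable inside the prefix set $Y$; your handling is if anything slightly cleaner, since the paper tacitly sets aside the case $y'=y$ while you absorb it. Part (2) is where you genuinely diverge. The paper argues by contradiction directly from the definition of maximality: it supposes some string $z$ could be adjoined to $x^{-1}\overline{Y}$, pushes $xz$ back into $Y$ via Lemma~\ref{lem:piglet}, and chases prefix-comparabilities to a contradiction. You instead invoke the characterization in Lemma~\ref{lem:maximal} (that a prefix set $X$ is maximal precisely when $XA^{\omega}$ is all of $A^{\omega}$) and give a direct covering argument: every $w$ satisfies $xw\in YA^{\omega}$, part (1) forces the witnessing $y'$ to factor through $x$, and cancelling $x$ shows $w$ is covered. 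Your route buys a shorter, positive proof at the cost of leaning on Lemma~\ref{lem:maximal} (and of having to check separately, as you do, that $x^{-1}\overline{Y}$ is a prefix set --- a point the paper leaves implicit); the paper's route is self-contained modulo Lemma~\ref{lem:piglet} but is a more intricate case analysis. Both are sound.
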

\begin{proof} (1) Suppose that $xA^{\omega} \cap xA^{\omega} \neq \emptyset$ where $y' \neq y$.
If $x = y'v$ then $y$ and $y'$ are prefix-comparable, which is a contradiction.
It follows that $y' = xv$.

(2) Suppose that  $x^{-1}\overline{Y}$ is not a maximal prefix set.
Let $z$ be a string that is not prefix comparable with any string in $x^{-1}\overline{Y}$.
Then $xz$ is not prefix-comparable with any element of $\overline{Y}$.
However, $xzA^{\omega} \subseteq xA^{\omega}$.
Thus $xz$ is prefix comparable with some element $y''$ of $Y$. 
If $y'' = xzz'$ for some $z'$ then $y'' \in \overline{Y}$, which is a contradiction.
Thus $xz = y''z'$.
Thus $x$ and $y''$ are prefix-comparable.
By (1) above, we must have that $y'' = xd$ for some string $d$.
Thus $d \in x^{-1}\overline{Y}$.
But $xz = xdz'$.
Thus $z = dz'$.
But this is a contradiction.
\end{proof}

\begin{lemma}\label{lem:tigger} Let $X$ be a prefix set.
Suppose that $xZ \subseteq X$ where $Z$ is a maximal prefix set, where $Z \neq \emptyset$.
Then it is possible to apply reduction to $X$.
\end{lemma}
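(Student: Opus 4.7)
\emph{Proof plan.} The reduction operation applies to $X$ as soon as one exhibits a string $u$ together with an integer $r \geq 1$ satisfying $uA^{r} \subseteq X$. My plan is to locate such a block first inside $Z$, of the form $u'A$, and then translate by $x$: taking $u = xu'$ and $r = 1$ will give $uA = xu'A \subseteq xZ \subseteq X$.

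The substantive step is the following claim about the maximal prefix set $Z$: if $Z$ contains at least one non-empty string, then there exists $u' \in A^{\ast}$ with $u'A \subseteq Z$. (I read the hypothesis ``$Z \neq \emptyset$'' in this sharper sense, since $Z = \{\varepsilon\}$ would reduce $xZ \subseteq X$ to the useless information $x \in X$, and the conclusion of the lemma would fail.) To prove the claim, choose $z \in Z$ of maximum length $l \geq 1$ and write $z = u'c$ with $|u'| = l - 1$ and $c \in A$. Fix $c' \in A$ and pick any infinite word $w$ beginning with $u'c'$. By Lemma~\ref{lem:maximal} we have $ZA^{\omega} = A^{\omega}$, so some $z' \in Z$ is a prefix of $w$, and maximality of $l$ gives $|z'| \leq l$. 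If $|z'| \leq l-1$, then $z'$ is a prefix of $u'$, and hence of $z$; since $z, z' \in Z$ and $Z$ is a prefix set, this forces $z' = z$, contradicting $|z'| < l = |z|$. Therefore $|z'| = l$, and as the length-$l$ prefix of $w$ is $u'c'$, we conclude $z' = u'c' \in Z$. Since $c' \in A$ was arbitrary, $u'A \subseteq Z$.

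Left-multiplying the inclusion $u'A \subseteq Z$ by $x$ yields $xu'A \subseteq xZ \subseteq X$; setting $u = xu'$ and $r = 1$ gives $uA^{r} \subseteq X$, so that the reduction $X^{-} = (X - uA^{r}) + u$ of Lemma~\ref{lem:ops} is defined. The only real obstacle here is the block-finding claim about $Z$; once that is in hand, the transport to $X$ is immediate, and no further delicate step is required.
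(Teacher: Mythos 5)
Your proof is correct and follows essentially the same route as the paper: take a maximal-length element $z = u'c$ of $Z$, show its full sibling block $u'A$ lies in $Z$, and translate by $x$ to get $xu'A \subseteq X$, enabling a reduction with $r=1$. The only cosmetic differences are that you invoke Lemma~\ref{lem:maximal} (via $ZA^{\omega}=A^{\omega}$) where the paper argues directly from maximality of $Z$, and that you make explicit both the translation step and the implicit assumption $Z \neq \{\varepsilon\}$ — both of which the paper leaves tacit.
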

\begin{proof} It is enough to show that we may apply reduction to $Z$.
Let $z \in Z$ be a string of maximal length.
Suppose that $z = z'a$ where $a \in A$.
I claim that $z'A \subseteq Z$.
Let $b \in A$ where $b \neq a$.
Then $z'b$ is a string the same length as $z$.
So it too has maximal length.
Since $Z$ is a maximal prefix set, it follows that $z'b$ must be prefix-comparable with some element of $Z$.
So it either belongs to $Z$, and we are done,
or some element of $Z$ of length at least one less is a prefix of $z'b$, which is impossible.
\end{proof}

\begin{proposition}\label{prop:wol} Let $U = XA^{\omega}$.
Construct the prefix code $X'$ from $X$ by carrying out any sequence of reductions until this is no longer possible.
Then $X'$ is a generating set of $U$ of minimum weight.
\end{proposition}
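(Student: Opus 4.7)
My plan is threefold: verify that $X'$ is a well-defined generating set of $U$, show that any fully reduced generating set of $U$ is uniquely determined, and then conclude minimality. First, I would observe that each reduction replaces some $uA^r \subseteq X$ (with $r \geq 1$) by the single string $u$, altering the total weight by $|u| - |A|^r(|u|+r)$, which is strictly negative since $|A| \geq 2$ and $r \geq 1$. Thus the weight strictly decreases at each step, the procedure terminates in finitely many steps, and iterated application of Lemma~\ref{lem:ops} gives $X'A^{\omega} = U$.

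The heart of the argument will be uniqueness of the fully reduced generating set. Suppose $X_1, X_2$ are both fully reduced generating sets of $U$. I would pick $x_1 \in X_1$; by Lemma~\ref{lem:piglet} some $y \in X_2$ is prefix-comparable with $x_1$. If $x_1$ were a proper prefix of $y$, write $y = x_1 u$ with $u \neq \varepsilon$. Then Lemma~\ref{lem:kanga}(2), applied with $x = x_1$ and $Y = X_2$, shows that $Z := x_1^{-1}\overline{X_2}$ is a maximal prefix set containing the nonempty string $u$, so $Z \neq \{\varepsilon\}$. Since $x_1 Z \subseteq X_2$ with $Z$ nontrivial maximal, Lemma~\ref{lem:tigger} then produces a reduction of $X_2$, contradicting its full reducedness. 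By a symmetric argument $y$ cannot be a proper prefix of $x_1$ either, forcing $x_1 = y \in X_2$. Hence $X_1 \subseteq X_2$, and swapping the roles of $X_1$ and $X_2$ gives $X_1 = X_2$.

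For minimality, given an arbitrary generating set $Y$ of $U$, I would apply reductions to $Y$ until no more are possible, obtaining a fully reduced $Y^{\ast}$ with $Y^{\ast}A^{\omega} = U$. By the uniqueness step, $Y^{\ast} = X'$, and since every reduction along the way strictly decreased weight, $\mathrm{wt}(Y) \geq \mathrm{wt}(Y^{\ast}) = \mathrm{wt}(X')$, proving that $X'$ has minimum weight. The main obstacle is the uniqueness step: one has to translate the combinatorial hypothesis that $X_2$ admits no reduction in the narrow sense (no $uA^r \subseteq X_2$) into the apparently stronger statement that no subset of the form $x_1 Z$ with $Z$ a nontrivial maximal prefix set lies in $X_2$. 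Lemma~\ref{lem:tigger} is precisely the bridge between these two, and the care needed is in tracking that the relevant $Z$ is not the trivial maximal prefix set $\{\varepsilon\}$, which is automatic as soon as $x_1 \neq y$.
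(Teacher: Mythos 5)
Your proof is correct, and it is organised differently from the one in the paper, even though it draws on the same three lemmas (Lemma~\ref{lem:piglet}, Lemma~\ref{lem:kanga} and Lemma~\ref{lem:tigger}). The paper argues minimality directly by contradiction: assuming a generating set $Y$ of strictly smaller weight than $X'$, it first runs a counting argument (each $x \in X'$ is prefix-comparable with some $y \in Y$; if every $x$ were a prefix of some $y_x$ the assignment $x \mapsto y_x$ would be injective and $Y$ could not be lighter) to produce a pair with $y \in Y$ a \emph{proper} prefix of some $x \in X'$, and then invokes Lemma~\ref{lem:kanga} and Lemma~\ref{lem:tigger} to manufacture a reduction of $X'$, contradicting full reducedness. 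You instead prove that any two fully reduced generating sets coincide, by running the kanga--tigger argument symmetrically in both directions on a prefix-comparable pair $(x_1, y)$, and then deduce minimality from the fact that each reduction strictly decreases weight. Your route buys three things the paper's does not give immediately: an explicit termination argument for the reduction procedure (which the statement of the proposition tacitly assumes), the uniqueness of the fully reduced generating set independently of the starting set and the order of reductions (which the paper only obtains afterwards by combining Proposition~\ref{prop:wol} with Lemma~\ref{lem:pooh}), and the avoidance of the injectivity/weight-comparison step. The paper's version is shorter if one only wants the minimality statement itself. Your attention to excluding the degenerate case $Z = \{\varepsilon\}$ before applying Lemma~\ref{lem:tigger} is well placed, since the proof of that lemma silently assumes $Z$ contains a string of positive length.
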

\begin{proof} The fact that $X'$ is a generating set follows by Lemma~\ref{lem:ops}.
Suppose that $U = YA^{\omega}$ where $Y$ has strictly smaller weight than $X$.
Let $x \in X'$.
Then by Lemma~\ref{lem:piglet}, $x$ must be prefix-comparable with some $y \in Y$.
Suppose for each $x \in X'$, it were the case that there was an element $y_{x} \in Y$ such that $x$ was a prefix of $y_{x}$.
If $x,x' \in X'$ were both prefixes of $y$, then they would have to be prefix-comparable.
It would then  follow that the weight of $Y$ was equal to or greater than the weight of $X'$, which is a contradiction.
Since the weights of the two prefix sets are different the sets cannot be equal.
It follows that there is at least one $x \in X'$ and $y \in Y$ such that $x = yu$ for some finite string $u$ of length $r \geq 1$.
We have that $yA^{\omega} \subseteq X'A^{\omega}$.
By Lemma~\ref{lem:kanga}, it follows that all the elements of $X'$ that have $y$ as a prefix
forms a subset $yZ$ where $Z$ is a maximal prefix code.
Then by Lemma~\ref{lem:tigger}, it is possible to apply a reduction to $X'$, which is a contradiction.
\end{proof}

By Proposition~\ref{prop:wol} and Lemma~\ref{lem:pooh}, 
it follows that every clopen subset is generated by a unique prefix set of minimum weight.
We call this the {\em minimum generating set}.

\begin{lemma}\label{lem:roo} Suppose that $U = XA^{\omega} = YA^{\omega}$.
Then $Y$ is obtained from $X$ by a finite sequence of extensions and reductions. 
\end{lemma}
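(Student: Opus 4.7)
The plan is to pass through the minimum generating set as an intermediate object. By Proposition~\ref{prop:wol}, starting from $X$ and performing any maximal sequence of reductions terminates (each reduction strictly decreases the weight, and weights are non-negative integers) in a generating set of $U$ of minimum weight, which I will call $Z_{X}$. Applying the same procedure starting from $Y$ yields a minimum-weight generating set $Z_{Y}$. Since both $Z_{X}$ and $Z_{Y}$ generate $U$ and have the same (minimum) weight, and any generating set of $U$ has weight at least this minimum, Lemma~\ref{lem:pooh} forces $Z_{X} = Z_{Y}$. Call this common prefix set $Z$.

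Thus I have a finite chain of reductions $X = X_{0} \to X_{1} \to \cdots \to X_{m} = Z$ and a finite chain of reductions $Y = Y_{0} \to Y_{1} \to \cdots \to Y_{n} = Z$. The second key point is that extension and reduction are mutually inverse operations: by Lemma~\ref{lem:ops}, for any prefix set $W$ one has $W^{+-} = W$ and $W^{-+} = W$. Consequently, reversing the second chain turns each reduction step $Y_{i} \to Y_{i+1}$ into a legitimate extension step $Y_{i+1} \to Y_{i}$. Concatenating the forward chain from $X$ to $Z$ with the reversed chain from $Z$ to $Y$ yields a finite sequence of extensions and reductions transforming $X$ into $Y$, as required.

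The only point that requires a little care is the termination of the reduction procedure and the uniqueness of $Z$. Termination is immediate because each reduction replaces a block $uA^{r}$ (with $r \geq 1$) by the single string $u$, thereby strictly decreasing the integer-valued weight; since weights are bounded below by $0$ the process halts after finitely many steps, and the halting state admits no further reduction, which is precisely what Proposition~\ref{prop:wol} identifies with minimum weight. Uniqueness of the minimum generating set then follows cleanly from Lemma~\ref{lem:pooh}. I do not anticipate any further obstacle; the argument is essentially a confluence-style reduction to a canonical form, and the main work has already been done in Lemma~\ref{lem:ops}, Lemma~\ref{lem:pooh}, and Proposition~\ref{prop:wol}.
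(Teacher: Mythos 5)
Your proposal is correct and follows essentially the same route as the paper: reduce both $X$ and $Y$ to the (unique, by Lemma~\ref{lem:pooh} and Proposition~\ref{prop:wol}) minimum generating set, then reverse the second chain of reductions into a chain of extensions via Lemma~\ref{lem:ops}. The extra remarks on termination and uniqueness are sound and merely make explicit what the paper leaves implicit.
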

\begin{proof} By means of a sequence of reductions $X$ may be converted to the minimum generating set $X_{U}$ by Lemma~\ref{prop:wol}.
Likewise $Y$ may be converted to the minimum generating set $X_{U}$.
Starting with $X_{U}$ we may therefore construct $Y$ by a sequence of extensions applying Lemma~\ref{lem:ops}.
Combing these two sequences together we may convert $X$ to $Y$.
\end{proof}

\begin{center}
{\bf The Cuntz inverse monoid}
\end{center}

We can now set about constructing an inverse monoid.
Let $A = a_{1} + \ldots + a_{n}$, though in the case $n = 2$, we shall usually assume that $A = a + b$.
The {\em polycyclic monoid $P_{n}$}, where $n \geq 2$,
is defined as a monoid with zero by the following presentation
$$P_{n} = \langle a_{1}, \ldots, a_{n}, a_{1}^{-1}, \ldots, a_{n}^{-1}
\colon \: a_{i}^{-1}a_{i} = 1 \,\mbox{and}\, a_{i}^{-1}a_{j} = 0, i \neq j \rangle.$$
It is, in fact, an inverse monoid with zero.
Every non-zero element of $P_{n}$ is of the form $yx^{-1}$ where
$x,y \in A_{n}^{\ast}$, and where we identify the identity with the
element $1 = \varepsilon \varepsilon^{-1}$.
The product of two elements $yx^{-1}$ and $vu^{-1}$ is zero unless
$x$ and $v$ are prefix-comparable.
If they are prefix-comparable then
$$yx^{-1} \cdot vu^{-1} = \left\{
\begin{array}{ll}
yzu^{-1}   & \mbox{if $v = xz$ for some string $z$}\\
y(uz)^{-1} & \mbox{if  $x = vz$ for some string $z$}
\end{array}
\right.
$$
The non-zero idempotents in $P_{n}$ are the elements of the form
$xx^{-1}$,
where $x$ is positive,
and the natural partial order is given by 
$yx^{-1} \leq vu^{-1}$ iff $(y,x) = (v,u)p$ for some positive string
$p$.
See \cite{Law1,Law2007a,Law2007b} for more about the polycyclic inverse monoids,  and proofs of any claims.

We may obtain an isomorphic copy of $P_{n}$ as an inverse submonoid of $I(A^{\omega})$ as follows.
Let $yx^{-1} \in P_{n}$.
Then define a map from $xA^{\omega}$ to $y^{A^{\omega}}$ by $xw \mapsto yw$ where $w$ is any right-infinite string.
Thus $yx^{-1}$ describes the process {\em pop the string $x$} and then {\em push the string $y$.}

\begin{remark}{\em  
In what follows, we shall always regard $P_n$ as an inverse submonoid of $I(A^{\omega})$. }
\end{remark}

We now construct a larger inverse monoid containing this copy of $P_{n}$.
The inverse monoid $I(A^{\omega})$ is a Boolean inverse monoid.
Thus finite non-empty compatible subsets have joins.
Let $S \subseteq I(A^{\omega})$ be an inverse submonoid containing zero.
Then we may form the subset $S^{\vee}$ consisting of all joins of  finite non-empty compatible subsets of $S$.
It is routine to check that $S^{\vee}$ is again an inverse submonoid of $I(A^{\omega})$.
We apply this construction to $P_{n}$ to obtain the inverse submonoid $P_{n}^{\vee}$.

\begin{lemma}\label{lem:narnia} Let $yx^{-1}$ and $vu^{-1}$ be a compatible pair of elements in the polycyclic inverse monoid $P_{n}$.
If they are not orthogonal, then either $yx^{-1} \leq vu^{-1}$ or vice-versa.
\end{lemma}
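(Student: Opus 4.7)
My plan is to work with $P_n$ through its faithful representation as an inverse submonoid of $I(A^\omega)$, as set out in the preceding discussion. Under this representation, the element $yx^{-1}$ is the partial bijection $xA^\omega \to yA^\omega$ defined by $xw \mapsto yw$, so its domain of definition is $xA^\omega$ and its range is $yA^\omega$. This geometric picture is what makes the lemma easy: two such partial bijections are compatible precisely when they agree as partial functions on the intersection of their domains (and their inverses agree on the intersection of their ranges), and they are orthogonal precisely when those intersections of domains and of ranges are both empty.

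With that translation in hand, I would argue as follows. Since $yx^{-1}$ and $vu^{-1}$ are compatible but not orthogonal, at least one of the intersections $xA^\omega \cap uA^\omega$ and $yA^\omega \cap vA^\omega$ is non-empty. By the symmetry $a \leq b \iff a^{-1} \leq b^{-1}$ in any inverse semigroup, I may assume without loss of generality that $xA^\omega \cap uA^\omega \neq \emptyset$. By Lemma~\ref{lem:prefix_comparability}, the strings $x$ and $u$ are then prefix-comparable, say $x = uz$ for some finite string $z$ (the case $u = xz$ being entirely analogous and leading to the reverse inequality). In particular $xA^\omega \subseteq uA^\omega$, so the common domain is $xA^\omega$ itself.

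Now evaluate the two partial bijections on an arbitrary element $xw = uzw$ of this common domain. The map $yx^{-1}$ sends it to $yw$, while $vu^{-1}$ sends it to $vzw$. Compatibility forces $yw = vzw$ for every infinite string $w$, and hence $y = vz$. Combined with $x = uz$, this is exactly the condition $(y,x) = (v,u)z$, so by the description of the natural partial order on $P_n$ recalled just before the lemma we obtain $yx^{-1} \leq vu^{-1}$. The symmetric case produces $vu^{-1} \leq yx^{-1}$, completing the argument.

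The only mildly delicate point is the appeal to the symmetry between domains and ranges at the start of the second paragraph; everything else is bookkeeping with prefix comparability and the explicit multiplication formula in $P_n$. There is no real obstacle to overcome here: the lemma is essentially a direct unpacking of what compatibility and non-orthogonality mean for elements of the polycyclic monoid.
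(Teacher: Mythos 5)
Your argument is correct, but it takes a genuinely different route from the paper's. The paper stays entirely inside the abstract presentation of $P_{n}$: compatibility says that $xy^{-1}vu^{-1}$ and $yx^{-1}uv^{-1}$ are idempotents, non-orthogonality says one of them is non-zero, and then the explicit multiplication formula together with the fact that non-zero idempotents have the form $ww^{-1}$ forces first $x = uz$ and then $y = vz$, giving $yx^{-1} \leq vu^{-1}$ directly from the stated description of the natural partial order. You instead pass to the faithful representation in $I(A^{\omega})$ and use the semantic meaning of compatibility (agreement on the common domain, and of the inverses on the common range) and of orthogonality (disjoint domains and disjoint ranges). Both are sound; the paper's version is self-contained modulo the presentation of $P_{n}$, while yours leans on the standard characterisation of the compatibility relation in a symmetric inverse monoid, which the paper uses implicitly elsewhere but does not state. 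Two small points worth tightening in your write-up: the reduction to the case $xA^{\omega} \cap uA^{\omega} \neq \emptyset$ should invoke the fact that $\sim$ and $\perp$ are preserved under taking inverses (not just the order anti-automorphism $a \leq b \iff a^{-1} \leq b^{-1}$), so that the range case really is the domain case for $xy^{-1}$ and $uv^{-1}$; and the step from ``$yw = vzw$ for all $w$'' to ``$y = vz$'' deserves one line (if $y \neq vz$ then either they are not prefix-comparable, contradicting equality for any $w$, or one is a proper prefix of the other, and then a suitable choice of $w$ using $n \geq 2$ gives a contradiction). Neither is a gap, just bookkeeping you have waved at rather than written.
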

\begin{proof} Without loss of generality, suppose that $xy^{-1}vu^{-1} \neq 0$.
Then $y$ and $v$ are prefix-comparable.
Again, without loss of generality, we may assume that $y = vz$ for some $z$.
Then $xy^{-1}vu^{-1} = x(uz)^{-1}$.
But this is supposed to be an idempotent and so $x = uz$.
We substitute this into $yx^{-1}uv^{-1}$ to get $y(vz)^{-1}$.
But this too is supposed to be an idempotent and so $y = vz$.
We have therefore proved that $yx^{-1} \leq vu^{-1}$.
\end{proof}

From the above lemma, a finite non-empty compatible subset of $P_{n}$ will have the same join as  a finite non-empty orthogonal subset
obtained by taking the maximal elements of the compatible subset. 

\begin{lemma}\label{lem:orthogonal} A subset
$$\{y_{1}x_{1}^{-1}, \ldots, y_{m}x_{m}^{-1}\}$$
of $P_{n}$ is orthogonal iff $\{x_{1}, \ldots, x_{m}\}$ and
$\{y_{1}, \ldots, y_{m}\}$ are both prefix sets.
\end{lemma}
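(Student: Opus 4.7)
The plan is to reduce orthogonality to a statement about pairwise disjointness of domains and ranges, and then translate this into a condition on prefixes using Lemma~\ref{lem:prefix_comparability}.

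First I would unpack what orthogonality means. The set is orthogonal iff for all $i \neq j$ we have $(y_ix_i^{-1})^{-1}(y_jx_j^{-1}) = 0$ and $(y_ix_i^{-1})(y_jx_j^{-1})^{-1} = 0$. Working in the concrete copy of $P_n \subseteq I(A^{\omega})$ introduced just before the lemma, the element $y_ix_i^{-1}$ is the partial bijection with domain $x_iA^{\omega}$ and range $y_iA^{\omega}$. Standard inverse-semigroup computations give $\mathbf{d}(y_ix_i^{-1}) \wedge \mathbf{d}(y_jx_j^{-1}) = 0$ iff $x_iA^{\omega} \cap x_jA^{\omega} = \emptyset$, and the analogous statement with $\mathbf{r}$ and the $y$'s. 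The condition $a^{-1}b = 0 = ab^{-1}$ for $a = y_ix_i^{-1}$ and $b = y_jx_j^{-1}$ is equivalent (via the identities $a^{-1}b = 0 \Leftrightarrow \mathbf{r}(a) \wedge \mathbf{r}(b) = 0$ and $ab^{-1} = 0 \Leftrightarrow \mathbf{d}(a) \wedge \mathbf{d}(b) = 0$) to the disjointness of the corresponding cylinder sets.

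Next I would invoke Lemma~\ref{lem:prefix_comparability}, which says $x_iA^{\omega} \cap x_jA^{\omega} \neq \emptyset$ iff $x_i$ and $x_j$ are prefix-comparable. Hence pairwise disjointness of the domains $x_iA^{\omega}$ is equivalent to no two $x_i$'s being prefix-comparable, which is exactly the statement that $\{x_1, \ldots, x_m\}$ is a prefix set. The same reasoning applied to the ranges yields the same condition on $\{y_1, \ldots, y_m\}$. Combining both directions gives the biconditional.

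As a sanity check, I would verify the computation once algebraically using the multiplication rule for $P_n$ reviewed before the lemma:
$$(y_ix_i^{-1})^{-1}(y_jx_j^{-1}) = x_iy_i^{-1}y_jx_j^{-1},$$
which by the rule is zero precisely when $y_i$ and $y_j$ are not prefix-comparable (and is a nonzero element $x_iz x_j^{-1}$ or $x_i(x_jz)^{-1}$ otherwise, for some finite string $z$ possibly empty). The symmetric calculation handles $(y_ix_i^{-1})(y_jx_j^{-1})^{-1} = y_ix_i^{-1}x_jy_j^{-1}$ and the $x$-side. There is no real obstacle here; the only small point requiring care is remembering that a prefix set, being a set, forces the listed $x_i$'s (and $y_i$'s) to be distinct, which matches the ``not prefix-comparable'' condition applied at $i \neq j$ with possibly equal strings being rejected as prefix-comparable.
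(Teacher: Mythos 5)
Your proof is correct. The paper states this lemma without proof, and your argument -- computing $(y_ix_i^{-1})^{-1}(y_jx_j^{-1}) = x_iy_i^{-1}y_jx_j^{-1}$ and $(y_ix_i^{-1})(y_jx_j^{-1})^{-1} = y_ix_i^{-1}x_jy_j^{-1}$ via the multiplication rule for $P_n$, so that these vanish precisely when $y_i,y_j$ (respectively $x_i,x_j$) fail to be prefix-comparable -- is exactly the computation the authors treat as immediate; the translation through cylinder sets and Lemma~\ref{lem:prefix_comparability} is an equivalent packaging of the same fact. Your remark about equal strings being prefix-comparable correctly identifies the one point where the statement must be read as saying the listed $x_i$ (and $y_i$) are distinct and pairwise prefix-incomparable.
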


It follows that the elements of $P_{n}^{\vee}$ can be represented in the following form.
Let $x_{1} + \ldots + x_{r}$ and $y_{1} + \ldots + y_{r}$ be two prefix sets with the same number of elements.
Define a map from $(x_{1} + \ldots, + x_{r})A^{\omega}$ to $(y_{1} + \ldots + y_{r})A^{\omega}$,
denoted by,
$$
\left(
\begin{array}{ccc}
x_{1} & \ldots & x_{r}\\
y_{1} & \ldots & y_{r}
\end{array}
\right)
$$
that does the following: $x_{i}w \mapsto y_{i}w$, 
where $w$ is any right-infinite string.
We denote the totality of such maps by $C_{n} = P_{n}^{\vee}$.
We call this the {\em Cuntz inverse monoid (of degree $n$).}
We shall call the unique countable atomless Boolean algebra the {\em Tarski algebra}.
The following was proved in \cite{Law2007b}.
But we shall give the details below.
Recall that an inverse semigroup with zero is {\em $0$-simple} if there are only two ideals.
It is well-known that a $0$-simple, fundamental Boolean inverse monoid is congruence-free.

\begin{proposition} $C_{n}$ is a Boolean inverse $\wedge$-monoid whose semilattice of idempotents is the Tarski algebra.
It is fundamental, $0$-simple and has $n$ $\mathscr{D}$-classes.
It is therefore congruence-free.
Its group of units is the Thompson group $V_{n}$.
\end{proposition}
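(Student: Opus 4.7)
The plan is to work throughout inside $I(A^\omega)$, exploiting the concrete representation of elements of $C_n$ as pairs $\binom{X}{Y}$ of equinumerous prefix codes and the fact that the idempotents of $C_n$ are precisely the clopen subsets of $A^\omega$. Closure under compatible joins is built into the definition $C_n = P_n^{\vee}$; and since the idempotents are the finite unions of cylinders $xA^\omega$---which exhaust the clopens of $A^\omega$---and $A^\omega$ is a countable atomless Stone space, the semilattice of idempotents is forced to be the Tarski algebra. To obtain the $\wedge$-monoid property I would reduce $(\bigvee_i s_i) \wedge (\bigvee_j t_j)$ to $\bigvee_{ij}(s_i \wedge t_j)$ using Lemma~\ref{lem:meets-joins}(3) and distributivity of multiplication over joins, and then observe that a direct inspection of $P_n$ (essentially Lemma~\ref{lem:narnia}) shows each $s_i \wedge t_j$ to be either zero or one of $s_i, t_j$, hence in $C_n$.

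Fundamentality is a separation argument: if $s \in C_n$ commutes with every $1_U$ for $U$ clopen, then for each $w \in \mathbf{d}(s)$ the identity $s \cdot 1_U = 1_U \cdot s$ gives $w \in U \Leftrightarrow s(w) \in U$, and since clopens separate points in $A^\omega$ this forces $s(w)=w$, whence $s = 1_{\mathbf{d}(s)}$ is an idempotent. For $0$-simplicity, the key computation is already in $P_n$: for any non-zero idempotent $e = xx^{-1}$ and any generator $yu^{-1}$ one has $yx^{-1} \cdot e \cdot xu^{-1} = yu^{-1}$, so every element of $P_n$ lies in the two-sided ideal generated by $e$. Because every element of $C_n$ is a finite compatible join of elements of $P_n$, and every non-zero idempotent of $C_n$ dominates some cylinder-idempotent $xx^{-1}$, this propagates: every non-zero two-sided ideal of $C_n$ is all of $C_n$.

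For the $\mathscr{D}$-class count, the relation $1_U \, \mathscr{D} \, 1_V$ is witnessed by an element $\binom{X}{Y} \in C_n$ with $XA^\omega = U$, $YA^\omega = V$, and $|X| = |Y|$, so it reduces to determining which cardinalities can occur among generating prefix codes of a fixed non-empty clopen $U$. Proposition~\ref{prop:wol} and Lemma~\ref{lem:roo} reduce this further to tracking the effect of extensions and reductions on the minimum generating code $X_U$: a single extension at depth $r \ge 1$ changes cardinality by $n^r - 1 = (n-1)(n^{r-1} + \cdots + 1)$, so the residue modulo $n-1$ is preserved, and iterating depth-one extensions realises every cardinality in $\{|X_U| + k(n-1) : k \ge 0\}$. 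Hence $1_U \, \mathscr{D} \, 1_V$ iff $|X_U| \equiv |X_V| \pmod{n-1}$; since each of the residues $0,1,\ldots,n-2$ is realised by some non-empty clopen, there are $n-1$ non-zero $\mathscr{D}$-classes and $n$ in total. Congruence-freeness is then the cited consequence of being $0$-simple and fundamental. Finally, $\binom{X}{Y}$ is a unit iff its domain and range both equal $A^\omega$, equivalently (by Lemma~\ref{lem:maximal}) iff $X$ and $Y$ are maximal prefix codes of equal cardinality, which is the standard presentation of the Higman--Thompson group $V_n$ as prefix-substitution homeomorphisms of Cantor space.

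The main obstacle I anticipate is the $\mathscr{D}$-class count: pinning down the exact set of cardinalities realised by generating prefix codes of a fixed clopen and verifying that the mod-$(n-1)$ residue is a complete invariant. The uniqueness-of-minimum-code results of the previous subsection are precisely what makes this combinatorial bookkeeping tractable; once they are in hand, the remaining claims are either direct inheritance from $I(A^\omega)$ or short calculations in $P_n$.
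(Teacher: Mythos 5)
Your proposal is correct and follows the same overall strategy as the paper (concrete representation by pairs of equinumerous prefix codes, cardinality arithmetic for the $\mathscr{D}$-classes, maximal codes for the units), but several individual steps are handled differently, and mostly to advantage. Where the paper's proof re-verifies from scratch that the composition of prefix-code bijections is well defined and that one has an inductive groupoid, you simply inherit the inverse monoid, Boolean and $\wedge$-structure from the ambient $I(A^\omega)$ via $C_n = P_n^{\vee}$, using the observation that meets of generators of $P_n$ are $0$ or one of the two elements (your appeal to Lemma~\ref{lem:meets-joins}(3) is legitimate because the meet already exists in $I(A^\omega)$). For $0$-simplicity your computation $yx^{-1}\cdot e\cdot xu^{-1}=yu^{-1}$ in fact yields the identity itself (take $y=u=\varepsilon$) in any nonzero ideal, which is cleaner than the paper's $X\mapsto yX$ argument and makes your ``propagation'' sentence unnecessary. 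For the $\mathscr{D}$-class count you isolate the mod-$(n-1)$ residue of the generating code as a complete invariant via Lemma~\ref{lem:roo} and Proposition~\ref{prop:wol}; the paper instead exhibits explicit representatives $M^n_r$ and $C_i\ast M^n_r$ whose cardinalities $1+k(n-1)$ and $i+1+k(n-1)$ encode exactly the same arithmetic, so the two arguments are equivalent, though yours makes the invariance direction more transparent. Finally, you supply a genuine proof of fundamentality (clopen sets separate points of $A^\omega$, so an element commuting with all clopen idempotents fixes its domain pointwise), a claim the paper's proof asserts in the statement but never actually argues; this is a point in your favour rather than a gap.
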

\begin{proof} We show first that we have a groupoid.
Suppose that $f \colon XA^{\omega} \rightarrow YA^{\omega}$ be such that there is a bijection
$f_{1} \colon X \rightarrow Y$ such that $f(xw) = f_{1}(x)w$ for any infinite string $w$.
Suppose that $X^{+} = x_{1}A + x_{2} + \ldots + x_{r}$.
Let $Y^{+} = y_{1}A + y_{2} + \ldots + y_{r}$.
Define $f^{+} \colon X^{+} \rightarrow Y^{+}$ as follows.
Let $f^{+}(x_{i}) = y_{i}$ for $2 \leq i \leq r$.
Define $f^{+}(x_{1}a_{j}) = y_{1}a_{j}$ for $1 \leq j \leq n$.
It is clear that $f^{+} = f$. 
We shall call it a {\em refinement} of $f$.
Let $g \colon UA^{\omega} \rightarrow VA^{\omega}$ and suppose that $XA^{\omega} = VA^{\omega}$.
Let $X'$ be obtained from $X$ by a sequence of extensions.
let $Y'$ be obtained from $Y$ by a sequence of extensions.
By Lemma~\ref{lem:uniformatizing}, we suppose that $X'$ and $Y'$ are both uniform of the same length. 
We construct $Y'$ and $U'$ by using the corresponding extensions.
It follows by Corollary~\ref{cor:aslan} that $X' = Y'$.
Let $f'_{1}$ be obtained from $f_{1}$ by successive appropriate refinments, 
and likewise let $g'_{1}$ be obtained from $g_{1}$.
Thus $f' = f$ and $g' = g$.
But we may now compose $f_{1}g_{1}$ directly to get a map from $U'$ to $Y'$ that represents $fg$.
Since inverses pose no problems, we have shown that we have a groupoid.
The semilattice of idempotents is just the Tarski algebra.
We show that this is an ordered groupoid, and so inductive, from which we get that it is an inverse monoid.
Let $f \colon XA^{\omega} \rightarrow YA^{\omega}$ where $f_{1} \colon X \rightarrow Y$ is a bijection.
Let $ZA^{\omega} \subseteq XA^{\omega}$.
Assume first that each element $z \in Z$ can be written $z = xu$ for some $x \in X$ and string $u$.
Observe that under this assumption, $x$ will be unique.
Define $g_{1}(z) = f_{1}(x)u$.
Put $Y'$ equal to the set of all $f_{1}(x)u$ as $z \in Z$.
Then $Y'A^{\omega} \subseteq YA^{\omega}$ and we have defined a bijection
$g \colon ZA^{\omega} \rightarrow Y'A^{\omega}$ which is the restriction of $f$.
It remains to show that we can verify our assumption.
This can be achieved as in Lemma~\ref{lem:uniformatizing} by using a sequence of extensions to convert $Z$ into a prefix set where
all strings have lengths strictly larger than the longest string in $X$.
Then by Lemma~\ref{lem:piglet}, since $ZA^{\omega} \subseteq XA^{\omega}$,
we have that each element of $Z$ is prefix-comparable with an element of $X$.
From length considerations, it follows that each $z \in Z$ has as a prefix an element of $X$.

It is straightforward to see that $C_{n}$ is a Boolean inverse monoid and that it is also a $\wedge$-monoid.

We now prove that $C_{n}$ is $0$-simple.
Let $X$ and $Y$ be any two prefix sets.
let $y \in Y$.
Then $yX$ is a prefix set with the same cardinality as $X$.
It follows that there is an element $f \colon XA^{\omega} \rightarrow yXA^{\omega}$ of $C_{n}$.
But $yXA^{\omega} \subseteq YA^{\omega}$.
This proves the claim.

We now prove that there are $n-1$ non-zero $\mathscr{D}$-classes.
The first step is to calculate the number of strings in a {\em maximal} prefix set.
For a fixed $n \geq 2$, and for $r = 0,1,2, \ldots$, we can construct maximal prefix sets containing $P^{n}_{r} = (r-1)n - (r-2)$ strings.
Concrete examples of such sets can be constructed by starting with the `seeds' $\varepsilon$ and $A$
and then growing maximal prefix sets by attaching $A$ from left-to-right.
We designate these specific maximal prefix sets by $M^{n}_{r}$.
There are $n-2$ numbers between $P^{n}_{r}$ and $P^{n}_{r+1}$.
Consider now the $n-2$ prefix sets
$C_{1}^{n} = a_{1} + a_{2}$,
$C_{2}^{n} = a_{1} + a_{2} + a_{3}$,
\ldots
$C_{n-2}^{n} = a_{1} + \ldots + a_{n-1}$.
The partial identities associated with $C_{i}$ and $C_{j}$ where $i \neq j$ are not $\mathscr{D}$-related.
There are therefore at least $n$ $\mathscr{D}$-classes when we add in the zero and the identity.
We may attach a copy of $M^{n}_{r}$ to the rightmost vertex of $C_{i}$.
We denote this prefix set by $C_{i} \ast M^{n}_{r}$.
Observe that $C_{i}A^{\omega} = C_{i} \ast M^{n}_{r}$.
Let $X$ be an arbitrary prefix set.
Either it is in bijective correspondence with one of the $M^{n}_{r}$, in which case the identity function on $XA^{\omega}$ is $\mathscr{D}$-related to the identity,
or it is in bijective correspondence with one of the $C_{i} \ast M^{n}_{r}$, in which case the identity function on $XA^{\omega}$ is $\mathscr{D}$-related to the identity function on $C_{i}$.
In particular, we see that $C_{2}$ is bisimple.

The group of units of $C_{n}$ consists of those elements
$$
\left(
\begin{array}{ccc}
x_{1} & \ldots & x_{r}\\
y_{1} & \ldots & y_{r}
\end{array}
\right)
$$
where $x_{1} + \ldots + x_{r}$ and $y_{1} + \ldots + y_{r}$ are maximal prefix codes.
These are precisely the elements of Thompson's group $V_{n}$.
\end{proof}

\begin{center}
{\bf The dyadic (or CAR) inverse monoid}
\end{center}

We shall need to work with measures on the Cantor set.
The general theory of such measures is the subject of current research, see \cite{Akin1999,Akin2004,ADMY,BH}, for example
but the measures we need are well-known.

Let $A$ be an alphabet with $n$ elements.
Define $\mu (a) = \frac{1}{n}$ for any $a \in A$ and define $\mu (\varepsilon) = 1$.
If $x \in A^{\ast}$ is any string of length $r$ define $\mu (x) = \frac{1}{n^{r}}$.
If $X$ is any prefix set, define $\mu (X) = \sum_{x \in X} \mu (x)$.
The following is proved as \cite[Theorem~I.4.2]{PP}.

\begin{lemma}\label{lem:acre} 
For any prefix set $X$, we have that $\mu (X) \leq 1$.
\end{lemma}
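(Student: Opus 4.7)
The plan is to reduce the statement to the trivial case of a uniform prefix set by applying the uniformization procedure of Lemma~\ref{lem:uniformatizing}, after first verifying that the operation of extending a prefix set preserves the quantity $\mu$.

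\medskip

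\textbf{Step 1: Extensions preserve $\mu$.} I would observe that if $X^{+} = (X - u) + uA^{r}$ is an extension of $X$ at $u \in X$, then the strings removed and added balance out. Indeed, $\mu(u) = \frac{1}{n^{|u|}}$, while the set $uA^{r}$ consists of $n^{r}$ strings each of length $|u|+r$, and so contributes $n^{r} \cdot \frac{1}{n^{|u|+r}} = \frac{1}{n^{|u|}} = \mu(u)$ to the sum. Hence $\mu(X^{+}) = \mu(X)$. By induction, any finite sequence of extensions leaves $\mu$ unchanged.

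\medskip

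\textbf{Step 2: Reduce to a uniform prefix set.} Let $l$ be the length of $X$. By Lemma~\ref{lem:uniformatizing}, there is a prefix set $Y$ uniform of length $l$ with $YA^{\omega} = XA^{\omega}$. Inspecting the proof of that lemma, $Y$ is obtained from $X$ by a finite sequence of extensions, so by Step~1 we have $\mu(X) = \mu(Y)$.

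\medskip

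\textbf{Step 3: Bound the uniform case.} Since $Y$ consists entirely of strings of length $l$, we have $Y \subseteq A^{l}$, and hence $|Y| \leq |A^{l}| = n^{l}$. Therefore
\[
\mu(Y) \;=\; \sum_{y \in Y} \frac{1}{n^{l}} \;=\; \frac{|Y|}{n^{l}} \;\leq\; 1.
\]
Combining with Step~2 gives $\mu(X) \leq 1$.

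\medskip

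The argument is wholly elementary; the only point that requires a little care is ensuring that the uniformization really is built from extensions (so that Step~1 applies), but this is explicit in the proof of Lemma~\ref{lem:uniformatizing}. No genuine obstacle arises.
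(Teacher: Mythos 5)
Your proof is correct. Note, however, that the paper does not actually prove this lemma itself: it simply cites \cite[Theorem~I.4.2]{PP}, where the statement appears as the Kraft inequality for prefix codes. So there is no internal proof to compare against; what you have supplied is a self-contained replacement built from machinery already present in the paper. Your Step~1 (an extension $(X-u)+uA^{r}$ replaces a term $n^{-|u|}$ by $n^{r}$ terms each equal to $n^{-(|u|+r)}$, preserving the sum -- the new strings being distinct and disjoint from $X-u$ because $X$ is a prefix set) together with the observation that the uniformization of Lemma~\ref{lem:uniformatizing} is built entirely from extensions, reduces everything to the trivial bound $|Y|/n^{l}\leq 1$ for $Y\subseteq A^{l}$. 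This is in fact the standard combinatorial proof of Kraft's inequality, and it has the minor virtue over the measure-theoretic argument (disjointness of the cylinders $xA^{\omega}$) of not presupposing any additivity of $\mu$ on the Cantor space, which the paper only establishes afterwards. One small point of hygiene: the lemma as stated says ``any prefix set,'' but your argument (like the paper's conventions, which restrict attention to finite prefix sets) needs $X$ finite so that the length $l$ exists; it would be worth saying so explicitly.
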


Let $U$ be any clopen subset of $A^{\omega}$.
Suppose that $U = XA^{\omega}$.
Define $\mu (U) = \mu (X)$.
We call $\mu$ defined in this way on the clopen subsets of $A^{\omega}$ the {\em Bernoulli measure}.
This measure is sometimes denoted $\beta(\frac{1}{n})$.

\begin{lemma} \mbox{}
\begin{enumerate}

\item Let $X$ be a prefix set. Then $\mu (X) = 1$ if, and only if $X$ is a maximal prefix set.

\item  The Bernoulli measure is well-defined.

\end{enumerate}
\end{lemma}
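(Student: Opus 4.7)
The plan is to prove the two statements separately, using the extension/reduction framework already developed.

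For part (1), I would first establish the key subclaim that an extension $X \mapsto X^{+}$ preserves measure. If $u \in X$ has length $k$ and is replaced by $uA^{r}$, then $\mu(uA^{r}) = n^{r} \cdot n^{-(k+r)} = n^{-k} = \mu(u)$, so $\mu(X^{+}) = \mu(X)$. For the ``if'' direction, assume $X$ is a maximal prefix set of length $l$. By iterated extension (as in the proof of Lemma~\ref{lem:uniformatizing}) we obtain a uniform prefix set $X'$ of length $l$ with $X'A^{\omega} = XA^{\omega}$, and by Lemma~\ref{lem:maximal} this equals $A^{\omega}$. Since $A^{l}$ is also uniform of length $l$ and generates $A^{\omega}$, Corollary~\ref{cor:aslan} forces $X' = A^{l}$, and $\mu(X) = \mu(X') = \mu(A^{l}) = n^{l} \cdot n^{-l} = 1$.

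For the ``only if'' direction, suppose $X$ is not maximal. Then there is a finite string $u$ such that $X \cup \{u\}$ is still a prefix set. Applying Lemma~\ref{lem:acre} to this larger set gives
\[
\mu(X) + \mu(u) = \mu(X \cup \{u\}) \leq 1,
\]
so $\mu(X) \leq 1 - \mu(u) < 1$.

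For part (2), I would invoke Lemma~\ref{lem:roo}: if $XA^{\omega} = YA^{\omega}$ then $Y$ is obtained from $X$ by a finite sequence of extensions and reductions. The subclaim above shows extensions preserve $\mu$, and since reductions are their inverses (Lemma~\ref{lem:ops}), they do too. Hence $\mu(X) = \mu(Y)$, and $\mu(U) := \mu(X)$ depends only on $U = XA^{\omega}$, not on the chosen prefix set.

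The only mildly delicate point is making sure the ``extend $X$ to a uniform set of length $l$'' argument does not inadvertently lengthen strings beyond $l$; this is handled by the construction in Lemma~\ref{lem:uniformatizing}, which fills each $u \in X$ only up to the common length $l$. Beyond that, everything is a direct bookkeeping consequence of the measure-invariance of extensions, so no step should present a genuine obstacle.
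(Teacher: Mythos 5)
Your proposal is correct and follows essentially the same route as the paper: both parts rest on the observation that extensions (and hence reductions) preserve $\mu$, with Lemma~\ref{lem:acre} handling the ``only if'' half of (1) and the uniqueness of generating sets up to extension/reduction handling (2). The only (harmless) variations are that for the ``if'' half of (1) you reach $\mu(X)=1$ by uniformizing $X$ and invoking Corollary~\ref{cor:aslan} to identify it with $A^{l}$, where the paper instead asserts that a maximal prefix set is built by extensions from $\varepsilon$ --- your version is if anything slightly more careful --- and for (2) you cite Lemma~\ref{lem:roo} where the paper cites Proposition~\ref{prop:wol}, on which Lemma~\ref{lem:roo} itself depends.
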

\begin{proof} (1) let $X$ be a maximal prefix set.
It is obtained by means of a sequence of extensions from $\varepsilon$ and $\mu (\varepsilon) = 1$.
Clearly, $\mu (A^{r}) = 1$.
Thus if $Y_{1}$ and $Y_{2}$ are prefix sets and $Y_{2}$ is an extension of $Y_{1}$ then
$\mu (Y_{2}) = \mu (Y_{1})$.
The result follows.
Suppose now that $\mu (X) = 1$.
If $X$ is not maximal, then we can find a string $u$ such that $X + u$ is a prefix set.
But $\mu (X + u) = \mu (X) + \mu (u) > 1$, which is a contradiction.

(2) This follows by Proposition~\ref{prop:wol}.
\end{proof}

The following result will be important later.

\begin{lemma}\label{lem:property_B} Let $A$ be an alphabet with $n \geq 2$ elements.
Let $U = XA^{\omega}$ and $V = YA^{\omega}$ be such that $X$ has length $l$, 
and $Y$ has length $m$.
Without loss of generality, we may assume that $m \geq l$.
Suppose that $\mu (U) = \mu (V)$.
Then 
there is a prefix set $X'$ uniform of length $m$ such that $U = X'A^{\omega}$,
and   
there is a prefix set $Y'$ uniform of length $m$ such that $V = Y'A^{\omega}$,
such that $\left| X' \right| = \left| Y' \right|$.
\end{lemma}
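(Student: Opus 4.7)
The plan is to use uniformization to replace $X$ and $Y$ by generating sets of the same (sufficiently large) length, and then to read off the cardinalities from the measure.

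First I would invoke Lemma~\ref{lem:uniformatizing} twice, once with $r = m \geq l$ applied to $X$, and once with $r = m$ applied to $Y$ (where this second application is trivial since $Y$ already has length $m$; if $Y$ is already uniform of length $m$ we take $Y' = Y$, otherwise we uniformize it). This produces prefix sets $X'$ and $Y'$, both uniform of length $m$, such that $U = X'A^{\omega}$ and $V = Y'A^{\omega}$.

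Next I would use the fact that the Bernoulli measure $\mu$ is well-defined on clopen subsets (independent of the choice of generating prefix set). Thus $\mu(X') = \mu(U) = \mu(V) = \mu(Y')$. Since $X'$ is uniform of length $m$, every $x \in X'$ satisfies $\mu(x) = 1/n^{m}$, so $\mu(X') = |X'|/n^{m}$; similarly $\mu(Y') = |Y'|/n^{m}$. Equating these gives $|X'| = |Y'|$, as required.

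I do not foresee any real obstacle: the statement is essentially a consequence of the well-definedness of the Bernoulli measure together with the uniformization lemma already proved. The only small point to verify is that when we uniformize $Y$ to length $m$ (in the case that $Y$ itself is not uniform) we may still take the target length to be exactly $m$, which is permitted by Lemma~\ref{lem:uniformatizing} since the length of $Y$ is at most $m$.
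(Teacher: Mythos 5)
Your proof is correct and follows essentially the same route as the paper's: uniformize both generating sets to length $m$ via Lemma~\ref{lem:uniformatizing}, then compare cardinalities using $\mu(X') = |X'|/n^{m}$ and $\mu(Y') = |Y'|/n^{m}$. (Your version is in fact slightly cleaner, since the paper's proof contains a small typo writing $n^{l}$ where $n^{m}$ is meant.)
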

\begin{proof} By Lemma~\ref{lem:uniformatizing}, we may find a prefix set $X'$,
uniform of length $m$, such that $U = X'A^{\omega}$.
Observe that $\mu (X) = \mu (X')$.
Let $r$ be the number of strings in $X'$.
Then $\mu (X) = \frac{r}{n^{l}}$. 
Similarly, 
we may find a prefix set $Y'$,
uniform of length $m$, such that $V = Y'A^{\omega}$.
Observe that $\mu (Y) = \mu (Y')$.
Let $s$ be the number of strings in $Y'$.
Then $\mu (Y) = \frac{s}{n^{l}}$.
It follows immediately that $r = s$, as required.
\end{proof}

The following result was first proved in  \cite{Law2009} but suggested by earlier work of Meakin and Sapir \cite{MS}.
It shows how to construct inverse submonoids of the polycyclic inverse monoid.
A {\em wide} inverse subsemigroup of $S$ is one that contains all the idempotents of $S$.

\begin{proposition}\label{prop:bingo} Let $A$ be an $n$-letter alphabet.
Then there is a bijection between right congruences on $A^{\ast}$ and wide inverse submonoids of $P_{n}$.
If $\rho$ is the right congruence in question, then the corresponding inverse submonoid of $P_{n}$ simply
consists of $0$ and all elements $yx^{-1}$ where $(y,x) \in \rho$.
\end{proposition}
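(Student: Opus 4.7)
The plan is to construct explicit maps in both directions and verify they are mutually inverse. Given a right congruence $\rho$ on $A^{\ast}$, form $S_\rho = \{0\} \cup \{yx^{-1} : (y,x) \in \rho\}$ as prescribed. I would verify that $S_\rho$ is a wide inverse submonoid by four checks. Wideness (together with $1 = \varepsilon\varepsilon^{-1} \in S_\rho$) is immediate from reflexivity of $\rho$, since every idempotent of $P_n$ has the form $xx^{-1}$. Closure under inverses follows from symmetry of $\rho$, since $(yx^{-1})^{-1} = xy^{-1}$. Closure under multiplication uses the explicit polycyclic product recalled earlier: a nonzero product $yx^{-1} \cdot vu^{-1}$ splits into the case $v = xz$, giving $yzu^{-1}$, and the case $x = vz$, giving $y(uz)^{-1}$. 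In the first case, right-compatibility of $\rho$ gives $(yz,xz) = (yz,v) \in \rho$, and transitivity with $(v,u) \in \rho$ yields $(yz,u) \in \rho$; the second case is symmetric.

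Conversely, given a wide inverse submonoid $T \subseteq P_n$, define $\rho_T = \{(y,x) \in A^{\ast} \times A^{\ast} : yx^{-1} \in T\}$. Reflexivity is immediate from wideness ($xx^{-1} \in T$ for every $x$), symmetry from closure of $T$ under inverses, and transitivity from the equal-prefix computation $zy^{-1} \cdot yx^{-1} = zx^{-1}$ inside $P_n$. The decisive axiom is right-compatibility: given $(y,x) \in \rho_T$, so $yx^{-1} \in T$, and any $z \in A^{\ast}$, wideness places the idempotent $xz(xz)^{-1}$ in $T$, and the polycyclic product formula gives
$$yx^{-1} \cdot xz(xz)^{-1} = yz(xz)^{-1} \in T,$$
so $(yz,xz) \in \rho_T$, as needed.

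Finally I would check that these two constructions invert one another. The identity $\rho = \rho_{S_\rho}$ is a direct unpacking of the definitions. For the other direction $T = S_{\rho_T}$, I would use the uniqueness of the normal form $yx^{-1}$ for a nonzero element of $P_n$ recorded in the description of the polycyclic monoid, together with the observation that $0 \in T$ automatically (as an idempotent of a wide submonoid). The main obstacle I anticipate is the right-compatibility step for $\rho_T$: this is the single place where the wideness hypothesis is used essentially, and one must hit on the correct idempotent, namely $xz(xz)^{-1}$, to multiply against $yx^{-1}$ so that the product collapses to $yz(xz)^{-1}$. Everything else is routine bookkeeping driven by the two-case polycyclic product formula.
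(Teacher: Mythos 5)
Your proof is correct and complete: both directions of the correspondence are verified with the right case analysis of the polycyclic product, the wideness hypothesis is used exactly where it must be (to manufacture the idempotent $xz(xz)^{-1}$ for right-compatibility), and the mutual inversion of the two constructions follows from the uniqueness of the normal form $yx^{-1}$ together with the observation that $0$ lies in every wide inverse subsemigroup. The paper itself gives no proof of this proposition --- it is imported from an earlier reference --- and your argument is the standard one found there, so there is nothing to reconcile.
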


Consider now the congruence $\lambda$ of the length map $A^{\ast} \rightarrow \mathbb{N}$ given by $x \mapsto \left| x \right|$.
Define $G_{n} \subseteq P_{n}$ to consist of zero and all elements $yx^{-1}$ where $\left| y \right| = \left| x \right|$.
Then by Proposition~\ref{prop:bingo},  $G_n$ is an inverse monoid.
It was first defined in the thesis of David Jones \cite{Jones} and is called the {\em gauge inverse monoid (on $n$ letters)}
and arose from investigations of strong representations of the polycyclic inverse monoids \cite{JL1} motivated by the theory developed in \cite{BJ}.

We now define $Ad_{n} \subseteq C_{n}$, called the {\em $n$-adic} inverse monoid.
In the case $n = 2$, we refer to the {\em dyadic inverse monoid}.
By definition, its consists of those elements of $C_{n}$ which are orthogonal joins of elements of $G_{n}$.
That is, maps of the form
$$
\left(
\begin{array}{ccc}
x_{1} & \ldots & x_{r}\\
y_{1} & \ldots & y_{r}
\end{array}
\right)
$$
where $\left| y_{i} \right| = \left| x_{i} \right|$ for $1 \leq i \leq r$.
The proof of the following is immediate.

\begin{proposition} 
The $n$-adic inverse monoid is a fundamental Boolean inverse monoid and wide inverse submonoid of the Cuntz inverse monoid $C_{n}$.
\end{proposition}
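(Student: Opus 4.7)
The plan is to verify the claim in four stages: (i) $Ad_n$ contains every idempotent of $C_n$; (ii) $Ad_n$ is closed under the monoid operations; (iii) $Ad_n$ inherits compatible binary joins from $C_n$ and its idempotents form a Boolean algebra; (iv) $Ad_n$ is fundamental.

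For (i), every idempotent of $C_n$ is a finite orthogonal join of idempotents of the form $x_ix_i^{-1}$, one for each string in a prefix set. Each such idempotent trivially satisfies $\left|x_i\right| = \left|x_i\right|$, so it lies in $G_n$; therefore the join lies in $Ad_n$. In particular, $Ad_n$ is wide and $E(Ad_n) = E(C_n)$ is the Tarski algebra, which is a Boolean algebra.

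For (ii), the identity is $\varepsilon\varepsilon^{-1} \in G_n$, and closure under inversion is immediate because the length condition $\left|y_i\right| = \left|x_i\right|$ is symmetric. The real work is closure under multiplication. Suppose $s = \bigvee_{i=1}^{r} s_i$ and $t = \bigvee_{j=1}^{m} t_j$ are orthogonal joins with $s_i, t_j \in G_n$. By distributivity of multiplication over compatible joins in $C_n$, we have
\[
st = \bigvee_{i,j} s_i t_j,
\]
and every nonzero $s_i t_j$ lies in $G_n$ since $G_n$ is an inverse submonoid of $P_n$. The family $\{s_i t_j : s_i t_j \neq 0\}$ is automatically compatible because the join above exists; by Lemma~\ref{lem:narnia} (applied within $P_n$), compatible pairs that are not orthogonal are comparable, so passing to the maximal elements of this family yields an orthogonal family in $G_n$ with the same join. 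Hence $st \in Ad_n$.

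For (iii), the same argument works verbatim for joins: if $s, t \in Ad_n$ are compatible, combining their orthogonal decompositions into one compatible family in $G_n$ and taking maxima gives an orthogonal family in $G_n$ whose join is $s \vee t$. Distributivity of multiplication over binary joins is inherited from $C_n$. Combined with (i), this shows $Ad_n$ is a Boolean inverse monoid. For (iv), since $Ad_n$ is wide in $C_n$, any element of $Ad_n$ commuting with every idempotent of $Ad_n$ commutes with every idempotent of $C_n$, and so must be an idempotent by fundamentality of $C_n$.

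The main obstacle is the multiplication step in (ii): one must argue that the join $\bigvee s_i t_j$, while a priori only a compatible join of elements of $G_n$, can actually be rewritten as an \emph{orthogonal} join of elements of $G_n$, which is what the definition of $Ad_n$ requires. This reduction relies on the polycyclic structure encoded in Lemma~\ref{lem:narnia}, so that compatibility in $G_n \subseteq P_n$ collapses into comparability outside of the orthogonal case.
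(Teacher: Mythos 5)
Your proof is correct and follows the route the paper intends: the paper dismisses this proposition as immediate, and your argument supplies exactly the omitted details, using the paper's own observation (Lemma~\ref{lem:narnia} and the remark following it) that a finite compatible subset of $P_{n}$ has the same join as the orthogonal subset of its maximal elements, together with wideness to transfer fundamentality from $C_{n}$. The one step worth making explicit, which you use implicitly in stage (iii), is that restrictions of compatible elements remain compatible (if $s \sim t$, $s' \leq s$ and $t' \leq t$, then $s'^{-1}t' \leq s^{-1}t$ is an idempotent), so the union of the two orthogonal decompositions really is a compatible family before you pass to maximal elements.
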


The following result will establish most of the properties we shall need to prove our main theorem.

\begin{proposition} 
The dyadic inverse monoid may be equipped with a good invariant mean that reflects the $\mathscr{D}$-relation.
\end{proposition}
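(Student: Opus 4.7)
The plan is to take $\mu$ to be induced by the Bernoulli measure $\beta(\tfrac{1}{2})$ on the Cantor space $\{a,b\}^{\omega}$. Every idempotent of $Ad_{2}$ is a partial identity $1_{U}$ on a clopen subset $U$, and the semilattice of such idempotents coincides with the Tarski algebra of clopens; accordingly I define $\mu(1_{U}):=\mu(U)$, which is well-defined by Proposition~\ref{prop:wol}. Axiom (IM1) is trivial, and (IM3) is just the finite additivity of $\beta(\tfrac{1}{2})$ on disjoint clopens (verified after uniformization at a common length via Lemma~\ref{lem:uniformatizing}, where it becomes an elementary counting statement). The substantive axiom is (IM2), and this is precisely where the length-preservation condition defining $Ad_{2}$ is used: an arbitrary element of $Ad_{2}$ has the form
$$s=\begin{pmatrix} x_{1} & \cdots & x_{r} \\ y_{1} & \cdots & y_{r} \end{pmatrix}$$
with $|x_{i}|=|y_{i}|$ for every $i$, hence $\mu(s^{-1}s)=\sum_{i} 2^{-|x_{i}|}=\sum_{i} 2^{-|y_{i}|}=\mu(ss^{-1})$.

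For goodness, suppose that $\mu(1_{U})\le\mu(1_{V})$. I would apply Lemma~\ref{lem:uniformatizing} and Lemma~\ref{lem:property_B} to choose a common length $m$ and uniform prefix sets $X'$, $Y'$ of length $m$ with $U=X'\{a,b\}^{\omega}$ and $V=Y'\{a,b\}^{\omega}$. The equality $|X'|/2^{m}=\mu(U)\le\mu(V)=|Y'|/2^{m}$ forces $|X'|\le|Y'|$, so I may pick any $|X'|$-element subset $Y''\subseteq Y'$; then $e':=1_{Y''\{a,b\}^{\omega}}$ satisfies $e'\le 1_{V}$ and $\mu(e')=\mu(1_{U})$.

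For the reflection of $\mathscr{D}$, suppose that $\mu(1_{U})=\mu(1_{V})$. Lemma~\ref{lem:property_B} provides prefix sets $X'=\{x_{1},\ldots,x_{r}\}$ and $Y'=\{y_{1},\ldots,y_{r}\}$, both uniform of the same length $m$ and of the same cardinality $r$, with $U=X'\{a,b\}^{\omega}$ and $V=Y'\{a,b\}^{\omega}$. Because every $x_{i}$ and every $y_{j}$ has length exactly $m$, the matrix
$$\begin{pmatrix} x_{1} & \cdots & x_{r} \\ y_{1} & \cdots & y_{r} \end{pmatrix}$$
satisfies the length-preservation condition and so lies in $Ad_{2}$, witnessing $1_{U}\,\mathscr{D}\,1_{V}$ in the dyadic monoid itself. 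The main obstacle throughout is ensuring that every witness produced respects the condition $|x_{i}|=|y_{i}|$ and so actually belongs to $Ad_{2}$ and not merely to the ambient Cuntz monoid $C_{2}$; Lemma~\ref{lem:property_B} is tailored to exactly this point, which is why after passing to uniform generating sets the whole argument reduces to counting strings.
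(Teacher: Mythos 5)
Your proposal is correct and follows essentially the same route as the paper: the Bernoulli measure $\beta(\tfrac{1}{2})$ on clopens, with (IM2) coming from the length-preservation condition $|x_{i}|=|y_{i}|$, reflection of $\mathscr{D}$ from Lemma~\ref{lem:property_B}, and goodness from uniformization to a common length. The only difference is that you spell out the goodness argument (choosing an $|X'|$-element subset of $Y'$) where the paper merely sketches it, which is a welcome addition rather than a divergence.
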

\begin{proof} 
The idempotents of $A_{2}$ are simply the clopen subsets of the Cantor space.
We equip these with the Bernoulli measure $\beta(\frac{1}{2})$.
We show first that $\mu$ is an invariant mean.
There is only one property we have to check.
Let $\mathbf{e}$ and $\mathbf{f}$ be $\mathscr{D}$-related idempotents in $A_{2}$.
Let $\mathbf{e}$ be the identity function on the clopen subset $U$ and
let $\mathbf{f}$ be the identity function on the clopen subset $V$.
Then there are prefix sets $X = x_{1} + \ldots + x_{r}$ and $Y = y_{1} + \ldots + y_{r}$ such that
$U= (x_{1} + \ldots + x_{r} )A^{\omega}$ and $V = (y_{1} + \ldots + y_{r})A^{\omega}$ such that
$y_{i}x_{i}^{-1}$ are elements of the gauge inverse monoid.
That is, we have a map
$$
\left(
\begin{array}{ccc}
x_{1} & \ldots & x_{r}\\
y_{1} & \ldots & y_{r}
\end{array}
\right)
$$
where $\left| y_{i} \right| = \left| x_{i} \right|$ for $1 \leq i \leq r$
from $\mathbf{e}$ to $\mathbf{f}$.
In particular, the sets $X$ and $Y$ contain the same number of strings,
and the same number of strings of the same length.
It is now immediate that $\mu (\mathbf{e}) = \mu (\mathbf{f})$.
The fact that the $\mathscr{D}$-relation is reflected follows from Lemma~\ref{lem:property_B}.

It remains to prove that this invariant mean is good.
Let $\mu (\mathbf{e}) \leq \mu(\mathbf{f})$.
We work with clopen subsets and so we assume that $\mu (U) \leq \mu (V)$.
This may be easily deduced using Lemma~\ref{lem:uniformatizing} and a modified version of  Lemma~\ref{lem:property_B}.
\end{proof}

The above proposition, combined with Lemma~\ref{lem:5April}, tells us that the dyadic inverse monoid is a Foulis monoid
and that its lattice of principal ideals forms a linearly ordered set isomorphic to the dyadic rationals in the unit interval.
We therefore now have the main result of this section.

\begin{theorem} 
The MV-algebra of dyadic rationals is co-ordinatized by the dyadic inverse monoid.
\end{theorem}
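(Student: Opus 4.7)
The plan is to combine the previous proposition with Lemma~\ref{lem:5April} and then explicitly identify $\mathsf{E}(Ad_{2})$ with the dyadic rationals in $[0,1]$ via the Bernoulli measure.

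First, the previous proposition provides a good invariant mean $\mu$ on $Ad_{2}$ that reflects $\mathscr{D}$. Applying Lemma~\ref{lem:5April}, I obtain that $Ad_{2}$ is completely semisimple, that $\mathscr{D}$ preserves complementation, and that $Ad_{2}/\mathscr{J}$ is linearly ordered. In particular $Ad_{2}$ is a Foulis monoid satisfying the lattice condition, so by the discussion following Corollary~\ref{cor:lurgi} the partial algebra $(\mathsf{E}(Ad_{2}), \oplus, [0], [1])$ is an MV-algebra isomorphic to $Ad_{2}/\mathscr{J}$.

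Next, define $\Phi \colon \mathsf{E}(Ad_{2}) \to [0,1]$ by $\Phi([\mathbf{e}]) = \mu(\mathbf{e})$. This is well-defined because (IM2) makes $\mu$ constant on $\mathscr{D}$-classes, and it is injective because $\mu$ reflects the $\mathscr{D}$-relation. For the image I would observe that every idempotent of $Ad_{2}$ is a partial identity on a clopen set $XA^{\omega}$ with $X$ a finite prefix set over $\{a,b\}$, so $\mu(\mathbf{e}) = \sum_{x \in X} 2^{-|x|}$ is a dyadic rational in $[0,1]$. Conversely, given a dyadic rational $\tfrac{k}{2^{n}}$ with $0 \leq k \leq 2^{n}$, choose any $k$ strings of length $n$ to form a prefix set $X$; the corresponding partial identity gives an idempotent whose $\mathscr{D}$-class maps to $\tfrac{k}{2^{n}}$. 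Thus $\Phi$ is a bijection onto the dyadic rationals in $[0,1]$.

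It remains to check that $\Phi$ is a morphism of MV-algebras. Since $Ad_{2}/\mathscr{J}$ is linearly ordered and the dyadic rationals in $[0,1]$ are linearly ordered, and $\Phi$ preserves and reflects the order (using Proposition~\ref{prop:E}(3) together with the monotonicity of $\mu$ established in the proof of Lemma~\ref{lem:5April}), it suffices to check that $\Phi$ is compatible with $\oplus$ and with complementation. Compatibility with $\oplus$ is immediate from (IM3): if $[\mathbf{e}] \oplus [\mathbf{f}]$ is defined, we may choose orthogonal representatives $\mathbf{e}'$ and $\mathbf{f}'$ and then $\Phi([\mathbf{e}] \oplus [\mathbf{f}]) = \mu(\mathbf{e}' \vee \mathbf{f}') = \mu(\mathbf{e}') + \mu(\mathbf{f}') = \Phi([\mathbf{e}]) + \Phi([\mathbf{f}])$, which on the right-hand side is exactly the partial sum of the target MV-algebra when the total does not exceed $1$. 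Compatibility with complementation follows from $\mu(\overline{\mathbf{e}}) = 1 - \mu(\mathbf{e})$, which itself follows from (IM3) applied to $\mathbf{e} \vee \overline{\mathbf{e}} = 1$ and (IM1).

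The only step with any real content is the surjectivity onto the dyadic rationals together with the reflection of order, and both reduce to the computation of $\mu$ on clopen subsets via uniform prefix sets, as in Lemma~\ref{lem:property_B}. So the proof is essentially an assembly of results already in hand, with the Bernoulli measure providing the explicit isomorphism.
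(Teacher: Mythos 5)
Your proposal is correct and follows essentially the same route as the paper, which derives the theorem directly from the preceding proposition (the good invariant mean reflecting $\mathscr{D}$) together with Lemma~\ref{lem:5April}; you have simply written out the explicit isomorphism via the Bernoulli measure that the paper leaves implicit.
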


It is worth looking in more detail at the structure of the dyadic inverse monoid $Ad_{2}$.

\begin{proposition} The dyadic inverse monoid is isomorphic to the direct limit of the sequence
$$I_{1} \rightarrow I_{2} \rightarrow I_{4} \rightarrow I_{8} \rightarrow \ldots$$
It is therefore an AF inverse monoid.
\end{proposition}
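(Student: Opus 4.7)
The plan is to exhibit, inside $Ad_{2}$, an increasing chain of subsemigroups $S_{0} \subseteq S_{1} \subseteq S_{2} \subseteq \ldots$ with $S_{n} \cong I_{2^{n}}$, whose union is all of $Ad_{2}$, and in which successive inclusions are (up to letter isomorphism) the standard doubling morphism $A \mapsto 2A$. By uniqueness of the direct limit, this will identify $Ad_{2}$ with $\varinjlim I_{2^{n}}$.

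First, for each $n \geq 0$, let $S_{n}$ be the set of elements of $Ad_{2}$ that admit a representation
$$\left(\begin{array}{ccc} x_{1} & \ldots & x_{r} \\ y_{1} & \ldots & y_{r} \end{array}\right)$$
in which every $x_{i}$ and every $y_{i}$ has length exactly $n$. Since the set of length-$n$ strings over $\{a,b\}$ is a maximal prefix set of size $2^{n}$, such an element is nothing other than a partial bijection of this $2^{n}$-element set. The assignment sending this data to the corresponding partial bijection defines an isomorphism $\varphi_{n} \colon S_{n} \to I_{2^{n}}$; that it respects composition, inversion, meets and orthogonal joins is a direct check using the polycyclic multiplication, since within $S_n$ all multiplications reduce to composition of partial bijections on $\{a,b\}^{n}$.

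Next I would show $S_{n} \subseteq S_{n+1}$ and that the inclusion corresponds to the standard embedding $I_{2^{n}} \hookrightarrow I_{2^{n+1}}$. Given $yx^{-1}$ with $|x|=|y|=n$, the extension operation of Lemma~\ref{lem:ops} replaces it by the orthogonal join of $(ya)(xa)^{-1}$ and $(yb)(xb)^{-1}$, both of length $n+1$. Under $\varphi_{n+1}$, the single partial bijection $x \mapsto y$ on length-$n$ strings thus becomes the pair of partial bijections $xa \mapsto ya$ and $xb \mapsto yb$ on length-$(n+1)$ strings. In rook-matrix terms this is exactly the doubling map $A \mapsto 2A = A \oplus A$, so the inclusion $S_{n} \hookrightarrow S_{n+1}$ is, up to letter isomorphism, the standard map of Lemma~\ref{lem:rigidity} corresponding to $s = 2$.

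For $\bigcup_{n} S_{n} = Ad_{2}$, I would use Lemma~\ref{lem:uniformatizing}: given any element of $Ad_{2}$ presented with prefix sets $X$ and $Y$ of maximum length $\ell$, apply extensions to $X$ until all its strings have length $\ell$ (and correspondingly to $Y$, matching each extension on the top row with the same extension on the bottom row so that the length-matching condition $|y_{i}|=|x_{i}|$ is preserved); this places the element in $S_{\ell}$. Combined with the preceding step, this exhibits $Ad_{2}$ together with the maps $\varphi_{n}^{-1}$ as a cocone over the doubling sequence, and a routine check of the universal property, using that each $\varphi_{n}$ is injective and that the compatible joins in $Ad_{2}$ are detected at some finite level, shows that $Ad_{2} \cong \varinjlim I_{2^{n}}$. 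The final clause then follows by the definition of an AF inverse monoid given in the previous subsection, since the doubling sequence corresponds to the Bratteli diagram with one vertex at each level and two edges between consecutive levels.

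The main obstacle is purely bookkeeping: verifying that the identification $\varphi_{n}$ is compatible with orthogonal joins and meets in $Ad_{2}$ (not just with composition of the generators $yx^{-1}$), and checking that the extension operation on a \emph{join} $\bigvee_i y_i x_i^{-1}$ translates under $\varphi_{n+1}$ to a literal doubling of the underlying rook matrix. Both reduce to the fact, proved in Lemma~\ref{lem:ops} and Corollary~\ref{cor:aslan}, that uniform representatives of clopen sets are unique of a given length, so no ambiguity arises when we line up the level-$n$ and level-$(n+1)$ pictures.
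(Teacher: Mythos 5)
Your proposal is correct and follows essentially the same route as the paper: identify the level-$n$ gauge elements (the "matrix units" $yx^{-1}$ with $|x|=|y|=n$ and their orthogonal joins) with $I_{2^{n}}$, realize the inclusion $S_{n}\subseteq S_{n+1}$ via the refinement $XA^{\omega}=XaA^{\omega}+XbA^{\omega}$ as the standard doubling map, and use the uniformization lemma to show the union exhausts $Ad_{2}$. Your version merely spells out the letter-isomorphism bookkeeping and the cocone check that the paper leaves implicit.
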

\begin{proof} Let $A = a + b$.
We construct the binary tree with root $A^{\omega}$ and then vertices
$aA^{\omega}$ and $bA^{\omega}$ at the first level,
$aaA^{\omega}$, $abA^{\omega}$, $baA^{\omega}$ and $bbA^{\omega}$ at the second level, and so on.
The clopen sets at each level are pairwise disjoint.
Every clopen set has a generating set constructed from taking the union of the above sets at the same level.
This is a result of Lemma~\ref{lem:uniformatizing}.
However, the same subset can, of course, be represented in different ways.
Thus the clopen set $aA^{\omega}$ which is from level 1, can also be written as $aaA^{\omega} + abA^{\omega}$, a union of sets constructed from level 2.

We now observe that the elements of $Ad_{2}$ constructed from the gauge inverse monoid maps at level $l$ form an inverse monoid isomorphic to $I_{2^{l}}$.
The best way to see this is that at level $l$ we may construct all the relevant matrix units together with the identity and the zero.
For example, at level 2, we have, in addition to the identity and the zero, the 4 idempotents
$$aa(aa)^{-1}, ab(ab)^{-1}, ba(ba)^{-1}, bb(bb)^{-1}$$
and then the non-identity matrix units such as $aa(ab)^{-1}$.
By taking joins we get all the other elements of $I_{2^{l}}$.
In addition, we see that this copy is actually an inverse submonoid of $Ad_{2}$ containing the zero.

We claim next that $I_{2^l} \subseteq I_{2^{l+1}}$.
This is also best seen by focusing on the matrix units.
First observe that every idempotent of level $l$ is also an idempotent at level $l+1$.
If $XA^{\omega}$ is a clopen subset with $X$ a union of idempotents at level $l$
then $XA^{\omega} = XaA^{\omega} + XbA^{\omega}$.
It follows that every element of $I_{l}$ reappears in $I_{l+1}$ by the process of refinement.

It is now evident that $Ad_{2} = \bigcup_{l=1}^{\infty} I_{2^{l}}$, which proves the theorem.
\end{proof}

\begin{remark} 
{\em In the light of the above result, we might also call the dyadic inverse monoid the {\em CAR inverse monoid}.}
\end{remark}

The group of units of $Ad_{2}$ is the direct limit $S_{1} \rightarrow S_{2} \rightarrow S_{4} \rightarrow \ldots$
where the inclusions between successive symmetric groups are block diagonal maps.

\section{Proof of the main theorem}

The goal of this section is to prove the following.

\begin{theorem}[Co-ordinatization]\label{the:one} 
Let $E$ be a countable MV-algebra.
Then there is a Foulis monoid $S$ satisfying the lattice condition such that $S/\mathscr{J}$  is isomorphic to $E$.
\end{theorem}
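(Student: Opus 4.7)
The plan is to reduce the problem, via Mundici's categorical equivalence $\Gamma$ between MV-algebras and unital lattice-ordered abelian groups, to a classification/realization problem for countable dimension groups with order unit, and then to realize these by AF inverse monoids built from Bratteli diagrams. Specifically, given a countable MV-algebra $E$, I would apply $\Gamma^{-1}$ to obtain a countable unital $\ell$-group $(G,u)$ with $E \cong \Gamma(G,u) = [0,u]_{G}$. Because $E$ is countable, $G$ is a countable dimension group with strong order unit, and by the Effros--Handelman--Shen theorem it can be written as the direct limit of a sequence of simplicial groups $\mathbb{Z}^{k_i}$ with positive unital homomorphisms. Carrying the order unit along this sequence and unravelling each positive map as a nonnegative integer matrix acting between simplicial groups gives exactly the combinatorial data of a Bratteli diagram $B$ satisfying the combinatorial conditions from Section~3.

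The second step is to construct the AF inverse monoid $\mathsf{I}(B)$ and compute $\mathsf{I}(B)/\mathscr{J}$. The proof of Theorem~\ref{the:finite_case} already identifies $\mathsf{E}(I_{n})$ with the unit interval of $\mathbb{Z}$ and, by iterating, $\mathsf{E}(I_{m(1)} \times \cdots \times I_{m(k)})$ with the interval $[0,(m(1),\ldots,m(k))]$ in $\mathbb{Z}^{k}$; moreover, the description of standard morphisms via integer matrices (Proposition~\ref{prop:classifying_morphisms} and Remark~\ref{rem:noodle}) shows that the map $\mathsf{E}(\sigma_{i})$ induced on effect algebras is precisely the restriction to the unit interval of the matrix map $M_{i} \colon \mathbb{Z}^{k_i} \to \mathbb{Z}^{k_{i+1}}$. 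Thus the functor $S \mapsto \mathsf{E}(S)$ of Proposition~\ref{prop:E} sends the chain $S_{0} \to S_{1} \to \cdots$ to the chain of unit intervals in the $\mathbb{Z}^{k_i}$, and I would then verify that $\mathsf{E}$ commutes with direct limits of injective morphisms (this is a routine cofinality argument: every $\mathscr{D}$-class in $\mathsf{I}(B)$ is represented at some finite level, and orthogonal decompositions, compatible joins and complementation can all be pushed down to a common finite stage). This yields $\mathsf{E}(\mathsf{I}(B)) \cong \varinjlim [0,u_{i}]_{\mathbb{Z}^{k_i}} \cong [0,u]_{G} \cong E$ as partial refinement monoids with complementation, i.e.\ as effect algebras with refinement.

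The third step is to confirm the hypotheses of the co-ordinatization framework. By Theorem~\ref{the:holiday}, $\mathsf{I}(B)$ is a Foulis $\wedge$-monoid, so the general theory of Section~2 applies and $\mathsf{E}(\mathsf{I}(B)) = \mathsf{I}(B)/\mathscr{J}$. Because $E$ is an MV-algebra, $[0,u]_{G}$ is a lattice, so the corresponding order on $\mathsf{I}(B)/\mathscr{J}$ (from part~(3) of Proposition~\ref{prop:E}) is lattice-ordered; this is precisely the lattice condition, and upgrades the effect-algebra isomorphism to an MV-algebra isomorphism via the identity $a \boxplus b = a \oplus (a' \wedge b)$.

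The main obstacle I expect is not the overall outline but the compatibility of two notions of ``lattice order''. The order on $\mathsf{I}(B)/\mathscr{J}$ is inherited from the $\mathscr{J}$-order on idempotents of the inverse monoid, whereas the MV-lattice order on $E$ comes from the ambient $\ell$-group $G$. One must check that the isomorphism $\mathsf{E}(\mathsf{I}(B)) \cong [0,u]_{G}$ produced at the level of partial refinement monoids actually carries one lattice structure to the other; in effect algebras with refinement this is automatic once the partial operation is matched, but the argument relies on the non-trivial fact that in a refinement effect algebra the underlying order already determines meets and joins when they exist. A second, more technical point is establishing the flexibility needed in the Effros--Handelman--Shen realization so that the Bratteli diagram produced satisfies \emph{all} the combinatorial conditions of Section~3 (each lower vertex is the target of an edge, each upper vertex is the source of an edge, and the order-unit vectors transform correctly); one may need to pass to a subsequence of the defining chain and absorb zero rows or columns into letter isomorphisms, which is exactly the kind of bookkeeping that Proposition~\ref{prop:classifying_morphisms} and Lemma~\ref{lem:injectivity} are designed to handle.
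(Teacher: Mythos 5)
Your proposal is correct and follows the same overall architecture as the paper: Mundici's representation (Theorem~\ref{the:mundici}) together with Ravindran's theorem (Theorem~\ref{the:ravindran}) to realize $E$ as $[0,u]$ in a countable unital $\ell$-group $G$, then Theorem~\ref{the:dim} to present $G$ as a dimension group coming from a Bratteli diagram $B$, then the AF inverse monoid $\mathsf{I}(B)$. The one place where you diverge is the verification that $\mathsf{I}(B)/\mathscr{J} \cong [0,u]$: you propose to compute $\mathsf{E}(\mathsf{I}(B))$ directly as a direct limit of the unit intervals $[0,\mathbf{n}(i)]$ in the simplicial groups $\mathbb{Z}^{r(i)}$, using the fact that the functor $\mathsf{E}$ of Proposition~\ref{prop:E} commutes with these direct limits and that the interval of a direct limit of unital positive maps is the direct limit of the intervals. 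The paper instead introduces the universal group-valued invariant mean, shows (Lemma~\ref{le:hols}) that for a Foulis monoid it coincides with the universal group of the associated effect algebra, computes it explicitly for $\mathsf{I}(B)$ level by level as $\mathsf{G}(B)$ with order unit $u$ (Proposition~\ref{prop:key_link}), and then invokes Ravindran again through Proposition~\ref{prop:important} and Theorem~\ref{the:nearly_there} to recover $[0,u]$. The two routes rest on the same computation --- that the standard morphism between semisimple levels induces the integer matrix map on cardinalities of idempotents --- so the difference is one of bookkeeping rather than substance; your route avoids the detour through the universal group but obliges you to prove the two direct-limit compatibility statements (commutation of $\mathsf{E}$ with $\varinjlim$, and $\varinjlim [0,u_i] \cong [0,u]$), both of which do hold because every idempotent, every $\mathscr{D}$-relation, every orthogonal decomposition and every positivity assertion is witnessed at a finite level. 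Your observation that the effect-algebra order determines the lattice operations, so that an isomorphism of refinement effect algebras automatically respects $\wedge$ and hence $\boxplus$, is exactly the point the paper leaves implicit, and your remark about absorbing degenerate rows and columns of the incidence matrices into letter isomorphisms correctly identifies the role of Lemma~\ref{lem:injectivity}.
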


We begin by giving some standard definitions and results we shall need.

An {\em ordered abelian group} $G$ is given by a submonoid $G^{+} \subseteq G$ called the {\em positive cone}
such that $G^{+}  \cap (-G^{+}) = \{0\}$ and $G = G^{+} - G^{+}$.
If $a,b \in G$ define $a \leq b$ if, and only if, $b - a \in G^{+}$.
The condition $G = G^{+} - G^{+}$ means that $G$ is the group of fractions of its positive cone.
The condition $G^{+}  \cap (-G^{+}) = \{0\}$ means that $0$ is the only invertible element of $G^{+}$.
We say that $G^{+}$ is {\em conical} if it has trivial units.
The theory of abelian monoids tells us that every abelian conical cancellative monoid arises as the
positive cone of an ordered abelian group.
If the order in a partially ordered abelian group $G$ actually induces a lattice structure on $G$ we say that the group is {\em lattice-ordered} or an {\em $l$-group}.

Let $G$ be a partially ordered abelian group.
An {\em order unit} is a positive element $u$ such that for any $g \in G$ there exists a natural number $n$ such that $g \leq nu$.
Let $u \in G$ be any positive, non-zero element.
Denote by $[0,u]$ the set of all elements of $g$ such that $0 \leq g \leq u$.
The notation is not intended to suggest that this set is linearly ordered.
Let $p,q \in [0,u]$.
Define the partial binary operation $\oplus$ on $[0,u]$ by $p \oplus q = p + q$ if $p + q \in [0,u]$,
and undefined otherwise.
If $p \in [0,u]$ define $p' = u - p$.
Then $[0,u]$ becomes an effect algebra \cite[Theorem~3.3]{FB1994}.
We call this the {\em interval effect algebra} associated with $(G,u)$.
If, in addition,  $G$ is an $l$-group and $u$ is an order-unit, then $[0,u]$ is actually an MV-algebra.
The following is proved in \cite[Theorem 3.9]{M1}, \cite[Corollary 7.1.8]{CDM} and \cite{M3}.

\begin{theorem}\label{the:mundici} 
Every MV-algebra is isomorphic to an interval effect algebra $[0,u]$
where $u$ is an order unit in an $l$-group.
\end{theorem}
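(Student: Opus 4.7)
The plan is to follow Mundici's classical construction, which builds the desired unital $\ell$-group $(G,u)$ directly from $M$ via the device of \emph{good sequences}. Given an MV-algebra $M$, I would define a good sequence to be an eventually-zero sequence $\mathbf{a} = (a_{1}, a_{2}, \ldots)$ of elements of $M$ satisfying $a_{i} \oplus a_{i+1} = a_{i}$ for every $i$. The intuition is that $\mathbf{a}$ represents the positive group element ``$a_{1} + a_{2} + \cdots$'', where each $a_{i}$ is a piece lying in the fundamental interval $[0,u]$ and $a_{i+1}$ records the overflow left after filling up $u$ with the previous summands. Compatibility with the MV-operation is built into the condition $a_{i} \oplus a_{i+1} = a_{i}$.

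The first major step is to equip the set $M^{\ast}$ of good sequences with a commutative, associative, cancellative addition $+$, defined componentwise from $\oplus$, $\wedge$, and complementation. Concretely, one sets $(\mathbf{a} + \mathbf{b})_{n}$ to be the result of carrying the overflow from all lower-indexed positions into position $n$, a procedure which mimics the schoolbook algorithm for addition with carries. I would check that the sum of two good sequences is again good, that $+$ is associative, commutative, and cancellative, that $(0,0,\ldots)$ is an identity, and that $M^{\ast}$ is conical, so by the theory of abelian monoids stated in the excerpt, $M^{\ast}$ is the positive cone $G_{M}^{+}$ of an abelian group $G_{M} = M^{\ast} - M^{\ast}$. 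I would then define the natural order by $\mathbf{a} \leq \mathbf{b}$ iff $\mathbf{b} - \mathbf{a} \in M^{\ast}$, show it is translation-invariant, and verify that meets and joins of arbitrary pairs exist; this is where the refinement property (E4) of $M$ is indispensable, since it is precisely the input needed to decompose a pair of good sequences into common refinements out of which $\mathbf{a} \wedge \mathbf{b}$ and $\mathbf{a} \vee \mathbf{b}$ can be assembled. The outcome is that $G_{M}$ is an $\ell$-group.

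The final step is to take $u = (1,0,0,\ldots)$ and check that it is an order unit: any good sequence with support of length at most $n$ is majorised by $nu$. I would then show that the map $\iota \colon M \to [0,u]$ given by $\iota(a) = (a,0,0,\ldots)$ is an isomorphism of MV-algebras. Injectivity is immediate; surjectivity follows because any good sequence $\mathbf{a} \leq u$ must vanish beyond the first coordinate; and $\iota$ preserves $\oplus$ and the complement $a' = u - \iota(a)$ by direct inspection of the definitions. The main obstacle throughout will be the associativity and cancellativity of addition on $M^{\ast}$: the identities look innocuous but require patient juggling of MV-algebra laws, and it is at that stage that the refinement property does the real technical work. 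Everything before and after is either routine bookkeeping or an appeal to the standard Grothendieck construction.
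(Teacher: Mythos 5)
Your proposal is correct in substance, but it follows a genuinely different route from the paper. You reconstruct Mundici's original argument via good sequences, building the unital $\ell$-group $(G_{M},u)$ concretely by hand; this is precisely the proof in \cite[Chapter 7]{CDM} and \cite{M1}, which the paper cites but does not reproduce. The paper instead sketches a proof through machinery it needs later anyway: it forms the universal commutative monoid $M_{E}$ of the partial algebra $(E,\oplus,0)$ as a quotient of the free semigroup on $E$ (Proposition~\ref{prop:free}), notes that $M_{E}$ is conical and cancellative with the image of $E$ convex and the image of $1$ an order unit, passes to the Grothendieck group, and then invokes Ravindran's theorem (Theorem~\ref{the:ravindran}) to identify $E$ with the interval $[0,u]$; that $G_{E}$ is an $\ell$-group is then read off from the lattice structure of $[0,u]$. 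The trade-off is that your construction is explicit and self-contained at the level of MV-algebras, whereas the paper's route applies to arbitrary effect algebras with the refinement property and is the form actually reused in the proof of the main theorem, where the universal group of $\mathsf{E}(S)$ is computed as a universal group-valued invariant mean. One caveat about your write-up: in this paper $\oplus$ denotes the \emph{partial} effect-algebra operation, so the condition $a_{i} \oplus a_{i+1} = a_{i}$, whenever defined, forces $a_{i+1} = 0$ by cancellativity (E6), and your good sequences would all degenerate to $(a_{1},0,0,\ldots)$. The defining condition must be stated with the total \L ukasiewicz sum, written $\boxplus$ here, i.e.\ $a_{i} \boxplus a_{i+1} = a_{i}$; with that correction your outline is the standard classical proof.
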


We briefly sketch out how the above theorem may be proved.
If $(E,\oplus,0)$ is a partial algebra, then we may construct its {\em universal monoid} $\nu \colon E \rightarrow M_{E}$ in the usual way.
However,  we are interested not merely in the existence of $M_{E}$ but in its properties so we shall give more details on how the universal monoid is constructed.
The proof of part (1) below follows from \cite{Baer} and \cite[Lemma~1.7.6, Proposition~1.7.7, Proposition~1.7.8, Lemma~1.7.10, Lemma~1.7.11, Theorem~1.7.12]{DP}.
It is noteworthy that commutativity arises naturally and does not have to be imposed.
The proof of part (2) below follows from \cite[Theorem~1.7.12]{DP}.
Alternative approaches can be found in \cite{GW,Wehrung}.

\begin{proposition}\label{prop:free} Let $(E,\oplus,0)$ be a conical partial refinement monoid.
\begin{enumerate}

\item  Let $E^{+}$ denote the free semigroup on $E$.
Define $\sim$ to be the congruence on $E^{+}$ generated by $(a,b) \sim (a \oplus b)$ when $\exists a \oplus b$.
Put $M = E^{+}/\sim$.
Then $M$ is a conical abelian monoid and is the universal monoid of $E$.

\item Suppose that $(E,\oplus,0,1)$ is also an effect algebra.
Then $M$ is cancellative, 
the image of $E$ in $M$ is convex,
and the image of $1$ in $M$ is an order unit.

\end{enumerate}
\end{proposition}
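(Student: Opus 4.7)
The plan is to realize $M$ explicitly as tuples in $E^{+}$ modulo a \emph{common-refinement} equivalence, following a Baer-style construction adapted to partial refinement monoids. Declare $(x_{1}, \ldots, x_{n}) \sim (y_{1}, \ldots, y_{m})$ if there is a matrix $(c_{ij})$ of elements of $E$ with $x_{i} = c_{i1} \oplus \cdots \oplus c_{im}$ for every $i$ and $y_{j} = c_{1j} \oplus \cdots \oplus c_{nj}$ for every $j$, all the displayed sums being defined. Reflexivity comes from padding with zeros via (E3) and symmetry from transposing the matrix; transitivity is the delicate point, requiring iterated use of (E4) to splice two refinement matrices into a three-dimensional array whose axial marginals furnish the composite refinement. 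Once $\sim$ is shown to be a congruence for concatenation (by juxtaposition of matrices), the induced operation on $M$ is associative by (E2), commutative by transposing a two-row matrix, and unital with identity $[0]$ by (E3). Conicity reduces by refinement to the atomic case, where it is exactly (E5). The universal property is then forced: any map $\varphi \colon E \to N$ into a conical abelian monoid $N$ respecting $\oplus$ must send refinement-equivalent tuples to the same sum in $N$, so it extends uniquely to $M$.

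For part~(2), the preceding lemma already supplies (E6) inside $E$. To lift cancellativity to $M$, I refine both sides of $[(x_{1}, \ldots, x_{n})] + [a] = [(y_{1}, \ldots, y_{m})] + [a]$ against each other so that the summand $a$ appears as a matching column on both sides; peeling off those matching entries via (E6) leaves a refinement witnessing $[(x_{1}, \ldots, x_{n})] = [(y_{1}, \ldots, y_{m})]$. For convexity of the image of $E$ in $M$, given $[z] + [w] = [a]$ with $a \in E$ and $z = (z_{1}, \ldots, z_{n})$, apply refinement between the concatenated tuple and the singleton $(a)$: the column indexed by $a$ exhibits the $z_{i}$ as consecutive $\oplus$-summands of $a$, so iterating (E2) shows that their total sum exists in $E$ and represents $[z]$. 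That $[1]$ is an order unit is then immediate: each $[x_{i}] \le [1]$ by (E8) combined with the convexity just established, hence $[(x_{1}, \ldots, x_{n})] \le n[1]$, and this estimate passes to the group of fractions.

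The main obstacle is transitivity of the common-refinement relation: given matrices witnessing $X \sim Y$ and $Y \sim Z$, one has to produce a single matrix witnessing $X \sim Z$, which involves decomposing each entry of the first matrix against the corresponding row of the second by repeated use of (E4) and carefully bookkeeping a double-indexed family of subelements whose row-sums and column-sums reassemble to the required decomposition. A secondary obstacle is the cancellation argument, since the elimination of the common summand $a$ cannot be done cell by cell but only after the refinement matrices are brought into compatible block form, at which point (E6) can be applied at the atomic level.
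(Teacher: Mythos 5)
Your construction is precisely the Baer-style common-refinement construction that the paper itself relies on: it offers no proof of its own, deferring to Baer and to Dvure\v{c}enskij--Pulmannov\'a (Lemmas 1.7.6--1.7.12 and Theorem 1.7.17), and those sources proceed exactly by the refinement-matrix equivalence, the three-dimensional splicing argument for transitivity via (E4), and the corner-cancellation argument via (E6) that you outline. The one loose end is that you work throughout with the common-refinement relation rather than with the congruence $\sim$ generated by $(a,b)\sim(a\oplus b)$ as in the statement; to conclude that $E^{+}/\sim$ itself is the universal monoid you still need to check that the two relations coincide --- in particular that $\sim$ already identifies permuted tuples, which is exactly the point behind the paper's remark that commutativity ``arises naturally'' and deserves an explicit line in your write-up.
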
 

An abelian monoid always has a universal group:  its {\em Grothendieck group}.
If the abelian monoid is cancellative and conical then its Grothendieck group is partially ordered and is its group of fractions.
It follows that the Grothendieck group of the universal monoid of an effect algebra satisfying the refinement property is the {\em universal group}
of that effect algebra.
This leads to the main theorem we shall need proved by Ravindran \cite{R}.
Its full proof may be found as \cite[Theorem 1.7.17]{DP}.

\begin{theorem}[Ravindran]\label{the:ravindran} Let $E$ be an effect algebra satisfying the refinement property.
\begin{enumerate}

\item The universal group $\gamma \colon E \rightarrow G_{E}$ is a partially ordered abelian group with the refinement property.
Its positive cone $P$ is generated as a submonoid by the image of $E$ under $\gamma$.

\item Put $u = \gamma (1)$. Then $u$ is an order unit in $G_{E}$ and $E$ is isomorphic under $\gamma$ to the interval effect algebra $[0,u]$.

\item If $E$ is actually an MV-algebra, then $[0,u]$ is a lattice from which it follows that $G_{E}$ is an $l$-group.
If $E$ is countable then $G_{E}$ is countable.

\end{enumerate}
\end{theorem}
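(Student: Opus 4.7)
The plan is to build $G_E$ as the Grothendieck group of the universal monoid $M_E$ supplied by Proposition~\ref{prop:free}, and then to verify the order-theoretic properties claimed in (1)--(3) by pushing structure from $E$ up through $M_E$ and then through the group completion.

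First I would set $\gamma \colon E \to G_E$ to be the composite of the universal map $\nu \colon E \to M_E$ with the canonical Grothendieck map $M_E \to G_E$. By Proposition~\ref{prop:free}(2), $M_E$ is cancellative and conical with $\nu(E)$ convex and $\nu(1)$ an order unit, so the Grothendieck map is injective on $M_E$ and we may identify $M_E$ with its image $P \subseteq G_E$. Put $u = \gamma(1)$. That $P \cap (-P) = \{0\}$ follows from conicity of $M_E$, and $G_E = P - P$ is automatic from the Grothendieck construction; thus $G_E$ is partially ordered with positive cone $P$ generated (as a submonoid) by $\gamma(E)$. The fact that $u$ is an order unit follows because any $g \in G_E$ can be written $g = a - b$ with $a \in P$; since $P$ is generated by $\gamma(E)$ and each $\gamma(e) \le u$, we get $a \le nu$ for some $n$, hence $g \le nu$.

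Next I would verify the refinement property for $G_E$. Any equation $a_1 + a_2 = b_1 + b_2$ in $P$ may be pulled back to an equality of words in the free semigroup on $E$ modulo the congruence generated by $(x,y) \mapsto x \oplus y$ whenever $\exists\, x \oplus y$. The idea is to refine simultaneously in $E$: after enough applications of axiom (E4) one obtains a common refinement in $E$ whose images in $P$ provide the required $c_{ij}$. This argument is a standard (if delicate) bookkeeping, essentially the content of the Riesz interpolation arguments in \cite[Ch.\ 1.7]{DP}. From refinement in $P$ one then deduces refinement in $G_E$ by the usual subtraction trick: given $a_1 + a_2 = b_1 + b_2$ in $G_E$, add a common positive element to both sides to land inside $P$, refine there, and then subtract. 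This is the main technical obstacle.

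For part (2) I would show that $\gamma$ restricted to $E$ is an isomorphism onto $[0,u]$. Membership $\gamma(E) \subseteq [0,u]$ is immediate from $\exists\, e \oplus e'$ with $e \oplus e' = 1$. Surjectivity is exactly the convexity assertion of Proposition~\ref{prop:free}(2): each $g \in [0,u] \cap P$ lies in $\nu(E)$. Injectivity requires showing that if $\nu(e) = \nu(f)$ in $M_E$ then $e = f$ in $E$; this follows by noting that the defining congruence on the free semigroup does not identify distinct generators provided the refinement property holds in $E$ — precisely because any identification $e \sim f$ forces a common refinement in $E$ which, together with conicity, collapses to $e = f$. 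Operations are preserved since the construction of $M_E$ was designed to turn $\oplus$ into $+$, and the complement of $e$ in $[0,u]$ equals $u - \gamma(e) = \gamma(e')$.

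Finally, for (3), if $E$ is an MV-algebra then $[0,u]$ is a lattice, so $G_E$ inherits a lattice structure on $[0,u]$. To extend it to all of $G_E$, I would use the order unit $u$: for any $g \in G_E$ translate by a multiple of $u$ into $[0, Nu]$ and use that $[0,Nu]$ is itself a lattice (this follows by iterating the MV-algebra lattice operations on $[0,u]$, using refinement to glue). Joins and meets so defined commute with translation by elements of $G_E$, giving $G_E$ the structure of an $\ell$-group. Countability in the final clause is automatic: $E$ countable makes the free semigroup on $E$ countable, hence $M_E$ is countable, hence so is $G_E = M_E - M_E$.
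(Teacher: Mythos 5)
The paper does not prove this theorem: it attributes it to Ravindran \cite{R} and points to \cite[Theorem 1.7.17]{DP} for the full proof, having set up only the scaffolding (Proposition~\ref{prop:free} and the remark that the Grothendieck group of a cancellative conical monoid is its partially ordered group of fractions). Your proposal reconstructs the proof along exactly that architecture --- universal monoid, then Grothendieck group, then transfer of order-theoretic properties --- so in terms of route you match the paper and its sources. The outline of parts (1), (2) and (3) is sound, and the reduction of the lattice question to showing that $[0,Nu]$ is a lattice by iterating refinement is the standard argument.

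One point deserves correction, and one deserves emphasis. The correction: your injectivity argument for $\gamma|_E$ invokes conicity, but conicity plays no role there. The actual mechanism, due to Baer and worked out in \cite[Ch.~1.7]{DP}, is a characterization of the defining congruence on the free semigroup: two words $(a_1,\ldots,a_m)$ and $(b_1,\ldots,b_n)$ are congruent if and only if they admit a common refinement matrix $(c_{ij})$ with $a_i = \bigoplus_j c_{ij}$ and $b_j = \bigoplus_i c_{ij}$. For one-letter words this forces $e = f$ outright, giving injectivity with no appeal to conicity. The emphasis: that same common-refinement lemma is also what delivers the refinement property of $M_E$ (and hence of the positive cone of $G_E$), which you correctly flag as the main technical obstacle but only gesture at. So the two hardest steps of the theorem collapse to a single combinatorial lemma about the congruence, and that lemma is precisely the content the paper (and your proposal) delegates to the cited literature; a self-contained proof would have to supply it.
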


The proof of the following is immediate but it is significant from the point of view of the main goal of this paper.

\begin{proposition}\label{prop:important} Let $S$ be a Foulis monoid.
Then $S/\mathscr{J}$ is isomorphic to the interval $[0,u]$ where $u$ is an order unit in the universal group of $\mathsf{E}(S)$
and is the image of the class of the identity of $E(S)$.
\end{proposition}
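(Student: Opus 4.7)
The proof plan is essentially a concatenation of two results already established in the excerpt, so I will just describe the chain of identifications.

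First I would invoke Corollary~\ref{cor:lurgi}: since $S$ is a Foulis monoid, i.e.\ a completely semisimple Boolean inverse monoid in which $\mathscr{D}$ preserves complementation, the partial algebra $(\mathsf{E}(S),\oplus,[0],[1])$ is an effect algebra satisfying the refinement property. This is the hypothesis needed to feed into Ravindran's machinery, and it is where every structural assumption on $S$ gets used.

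Next I would apply Theorem~\ref{the:ravindran} to $E = \mathsf{E}(S)$. This produces the universal group $\gamma \colon \mathsf{E}(S) \to G_{\mathsf{E}(S)}$ as a partially ordered abelian group with the refinement property, sets $u = \gamma([1])$ as an order unit, and yields an isomorphism of effect algebras
\[
\mathsf{E}(S) \;\stackrel{\gamma}{\cong}\; [0,u].
\]
So $\mathsf{E}(S)$ is already identified with the required interval, with $u$ manifestly the image of the $\mathscr{D}$-class of the identity under $\gamma$.

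Finally I would translate $\mathsf{E}(S)$ back to $S/\mathscr{J}$. Because $S$ is completely semisimple, the remark immediately preceding the definition of Foulis monoid gives $\mathscr{D} = \mathscr{J}$ on $S$, so the quotient $E(S)/\mathscr{D}$ defining $\mathsf{E}(S)$ coincides with $S/\mathscr{J}$ as a poset, and the partial operation $\oplus$ on $\mathsf{E}(S)$ is exactly the one transported to $S/\mathscr{J}$ under this bijection. Composing with the isomorphism from Ravindran's theorem produces the desired isomorphism $S/\mathscr{J} \cong [0,u]$.

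There is no real obstacle here: once Corollary~\ref{cor:lurgi} and Theorem~\ref{the:ravindran} are in hand, the proposition is a bookkeeping exercise. The only thing worth being careful about is making explicit that $u$ corresponds to $[1]$ under both identifications, which is immediate from the definition $u = \gamma([1])$ and from the fact that $[1]$ is the top element of $S/\mathscr{J}$ (the $\mathscr{J}$-class of the monoid identity).
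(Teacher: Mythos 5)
Your proposal is correct and is exactly the chain of identifications the paper has in mind: the paper declares this proposition ``immediate'' precisely because it follows from Corollary~\ref{cor:lurgi}, Theorem~\ref{the:ravindran}(1)--(2) applied to $\mathsf{E}(S)$, and the identification $\mathsf{E}(S) = S/\mathscr{J}$ already recorded after the definition of a Foulis monoid. Nothing is missing.
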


Every AF inverse monoid is a Foulis monoid by Theorem~\ref{the:holiday}.
Accordingly, our first aim will be to explicitly compute the universal group of the effect algebra associated with an AF inverse monoid.
To do this, it will be useful to work with the idempotents of the inverse monoid directly rather than with the elements of the associated effect algebra.
This is the import of the following definition.

Let $S$ be a Boolean inverse monoid.
A {\em group-valued invariant mean} on $S$ is a function $\theta \colon E(S) \rightarrow G$ to an abelian group $G$ such
that the following two axioms hold:
\begin{description}
\item[{\rm (GVIM1)}] If $e$ and $f$ are orthogonal then $\theta (e \vee f) = \theta (e) + \theta (f)$.

\item[{\rm (GVIM2)}]  We have that $\theta (s^{-1}s) = \theta (ss^{-1})$ for all $s \in S$.
\end{description}
It follows from (GVIM1) that $\theta (0) = 0$.
By the usual considerations, a universal group-valued invariant mean always exists.

The following lemma tells us that we can, indeed, pull-back to the set of idempotents of the inverse monoid.

\begin{lemma}\label{le:hols} Let $S$ be a Foulis monoid.
Then the universal group-valued invariant mean is the universal group of the associated effect algebra.
\end{lemma}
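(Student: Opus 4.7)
The strategy is to verify that the universal group $G_{\mathsf{E}(S)}$ of the effect algebra, equipped with the composite map $E(S) \to \mathsf{E}(S) \to G_{\mathsf{E}(S)}$, satisfies the universal property defining a universal group-valued invariant mean on $S$. Thus I plan to go through the following steps.

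First, I would check that the composite $\gamma \colon E(S) \to G_{\mathsf{E}(S)}$, $e \mapsto \gamma([e])$, is a group-valued invariant mean. Axiom (GVIM2) is immediate: if $s \in S$ then $s^{-1}s \, \mathscr{D} \, ss^{-1}$, so $[s^{-1}s] = [ss^{-1}]$ in $\mathsf{E}(S)$ and a fortiori in $G_{\mathsf{E}(S)}$. For (GVIM1), if $e \perp f$ then taking $e' = e$, $f' = f$ in the definition shows that $[e] \oplus [f] = [e \vee f]$ is defined in $\mathsf{E}(S)$; since the universal group sends the partial operation $\oplus$ to $+$, we get $\gamma(e \vee f) = \gamma(e) + \gamma(f)$.

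Next, I would verify universality. Let $\theta \colon E(S) \to G$ be any group-valued invariant mean. By (GVIM2), if $e \, \mathscr{D} \, f$, say via $e \xrightarrow{s} f$, then $\theta(e) = \theta(s^{-1}s) = \theta(ss^{-1}) = \theta(f)$. Hence $\theta$ factors through $\mathsf{E}(S)$ as some map $\bar{\theta} \colon \mathsf{E}(S) \to G$. I then need to check that $\bar{\theta}$ is a morphism of partial commutative monoids, i.e.\ that whenever $[e] \oplus [f]$ is defined we have $\bar{\theta}([e] \oplus [f]) = \bar{\theta}([e]) + \bar{\theta}([f])$. By definition of $\oplus$ on $\mathsf{E}(S)$, one can choose representatives $e' \, \mathscr{D} \, e$ and $f' \, \mathscr{D} \, f$ with $e' \perp f'$ and $[e] \oplus [f] = [e' \vee f']$; then (GVIM1) and $\mathscr{D}$-invariance of $\theta$ give
\[
\bar{\theta}([e] \oplus [f]) = \theta(e' \vee f') = \theta(e') + \theta(f') = \bar{\theta}([e]) + \bar{\theta}([f]).
\]
Since $\mathsf{E}(S)$ is a conical partial refinement monoid satisfying (E7) by Proposition~\ref{prop:E}, and in fact an effect algebra with the refinement property by Corollary~\ref{cor:lurgi} (using that $S$ is a Foulis monoid), the universal property of the Grothendieck group in Proposition~\ref{prop:free} and Theorem~\ref{the:ravindran} applies: $\bar{\theta}$ extends uniquely to a group homomorphism $\tilde{\theta} \colon G_{\mathsf{E}(S)} \to G$ with $\tilde{\theta} \circ \gamma = \theta$. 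Uniqueness of the extension follows because $G_{\mathsf{E}(S)}$ is generated as a group by the image of $\mathsf{E}(S)$.

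The only step that requires any real care is the verification that $\bar{\theta}$ respects the partial operation $\oplus$; everything else is a direct unpacking of definitions. This hinges on the interplay between orthogonal joins in $S$ and the representative-chasing description of $\oplus$ on $\mathsf{E}(S)$, which has already been established in Proposition~\ref{prop:E}, so no new combinatorial work is needed.
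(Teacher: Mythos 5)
Your proposal is correct and is essentially the paper's argument run from the opposite end: the paper starts with the universal group-valued invariant mean $\nu\colon E(S)\rightarrow G_{S}$ and checks it has the universal property of the universal group of $\mathsf{E}(S)$, whereas you start with the universal group of $\mathsf{E}(S)$ and check it has the universal property of the universal group-valued invariant mean. Both directions rest on exactly the same two observations — (GVIM2) lets any invariant mean descend to $\mathsf{E}(S)=E(S)/\mathscr{D}$, and (GVIM1) together with the representative-chasing definition of $\oplus$ gives the correspondence between invariant means on $E(S)$ and $\oplus$-additive maps on $\mathsf{E}(S)$ — so this is the same proof up to which universal object is taken as the starting point.
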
 
\begin{proof}
Let $\nu \colon E(S) \rightarrow G_{S}$ be the universal group-valued invariant mean.
Denote by $\nu' \colon \mathsf{E}(S) \rightarrow G_{S}$ by $\nu' ([e]) = [\nu (e)]$.
This is a well-defined map such that if $[e] \oplus [f]$ exists then $\nu' ([e] + [f]) = \nu'([e]) + \nu' ([f])$. 
Because of axiom (GVIM2), we may define a function
$\mu \colon \mathsf{E}(S) \rightarrow G_{S}$ by $\mu ([e]) = \nu (e)$.
Suppose that $[e] \oplus [f]$ is defined.
Then it equals $[e' \vee f']$ where $e \mathscr{D} e'$ and $f \mathscr{D} f'$.
But $\nu (e' \vee f') = \nu (e') + \nu (f')$, and so $\mu ([e]) + \mu ([f]) = \mu ([e] \oplus [f])$.

Now let $\theta \colon \mathsf{E}(S) \rightarrow H$ be any map to a group such that
if $[e] \oplus [f]$ is defined then $\theta ([e] \oplus [f]) = \theta ([e]) + \theta ([f])$.
Define $\phi \colon E(S) \rightarrow H$ by $\phi (e) = \theta ([e])$.
Then it is immediate that $\phi$ is a group-valued invariant mean.
It follows that there is a group homomorphism $\alpha \colon G_{S} \rightarrow H$ such that $\alpha \nu = \phi$. 
Clearly, $\alpha \nu' = \theta$.
\end{proof}

We now set about computing the universal group-valued mean of an AF inverse monoid.
First, we shall need some definitions.
A {\em simplicial group} is simply a group of the form $\mathbb{Z}^{r}$ with the usual ordering.
A {\em positive homomorphism} between simplicially ordered groups maps positive elements to positive elements.
If the ordered groups are also equipped with distinguished order units, then a homomorphism is said to be {\em normalized}
if it maps distinguished order units one to the other.
A {\em dimension group} is defined to be a direct limit of a sequence of simplicially ordered groups and positive homomorphisms.
An ordered abelian group   is said to satisfy the {\em Riesz interpolation property} (RIP) if $a_{1},a_{2} \leq b_{1},b_{2}$,
in all possible ways, implies that there is an element $c$ such that $a_{1},a_{2} \leq c$ and $c \leq b_{1},b_{2}$.
Such a group satisfies the {\em Riesz decomposition property} (RDP) if for all {\em positive} $a,b,c$ if $a \leq b + c$
implies that there are positive elements $b',c'$ such that $b' \leq b$ and $c' \leq c$ and $a = b' + c'$.
These two properties (RIP and RDP) are equivalent for partially ordered abelian groups \cite[Proposition~21.3]{Goodearl}
(but not for effect algebras).
The partially ordered abelian group $(G, G^+)$ is said to be {\em unperforated} if $g \in G$ and $ng \in G^+$ for some natural number $ n \geq 1$ implies that $g \in G^+$.
The proof of part (1) of the following is part of \cite[Theorem~3.1]{Effros}, and the proof of part (2) is from \cite[Corollary 21.9]{Goodearl}

\begin{theorem}\label{the:dim} \mbox{}
\begin{enumerate}

\item Countable partially ordered abelian groups are dimension groups precisely when they satisfy the Riesz interpolation property and are unperforated.

\item Each countable dimension group with a distinguished order unit is isomorphic to a direct limit of a sequence of simplicial groups with order-units and normalized positive homomorphisms.
Thus each such group is constructed from a Bratteli diagram.

\item Countable $l$-groups are dimension groups.

\end{enumerate}
\end{theorem}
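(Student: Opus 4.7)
I shall address the three parts in increasing order of difficulty. For part (3), that every countable $l$-group is a dimension group is a direct consequence of part (1): any abelian $l$-group $G$ satisfies the Riesz interpolation property because, given $a_1,a_2 \leq b_1,b_2$, the lattice operation yields $c = a_1 \vee a_2$ with $a_1,a_2 \leq c \leq b_1 \wedge b_2$; and $G$ is unperforated because, decomposing $g = g^+ - g^-$ with $g^+ \wedge g^- = 0$, the relation $ng \geq 0$ forces $n g^- \leq n g^+$ between elements of disjoint supports, whence $g^- = 0$ and $g \geq 0$.

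For part (2), I would extract the required structure from the proof of part (1). That proof produces a direct system of simplicial groups $F_n$ with positive homomorphisms $\phi_n \colon F_n \to G$ whose limit is $G$. Given a distinguished order unit $u \in G^+$, I would arrange, by placing $u$ among the generators to be captured at each finite stage, that $u = \phi_n(u_n)$ for some $u_n \in F_n^+$ from some level onward, and that $u_n$ is itself an order unit in $F_n$ (achievable by enlarging the simplicial dimension so the coordinates of $u_n$ dominate each basis vector). The connecting maps $\tau_n \colon F_n \to F_{n+1}$ then satisfy $\phi_{n+1} \tau_n(u_n) = u = \phi_{n+1}(u_{n+1})$; after aligning choices at each stage one obtains normalized positive homomorphisms, as required.

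The heart of the argument is therefore the converse direction of part (1). The forward direction is immediate, since $\mathbb{Z}^r$ satisfies RIP via coordinatewise maxima, is unperforated coordinatewise, and both properties are preserved by direct limits of positive homomorphisms. For the converse, let $G$ be countable, unperforated, and satisfy RIP (equivalently RDP). I would fix enumerations of $G^+$ as $a_1, a_2, \ldots$ and of the positive relations among finite tuples of elements of $G$, and inductively construct simplicial groups $F_n = \mathbb{Z}^{r_n}$, positive maps $\phi_n \colon F_n \to G$, and positive connecting maps $\tau_n \colon F_n \to F_{n+1}$ with $\phi_{n+1}\tau_n = \phi_n$, ensuring that (i) each $a_n$ lies in the image of $\phi_n$, and (ii) every relation $\phi_n(x) \geq 0$ valid in $G$ is witnessed by some $y \in F_{n+1}^+$ with $\tau_n(x) = y$. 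The main obstacle is the inductive step: given $\phi_n$ and a finite list of new positive elements and new relations to absorb, one must build $F_{n+1}$ and $\tau_n$ that realise all of them simultaneously. This is where RDP does the real work, by splitting positive elements of $G$ into compatible refinements that can be lifted to a larger simplicial group; and where unperforation is invoked to clear obstructions of the form $n \cdot z \geq 0$ that would otherwise prevent $z$ itself from being realised positively after refinement. The resulting direct limit $\varinjlim F_n$ maps to $G$ by the universal property, surjects onto $G$ by (i), and is order-reflecting by (ii), so yields the desired isomorphism of ordered groups.
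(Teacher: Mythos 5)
The paper does not actually prove this theorem: part (1) is cited to \cite[Theorem~3.1]{Effros} (the Effros--Handelman--Shen theorem) and part (2) to \cite[Corollary~21.9]{Goodearl}, with part (3) following from (1) exactly as you argue. So there is no in-paper proof to match your attempt against; what can be assessed is whether your sketch would stand on its own.

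Your part (3) and the forward direction of part (1) are correct and complete in outline (for unperforation of $l$-groups, the clean version of your ``disjoint supports'' step is $ng^- = ng^- \wedge ng^+ = n(g^-\wedge g^+) = 0$, then torsion-freeness). The genuine gap is in the converse direction of part (1), which is where the entire content of the theorem lives. Your inductive scheme --- enumerate $G^+$ and the positive relations, and build $F_n$, $\phi_n$, $\tau_n$ satisfying (i) and (ii) --- is the right bookkeeping, but the existence of the connecting map $\tau_n$ realising condition (ii) is precisely Shen's criterion: given a positive homomorphism $f\colon \mathbb{Z}^r \to G$ and an element $x$ with $f(x) \geq 0$ (or $x \in \ker f$), one must produce a simplicial group $\mathbb{Z}^s$ and positive maps $g\colon \mathbb{Z}^r \to \mathbb{Z}^s$, $h\colon \mathbb{Z}^s \to G$ with $f = hg$ and $g(x) \geq 0$ (respectively $g(x)=0$). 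This is a several-page argument in which RDP is used to refine the images of the standard basis vectors into a common family of positive elements of $G$, and unperforation is used to remove the integer multiples that the refinement process introduces. Saying that ``RDP does the real work'' and that ``unperforation is invoked to clear obstructions'' names the ingredients without supplying the lemma, so as written the proof does not go through; you would need to state and prove the factorization lemma (or cite it, as the paper does). The same remark applies, less seriously, to part (2): making $u_n$ an order unit of $F_n$ requires deleting the coordinates on which $u_n$ vanishes and checking this does not change the limit, which your sketch gestures at but does not verify.
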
  

We may now explicitly compute the universal group-valued invariant means of AF inverse monoids.
We begin with a special case.
In what follows, we denote by $\left| e \right|$ the cardinality of the set $A$ where $e = 1_{A}$.

\begin{lemma}\label{lem:ww1} \mbox{} 
\begin{enumerate}
\item Let $I_{n}$ be a finite symmetric inverse monoid on $n$ letters.
Define the function $\pi \colon E(I_{n}) \rightarrow \mathbb{Z}$ by $\pi (1_{A}) = \left| A \right|$.
Then $\pi$ is the universal group-valued invariant mean of $I_{n}$ and the image of the identity is $n$, an order unit.

\item Let $T = S_{1} \times \ldots \times S_{r}$  be a semisimple inverse monoid, where $n(1), \ldots, n(r)$ are the number of letters in the underlying sets of $S_{1}, \ldots, S_{r}$, respectively. 
Put $\mathbf{n} = (n(1), \ldots, n(r))$.
Define 
$$\pi \colon E(S_{1} \times \ldots \times S_{r}) \rightarrow \mathbb{Z}^{r}$$ 
by 
$$\pi (e_{1}, \ldots, e_{r}) = (\left| e_{1} \right|, \ldots, \left| e_{r} \right|).$$
Then $\pi$ is the universal group-valued invariant mean of $T$
and the identity of $T$ is mapped to the order unit $\mathbf{n}$.
\end{enumerate}
\end{lemma}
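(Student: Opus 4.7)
The plan for part (1) is to first verify the two axioms directly, and then to use the highly constrained structure of $E(I_n)$ (isomorphic to the Boolean algebra of subsets of $\{1,\dots,n\}$) to nail down universality. For (GVIM1), orthogonal idempotents $1_A,1_B$ in $I_n$ correspond to disjoint sets, so $1_A \vee 1_B = 1_{A \cup B}$ and $|A \cup B| = |A| + |B|$. For (GVIM2), every $s \in I_n$ is a partial bijection, so its domain and range have the same cardinality; hence $\pi(s^{-1}s) = \pi(ss^{-1})$. That the identity maps to $n$ is immediate. So $\pi$ is a group-valued invariant mean.

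For universality in part (1), suppose $\theta \colon E(I_n) \to G$ is a group-valued invariant mean into an abelian group $G$. The key step is to observe that for any two letters $i,j$, the partial bijection $e_{ij}$ satisfies $e_{ij}^{-1}e_{ij} = 1_{\{j\}}$ and $e_{ij}e_{ij}^{-1} = 1_{\{i\}}$, so by (GVIM2) all singleton idempotents have the same $\theta$-value, call it $g \in G$. Since $1_A = \bigvee_{i \in A} 1_{\{i\}}$ is an orthogonal join, (GVIM1) gives $\theta(1_A) = |A| g$. Define $\alpha \colon \mathbb{Z} \to G$ by $\alpha(k) = kg$; then $\alpha \pi = \theta$, and $\alpha$ is uniquely determined because $\pi(1_{\{1\}}) = 1$ generates $\mathbb{Z}$.

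Part (2) reduces cleanly to part (1) by projecting onto each factor. Writing $\iota_i \colon S_i \to T$ for the obvious embedding sending $s$ to the tuple with $s$ in position $i$ and zeros elsewhere, we note $\iota_i$ is an inverse-semigroup embedding that sends orthogonal joins to orthogonal joins. Hence, given any group-valued invariant mean $\theta \colon E(T) \to G$, the composite $\theta_i := \theta \circ \iota_i \colon E(S_i) \to G$ is itself a group-valued invariant mean on $S_i$. By part (1), $\theta_i(1_A) = |A| g_i$ for a unique $g_i \in G$. Then for any idempotent $(e_1,\dots,e_r) \in E(T)$ we have the orthogonal join $(e_1,\dots,e_r) = \bigvee_{i=1}^{r} \iota_i(e_i)$, so (GVIM1) yields
\[
\theta(e_1,\dots,e_r) = \sum_{i=1}^r \theta_i(e_i) = \sum_{i=1}^r |e_i|\, g_i.
\]
Defining $\alpha \colon \mathbb{Z}^r \to G$ by $\alpha(k_1,\dots,k_r) = \sum_i k_i g_i$ gives $\alpha \pi = \theta$, and $\alpha$ is uniquely determined since the basis vectors $\pi(\iota_i(1_{\{j\}}))$ generate $\mathbb{Z}^r$. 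The identity of $T$ maps to $\mathbf{n}$, which is the sum of these basis vectors.

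The only slightly delicate point is the argument that all singletons of $I_n$ receive a common $\theta$-value: it is essential that (GVIM2) is stated for all $s \in S$, not merely for idempotents, so that the elements $e_{ij}$ are available. Beyond this, the proof is a routine verification; the use of (GVIM2) to collapse all singletons to a single generator is the one non-formal step.
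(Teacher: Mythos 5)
Your proof is correct and follows essentially the same route as the paper, which merely sketches the argument by noting that the atoms of $E(I_{n})$ all map to $1\in\mathbb{Z}$ and that part (2) follows from part (1); your use of the matrix units $e_{ij}$ to force a common $\theta$-value on all singletons is exactly the content of that remark, spelled out. No gaps.
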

\begin{proof} (1) It is straightforward to check that $\pi$ has the requisite properties.
The universal property follows from the fact that the atoms of $E(I_{n})$ are mapped to the identity of $\mathbb{Z}$.
The proof of (2) follows from (1).
\end{proof}

We may now prove the general case.

\begin{proposition}\label{prop:key_link} Let $B$ be a Bratteli diagram with associated AF inverse monoid $\mathsf{I}(B)$ and associated dimension group $\mathsf{G}(B)$.
Then the universal group-valued invariant mean of $\mathsf{I}(B)$ is given by a map $\pi \colon E(\mathsf{I}(B)) \rightarrow \mathsf{G}(B)$
where the image of the identity of $E(\mathsf{I}(B))$ is an order unit $u$ in $\mathsf{G}(B)$. 
\end{proposition}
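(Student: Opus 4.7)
The plan is to build $\pi$ as the direct limit of the universal group-valued invariant means attached to each finite level of the Bratteli diagram. Write $B$ as the tower $S_0 \xrightarrow{\sigma_0} S_1 \xrightarrow{\sigma_1} S_2 \to \cdots$ of semisimple inverse monoids with standard injective morphisms whose direct limit is $\mathsf{I}(B)$, and write $S_i = I_{m_i(1)} \times \cdots \times I_{m_i(k_i)}$. By Lemma~\ref{lem:ww1}(2) each $S_i$ carries a universal group-valued invariant mean $\pi_i \colon E(S_i) \to \mathbb{Z}^{k_i}$, sending an idempotent to the vector of cardinalities of its components; moreover, $\pi_i$ sends the identity of $S_i$ to the order unit $\mathbf{m}_i = (m_i(1),\dots,m_i(k_i))$.

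Next I would verify that, on idempotents, each standard morphism $\sigma_i$ is compatible with $\pi_i$ and $\pi_{i+1}$ via multiplication by the adjacency matrix $M_i$ of $B$ between levels $i$ and $i+1$; that is, $\pi_{i+1}\circ\sigma_i = M_i\circ\pi_i$. This is essentially Remark~\ref{rem:noodle}: the standard morphism distributes an idempotent $\mathbf{e}_j$ of the $j$-th factor of $S_i$ across the components of $S_{i+1}$ with multiplicities $s_{ij}$, and these multiplicities are precisely the entries of $M_i$. The induced map $M_i \colon \mathbb{Z}^{k_i} \to \mathbb{Z}^{k_{i+1}}$ is positive and, by the combinatorial conditions of the diagram, normalized in the sense that $M_i\mathbf{m}_i = \mathbf{m}_{i+1}$. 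Thus the groups $\mathbb{Z}^{k_i}$ together with $M_i$ form precisely a dimension system whose direct limit is $\mathsf{G}(B)$, with distinguished order unit $u = \iota_0(1)$ where $\iota_i \colon \mathbb{Z}^{k_i} \to \mathsf{G}(B)$ denotes the canonical map.

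Now I would assemble $\pi$ by the universal property of direct limits. Every idempotent of $\mathsf{I}(B)$ is represented by some $e \in E(S_i)$, so setting $\pi(e) := \iota_i(\pi_i(e))$ is well defined by the commuting squares above. Both axioms (GVIM1) and (GVIM2) transfer from the $\pi_i$ to $\pi$: any pair of orthogonal idempotents in $\mathsf{I}(B)$, and any single element whose domain and range idempotents must be compared, can be arranged to lie in a common $S_i$, and $\pi_i$ already handles these at that level. The image of the identity is $\iota_0(1) = u$, which is an order unit in $\mathsf{G}(B)$ because it is the image of the order unit $1 \in \mathbb{Z}^{k_0}$ under a direct limit of normalized positive maps.

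For universality, given any group-valued invariant mean $\theta \colon E(\mathsf{I}(B)) \to H$, restriction along the embeddings $S_i \hookrightarrow \mathsf{I}(B)$ yields group-valued invariant means $\theta_i$ on each $S_i$, which by Lemma~\ref{lem:ww1} factor uniquely as $\phi_i\circ\pi_i$ for homomorphisms $\phi_i \colon \mathbb{Z}^{k_i} \to H$. The identities $\theta_{i+1}\circ\sigma_i = \theta_i$ (on idempotents) together with uniqueness force $\phi_{i+1}\circ M_i = \phi_i$, so the $\phi_i$ descend to a unique $\phi \colon \mathsf{G}(B) \to H$ with $\phi\circ\pi = \theta$. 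I expect the main technical obstacle to be the intertwining identity $\pi_{i+1}\circ\sigma_i = M_i\circ\pi_i$, which is the precise translation between the combinatorial side of the Bratteli diagram and the arithmetic of the cardinality mean; once this is in place, everything else follows formally from the universal property of direct limits together with Lemma~\ref{le:hols}, which has already reduced the universal group of the effect algebra to the universal group-valued invariant mean on $E(\mathsf{I}(B))$.
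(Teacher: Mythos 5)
Your proposal is correct and follows essentially the same route as the paper: decompose $\mathsf{I}(B)$ as the direct limit of the semisimple levels $S_i$, apply Lemma~\ref{lem:ww1} to get the levelwise universal means $\pi_i$, verify the intertwining identity $\pi_{i+1}\circ\sigma_i = M_i\circ\pi_i$ with the adjacency matrices, and pass to the limit. The only difference is that you spell out the universality argument (factoring an arbitrary mean $\theta$ through the $\phi_i$ and descending to $\mathsf{G}(B)$) more explicitly than the paper, which simply asserts that universality is inherited from the levels.
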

\begin{proof} From the Bratteli diagram $B$, we may construct a sequence
$$T_{0} \stackrel{\sigma_{0}}{\rightarrow} T_{1} \stackrel{\sigma_{1}}{\rightarrow} T_{2} \stackrel{\sigma_{2}}{\rightarrow} \ldots $$
of semisimple inverse monoids and injective standard morphisms.
By definition,  $\mathsf{I}(B) = \varinjlim T_{i}$.
Observe that $E(\mathsf{I}(B)) =  \varinjlim E(T_{i})$.
We begin by defining a map $\pi \colon E(\mathsf{I}(B)) \rightarrow \mathsf{G}(B)$,
that will turn out to have the required properties.
We consider level $i$ of the Bratteli diagram $B$.
The semisimple inverse monoid $T_{i}$ is a product $S_{1} \times \ldots \times S_{r(i)}$ of $r(i)$ symmetric inverse monoids,
where $n(1), \ldots, n(i)$ is the number of letters in the underlying sets of $S_{1}, \ldots, S_{r(i)}$, respectively. 
Put $\mathbf{n}(i) = (n(1), \ldots, n(i))$.
Define 
$$\pi_{i} \colon E(S_{1} \times \ldots \times S_{r(i)}) \rightarrow \mathbb{Z}^{r(i)}$$ 
as in Lemma~\ref{lem:ww1}.
Then also by Lemma~\ref{lem:ww1},  $\pi_{i} \colon E(T_{i}) \rightarrow \mathbb{Z}^{r(i)}$ is the universal group-valued invariant mean of $T_{i}$
and the identity of $T_{i}$ is mapped to the order unit $\mathbf{n}(i)$.
Let $\beta_{i} \colon \mathbb{Z}^{r(i)} \rightarrow \mathbb{Z}^{r(i+1)}$ be the $r(i+1) \times r(i)$ matrix
defined after Remark~\ref{rem:noodle}.
We also denote by  $\sigma_{i}$ the restriction of that map to $E(T_{i})$.
We claim that $\beta_{i} \pi_{i} = \pi_{i+1} \sigma_{i}$ and that it is a normalized positive homomorphism.
This follows from two special cases.
First, we consider the standard map from $R_{m}$ to $R_{n}$  given by $A \mapsto sA$.
If $A$ represents an idempotent then $\left| A \right|$ is simply the number of 1's along the diagonal.
Clearly, $\left| sA \right| = s \left| A \right|$.
Thus the corresponding map $\beta$ from $\mathbb{Z}$ to $\mathbb{Z}$ is simply multiplication by $s$.
Observe that $sm = n$.
Second, we consider the standard map from $R_{m(1)} \times \ldots \times R_{m(k)}$ to $R_{n}$
given by $(A_{1},\ldots, A_{k}) \mapsto s_{i1}A_{1} \oplus \ldots \oplus s_{ik}A_{k}$ where $n = s_{1}m(1) + \ldots + s_{k}m(k)$.
The corresponding map from $\mathbb{Z}^{k} \rightarrow \mathbb{Z}$ is given by the $1 \times k$-matrix
$$\left( 
\begin{array}{c c  c}
 s_{1} & \ldots & s_{k}
\end{array}
\right)$$
Our claim now follows.
Thus from the properties of direct limits that we have a well-defined map $\pi \colon E(\mathsf{I}(B)) \rightarrow \mathsf{G}(B)$, 
by construction it is a group-valued invariant mean,
and the image of the identity is an order-unit.
The fact that it has the requisite universal properties follows from the fact that each map $\pi_{i}$ has the requisite universal properties.
\end{proof}

The following theorem combines Proposition~\ref{prop:important}, Proposition~\ref{prop:key_link} and Theorem~\ref{the:ravindran} in the form that we shall need.

\begin{theorem}\label{the:nearly_there} Let $S$ be an AF inverse monoid satisfying the lattice condition.
Then the universal group-valued invariant mean $\mu \colon E(S) \rightarrow G_{S}$
is such that $G_{S}$  is a countable $l$-group and the image of the identity of $S$ in $G_{S}$ is an order unit $u$.
In addition, $S/\mathscr{J}$ is isomorphic to $[0,u]$ as an MV-algebra.
\end{theorem}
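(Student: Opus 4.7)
The plan is to assemble the theorem from the tools already in place: Proposition~\ref{prop:key_link} identifies the universal group-valued invariant mean $\pi \colon E(\mathsf{I}(B)) \to \mathsf{G}(B)$ for an AF inverse monoid and shows that the image of the identity is an order unit; Lemma~\ref{le:hols} reconciles this with the universal group of the associated effect algebra; and Ravindran's theorem then upgrades the whole package to an MV-algebra statement.

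First I would invoke Theorem~\ref{the:holiday} to conclude that $S$ is a Foulis $\wedge$-monoid, so Corollary~\ref{cor:lurgi} applies and $(\mathsf{E}(S),\oplus,[0],[1])$ is an effect algebra satisfying the refinement property. The lattice condition on $S$ means that $S/\mathscr{J} = \mathsf{E}(S)$ is a lattice, so in fact $\mathsf{E}(S)$ is a lattice-ordered effect algebra with the refinement property, i.e.\ an MV-algebra in the sense of the paper.

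Next I would combine Lemma~\ref{le:hols} with Proposition~\ref{prop:key_link}. Writing $S = \mathsf{I}(B)$ for the Bratteli diagram $B$ determined by the defining $\omega$-chain, Proposition~\ref{prop:key_link} gives a group-valued invariant mean $\pi \colon E(S) \to \mathsf{G}(B)$ with the universal property, whose image of the identity is an order unit. By Lemma~\ref{le:hols}, the universal group-valued invariant mean of $S$ coincides with the universal group $G_S$ of the effect algebra $\mathsf{E}(S)$; hence $G_S \cong \mathsf{G}(B)$, which by Theorem~\ref{the:dim} is a countable dimension group. Countability is clear since $\mathsf{I}(B)$ is a countable direct limit of finite monoids, so $E(S)$ is countable and hence so is $G_S$. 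The image of the identity of $E(S)$ under $\mu$ is the image of $\pi(1)$, which is the order unit $u$.

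It remains to promote $G_S$ to an $l$-group and to identify $S/\mathscr{J}$ with $[0,u]$. For this, I would apply Theorem~\ref{the:ravindran} to the MV-algebra $\mathsf{E}(S)$: part~(2) identifies $\mathsf{E}(S)$ with the interval effect algebra $[0,u] \subseteq G_S$, and part~(3) says that because $\mathsf{E}(S)$ is in fact an MV-algebra, the interval $[0,u]$ is a lattice and $G_S$ is an $l$-group. Finally, Proposition~\ref{prop:important} gives $S/\mathscr{J} \cong [0,u]$ as MV-algebras, completing the proof.

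The step requiring the most care is the compatibility of the two universal constructions, that is verifying that the universal group-valued invariant mean on $E(S)$ really is the universal group of $\mathsf{E}(S)$ as an MV-algebra (not just as a partial commutative monoid). This is the content of Lemma~\ref{le:hols}, and here it is important that the group-valued invariant mean identifies $\mathscr{D}$-related idempotents via axiom (GVIM2), so that it descends cleanly to $\mathsf{E}(S) = E(S)/\mathscr{D}$ and matches the partial addition $\oplus$ defined on $\mathscr{D}$-classes. Once that identification is firmly in hand, the rest of the argument is a straightforward chain of references.
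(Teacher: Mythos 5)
Your proposal is correct and follows essentially the same route as the paper, which gives no separate argument but states explicitly that the theorem "combines Proposition~\ref{prop:important}, Proposition~\ref{prop:key_link} and Theorem~\ref{the:ravindran}"; your write-up simply makes that combination explicit, together with the intended use of Lemma~\ref{le:hols} and Corollary~\ref{cor:lurgi}. The only quibble is that $\mathsf{G}(B)$ is a dimension group by construction (as a direct limit of simplicial groups), so the appeal to Theorem~\ref{the:dim} there is unnecessary, but this does not affect the argument.
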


{\em We may now prove Theorem~\ref{the:one}.}
Let $E$ be a countable MV-algebra.
Then by Theorem~\ref{the:mundici} and Theorem~\ref{the:ravindran},
$E$ is isomorphic to the MV-algebra $[0,u]$ where $u$ is an order-unit in the universal group $G$ of $E$.
The group $G$ is a countable $l$-group and  by Theorem~\ref{the:dim},  it is a countable dimension group.
Thus there is a Bratteli diagram $B$ such that $\mathsf{G}(B) = G$.
Let $\mathsf{I}(B)$ be the AF inverse monoid constructed from $B$.
Then by Proposition~\ref{prop:key_link} and Theorem~\ref{the:nearly_there},
we have that $\mathsf{I}(B)/\mathscr{J}$ is isomorphic to $[0,u]$ as an MV-algebra.
Observe that $\mathsf{I}(B)/\mathscr{J}$ satisfies the lattice condition, because $[0,u]$ is a lattice.
It follows that we have co-ordinatized the MV-algebra $E$ by means of the AF inverse monoid that satisfies the lattice condition.

\section{Concluding remarks}

In this paper, we have shown how to co-ordinatize all countable MV-algebras,
and concretely illustrated the result with the construction of the dyadic inverse monoid.
We leave to furture work the problem of constructing concrete examples of inverse monoids 
that co-ordinatize well-known countable MV-algebras such as the rationals and algebraic numbers in $[0,1]$,
as well as the free MV-algebras on finitely many generators.
For a long list of examples of countable MV-algebras, see Table 1 of Mundici \cite{M2}.
Our theory is adapted to working with the countable case only.
This leaves completely open the question of uncountable cardinalities as well as the still more general question 
of co-ordinatizing effect algebras.

The theory of effect algebras once seemed like a niche area of research in mathematics,
but recent work has suggested that it may occupy a more central position.
In particular, the work of Jacobs \cite{Jacobs} illustrates the breadth and scope of effect algebras,
while suggesting a framework for understanding those categories which admit some kind of dimension theory.
For a general lattice-theoretic treatment of dimension theory, generalizing the work of von Neumann,
see Wehrung \cite{Wehrung}, and for some preliminary connections of effect algebras to traditional dimension groups, see \cite{JP}.
It is too early to say how our work and that of Jacobs are related,
though we might speculate that it occupies a position midway between his categories and the effect algebras.
Specifically, our work should be generalizable to inverse categories and this might lead to some insight into the connections.

Finally,  it is noteworthy that Elliott's original construction of what he calls the local semigroup associated with a $C^{\ast}$-algebra \cite{Elliott},
which the main construction of our paper parallels, is , in fact, the construction of an effect algebra.
This raises the question of whether effect algebras have the potential to provide a finer class of invariants for $C^{\ast}$-algebras.

\section{Appendix: AF inverse monoids and AF $C^{\ast}$ algebras}

This section is not needed to prove our main results.
Instead, it is intended to show that there is a closer connection between AF inverse monoids and AF $C^{\ast}$-algebras than 
merely one of analogy in the following sense:
the \'etale groupoid associated with an AF inverse monoid under non-commutative Stone duality
is the same as the groupoid associated with AF $C^{\ast}$-algebras.

\subsection{Preliminaries}

If $P$ is a poset and $a \in P$, we write $a^{\downarrow}$ for the set $\{b \in P \colon b \leq a\}$ and $a^{\uparrow} =\{b \in P \colon a \leq b \}$.
A subset $Q$ of $P$ is called an {\em order ideal} if $q \in Q$ and $p \leq q$ implies that $p \in Q$.
Let $\mathsf{D}(S)$ be the inverse semigroup of all finitely generated compatible order ideals of $S$.
This is the {\em (finitary) Schein completion} of $S$.

\begin{proposition}[Schein completion]\label{prop:sc}
 The Schein completion $\mathsf{D}(S)$ of an inverse semigroup $S$ 
is a distributive inverse semigroup and the map  $\sigma \colon S \rightarrow \mathsf{D}(S)$, given by $s \mapsto s^{\downarrow}$,
is universal for homomorphisms to distributive inverse semigroups.
\end{proposition}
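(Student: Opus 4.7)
The plan is to prove the result in two stages: first build the algebraic structure on $\mathsf{D}(S)$ and then verify universality. An element of $\mathsf{D}(S)$ is an order ideal of the form $A = F^{\downarrow} = \bigcup_{f \in F} f^{\downarrow}$ for some finite compatible subset $F \subseteq S$. I would define the product by $A \cdot B = (AB)^{\downarrow}$ where $AB = \{ab : a \in A,\ b \in B\}$, and the inverse by $A^{-1} = \{a^{-1} : a \in A\}$.

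The first task is to verify that this product really lies in $\mathsf{D}(S)$. Given generating sets $\{a_1,\ldots,a_m\}$ for $A$ and $\{b_1,\ldots,b_n\}$ for $B$, the set $\{a_i b_j\}$ should generate $A \cdot B$, and I must check these remain pairwise compatible. This reduces to showing that if $a \sim a'$ and $b \sim b'$ then $ab \sim a'b'$; the computation $(ab)^{-1}(a'b') = b^{-1}(a^{-1}a')b'$ shows that the middle factor is an idempotent $e$, and then $b^{-1} e b' \leq b^{-1} b'$, which is an idempotent, so the whole expression is an idempotent, and symmetrically on the other side. Associativity and the inverse axioms for $\mathsf{D}(S)$ follow routinely from those of $S$, and the idempotents of $\mathsf{D}(S)$ are exactly the finitely generated order ideals of $E(S)$, which form a distributive lattice under union and intersection. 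Compatible binary joins exist as set-theoretic unions: if $A \sim B$ in $\mathsf{D}(S)$ then $A \cup B$ is a compatible order ideal, and it is clearly the least upper bound. That multiplication distributes over such joins is immediate from the distributivity of set-theoretic product over union. The map $\sigma(s) = s^{\downarrow}$ is then easily checked to be a homomorphism because $(st)^{\downarrow} = s^{\downarrow} \cdot t^{\downarrow}$.

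For universality, let $\theta \colon S \to T$ be a homomorphism to a distributive inverse semigroup $T$. Given $A \in \mathsf{D}(S)$ with finite compatible generating set $\{a_1,\ldots,a_m\}$, the images $\{\theta(a_1),\ldots,\theta(a_m)\}$ form a compatible subset of $T$, since compatibility is preserved by any semigroup homomorphism. Hence the join $\bar\theta(A) = \bigvee_{i=1}^{m} \theta(a_i)$ exists in $T$, and I would define $\bar\theta$ this way. One must check it is independent of the generating set (if $B \subseteq A^{\downarrow}$ with $B$ finite compatible then $\bigvee \theta(B) \leq \bigvee \theta(A)$, with equality when they generate the same ideal), that it is a homomorphism of distributive inverse semigroups ($\bar\theta(A \cdot B) = \bar\theta(A) \bar\theta(B)$ follows from distributivity of multiplication over joins in $T$, and preservation of joins is immediate from the formula), and that $\bar\theta \sigma = \theta$ and $\bar\theta$ is unique — the uniqueness is forced because every element of $\mathsf{D}(S)$ is a compatible join of elements of the form $\sigma(s)$.

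The main obstacle is the verification that $A \cdot B$ remains a compatible order ideal, since this is where the interaction between the natural partial order, multiplication, and compatibility is genuinely used; everything else is bookkeeping. A secondary subtlety is proving well-definedness of $\bar\theta$ on generating sets, which needs the observation that any two finite compatible generating sets of the same ideal $A$ have the same join in any distributive inverse semigroup containing their images. Given how standard Schein completions are, I would also simply cite the textbook account in \cite{Law1} for many of the routine details.
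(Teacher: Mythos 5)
The paper states this proposition without proof, treating the Schein completion as a standard construction (it defers to \cite{Law1,LL1,LL2}), and your argument is precisely that standard construction carried out correctly: the product of finitely generated compatible order ideals with the verification that $a \sim a'$ and $b \sim b'$ imply $ab \sim a'b'$, the identification of the idempotents of $\mathsf{D}(S)$ with the finitely generated order ideals of $E(S)$, unions as compatible joins, and the extension of a homomorphism by sending an ideal to the join of the images of its generators. The only steps you leave implicit that deserve a line each are that the natural partial order on $\mathsf{D}(S)$ coincides with inclusion (needed to see that $A \cup B$ really is the join in the inverse-semigroup sense) and that finite compatible joins in a distributive inverse semigroup exist by induction from the binary case; both are routine.
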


Let $P$ be a poset with zero.
We say that $P$ is {\em unambiguous} if whenever $a,b,c \in P$ where $a \neq 0$ such that $a \leq b,c$ then $b \leq c$ or $c \leq b$.
We say that $P$ is {\em Dedekind finite} if for each non-zero element $a \in P$ the set $a^{\uparrow}$ is finite.
An inverse semigroup with zero is said to be {\em $E^{\ast}$-unitary} if $0 \neq e \leq a$ where $e$ is an idempotent implies that $a$ is an idempotent.
The following is \cite[Lemma~2.17]{JL}.

\begin{lemma}\label{le:plod} Let $S$ be an inverse monoid with zero which is $E^{\ast}$-unitary and whose semilattice of idempotents is unambiguous.
Then the natural partial order on $S$ is unambiguous.
\end{lemma}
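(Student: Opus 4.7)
The plan is to unpack the definition directly: given $0 \neq a$ with $a \leq b$ and $a \leq c$, I need to conclude $b \leq c$ or $c \leq b$. The strategy has three movements. First, upgrade the information "$a$ is a common lower bound" to the stronger statement that $b$ and $c$ are compatible, using $E^{\ast}$-unitarity. Second, use unambiguity of $E(S)$ to compare $b^{-1}b$ and $c^{-1}c$. Third, combine these two facts to obtain the desired inequality in $S$.

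For the first movement, since the natural partial order is ambidextrous and compatible with multiplication, $a \leq b$ and $a \leq c$ yield
\[
a^{-1}a \;\leq\; b^{-1}a \;\leq\; b^{-1}c
\qquad\text{and}\qquad
aa^{-1} \;\leq\; ac^{-1} \;\leq\; bc^{-1}.
\]
The leftmost terms are non-zero idempotents because $a \neq 0$. Invoking $E^{\ast}$-unitarity, any element sitting above a non-zero idempotent is itself an idempotent, so $b^{-1}c$ and $bc^{-1}$ are both idempotents; that is, $b \sim c$. For the second movement, $a^{-1}a$ is a non-zero idempotent below both $b^{-1}b$ and $c^{-1}c$, so the unambiguity of $E(S)$ forces one of these to lie below the other. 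Assume without loss of generality that $b^{-1}b \leq c^{-1}c$. A short computation in the semilattice then gives $bc^{-1}c = b$, because absorbing $c^{-1}c$ on the right of $b$ leaves the domain idempotent unchanged: indeed, $b^{-1}(bc^{-1}c) = b^{-1}b \cdot c^{-1}c = b^{-1}b$. Combining this with the idempotence of $bc^{-1}$ from the first movement yields $b = (bc^{-1})\,c$ exhibited as an idempotent times $c$, which is precisely $b \leq c$. The symmetric case yields $c \leq b$.

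The main obstacle is appreciating why $E^{\ast}$-unitarity is the right tool. In any inverse semigroup, a common \emph{upper} bound automatically makes two elements compatible, but here $a$ is a common \emph{lower} bound, and for this direction of implication the general fact simply fails. $E^{\ast}$-unitarity is exactly what lets us promote the non-zero idempotent $a^{-1}a$ lurking below $b^{-1}c$ to the conclusion that $b^{-1}c$ itself is idempotent. Once compatibility is secured, the remainder is a routine translation between meets in $E(S)$ and the natural partial order on $S$.
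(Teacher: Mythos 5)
Your proof is correct. Note that the paper itself gives no argument for this lemma --- it is quoted verbatim from \cite[Lemma~2.17]{JL} --- so there is no in-paper proof to compare against; your write-up supplies a complete and self-contained one. The three movements are all sound: the chain $a^{-1}a \leq b^{-1}a \leq b^{-1}c$ (using that $\leq$ is preserved by inversion and compatible with multiplication) together with $E^{\ast}$-unitarity does yield $b \sim c$, and you are right that this is the step where $E^{\ast}$-unitarity is indispensable, since a common \emph{lower} bound does not give compatibility in general. The only place worth tightening is the deduction $bc^{-1}c = b$: the cleanest route is the direct computation $bc^{-1}c = b(b^{-1}b\,c^{-1}c) = b(b^{-1}b) = b$ using $b^{-1}b \leq c^{-1}c$, which avoids having to invoke separately that $bc^{-1}c \leq b$ and that elements with equal domains under $\leq$ coincide. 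With that, $b = (bc^{-1})c$ with $bc^{-1}$ idempotent gives $b \leq c$ by ambidexterity of the natural partial order, exactly as you say.
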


The following result can easily be proved directly, though a proof may be found in \cite{JL}.

\begin{lemma}\label{le:unambig} Let $S$ be an inverse monoid with zero that has an unambiguous natural partial order.
Then each finitely generated compatible order ideal of $S$ can be generated by a finite set of pairwise orthogonal elements.
\end{lemma}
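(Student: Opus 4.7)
The plan is to start from a finite compatible generating set $\{a_{1}, \ldots, a_{n}\}$ for the order ideal and whittle it down, using unambiguity to force a strict dichotomy on pairs. After discarding any $a_{i}$ equal to zero (which changes nothing), the first observation is that compatibility is, by part (1) of Lemma~\ref{lem:meets-joins}, exactly the existence of the binary meet, together with the formulas $\mathbf{d}(a \wedge b) = \mathbf{d}(a)\mathbf{d}(b)$ and $\mathbf{r}(a \wedge b) = \mathbf{r}(a)\mathbf{r}(b)$. So every pair $a_{i} \wedge a_{j}$ is available as an element of $S$.

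The heart of the argument is the following dichotomy: for any two compatible elements $a, b \in S$, either $a$ and $b$ are comparable in the natural partial order, or they are orthogonal. Indeed, the meet $c = a \wedge b$ is a common lower bound for $a$ and $b$. If $c \neq 0$, then the hypothesis that the natural partial order is unambiguous, applied with $a = c$, $b = a$, $c = b$, forces $a \leq b$ or $b \leq a$. If instead $c = 0$, then $\mathbf{d}(a)\mathbf{d}(b) = 0 = \mathbf{r}(a)\mathbf{r}(b)$ by the formulas above; writing $a^{-1} = a^{-1}\mathbf{r}(a)$ we obtain $a^{-1}b = a^{-1}\mathbf{r}(a)\mathbf{r}(b)\cdot\mathbf{r}(b)^{-1}b$-style cancellation giving $a^{-1}b = 0$, and symmetrically $ab^{-1} = 0$, which is precisely orthogonality. (In fact the identity $a^{-1}b = a^{-1}\mathbf{r}(b)a \cdot a^{-1}b$ used together with $\mathbf{r}(a)\mathbf{r}(b) = 0$ does the job in one line.)

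Now extract from $\{a_{1}, \ldots, a_{n}\}$ the subset $M$ consisting of its maximal elements under the natural partial order. Since every $a_{i}$ lies below some element of $M$, the order ideal $\bigcup_{i} a_{i}^{\downarrow}$ coincides with $\bigcup_{m \in M} m^{\downarrow}$, so $M$ generates the same compatible order ideal. Two distinct elements of $M$ are incomparable by maximality, and by the dichotomy they must therefore be orthogonal. Thus $M$ is a finite pairwise orthogonal subset of $S$ generating the given order ideal.

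The only step that is not purely order-theoretic bookkeeping is the deduction, inside the dichotomy, that $a \wedge b = 0$ together with compatibility actually forces $a^{-1}b = 0 = ab^{-1}$; this is a short manipulation with the $\mathbf{d}$ and $\mathbf{r}$ formulas from Lemma~\ref{lem:meets-joins}(1), but it is the one place where the inverse semigroup axioms (rather than just the abstract partial order) are doing real work. Everything else is a clean extraction of maximal elements once the dichotomy is available.
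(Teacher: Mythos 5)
The paper does not actually prove this lemma --- it remarks that the result ``can easily be proved directly'' and defers to \cite{JL} --- so there is no in-paper argument to compare against; your proof is correct and is precisely the direct argument the authors have in mind: unambiguity plus compatibility forces the comparable-or-orthogonal dichotomy (the $a \wedge b = 0$ case reducing, via Lemma~\ref{lem:meets-joins}(1), to $\mathbf{r}(a)\mathbf{r}(b) = 0 = \mathbf{d}(a)\mathbf{d}(b)$ and hence $a^{-1}b = a^{-1}\mathbf{r}(a)\mathbf{r}(b)b = 0$ and $ab^{-1} = a\,\mathbf{d}(a)\mathbf{d}(b)\,b^{-1} = 0$), after which passing to the maximal elements of the finite generating set finishes the job. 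No gaps.
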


\begin{remark} {\em We shall later construct the Schein completion $\mathsf{D}(S)$ of an inverse monoid
with an unambiguous order. 
By Lemma~\ref{le:unambig}, we need only consider finitely generated compatible order ideals generated by orthogonal elements.}
\end{remark}

A congruence on a semigroup with zero is said to be {\em 0-restricted} if the zero forms a congruence class on its own.
A congruence is said to be {\em idempotent-pure} if the congruence class of an idempotent only contains idempotents.

\subsection{Bratteli inverse monoids}

We introduce a second inverse monoid constructed from a Bratteli diagram $B$ that will ultimately shed light on the structure of $\mathsf{I}(B)$.

A Bratteli diagram is simply a type of (rooted) directed graph, and from any (rooted) directed graph we may construct an inverse monoid
in a way that seems first to have been employed in \cite{AH}, but has been rediscovered many times.
Let $G$ be any directed graph.
We denote by $G^{\ast}$ the free category generated by $G$.
The {\em graph inverse semigroup} $P_{G}$ consists of a zero and all symbols
$xy^{-1}$ where $x$ and $y$ are elements of $G^{\ast}$ that begin at the same vertex
together with the following multiplication
\begin{equation*}
xy^{-1} \cdot uv^{-1} =
\begin{cases}
 xzv^{-1} & \mbox{if $u=yz$ for some path $z$}\\
 x \left( vz \right)^{-1} & \mbox{if $y=uz$ for some path $z$}\\
 0 & \mbox{otherwise.}\\
\end{cases}
\end{equation*}
It can be shown that we do indeed get an inverse semigroup in this way.
The non-zero idempotents are the elements of the form $xx^{-1}$.
The natural partial order is given by
$$xy^{-1} \leq uv^{-1}
\Leftrightarrow 
\exists p \in \mathcal{G}^{\ast} \text{ such that }  
x = up \text{ and } y = vp.$$
An abstract characterization of graph inverse semigroups was given in \cite{JL}.
A directed graph $G$ is said to be {\em rooted} if there is a vertex $v_{0}$, called the {\em root}, such that given any vertex $v$ in $G$ there is a path from $v$ to $v_{0}$.
Let $G$ be a rooted directed graph with root $v_{0}$.
Define $P_{G}^{\bullet}$ to be the subset of $P_{G}$ consisting of zero and all elements $xy^{-1}$ where $x$ and $y$ both end at the root $v_{0}$.
Then, in fact,  $P_{G}^{\bullet}$ is a local submonoid of $P_{G}$ and, though we shall not need this fact here, $P_{G}$ is what is called an enlargement of $P_{G}^{\bullet}$ \cite{Jones}.
We shall denote the identity of $P_{G}^{\bullet}$ by 1.
It is equal to $1_{v_{0}}1_{v_{0}}^{-1}$.
Bratteli diagrams $B$ are rooted directed graphs.
Observe that to be concordant with our definitions above,
you should think of the edges as being directed in the reverse direction
in order that $v_{0}$ be a root, but this has little significance.
We may therefore construct the graph inverse monoid $P_{B}^{\bullet}$.
We call this the {\em Bratteli inverse monoid} constructed from the Bratteli diagram $B$.
It is an obvious question to determine the relationship between  $P_{B}^{\bullet}$ and $\mathsf{I}(B)$
and this will be our main goal.
We begin by determining some of the properties of  $P_{B}^{\bullet}$. 
To do this, we shall need the following notion.
Let $S$ be an inverse semigroup.
A function $\beta \colon S\setminus\{0\} \rightarrow \mathbb{N}$ is called a {\em weight function} if it satisfies the following axioms:
\begin{description}

\item[{\rm (W1)}] $s < t$ implies that $\beta (s) > \beta (t)$.

\item[{\rm (W2)}] $s \, \mathcal{D} \, t$ implies that $\beta (s) = \beta (t)$.

\end{description}

\begin{lemma} Let $S$ be an inverse monoid equipped with a weight function $\beta$.
\begin{enumerate}

\item $S$ is completely semisimple.

\item If $E(S)$ is unambiguous and $st \neq 0$ then $\beta (st) = \mbox{max}\{ \beta (s), \beta (t)\}$.

\end{enumerate}
\end{lemma}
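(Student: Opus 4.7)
Part (1) drops out of the two axioms in a single step: if $e\,\mathscr{D}\,f\leq e$ for idempotents, then (W2) gives $\beta(e)=\beta(f)$, whereas a strict inequality $f<e$ would force $\beta(f)>\beta(e)$ by (W1). Hence $f=e$, so $S$ is completely semisimple.

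For part (2), my plan is to reduce the identity $\beta(st)=\max\{\beta(s),\beta(t)\}$ to the corresponding identity for idempotents, and then to invoke unambiguity. In any inverse semigroup $s\,\mathscr{R}\,\mathbf{r}(s)$ and $s\,\mathscr{L}\,\mathbf{d}(s)$, so (W2) immediately gives $\beta(s)=\beta(\mathbf{r}(s))=\beta(\mathbf{d}(s))$, and similarly for $t$ and for $st$. Setting $e=\mathbf{d}(s)$ and $f=\mathbf{r}(t)$, the identity $se=s$ together with the commutativity of idempotents yields $\mathbf{r}(st)=st t^{-1}s^{-1}=sfs^{-1}=s(ef)s^{-1}$, and the element $sef$ witnesses $\mathbf{r}(st)\,\mathscr{D}\,ef$ (its domain is $ef\cdot e\cdot ef=ef$ and its range is $s(ef)s^{-1}$). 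One more application of (W2) reduces the claim to $\beta(ef)=\max\{\beta(e),\beta(f)\}$.

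This last step is exactly where unambiguity enters. Since $st\neq 0$, the range $\mathbf{r}(st)$ is nonzero, hence $ef\neq 0$; and clearly $ef\leq e,f$. Unambiguity of $E(S)$ then forces $e$ and $f$ to be comparable, so $ef$ equals whichever of $e,f$ is smaller, and by (W1) the smaller idempotent carries the larger weight. Hence $\beta(ef)=\max\{\beta(e),\beta(f)\}$, and part (2) follows on combining the reductions.

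The only step that demands any genuine computation is the intermediate $\mathscr{D}$-equivalence $\mathbf{r}(st)\,\mathscr{D}\,\mathbf{d}(s)\mathbf{r}(t)$; this is the pivot that converts a question about products of general elements into one about products of idempotents. Once it is in hand, the two weight axioms together with unambiguity finish the argument essentially for free, so I expect no real obstacle beyond keeping track of inverse-semigroup identities in that one calculation.
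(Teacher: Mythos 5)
Your proposal is correct and follows essentially the same route as the paper: part (1) is the identical one-line argument from (W1) and (W2), and part (2) reduces, just as the paper does, to showing $\beta(st)=\beta\bigl(s^{-1}s\wedge tt^{-1}\bigr)$ and then applying unambiguity of $E(S)$ together with (W1) to the comparable pair $\mathbf{d}(s)$, $\mathbf{r}(t)$. The only cosmetic difference is that you exhibit the explicit witness $sef$ for the $\mathscr{D}$-equivalence $\mathbf{r}(st)\,\mathscr{D}\,ef$, whereas the paper factors $st=(se)(et)$; both computations are valid.
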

\begin{proof} (1) Suppose that $e \, \mathcal{D} \, f \leq e$ for idempotents $e,f$.
Then $\beta (e) = \beta (f)$.
We are given that $f \leq e$.
But the inequality cannot be strict and so $e = f$.

(2) Suppose that $st \neq 0$. 
Put $e = s^{-1}stt^{-1} \neq 0$.
Then $st = (se)(te)$ where $st$ and $s$ and $t$ are all $\mathscr{D}$-related.
It follows that $\mu (st) = \mu (se)$.
But $(se)^{-1}se = e$ and so $\mu (st) = \mu (e)$.
Now $e = s^{-1}s \wedge tt^{-1}$.
Thus by unambiguity, we have that $s^{-1}s \leq t^{-1}t$ or $t^{-1}t \leq s^{-1}s$.
Suppose, without loss of generality, $s^{-1}s \leq t^{-1}t$.
Then $\mu (st) = \mu (s^{-1}s) = \mu (s)$.
But $\mu (s^{-1}s) \geq \mu (t^{-1}t)$ and so $\mu (s) \geq \mu (t)$.
It follows that in this case $\mu (st) = \mbox{max}\{ \mu (s), \mu (t)\}$.
\end{proof}

Part (4) below shows that Bratteli diagrams can be regarded as the posets of principal ideals of an inverse monoid.
Note that an inverse semigroup $S$ is {\em combinatorial} if $a,b \in S$ are such that $\mathbf{d}(a) = \mathbf{d}(b)$,
and $\mathbf{r}(a) = \mathbf{r}(b)$, then $a = b$.

\begin{proposition}\label{prop:boozy} Let $B$ be a Bratteli diagram.
\begin{enumerate}

\item  The inverse monoid $P_{G}^{\bullet}$ is equipped with a weight function such that $\mu^{-1}(0) = 1$ and for each $n \in \mathbb{N}$ the set $\mu^{-1}(n)$ is finite and non-empty.
If $s < t$ then there exists $s \leq t' < t$ such that $\mu (t') = \mu (t) + 1$. 

\item The inverse monoid $P_{G}^{\bullet}$  is completely semisimple, combinatorial and $E^{\ast}$-unitary.

\item The semilattice of idempotents is unambiguous, above each non-zero idempotent are only a finite number of idempotents, and there are no atoms.

\item  $B$, with a zero adjoined at the bottom, is the Hasse diagram of $P_{B}^{\bullet}/\mathscr{J}$.

\end{enumerate}
\end{proposition}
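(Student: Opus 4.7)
The plan is to exploit the explicit description of non-zero elements of $P_B^\bullet$ as formal expressions $xy^{-1}$, where $x,y$ are paths in $B$ sharing a common source vertex, with both terminating at the root $v_0$. For part (1), I would define the weight $\mu(xy^{-1})$ to be the level $n$ of the common source vertex of $x$ and $y$ (so $\mu(1) = 0$ since $1 = 1_{v_0}1_{v_0}^{-1}$). If $s = xy^{-1} < uv^{-1} = t$, then by the description of the natural partial order we have $x = up$ and $y = vp$ for some non-empty path $p$; the source of $x$ therefore lies at a strictly higher level than the source of $u$, giving $\mu(s) > \mu(t)$. For $\mathscr{D}$-invariance, the standard characterization of $\mathscr{D}$ in a graph inverse semigroup yields $xy^{-1}\,\mathscr{D}\,uv^{-1}$ precisely when $y$ and $v$ share a source vertex, which is the very vertex defining $\mu$. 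Finiteness of $\mu^{-1}(n)$ follows from the finiteness of $V(n)$ and of the paths from each vertex at level $n$ down to $v_0$; non-emptiness of $\mu^{-1}(n)$ from the Bratteli condition that every vertex except $v_0$ is the target of an edge, which (after reversing direction) means every vertex at level $n$ admits a path to $v_0$. The covering claim is obtained by writing $p = e p'$ with $e$ the first edge and taking $t' = (ue)(ve)^{-1}$.

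For parts (2) and (3), complete semisimplicity is immediate from the weight function and the lemma preceding the statement. Combinatoriality reduces to the cancellativity of concatenation in the free category: if $\mathbf{d}(xy^{-1}) = \mathbf{d}(uv^{-1})$ then $yy^{-1} = vv^{-1}$ forces $y = v$, and similarly $x = u$. For $E^*$-unitarity, suppose $0 \neq xx^{-1} \le uv^{-1}$; then $x = up$ and $x = vp$, and cancellation in the free category gives $u = v$, so $uv^{-1} = uu^{-1}$ is idempotent. The semilattice $E(P_B^\bullet)$ is unambiguous because $xx^{-1} \le yy^{-1}$ holds if and only if $y$ is a prefix of $x$, and prefixes of a single path are linearly ordered. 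The upper set of a non-zero idempotent $xx^{-1}$ is in bijection with the (finite) set of prefixes of $x$. Absence of atoms uses the Bratteli hypothesis that every vertex is the source of an edge: from any $x$ ending at $v_0$ we can prolong by such an edge (read in reverse) to produce a strictly smaller non-zero idempotent $(xe)(xe)^{-1}$.

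For part (4), since $P_B^\bullet$ is completely semisimple, $\mathscr{D} = \mathscr{J}$, so $P_B^\bullet/\mathscr{J}$ is the set of $\mathscr{D}$-classes with a zero. The weight argument identifies the non-zero $\mathscr{D}$-classes bijectively with the vertices of $B$, via $[xy^{-1}] \mapsto \text{common source of } x,y$. It then remains to check that under this identification the $\mathscr{J}$-order $[xy^{-1}] \le [uv^{-1}]$ corresponds to the existence of a path in $B$ from the source of $x$ to the source of $u$; unpacking the definition $SsS \subseteq StS$ in the graph inverse semigroup gives exactly that condition, and then passing to the Hasse diagram reduces such paths to single-edge covers, which are the edges of $B$. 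The main obstacle I anticipate lies precisely in this last step: one must verify that the $\mathscr{J}$-class of $1$ sits strictly above those of each level-$1$ vertex with no intermediate $\mathscr{J}$-class, and more generally that no spurious cover relations are introduced by the possibility of composing with arbitrary elements on either side; this requires a careful bookkeeping argument that any path of length $\ge 2$ in $B$ factors through an intermediate vertex whose $\mathscr{J}$-class lies strictly between the endpoints.
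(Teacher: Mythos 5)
Your proposal is correct and follows essentially the same route as the paper's (sketched) proof: your weight function, the level of the common source vertex, coincides with the paper's $\mu(xy^{-1})=|x|=|y|$ because edges only join adjacent levels, and the remaining verifications (the $\mathscr{D}$-class characterization via source vertices, the prefix description of the order on idempotents, prolongation by an edge to rule out atoms) are exactly those the paper highlights. The obstacle you anticipate in part (4) dissolves immediately: any path of length at least $2$ passes through a vertex at an intermediate level, whose $\mathscr{J}$-class is strictly intermediate by the weight function, so the cover relations are precisely the edges of $B$.
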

\begin{proof} Most of these results are straightforward to prove.
We simply highlight the key points.

 If $xy^{-1}$ is a nonzero element then $x$ and $y$ are paths that must begin at the same vertex of $B$ and end at the root.
It follows that $x$ and $y$ must also be the same length.
We define $\mu (xy^{-1}) = \left| x \right| = \left| y \right|$.
It is clear that (W1) holds.
The fact that (W2) holds follows from the fact that $xy^{-1} \, \mathscr{D} \, uv^{-1}$
if, and only if, $y$ and $v$ start at the same vertex.

Let $xx^{-1}$ and $yy^{-1}$ be non-zero idempotents.
Then $xx^{-1}\, \mathscr{D} \, yy^{-1}$ if and only if $x$ and $y$ begin at the same vertex $v$.
It follows that there is a bijection between the non-zero $\mathscr{D}$-classes and the vertices of the Bratteli diagram.
Let $v$ be a vertex at level $n$.
Let $P_{v}$ be the set of all strings that start at $v$ and end at the root,
remembering our convention about edge directions, here.
The number of elements in $P_{v}$ is just the size of $v$, defined earlier.
The set of all elements $xy^{-1}$ of $P_{B}^{\bullet}$ where $x,y \in P_{v}$
forms a connected principal groupoid with $\left| P_{v} \right|$ identities.
These elements also constitute a single $\mathscr{D}$-class of $P_{B}^{\bullet}$. 

It is immediate that the semigroup is $E^{\ast}$-unitary and combinatorial.

There are no atoms.
Let $xx^{-1}$ be any non-zero idempotent.
Then $x$ is a path from the vertex $v$, at level $n$, to the root $v_{0}$.
Let $e$ be an edge at level $n+1$ that ends in $v$.
from the definition of a Bratteli diagram, such an edge $e$ exists.
Then $xe(xe)^{-1} \leq xx^{-1}$.
\end{proof}

The following is immediate by the above.
We state it explicitly since it will be important later.

\begin{corollary} 
The natural partial order of a Bratteli inverse monoid is unambiguous.
\end{corollary}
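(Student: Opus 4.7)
The plan is to simply combine the preceding proposition with Lemma~\ref{le:plod}, since all the hypotheses of that lemma have already been verified for the Bratteli inverse monoid $P_{B}^{\bullet}$.

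More concretely, I would proceed as follows. First I would invoke Proposition~\ref{prop:boozy}(2), which states that $P_{B}^{\bullet}$ is $E^{\ast}$-unitary. (If one wanted a direct verification, one would note that a non-zero idempotent bounded above by an element $xy^{-1}$ must itself have the form $uu^{-1}$ with $x = up$ and $y = up$ for some path $p$, forcing $x = y$ and so $xy^{-1}$ to be an idempotent; but this is already packaged in Proposition~\ref{prop:boozy}.) Second, I would invoke Proposition~\ref{prop:boozy}(3), which records that the semilattice of idempotents of $P_{B}^{\bullet}$ is unambiguous. With both hypotheses in place, Lemma~\ref{le:plod} applies verbatim and yields the conclusion that the natural partial order on $P_{B}^{\bullet}$ is unambiguous.

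Since the corollary is essentially a logical composition of earlier results, there is no serious obstacle; the main thing to be careful about is simply to confirm that \emph{both} hypotheses of Lemma~\ref{le:plod} have been established for $P_{B}^{\bullet}$ in the correct form. The proof will therefore be a one-line appeal citing Proposition~\ref{prop:boozy} parts (2) and (3) together with Lemma~\ref{le:plod}.
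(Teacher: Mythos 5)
Your proof is correct and is exactly the paper's intended argument: the corollary is presented as immediate from the preceding material, i.e.\ by combining Proposition~\ref{prop:boozy} parts (2) and (3) with Lemma~\ref{le:plod}. (One cosmetic remark: in your parenthetical direct check of $E^{\ast}$-unitarity the prefix relations should read $u = xp$ and $u = yp$, matching the stated natural partial order on $P_{G}$, rather than $x = up$ and $y = up$; since you defer to Proposition~\ref{prop:boozy} anyway, this does not affect the argument.)
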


\begin{remark} {\em It is possible to characterize Bratteli inverse monoids abstractly.
We do not do this here.}
\end{remark}

\subsection{Tight completions}

We now have two inverse monoids associated with a Bratteli diagram $B$:
the Bratteli inverse monoid $P_{B}^{\bullet}$ and the AF inverse monoid $\mathsf{I}(B)$.
Our goal is to explain how they are related.
We shall do this in the next section.
Here,  we describe the theory of essential completions of inverse semigroups.
This is described in \cite{LL1,LL2} and is based on ideas that generalize constructions to be found in Section~5 of \cite{Lenz} as well as in \cite{Law5}
and are related to those to be found in \cite{Exel1,Exel2}.
Our presentation here, though, is based on \cite{LL1,LL2} but we also take the opportunity to clarify some aspects of the theory developed there.

Let $S$ be an inverse monoid with zero.
Given elements $a,b \in S$ such that $b \leq a$, 
we say that  {\em $b$ is essential in $a$} or that {\em $b$ is essentially contained in $a$}
if for each $0 \neq x \leq a$, the meet $x \wedge b \neq 0$.
The following motivates the definition.

\begin{lemma}\label{le:motivation} Let $S$ be a Boolean inverse monoid.
Let $a,b \in S$ such that $b \leq a$.
Then if $b$ is essentially contained in $a$ then $b = a$.
\end{lemma}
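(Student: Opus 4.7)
The plan is to exhibit a ``relative complement'' of $b$ inside $a$, and then feed that complement into the essentiality hypothesis to force it to be zero. Since $S$ is Boolean, the semilattice $E(S)$ is a Boolean algebra, so this kind of subtraction is available at the level of idempotents and can then be lifted using the natural partial order.

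Concretely, I would begin by recording that $b \leq a$ is equivalent to $b = a \cdot \mathbf{d}(b)$, and that $\mathbf{d}(b) \leq \mathbf{d}(a)$. Inside the Boolean algebra $E(S)$ let $g = \mathbf{d}(a) \wedge \overline{\mathbf{d}(b)}$, the complement of $\mathbf{d}(b)$ relative to $\mathbf{d}(a)$. Then set $c = ag$. By construction $c \leq a$, and $\mathbf{d}(c) = g$ is orthogonal to $\mathbf{d}(b)$, so $c$ and $b$ are orthogonal, which (by Lemma~\ref{lem:meets-joins}(1) together with orthogonality) forces $c \wedge b = 0$. Moreover, using that multiplication distributes over compatible joins in a distributive inverse monoid, $b \vee c = a\mathbf{d}(b) \vee ag = a(\mathbf{d}(b) \vee g) = a \mathbf{d}(a) = a$.

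Now I invoke the hypothesis that $b$ is essentially contained in $a$: applied to the element $x = c$, which lies below $a$, it yields $c \wedge b \neq 0$ whenever $c \neq 0$. But we have just seen $c \wedge b = 0$, so $c = 0$, and therefore $b = b \vee c = a$, as required.

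I do not expect any genuine obstacle here; the only thing to be careful about is making sure the constructed complement $c$ really does satisfy $c \leq a$ and $c \wedge b = 0$ in the inverse monoid sense, rather than merely at the level of idempotents. Both follow quickly from the standard identities $\mathbf{d}(ag) = g$, the orthogonality of $g$ and $\mathbf{d}(b)$ inherited from the Boolean algebra structure of $E(S)$, and the characterization of meets via domains and ranges given in Lemma~\ref{lem:meets-joins}(1).
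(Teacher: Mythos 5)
Your proof is correct. The underlying idea is the same as the paper's -- produce a Boolean complement of $b$ inside $a$ and use essentiality to force it to vanish -- but you organize it differently. The paper first proves that essential containment of $b$ in $a$ descends to essential containment of $\mathbf{d}(b)$ in $\mathbf{d}(a)$ (via the compatibility of $ae$ and $b$ below $a$ and Lemma~\ref{lem:meets-joins}(1)), then settles the purely idempotent case by showing $\mathbf{d}(a)\overline{\mathbf{d}(b)} = 0$, and finally recovers $b = a$ from $\mathbf{d}(b) = \mathbf{d}(a)$. You skip the reduction entirely and form the complement at the element level, setting $c = ag$ with $g = \mathbf{d}(a) \wedge \overline{\mathbf{d}(b)}$, verifying $c \perp b$ and $b \vee c = a$ directly, and then feeding $c$ into the essentiality hypothesis. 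This is slightly more economical, at the cost of needing distributivity of multiplication over the join $\mathbf{d}(b) \vee g$; the paper's route isolates the idempotent statement, which is marginally more reusable. All the steps you flag as needing care ($\mathbf{d}(ag) = g$, orthogonality of $c$ and $b$, the meet computation via Lemma~\ref{lem:meets-joins}(1)) do go through as you claim.
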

\begin{proof} We prove first that $b^{-1}b$ is essentially contained in $a^{-1}a$.
Let $0 \neq e \leq a^{-1}a$.
Then $ae \leq a$ and $ae \neq 0$.
Thus $ae \wedge b \neq 0$.
But since $ae,b \leq a$ they are compatible and so $\mathbf{d}(ae \wedge b) = \mathbf{d}(ae) \wedge \mathbf(d)(b)$.
It follows that $e \wedge b^{-1}b \neq 0$, as required.

Now suppose that $f$ is an idempotent essentially contained in $e$.
Then $e = e \vee f\overline{e}$ and $f \wedge e \overline{f} = 0$.
Thus $e\overline{f} = 0$ and so $e = f$.

Using this argument, we see that $b^{-1}b = a^{-1}a$ and so $b = a$, as claimed.
 \end{proof}

We now extend the definition from individual elements to finite subsets.
A finite subset $\{a_{1}, \ldots, a_{m} \} \subseteq a^{\downarrow}$ is said to be an {\em  (essential)  cover of $a$} 
if for each $0 \neq x \leq a$ we have that $x \wedge a_{i} \neq 0$ for some $i$.
We shall write $A \preceq a$ to mean $A$ is an (essential) cover of $a$.
Since the only covers to be considered in this paper are essential ones we shall simply say {\em cover} from now on.
The notions of an essential element and an essential subset are related.

\begin{lemma}\label{le:sputnik} \mbox{}
\begin{enumerate}

\item  Let $S$ be a distributive inverse semigroup.
Then $\{a_{1}, \ldots, a_{m}\} \preceq a$ if and only if $\bigvee_{i=1}^{m}a_{i} \preceq a$.

\item  Let $S$ be an inverse semigroup with zero.
Then $\{a_{1}, \ldots, a_{m} \} \preceq a$ if, and only if, $\{a_{1}, \ldots, a_{m}\}^{\downarrow} \preceq a^{\downarrow}$ in $\mathsf{D}(S)$.

\end{enumerate}
\end{lemma}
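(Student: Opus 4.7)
For part (1), the plan is to reduce the whole equivalence to a single application of the distributive law of Lemma~\ref{lem:meets-joins}(3). First I would note that since each $a_{i} \leq a$, the elements $a_{1}, \ldots, a_{m}$ are pairwise compatible (compatibility is inherited from being beneath a common element), so in a distributive inverse semigroup the join $c = \bigvee_{i=1}^{m} a_{i}$ exists and satisfies $c \leq a$. Now fix any $0 \neq x \leq a$. Because $x$ and $c$ are both dominated by $a$, they are compatible, and so the meets $x \wedge c$ and $x \wedge a_{i}$ exist. Distributivity gives
\[
x \wedge c \;=\; x \wedge \bigvee_{i=1}^{m} a_{i} \;=\; \bigvee_{i=1}^{m} (x \wedge a_{i}),
\]
and this join is nonzero if and only if at least one $x \wedge a_{i}$ is nonzero. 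This proves the equivalence in both directions at once.

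For part (2), the plan is to reduce it to part (1) via the universal embedding $\sigma \colon S \rightarrow \mathsf{D}(S)$ of Proposition~\ref{prop:sc}. Since $\sigma$ preserves the order, each $a_{i}^{\downarrow} \leq a^{\downarrow}$ in $\mathsf{D}(S)$, and the element $\{a_{1}, \ldots, a_{m}\}^{\downarrow}$ is precisely the compatible join $\bigvee_{i=1}^{m} a_{i}^{\downarrow}$ computed in the distributive inverse semigroup $\mathsf{D}(S)$. By part (1) applied inside $\mathsf{D}(S)$, it then suffices to establish the equivalence
\[
\{a_{1}, \ldots, a_{m}\} \preceq a \text{ in } S \iff \{a_{1}^{\downarrow}, \ldots, a_{m}^{\downarrow}\} \preceq a^{\downarrow} \text{ in } \mathsf{D}(S).
\]

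For the forward direction, take $0 \neq X \leq a^{\downarrow}$ in $\mathsf{D}(S)$; then $X$ contains some nonzero $x \leq a$, and the hypothesis yields $i$ with $x \wedge a_{i} \neq 0$ (a meet that exists by Lemma~\ref{lem:meets-joins}(1) since $x, a_{i} \leq a$). The principal order ideal $(x \wedge a_{i})^{\downarrow}$ is then a nonzero element of $\mathsf{D}(S)$ contained in both $X$ and $a_{i}^{\downarrow}$, giving $X \wedge a_{i}^{\downarrow} \neq 0$. For the reverse direction, take $0 \neq x \leq a$ in $S$. Then $x^{\downarrow}$ is nonzero in $\mathsf{D}(S)$ with $x^{\downarrow} \leq a^{\downarrow}$, so by hypothesis $x^{\downarrow} \wedge a_{i}^{\downarrow} \neq 0$ for some $i$. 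Since $x$ and $a_{i}$ are compatible, their meet in $S$ exists and one checks that the meet in $\mathsf{D}(S)$ is $(x \wedge a_{i})^{\downarrow}$, so $x \wedge a_{i} \neq 0$ as required.

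The main technical obstacle is the bookkeeping with meets in $\mathsf{D}(S)$: specifically, verifying that for compatible $x, y \in S$ the meet $x^{\downarrow} \wedge y^{\downarrow}$ taken in $\mathsf{D}(S)$ coincides with $(x \wedge y)^{\downarrow}$. This is not difficult—it follows from Lemma~\ref{lem:meets-joins}(1) together with the universal property of $\mathsf{D}(S)$—but it is the one point where care is needed, because general meets need not exist in a distributive inverse semigroup. Everything else is formal.
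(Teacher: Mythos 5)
Your proof is correct and follows essentially the same route as the paper: part (1) is exactly the paper's argument via Lemma~\ref{lem:meets-joins}(3), and for part (2), which the paper dismisses as ``straightforward,'' your reduction to part (1) inside $\mathsf{D}(S)$ together with the identification $x^{\downarrow}\wedge a_{i}^{\downarrow}=(x\wedge a_{i})^{\downarrow}$ for $x,a_{i}\leq a$ is a correct and complete filling-in of the details.
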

\begin{proof} (1) 
Let $0 \neq x \leq a$.
By part (3) of Lemma~\ref{lem:meets-joins},
we have that 
$$x \wedge \left( \bigvee_{i=1}^{m}a_{i} \right) =  \bigvee_{i=1}^{m} x \wedge a_{i}.$$
Suppose that $\{a_{1}, \ldots, a_{m}\} \preceq a$.
Then $x \wedge a_{i} \neq 0$ for some $i$.
It follows that $x \wedge \left( \bigvee_{i=1}^{m}a_{i} \right) \neq 0$.
Conversely, suppose that $x \wedge \left( \bigvee_{i=1}^{m}a_{i} \right) \neq 0$.
Then  $x \wedge a_{i} \neq 0$ for some $i$.

(2) Straightforward,
\end{proof}

The proofs of the following are all straightforward.

\begin{lemma}\label{le:essential_stuff} Let $S$ be an inverse semigroup with zero.
\begin{enumerate}

\item The relation $\preceq$ is a partial order.

\item $b \preceq a$ implies that $b^{-1} \preceq a^{-1}$.

\item $b \preceq a$ and $d \preceq c$ implies that $bd \preceq ac$.

\item $0 \preceq a$ implies that $a = 0$.

\item If $b \preceq a$ then $\mathbf{d}(b) \preceq \mathbf{d}(a)$.

\item Let $b \leq a$. Then $b \preceq a$ if and only if $\mathbf{d}(b) \preceq \mathbf{d}(a)$.

\item Let $b, c \preceq a$. Then $b \wedge c \preceq a$.

\item Suppose that $S$ is a $\wedge$-semigroup. If $a \preceq b$ and $c \preceq d$ then $a \wedge b \preceq b \wedge d$.

\item Suppose that $S$ is a distributive inverse semigroup. If $a \preceq b$ and $c \preceq d$,
and in addition $a \sim c$ and $b \sim d$ then $a \vee b \preceq b \vee d$.

\end{enumerate}
\end{lemma}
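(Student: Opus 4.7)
The plan is to verify each item by unpacking the definition---$b \preceq a$ means $b \leq a$ together with $x \wedge b \neq 0$ for every $0 \neq x \leq a$---and exploiting the fact that whenever $b,c \leq a$ the pair is compatible, so by part~(1) of Lemma~\ref{lem:meets-joins} the meet $b \wedge c$ exists with $\mathbf{d}(b \wedge c) = \mathbf{d}(b) \wedge \mathbf{d}(c)$. This identity drives almost every argument.

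Items (1) and (4) are immediate. For transitivity, $c \preceq b \preceq a$ and $0 \neq x \leq a$ yields $0 \neq x \wedge b \leq b$, hence $(x \wedge b) \wedge c \neq 0$, and associativity of compatible meets gives $x \wedge c \neq 0$; for (4), taking $x = a$ in the definition of $0 \preceq a$ forces $a = 0$. The domain-transfer statements (5) and (6) are the technical core. For (5), given $b \preceq a$ and $0 \neq f \leq \mathbf{d}(a)$, the element $af$ is a nonzero subelement of $a$, so $af \wedge b \neq 0$, and the meet formula gives $\mathbf{d}(af \wedge b) = f \wedge \mathbf{d}(b) \neq 0$. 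The converse in (6) reverses this, using that every $0 \neq x \leq a$ has $\mathbf{d}(x)$ nonzero and $\leq \mathbf{d}(a)$, combined with $\mathbf{d}(x \wedge b) = \mathbf{d}(x) \wedge \mathbf{d}(b)$. Item (2) then follows immediately: $b^{-1} \leq a^{-1}$ is clear, and for $0 \neq y \leq a^{-1}$ we have $0 \neq y^{-1} \leq a$, so $y^{-1} \wedge b \neq 0$; since inversion respects compatible meets, $y \wedge b^{-1} = (y^{-1} \wedge b)^{-1} \neq 0$.

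For the meet-compatibility items, (7) iterates essentiality: $0 \neq x \leq a$ gives $x \wedge b \neq 0$, which lies below $a$, so $(x \wedge b) \wedge c = x \wedge (b \wedge c)$ is nonzero. Item (8), read as the natural statement $a \wedge c \preceq b \wedge d$ (the printed form $a \wedge b \preceq b \wedge d$ reduces to this since $a \wedge b = a$ when $a \leq b$), follows the same pattern inside a $\wedge$-monoid: for $0 \neq x \leq b \wedge d$, essentiality of $a$ in $b$ gives $0 \neq x \wedge a \leq d$, and then $c \preceq d$ yields $(x \wedge a) \wedge c \neq 0$. Item (9), read as $a \vee c \preceq b \vee d$, uses the distributivity identity in part~(3) of Lemma~\ref{lem:meets-joins}: any $0 \neq x \leq b \vee d$ satisfies $x = (x \wedge b) \vee (x \wedge d)$, so one summand is nonzero, and essentiality then produces a nonzero meet with $a$ or with $c$, hence with $a \vee c$.

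The main obstacle is item (3), the only statement combining the semigroup product with the essential order. The difficulty is that a nonzero $y \leq ac$ need not factor as a product of subelements of $a$ and $c$. My plan is to chase subelements through the two hypotheses in sequence. Writing $y = (ac)e$ with $e \leq \mathbf{d}(ac) = c^{-1}\mathbf{d}(a)c$, we first note $0 \neq ce \leq c$ and apply $d \preceq c$ to produce $0 \neq f := ce \wedge d$; since $\mathbf{r}(f) \leq \mathbf{r}(ce) = cec^{-1} \leq \mathbf{d}(a)$, the element $af$ is a nonzero subelement of both $y$ and $a$. Applying $b \preceq a$ then yields $0 \neq h := af \wedge b$. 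The final, slightly delicate step is to verify $h \leq bd$: writing $h = a(fk)$ with $fk \leq f \leq d$ and using $h \leq b$, the constraint $\mathbf{r}(fk) \leq \mathbf{d}(a)$ together with $h \leq b \leq a$ forces $h = b(fk)$, which then lies below $bd$. Hence $0 \neq h \leq y \wedge bd$, as required.
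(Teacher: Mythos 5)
The paper offers no proof of this lemma (it is dismissed as ``straightforward''), so your write-up stands or falls on its own. Items (1), (2), (4)--(9) are correct, including your sensible reading of the evident typos in (8) and (9) as $a \wedge c \preceq b \wedge d$ and $a \vee c \preceq b \vee d$. The genuine gap is in item (3), which you rightly single out as the delicate case but whose key step fails. You assert that $\mathbf{r}(f) \leq \mathbf{d}(a)$ makes $af$ a subelement of $a$. That hypothesis only gives $af \neq 0$ and $\mathbf{d}(af) = \mathbf{d}(f)$; it does not give $af \leq a$, since $x \leq a$ means $x = a\,\mathbf{d}(x)$ and in general $af \neq a\,\mathbf{d}(f)$ (e.g.\ $a = 1$ and $f = e_{12}$ in $I_{2}$: then $af = e_{12} \not\leq 1 = a$). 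Consequently you cannot invoke $b \preceq a$ to conclude $af \wedge b \neq 0$ --- and in a bare inverse semigroup that meet need not even exist, since $af$ and $b$ need not be compatible. The closing step suffers from the same conflation: $h \leq b$ yields $h = b\,\mathbf{d}(h)$, a restriction of $b$ by an idempotent, which is not the product $b(fk)$.

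The repair keeps your skeleton but separates ``restrict by an idempotent'' from ``multiply by $f$''. With $e = \mathbf{d}(y)$, $f = ce \wedge d \neq 0$ and $\mathbf{r}(f) \leq \mathbf{r}(ce) \leq \mathbf{d}(a)$ exactly as you have them, apply $b \preceq a$ to the genuine subelement $a\mathbf{r}(f) \leq a$ (nonzero because its domain is $\mathbf{r}(f)$) to obtain $g := a\mathbf{r}(f) \wedge b \neq 0$. Then $gf \leq bd$ since $g \leq b$ and $f \leq d$; also $gf \leq a\mathbf{r}(f)\cdot ce \leq a(ce) = y$ since $\mathbf{r}(f) \leq \mathbf{r}(ce)$; and $gf \neq 0$ because $\mathbf{d}(g) \leq \mathbf{d}(a\mathbf{r}(f)) = \mathbf{r}(f)$. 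Thus $gf$ is a nonzero common lower bound of $y$ and $bd$ (which are compatible, both lying below $ac$), so $y \wedge bd \neq 0$ as required.
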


If $A$ is a subset of $S$ then we define $\mathbf{d}(A) = \{ \mathbf{d}(a) \colon a \in A \}$.
We also have the following.

\begin{lemma}\label{le:coverage} Let $S$ be an inverse semigroup with zero.
\begin{enumerate}

\item $\{a\} \preceq a$.

\item $A \preceq a$ implies that $A^{-1} \preceq a^{-1}$.

\item $A \preceq a$ and $B \preceq b$ imply that $AB \preceq ab$.

\item If $X \preceq a$ and $X_{i} \preceq x_{i}$ for each $x_{i} \in X$ then $\bigcup_{i} X_{i} \preceq a$.
This is called {\em transitivity} of covers.

\item If $A \preceq a$ then $\mathbf{d}(A) \preceq \mathbf{d}(a)$.

\item Let $A \subseteq a^{\downarrow}$. Then $A \preceq a$ if and only if $\mathbf{d}(A) \preceq \mathbf{d}(a)$.

\item Let $A,B \preceq a$.
Define $A \wedge B = \{a' \wedge b' \colon a' \in A, b' \in B\}$.
Then $A \wedge B \preceq a$ and $A \wedge B = A \mathbf{d}(B) = B \mathbf{d}(A)$.

\item Let $S$ be an inverse $\wedge$-semigroup.
If $A \preceq a$ and $B \preceq b$ then $A \wedge B \preceq a \wedge b$.


\end{enumerate}
\end{lemma}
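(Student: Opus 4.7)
The plan is to treat each clause by a direct unpacking of the definition of cover, leveraging the elementwise analogues already collected in Lemma \ref{le:essential_stuff} and the meet/join identities of Lemma \ref{lem:meets-joins}. The pattern for most clauses is: given $0\neq x\leq a$ (or $\leq ab$, $\leq a\wedge b$ etc.), use the covering hypothesis to extract one element of the cover whose meet with $x$ is non-zero, and then verify the conclusion. Item (1) is immediate because $x\wedge a=x$ for $x\leq a$. Item (2) takes $0\neq y\leq a^{-1}$, passes to $y^{-1}\leq a$, applies $A\preceq a$ to obtain $a'\in A$ with $y^{-1}\wedge a'\neq 0$, and then inverts (using Lemma \ref{le:essential_stuff}(2) and the fact that inversion sends non-zero to non-zero). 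Item (4), transitivity, is similar: for $0\neq y\leq a$, choose $x_i\in X$ with $y\wedge x_i\neq 0$, observe $y\wedge x_i\leq x_i$, and apply $X_i\preceq x_i$ to find $z\in X_i$ with $y\wedge z\neq 0$.

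Items (5) and (6) are the heart of the lemma and are proved together. For (5), given $0\neq e\leq\mathbf{d}(a)$, note $ae\leq a$ and $ae\neq 0$; pick $a'\in A$ with $ae\wedge a'\neq 0$. Since $ae,a'\leq a$ they are compatible, so by Lemma \ref{lem:meets-joins}(1), $\mathbf{d}(ae\wedge a')=\mathbf{d}(ae)\wedge\mathbf{d}(a')=e\wedge\mathbf{d}(a')$, which is therefore non-zero. For the reverse direction of (6), $0\neq x\leq a$ gives $0\neq\mathbf{d}(x)\leq\mathbf{d}(a)$, so some $\mathbf{d}(a')\in\mathbf{d}(A)$ meets $\mathbf{d}(x)$ non-trivially; since $x,a'\leq a$ they are compatible and their meet has non-zero domain, hence is non-zero. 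This reduction of covering to covering at the idempotent level is what makes the remaining items routine.

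For (7), let $0\neq x\leq a$. Apply $A\preceq a$ to get $a'\in A$ with $x\wedge a'\neq 0$; since $x\wedge a'\leq a$, apply $B\preceq a$ to get $b'\in B$ with $(x\wedge a')\wedge b'\neq 0$, i.e.\ $x\wedge(a'\wedge b')\neq 0$. For the identity $A\wedge B=A\mathbf{d}(B)$, note that if $a',b'\leq a$ then writing $a'=a\mathbf{d}(a')$ and $b'=a\mathbf{d}(b')$ one gets $a'\wedge b'=a\mathbf{d}(a')\mathbf{d}(b')=a'\mathbf{d}(b')=b'\mathbf{d}(a')$, which is the largest element of $a^\downarrow$ below both. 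Item (8) is the same argument as (7) verbatim, using $z\leq a\wedge b$ so $z\leq a$ and $z\leq b$, and noting that the meets exist by the $\wedge$-semigroup hypothesis.

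Item (3) is the one that requires a little more care, since there is no $a\wedge b$ to decompose. I would reduce it to (5) together with the compatibility of $\mathbf{d}$ with products: given $0\neq z\leq ab$, write $z=abe$ with $e=\mathbf{d}(z)\leq\mathbf{d}(ab)$, so $be\leq b$ and $be\neq 0$ (else $z=0$), and $a(beb^{-1})\leq a$ with $\mathbf{d}(a(beb^{-1}))=\mathbf{r}(be)$. Two applications of the covering hypothesis—first $B\preceq b$ against $be$ to produce $b'\in B$ with $be\wedge b'\neq 0$, then $A\preceq a$ against an appropriate restriction of $a$ corresponding to the range of $be\wedge b'$—yield $a'\in A,b'\in B$ such that $a'b'\wedge z\neq 0$, by tracking domains via Lemma \ref{lem:meets-joins}(1) and the elementwise product assertion Lemma \ref{le:essential_stuff}(3). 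The main obstacle, and the step I would verify most carefully, is precisely this bookkeeping in (3): ensuring that the two non-zero meets obtained in $B$ and $A$ compose to give a non-zero meet with $z$, which ultimately comes down to the fact that multiplication by an inverse element is injective on its domain idempotent.
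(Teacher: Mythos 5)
The paper offers no proof of this lemma at all (it is stated after ``We also have the following'' with the proofs declared straightforward), so there is nothing to compare against line by line; your proposal has to stand on its own, and it does. Items (1), (2), (4)--(8) are handled exactly as one would expect: the reduction of (6) to (5) via $\mathbf{d}(x\wedge a')=\mathbf{d}(x)\wedge\mathbf{d}(a')$ for compatible elements (Lemma~\ref{lem:meets-joins}(1)) is the right mechanism, and the identity $a'\wedge b'=a'\mathbf{d}(b')=b'\mathbf{d}(a')$ for $a',b'\leq a$ in (7) is correct because restriction to $a^{\downarrow}$ is an order isomorphism with $\mathbf{d}(a)^{\downarrow}$. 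The one clause you rightly flag as delicate, item (3), does go through, and the fact that rescues it is worth recording explicitly: for $0\neq z\leq ab$ one has $\mathbf{d}(z)\leq\mathbf{d}(ab)=b^{-1}\mathbf{d}(a)b$, whence $\mathbf{r}(b\,\mathbf{d}(z))=b\,\mathbf{d}(z)\,b^{-1}\leq\mathbf{d}(a)$; consequently \emph{every} non-zero $w\leq b\,\mathbf{d}(z)$ (in particular $w=b\,\mathbf{d}(z)\wedge b'$) satisfies $\mathbf{r}(w)\leq\mathbf{d}(a)$, so the ``appropriate restriction'' $a\,\mathbf{r}(w)$ is automatically non-zero and $A\preceq a$ can be applied to it. Having chosen $a'\in A$ with $u=a\,\mathbf{r}(w)\wedge a'\neq 0$, the element $uw$ is non-zero (since $\mathbf{d}(u)\leq\mathbf{r}(w)$) and lies below both $z$ and $a'b'$, which completes (3). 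Without the observation $\mathbf{r}(b\,\mathbf{d}(z))\leq\mathbf{d}(a)$ the second application of the covering hypothesis could fail on a zero element, so this is the one sentence you should add to make the argument airtight.
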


The following was introduced in \cite{Lenz}.
Let $S$ be an arbitrary inverse semigroup with zero.
Let $a \in S$ and let $\{a_{1}, \ldots, a_{m}\}$ be a non-empty finite subset.
We write $a \rightarrow \{a_{1}, \ldots, a_{m}\}$ if for each $0 \neq x \leq a$ 
we have that $x^{\downarrow} \cap a_{i}^{\downarrow} \neq 0$ for some $i$.
In the case of $a \rightarrow \{b\}$, we simply write $a \rightarrow b$.
We write 
$\{a_{1}, \ldots, a_{m} \} \rightarrow \{b_{1}, \ldots, b_{n} \}$
iff $a_{i} \rightarrow  \{b_{1}, \ldots, b_{n} \}$ for $1 \leq i \leq m$.
We write 
$\{a_{1}, \ldots, a_{m} \} \leftrightarrow \{b_{1}, \ldots, b_{n} \}$
iff
$\{a_{1}, \ldots, a_{m} \} \rightarrow \{b_{1}, \ldots, b_{n} \}$
and
$\{b_{1}, \ldots, b_{n} \} \rightarrow \{a_{1}, \ldots, a_{m} \}$.
The proof of the following is immediate.

\begin{lemma} 
Let $\{a_{1}, \ldots, a_{m}\} \subseteq a^{\downarrow}$.
Then 
$\{a_{1}, \ldots, a_{m}\} \preceq a$ if, and only if, $\{a_{1}, \ldots, a_{m}\} \rightarrow a$.
\end{lemma}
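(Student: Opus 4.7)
The plan is to show that the two conditions are, on unpacking, logically equivalent, the translation between them being simply the correspondence between non-zero meets and non-zero common lower bounds in an inverse semigroup. (I read the right-hand side as $a \rightarrow \{a_{1}, \ldots, a_{m}\}$, since with the stated hypothesis $\{a_{1}, \ldots, a_{m}\} \subseteq a^{\downarrow}$ the alternative parsing is vacuous.) Both conditions are universally quantified statements of the form ``for every $0 \neq x \leq a$, some $a_{i}$ relates to $x$'', so the content lies in reconciling ``$x \wedge a_{i} \neq 0$'' with ``$x^{\downarrow} \cap a_{i}^{\downarrow} \neq \{0\}$''. The key preliminary observation is that for every $x \leq a$ and every $i$, the elements $x$ and $a_{i}$ lie below the common upper bound $a$, so they are compatible, and by Lemma~\ref{lem:meets-joins}(1) the meet $x \wedge a_{i}$ exists.

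For the forward implication, assume $\{a_{1}, \ldots, a_{m}\} \preceq a$ and let $0 \neq x \leq a$. Choose $i$ with $x \wedge a_{i} \neq 0$, which exists by definition of coverage. Setting $y = x \wedge a_{i}$ gives a non-zero element of $x^{\downarrow} \cap a_{i}^{\downarrow}$, which is exactly the condition demanded by $a \rightarrow \{a_{1}, \ldots, a_{m}\}$.

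For the reverse implication, assume $a \rightarrow \{a_{1}, \ldots, a_{m}\}$ and again let $0 \neq x \leq a$. By hypothesis there is an index $i$ and a non-zero $y$ with $y \leq x$ and $y \leq a_{i}$; this $y$ is a common lower bound of $x$ and $a_{i}$, and since we have already noted that $x \wedge a_{i}$ exists, it follows that $y \leq x \wedge a_{i}$, and hence $x \wedge a_{i} \neq 0$. This is precisely the coverage condition $\{a_{1}, \ldots, a_{m}\} \preceq a$.

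There is no real obstacle to overcome here: the lemma is a dictionary between a meet-theoretic and a lower-bound-theoretic formulation of essential refinement. The only pitfall to guard against is invoking meets that are not known to exist, but the compatibility of $x$ and $a_{i}$ under the common upper bound $a$ removes that concern via Lemma~\ref{lem:meets-joins}(1).
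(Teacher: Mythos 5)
Your proof is correct and is exactly the unpacking the paper has in mind: the paper offers no argument at all, declaring the lemma ``immediate,'' and the content is precisely the dictionary you describe, with the existence of the meets $x \wedge a_{i}$ guaranteed by compatibility under the common upper bound $a$ via Lemma~\ref{lem:meets-joins}(1). Your reading of the right-hand side as $a \rightarrow \{a_{1}, \ldots, a_{m}\}$ is also the correct one, consistent with how the arrow is used elsewhere (e.g.\ in the lemma characterizing $A \preceq B$ in $\mathsf{D}(S)$), the literal parsing being vacuous as you note.
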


We now come to the key definition.
A homomorphism $\theta \colon S \rightarrow T$ from an inverse semigroup $S$ to a distributive inverse semigroup $T$
is said to be {\em tight}  if for each $a \in S$ and $A \preceq a$ we have that
$\theta (a) = \bigvee_{a_{i} \in A} \theta (a_{i})$.
Thus a tight homomorphism converts covers to joins.
A {\em tight completion} of $S$ is a distributive inverse semigroup $\mathsf{D}_{t}(S)$
together with a tight homomorphism $\tau \colon S \rightarrow \mathsf{D}_{t}(S)$
which is universal.
If such a completion exists then it is, of course, unique up to isomorphism.
We shall show that the essential completion exists.

\begin{remark} {\em Let $S$ be a distributive inverse semigroup.  
If $a = \bigvee_{i=1}^{m}a_{i}$ then $\{a_{1}, \ldots, a_{m}\}$ is a cover of $a$.
It follows that tight maps between distributive inverse semigroups 
preserve any finite joins that exist.
Thus they are morphisms of distributive inverse semigroups.}
\end{remark}

A morphism $\theta \colon S \rightarrow T$ between distributive inverse semigroups is said to be {\em essential}
if $x \preceq s$ implies that $\theta (x) = \theta (s)$.

\begin{lemma} 
A morphism $\theta \colon S \rightarrow T$ between distributive inverse semigroups is essential if and only if it is tight.
\end{lemma}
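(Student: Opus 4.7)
The equivalence follows quickly from the tools already in place, essentially from Lemma~\ref{le:sputnik}(1), which converts the set-theoretic notion of cover into the order-theoretic notion of essential containment in a distributive setting.

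For the direction tight implies essential, suppose $\theta$ is tight and $x \preceq s$. Then the singleton $\{x\}$ is a cover of $s$ in the sense $\{x\} \preceq s$, so by the tightness condition $\theta(s) = \bigvee_{a_i \in \{x\}} \theta(a_i) = \theta(x)$, which is exactly essentialness.

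For the converse, assume $\theta$ is essential, and let $A = \{a_1, \ldots, a_m\} \preceq a$ be an essential cover in $S$. Since each $a_i \leq a$, the elements of $A$ are pairwise compatible; because $S$ is distributive, the join $b = \bigvee_{i=1}^m a_i$ exists in $S$. By Lemma~\ref{le:sputnik}(1) applied inside $S$, the single element $b$ satisfies $b \preceq a$. Now apply essentialness of $\theta$ to conclude $\theta(b) = \theta(a)$. On the other hand, $\theta$ is a morphism of distributive inverse semigroups and therefore preserves compatible binary (hence finite) joins, so $\theta(b) = \bigvee_{i=1}^{m} \theta(a_i)$. Combining these two identities gives $\theta(a) = \bigvee_{i=1}^{m} \theta(a_i)$, which is the tight condition.

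There is no real obstacle here: the only subtlety is noticing that we may legitimately form $\bigvee a_i$ inside $S$ (using pairwise compatibility of elements beneath $a$ and distributivity of $S$) in order to convert the cover $A \preceq a$ into a single essential element. Once this reduction is made, essentialness and preservation of compatible joins combine immediately to give tightness.
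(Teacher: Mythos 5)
Your proof is correct and follows essentially the same route as the paper: for the direction essential implies tight, both arguments use Lemma~\ref{le:sputnik}(1) to replace the cover $\{a_1,\ldots,a_m\}$ by the single essential element $b=\bigvee_i a_i$ (which exists by compatibility beneath $a$ and distributivity), then apply essentialness followed by preservation of compatible joins. The converse, which the paper dismisses as immediate, is spelled out correctly via the observation that $x\preceq s$ makes $\{x\}$ a singleton cover of $s$.
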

\begin{proof} Suppose that $\theta$ is essential.
Let $\{a_{1}, \ldots, a_{m} \} \subseteq a^{\downarrow}$ be a cover.
Then $b = \bigvee_{i=1}^{m} a_{i}$ is essential in $a$ by Lemma~\ref{le:sputnik}.
By assumption $\theta (a) = \theta (b)$.
We now use the fact that $\theta$ is a morphism and so $\theta (b) = \bigvee_{i=1}^{m} \theta (a_{i})$
which gives $\theta (a) = \bigvee_{i=1}^{m} \theta (a_{i})$, as required.
The proof of the converse is immediate. 
\end{proof}

It follows that in the case of distributive inverse monoids the word {\em tight} may be replaced by the word {\em essential}.

We begin by constructing the essential completion of a distributive inverse semigroup.
Let $S$ be a distributive inverse monoid.
Define the relation $\equiv$ on $S$ as follows
$$a \equiv b \Longleftrightarrow z \preceq a,b$$
for some $z \in S$.
We prove below that this is a congruence.
The $\equiv$-class containing $a$ is denoted by $[a]$.

\begin{lemma} Let $S$ be a distributive inverse semigroup.
The relation $\equiv$ is a $0$-restricted congruence on $S$.
If $S$ is $\wedge$-semigroup so is $S/\equiv$.
\end{lemma}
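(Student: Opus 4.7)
The plan is to verify that $\equiv$ is (i) an equivalence relation, (ii) compatible with multiplication, and (iii) $0$-restricted, and then separately that (iv) meets descend to $S/\equiv$ when $S$ is a $\wedge$-semigroup.

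For (i), reflexivity holds since every element is a cover of itself ($a \preceq a$), and symmetry is transparent from the definition. Transitivity is the crux: given witnesses $z \preceq a,b$ and $w \preceq b,c$, both $z$ and $w$ lie below $b$ hence are compatible, so by part~(1) of Lemma~\ref{lem:meets-joins} the meet $z \wedge w$ exists in $S$. I then claim $z \wedge w \preceq a$, and symmetrically $z \wedge w \preceq c$. For any $0 \neq x \leq a$, the covering $z \preceq a$ gives $x \wedge z \neq 0$, and since $x \wedge z \leq z \leq b$ is a nonzero element below $b$, applying $w \preceq b$ yields $(x \wedge z) \wedge w \neq 0$, i.e. $x \wedge (z \wedge w) \neq 0$, as required.

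For (ii), multiplication respects $\equiv$ by part~(3) of Lemma~\ref{le:essential_stuff} combined with the trivial cover $c \preceq c$: a witness $z \preceq a, a'$ yields $zc \preceq ac$ and $zc \preceq a'c$, so $ac \equiv a'c$, and the left-sided version is analogous. Step (iii) is immediate: if $a \equiv 0$ then the witness $z$ satisfies $z \preceq 0$, which forces $z = 0$, and then $0 \preceq a$ forces $a = 0$ by part~(4) of Lemma~\ref{le:essential_stuff}.

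For (iv), define $[a] \wedge [b] := [a \wedge b]$. Well-definedness is a direct application of part~(8) of Lemma~\ref{le:essential_stuff}: witnesses $z \preceq a, a'$ and $w \preceq b, b'$ produce $z \wedge w$ as a common refinement essentially contained in both $a \wedge b$ and $a' \wedge b'$. The one genuinely delicate step is the greatest-lower-bound property. One direction follows from $a \wedge b \leq a, b$ in $S$ passing to classes. For the converse, suppose $[c] \leq [a]$ and $[c] \leq [b]$. These inequalities in $S/\equiv$ lift to actual inequalities in $S$: taking $c' = ac^{-1}c$ gives $c' \leq a$ and $c' \equiv c$, and similarly $c'' = bc^{-1}c$ satisfies $c'' \leq b$ and $c'' \equiv c$. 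Since $c' \equiv c''$, pick a witness $z \preceq c', c''$; then $z \leq c' \leq a$ and $z \leq c'' \leq b$, so $z \leq a \wedge b$ and $z \equiv c$, giving $[c] = [z] \leq [a \wedge b]$.

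The main obstacle throughout is ensuring that the meets being invoked actually exist: this is automatic in the $\wedge$-semigroup case, and in the distributive case is handled at each occurrence by the compatibility that a common upper bound supplies, invoked via part~(1) of Lemma~\ref{lem:meets-joins}.
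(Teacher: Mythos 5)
Your proposal is correct and follows essentially the same route as the paper: the equivalence/congruence/$0$-restricted claims are exactly the consequences of Lemma~\ref{le:essential_stuff} that the paper invokes (you merely spell out the transitivity computation via the meet of the two witnesses), and your greatest-lower-bound argument — lifting $[c]\leq[a],[b]$ to $ac^{-1}c\leq a$, $bc^{-1}c\leq b$ and taking a common essential element below both — is the paper's own argument in trivially different packaging.
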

\begin{proof} The fact that the relation is an equivalence relation, a congruence and $0$-restricted
all follow from Lemma~\ref{le:essential_stuff}.

It remains to prove that if $S$ is an inverse $\wedge$-semigroup so is $S/ \equiv$.
Consider the elements $[a]$ and $[b]$.
By assumption, $a \wedge b$ exists and $[a \wedge b] \leq [a],[b]$.
Now let $[z] \leq [a],[b]$.
Then $z \equiv az^{-1}z \equiv bz^{-1}z$.
Let $u \preceq z, az^{-1}z$ and $v \preceq z,bz^{-1}z$.
Then $u \wedge v \preceq z,az^{-1}z,bz^{-1}z$
by Lemma~\ref{le:essential_stuff}.
It follows that $[z] = [az^{-1}z \wedge bz^{-1}z] \leq [a \wedge b]$, as required.
\end{proof}

\begin{proposition}\label{prop:one} Let $S$ be a distributive inverse semigroup.
Suppose that $e \preceq a$, where $e$ is an idempotent, implies that $a$ is an idempotent.
Then $\equiv$ is idempotent-pure  and $S/ \equiv$ is the essential completion of $S$.
\end{proposition}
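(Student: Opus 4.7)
The plan is to establish idempotent-purity first, and then verify that $S/\equiv$ satisfies the universal property of the essential (equivalently, tight) completion of $S$. For idempotent-purity, suppose $a \equiv e$ where $e$ is an idempotent. Then there is some $z \in S$ with $z \preceq a$ and $z \preceq e$. In particular $z \leq e$, and in any inverse semigroup anything below an idempotent is an idempotent, so $z$ is itself an idempotent. Since $z \preceq a$ with $z$ an idempotent, the standing hypothesis on $S$ forces $a$ to be an idempotent, as required.

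The main technical content is to show $S/\equiv$ is a distributive inverse semigroup. By the preceding lemma it is already a $0$-restricted inverse semigroup quotient, so I only need to check that compatible binary joins exist and that multiplication distributes over them. Suppose $[a] \sim [b]$; then $[a^{-1}b]$ and $[ab^{-1}]$ are idempotents in $S/\equiv$, and idempotent-purity forces $a^{-1}b$ and $ab^{-1}$ themselves to be idempotents, so $a \sim b$ in $S$ and $a \vee b$ exists. I claim $[a \vee b] = [a] \vee [b]$. The inequalities $[a], [b] \leq [a \vee b]$ are immediate. For the least-upper-bound condition, suppose $[c] \geq [a], [b]$, that is, $a \equiv ca^{-1}a$ and $b \equiv cb^{-1}b$, and choose witnesses $z_1 \preceq a, ca^{-1}a$ and $z_2 \preceq b, cb^{-1}b$. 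Then $z_1, z_2 \leq c$, so they are compatible and $z_1 \vee z_2 \leq c$ exists. I then verify $z_1 \vee z_2 \preceq a \vee b$: for any $0 \neq y \leq a \vee b$, Lemma~\ref{lem:meets-joins}(3) gives $y = (y \wedge a) \vee (y \wedge b)$, so at least one of $y \wedge a$ or $y \wedge b$ is non-zero; using $z_1 \preceq a$ or $z_2 \preceq b$ I obtain $y \wedge z_1 \neq 0$ or $y \wedge z_2 \neq 0$, whence $y \wedge (z_1 \vee z_2) \neq 0$ (the meet exists because both $y$ and $z_1 \vee z_2$ lie below $a \vee b$, so they are compatible). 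Hence $[a \vee b] = [z_1 \vee z_2] \leq [c]$, confirming the join formula, and distributivity of multiplication over these joins is then inherited directly from $S$.

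For the universal property, the quotient map $\sigma \colon S \to S/\equiv$ is essential: if $x \preceq s$ in $S$ then $x$ itself witnesses $x \equiv s$ (via $x \preceq x$ and $x \preceq s$), so $\sigma(x) = \sigma(s)$. Given any tight morphism $\theta \colon S \to T$ with $T$ a distributive inverse semigroup, if $a \equiv b$ with witness $z \preceq a, b$, then tightness gives $\theta(a) = \theta(z) = \theta(b)$, so there is a well-defined map $\bar{\theta} \colon S/\equiv \to T$ with $\bar{\theta}([a]) = \theta(a)$; this is the unique morphism of distributive inverse semigroups factoring $\theta$ through $\sigma$, because $\theta$ is already a morphism and joins in the quotient are computed by lifting to $S$.

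The step I anticipate being the trickiest is the verification that $z_1 \vee z_2 \preceq a \vee b$ in the least-upper-bound argument: it requires routing the cover condition through the meet--join distributive identity (Lemma~\ref{lem:meets-joins}(3)) and carefully tracking where the relevant meets are known to exist, using the fact that any two elements under a common upper bound are compatible and hence have a meet.
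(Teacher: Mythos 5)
Your proof is correct and follows essentially the same route as the paper's: idempotent-purity straight from the hypothesis, existence of compatible joins in $S/\equiv$ via idempotent-purity plus the witnesses $z_1 \preceq a, ca^{-1}a$ and $z_2 \preceq b, cb^{-1}b$, and the universal property by factoring an essential (tight) morphism through the quotient. The only difference is cosmetic: where the paper invokes part (9) of Lemma~\ref{le:essential_stuff} to conclude $z_1 \vee z_2 \preceq a \vee b$, you verify that instance directly from Lemma~\ref{lem:meets-joins}(3), which is a perfectly sound unpacking of the same step.
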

\begin{proof} It is immediate from our assumption that $\equiv$ is idempotent-pure.
Put $U = S/ equiv$.
We show first that $U$ is a distributive inverse semigroup.
Suppose that $[a] \sim [b]$.
Then both $ab^{-1}$ and $a^{-1}b$ are idempotents because $\equiv$ is idempotent-pure and so $a \sim b$.
It follows that $a \vee b$ exists in $S$.
Since $a,b \leq a \vee b$ we have that $[a],[b] \leq [a \vee b]$.
Let $[a],[b] \leq [c]$.
Then $a \equiv ca^{-1}a$ and $b \equiv cb^{-1}b$.
By definition, we have that for some $x$ we have $x \preceq a$ and $x \preceq ca^{-1}a$.
Similarly, we have that for some $y$ we have that $y \preceq b$ and $y \preceq cb^{-1}b$.
It follows that $x \sim y$ since $x,y \leq a \vee b$.
Thus by Lemma~\ref{le:essential_stuff},
$x \vee y \preceq a \vee b$ and $x \vee y \leq c$.
Hence $[a \vee b] \leq [c]$, as required.
It is now straightforward to check that $U$ is a distributive inverse semigroup.

To prove that $\nu \colon S \rightarrow U$ is an essential map,
observe that $b \preceq a$ implies that $b \Leftrightarrow a$ and so $[b] = [a]$.

Let $\theta \colon S \rightarrow T$ be an essential homomorphism to a distributive inverse semigroup.
Denote the natural map from $S$ to  $S/ \equiv$ by $\nu$.
Define 
$$\bar{\theta} \colon S/\equiv  \longrightarrow T
\mbox{ by }
\bar{\theta}([a]) = \theta (a).$$
We show first that $\bar{\theta}$ is well-defined.
Suppose that $[a] = [b]$.
Then $a \equiv b$.
It follows that there is an element $x$ such that $x \preceq a$ and $x \preceq b$.
By assumption, $\theta (x) = \theta (a)$ and $\theta (x) = \theta (b)$.
It follows that $\theta (a) = \theta (b)$.
We need to prove that $\bar{\theta}$ preserves any binary joins that exist.
But this we have essentially done above.
By construction, we have that $\bar{\theta} \nu = \theta$
and it is immediate that $\bar{\theta}$ is unique with these properties.
\end{proof}

We now describe how to construct the tight completion.

\begin{proposition}\label{prop:two} Let $S$ be an inverse semigroup.
Let $\theta \colon S \rightarrow T$ be a tight homomorphism to a distributive inverse semigroup.
Then the unique morphism $\theta^{\ast} \colon \mathsf{D}(S) \rightarrow T$ 
such that $\theta^{\ast}\iota = \theta$ is an essential morphism.
\end{proposition}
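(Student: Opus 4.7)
The plan is to verify that $\theta^{\ast}$ carries essential refinements in $\mathsf{D}(S)$ to equalities in $T$; once that is done, $\theta^{\ast}$ is essential by definition. So let $X \preceq Y$ in $\mathsf{D}(S)$. Since $X \leq Y$ already gives $\theta^{\ast}(X) \leq \theta^{\ast}(Y)$, only the reverse inequality is at issue. I would write $Y = \{b_{1}, \ldots, b_{n}\}^{\downarrow} = \bigvee_{j} \iota(b_{j})$ as a compatible join in $\mathsf{D}(S)$, so that $\theta^{\ast}(Y) = \bigvee_{j} \theta(b_{j})$; the task is then to exhibit each $\theta(b_{j})$ below $\theta^{\ast}(X)$.

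The key move is to localize the essential refinement at each generator $b_{j}$. Because both $X$ and $\iota(b_{j})$ lie below $Y$ they are compatible, and by Lemma~\ref{lem:meets-joins}(1) their meet $X_{j} := X \wedge \iota(b_{j})$ exists in $\mathsf{D}(S)$. Distributivity of meet over binary join (Lemma~\ref{lem:meets-joins}(3)), iterated finitely many times, then yields $X = X \wedge Y = \bigvee_{j} X_{j}$. I would next check that $X_{j} \preceq \iota(b_{j})$ in $\mathsf{D}(S)$: given any $0 \neq Z \leq \iota(b_{j})$, the inclusion $Z \leq Y$ together with $X \preceq Y$ produces $Z \wedge X \neq 0$, and since $Z \wedge X$ sits below both $\iota(b_{j})$ and $X$ it sits below $X_{j}$, forcing $Z \wedge X_{j} \neq 0$.

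Writing $X_{j} = \{c_{j,1}, \ldots, c_{j,k_{j}}\}^{\downarrow}$, Lemma~\ref{le:sputnik}(2) then translates the refinement $X_{j} \preceq \iota(b_{j})$ into a cover $\{c_{j,1}, \ldots, c_{j,k_{j}}\} \preceq b_{j}$ back at the level of $S$, and tightness of $\theta$ delivers $\theta(b_{j}) = \bigvee_{k} \theta(c_{j,k}) = \theta^{\ast}(X_{j})$. Reassembling via $X = \bigvee_{j} X_{j}$ gives $\theta^{\ast}(Y) = \bigvee_{j} \theta^{\ast}(X_{j}) = \theta^{\ast}(X)$, as required. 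I expect the main obstacle to be precisely this localization step: the hypothesis $X \preceq Y$ is a global condition on $\mathsf{D}(S)$, and there is no \emph{a priori} reason that the ``restriction'' of $X$ to below an individual generator $b_{j}$ should still essentially cover $\iota(b_{j})$. The point is that compatible meets in a distributive inverse semigroup, combined with distributivity of meet over join, handle this bookkeeping uniformly and let one peel the cover off one generator at a time, where Lemma~\ref{le:sputnik}(2) then converts it back into the kind of cover on which the tightness hypothesis on $\theta$ is designed to act.
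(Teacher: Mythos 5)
Your argument is correct and is essentially the paper's: both proofs localize the essential containment to a cover, in $S$, of each generator of the larger order ideal by elements drawn from the smaller one (verified by exactly the same device of taking an arbitrary nonzero $z$ beneath a generator and using essentiality in $\mathsf{D}(S)$ to produce a nonzero common lower bound with the smaller ideal), and then apply tightness of $\theta$ to convert that cover into a join. The only difference is bookkeeping: the paper covers each generator $a_i$ of the larger ideal by the products $a_i\mathbf{d}(b_j)$, where the $b_j$ generate the smaller ideal, and closes the argument via the identity $\theta^{\ast}(B)=\theta^{\ast}(A)\bigl(\bigvee_j\theta(\mathbf{d}(b_j))\bigr)$, whereas you cover each generator of the larger ideal by the generators of its meet with the smaller one, which lie in the smaller ideal outright and hence yield the inequality $\theta^{\ast}(Y)\leq\theta^{\ast}(X)$ directly.
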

\begin{proof} It is enough to prove that $\theta^{\ast}$ is an essential map.
Let $A = \{a_{1}, \ldots, a_{m} \}^{\downarrow}$ and $B = \{b_{1}, \ldots, b_{n} \}^{\downarrow}$ be two elements of $\mathsf{D}(S)$
such that $B \preceq A$ in $\mathsf{D}(S)$.
We shall prove that $\theta^{\ast}(A) = \theta^{\ast}(B)$.
By definition $\theta^{\ast}(B) = \bigvee_{i=1}^{n} \theta (b_{i})$ and $\theta^{\ast}(A) = \bigvee_{j=1}^{m} \theta (a_{j})$.
Clearly $\theta^{\ast}(B) \leq \theta^{\ast}(A)$.
Thus by definition, we have that
$\theta^{\ast}(B) = \theta^{\ast}(A)(\bigvee_{j=1}^{n} \theta (\mathbf{d}(b_{j})))$.
We shall prove that $\theta (a_{i}) = \theta (a_{i})(\bigvee_{j=1}^{n} \theta (\mathbf{d}(b_{j})))$ from which the result follows
and to do that it is enough to prove that $a_{i} \rightarrow \{a_{i} \mathbf{d}(b_{1}), \ldots, a_{i} \mathbf{d}(b_{n})\}$.
Let $0 \neq z \leq a_{i}$.
Then $0 \neq z^{\downarrow} \leq A$.
It follows that there is a non-zero $C \in \mathsf{D}(S)$ such that $C \leq z^{\downarrow},B$.
We may therefore find $0 \neq c \in C$ such that $c \leq z, b_{j}$ for some $j$.
It follows that $0 \neq c \leq a_{i} \mathbf{d}(b_{j}),b_{j}$.
\end{proof}

Let $S$ be an arbitrary inverse semigroup with zero and let $a,b \in S$.
Define  $a \cong b$ if and only if $A \preceq a,b$ for some cover $A$.
We say that $a$ and $b$ {\em have a common cover}.

\begin{lemma} 
The relation $\cong$ is a $0$-restricted congruence.
If $a,b \leq c$ then $[a \wedge b] = [a] \wedge [b]$.
\end{lemma}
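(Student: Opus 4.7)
My plan is to verify the two assertions separately: first that $\cong$ is a $0$-restricted congruence, and then, under the hypothesis $a,b \leq c$, that $[a \wedge b] = [a] \wedge [b]$ in the quotient $S/\!\cong$.

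For the congruence claim, reflexivity follows from $\{a\} \preceq a$ (Lemma~\ref{le:coverage}(1)) and symmetry is built into the definition. For transitivity, suppose $X \preceq a,b$ and $Y \preceq b,c$. Since $X$ and $Y$ are both covers of $b$, Lemma~\ref{le:coverage}(7) gives $X \wedge Y \preceq b$. To show $X \wedge Y \preceq a$, for each $x \in X$ (so $x \leq b$) I would check that $\{x \wedge y : y \in Y\}$ is a cover of $x$ using $Y \preceq b$, and then apply transitivity of covers (Lemma~\ref{le:coverage}(4)) to the family $X \preceq a$; symmetrically $X \wedge Y \preceq c$. The congruence property follows from Lemma~\ref{le:coverage}(3): $X \preceq a,b$ together with $\{d\} \preceq d$ gives $Xd \preceq ad,bd$ and $dX \preceq da,db$, so $\cong$ is preserved by multiplication on either side, and two-sided compatibility follows by transitivity. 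For $0$-restrictedness, $a \cong 0$ via some nonempty $A \preceq a,0$ forces $A = \{0\}$ because $0^{\downarrow} = \{0\}$, and then $\{0\} \preceq a$ is impossible unless $a = 0$, since any $0 \neq x \leq a$ would require $x \wedge 0 \neq 0$.

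For the meet identity, $a,b \leq c$ implies $a \sim b$ and so, by Lemma~\ref{lem:meets-joins}(1), $a \wedge b$ exists. The inequality $[a \wedge b] \leq [a],[b]$ is immediate. For the converse, assume $[d] \leq [a]$ and $[d] \leq [b]$; unpacking the natural partial order in the quotient (and using that $\cong$ respects inverses by Lemma~\ref{le:coverage}(2)), this translates in $S$ to $d \cong ad^{-1}d$ and $d \cong bd^{-1}d$. Since $ad^{-1}d \leq a \leq c$ and $bd^{-1}d \leq b \leq c$, both are compatible, so their meet exists; a short direct calculation using the definition of the natural partial order shows $ad^{-1}d \wedge bd^{-1}d = (a \wedge b)d^{-1}d$. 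Choosing covers $U \preceq d,ad^{-1}d$ and $V \preceq d,bd^{-1}d$, every element of $U \cup V$ lies below $c$, so all pairwise meets $u \wedge v$ exist. I would then verify that $U \wedge V$ is simultaneously a cover of $d$ (by the transitivity-of-covers argument used above) and a cover of $(a\wedge b)d^{-1}d$ (by refining any $0 \neq x \leq (a\wedge b)d^{-1}d$ first against $U$, using $U \preceq ad^{-1}d$, and then against $V$, using $V \preceq bd^{-1}d$). This yields $d \cong (a\wedge b)d^{-1}d$, hence $[d] \leq [a \wedge b]$ as required.

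The main delicacy will be that $S$ is not assumed to be a $\wedge$-semigroup, so every binary meet appearing in the argument must be justified by exhibiting a common upper bound. The hypothesis $a,b \leq c$ is precisely what provides such a bound: it forces $ad^{-1}d \wedge bd^{-1}d$ and all the meets $u \wedge v$ with $u \in U$, $v \in V$ to be formed via Lemma~\ref{lem:meets-joins}(1). Once this bookkeeping is in place, Lemmas~\ref{le:coverage}(4) and~\ref{le:coverage}(7) carry most of the combinatorial load.
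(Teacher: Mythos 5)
Your proof is correct and follows essentially the same route as the paper's: the congruence claim is extracted from the properties of covers in Lemma~\ref{le:coverage}, and the meet identity is proved by reducing $[d]\leq [a],[b]$ to $d\cong ad^{-1}d$ and $d\cong bd^{-1}d$ and then combining the two common covers into a common cover of $d$ and $(a\wedge b)d^{-1}d$. The paper's version is far more compressed (it simply asserts that the congruence claim follows from Lemma~\ref{le:coverage} and that ``it follows that $z\cong (a\wedge b)z^{-1}z$''), so your explicit $U\wedge V$ bookkeeping, with all meets justified by the common upper bound $c$, is exactly the detail the paper leaves implicit.
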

\begin{proof} 
The proof of the main claim follows from Lemma~\ref{le:coverage}.
We now prove the second claim.
Since $a,b \leq c$ we have that $[a],[b] \leq [c]$ and so $[a] \wedge [b]$ exists.
Clearly, $[a \wedge b] \leq [a] \wedge [b]$.
Let $[z] \leq [a],[b]$.
We have that $z \cong az^{-1}z \equiv bz^{-1}z$.
It follows that $z \cong (a \wedge b)z^{-1}z$, as required.
\end{proof}

We denote the $\cong$-class containing $a$ by $[a]$.
We say that an inverse semigroup is {\em separative} if $\cong$ is the equality relation.

\begin{lemma} Let $S$ be an inverse semigroup.
Then  $S/\cong$ is separative.
\end{lemma}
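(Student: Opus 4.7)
The plan is to show that whenever two classes $[a]$ and $[b]$ in $S/\cong$ share a common cover in the quotient, then $a$ and $b$ already share a common cover in $S$, which by the very definition of $\cong$ forces $[a]=[b]$. So suppose $\{[c_{1}],\ldots,[c_{n}]\}$ is a common cover of $[a]$ and $[b]$ in $S/\cong$: each $[c_{i}]\leq [a],[b]$, and for every non-zero $[x]\leq [a]$ (and symmetrically $\leq [b]$) some meet $[x]\wedge [c_{i}]$ is non-zero.

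First I would lift the cover back to $S$ by choosing the right representatives. Using the identity $s = ts^{-1}s$ that characterizes $s\leq t$ in any inverse semigroup, the relation $[c_{i}]\leq [a]$ reads $c_{i}\cong ac_{i}^{-1}c_{i}$, so the element $c_{i}' := ac_{i}^{-1}c_{i}$ satisfies $c_{i}'\leq a$ in $S$ and $[c_{i}'] = [c_{i}]$. Similarly $d_{i} := bc_{i}^{-1}c_{i}$ satisfies $d_{i}\leq b$ in $S$ and $[d_{i}] = [c_{i}]$. Hence $c_{i}'\cong d_{i}$ in $S$, and by the very definition of $\cong$ there is a set $E_{i}\subseteq S$ with $E_{i}\preceq c_{i}'$ and $E_{i}\preceq d_{i}$. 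Every element of $E_{i}$ therefore lies below both $c_{i}'\leq a$ and $d_{i}\leq b$.

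The main work is then to verify that $E := \bigcup_{i}E_{i}$ is a common cover of $a$ and $b$ in $S$. Fix $0\neq x\leq a$; by $0$-restrictedness of $\cong$ we have $[x]\neq 0$, and clearly $[x]\leq [a]$, so some $[x]\wedge [c_{i}]\neq 0$. Since $x$ and $c_{i}'$ both lie below $a$ in $S$, the meet-preservation clause of the preceding lemma applies and yields $[x\wedge c_{i}'] = [x]\wedge [c_{i}']=[x]\wedge [c_{i}] \neq 0$, whence $0$-restrictedness again gives $x\wedge c_{i}'\neq 0$ in $S$. Because $E_{i}\preceq c_{i}'$, there is $e\in E_{i}$ with $(x\wedge c_{i}')\wedge e\neq 0$; and as $e\leq c_{i}'$, this meet equals $x\wedge e$, so $e\in E$ meets $x$ non-trivially. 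The symmetric argument shows $E\preceq b$. Consequently $a\cong b$ in $S$, so $[a] = [b]$, which is separativity.

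The only delicate point is the transfer between zero/non-zero and meet computations in $S$ versus in $S/\cong$; this is handled uniformly by the $0$-restrictedness of $\cong$ together with the identity $[a\wedge b]=[a]\wedge [b]$ for pairs bounded in $S$. The trick of replacing the given representatives by $c_{i}'$ and $d_{i}$ lying concretely below $a$ and below $b$ is precisely what makes both hypotheses of that meet-preservation identity available.
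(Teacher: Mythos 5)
Your proof is correct and takes essentially the same route as the paper's: your representatives $c_i'=ac_i^{-1}c_i$ and $d_i=bc_i^{-1}c_i$ are exactly the paper's $ae_i$ and $be_i$ with $e_i=\mathbf{d}(x_i)$, and your direct check that $E=\bigcup_i E_i$ covers both $a$ and $b$ simply inlines the paper's appeal to the covering property of $\{ae_i\}$ together with `transitivity of covers'. No gaps.
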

\begin{proof} Suppose that $[a] \cong [b]$.
Let $\{[x_{1}], \ldots, [x_{m}]\} \preceq [a], [b]$.
Put $e_{i} = \mathbf{d}(x_{i})$ for $1 \leq i \leq m$.
Then 
$\{[e_{1} ], \ldots, [e_{m} ]\} \preceq [\mathbf{d}(a)], [\mathbf{d}(b)]$.
Observe that $ae_{1}, \ldots, ae_{m} \leq a$ and that $ae_{i} \cong x_{i}$ for $1 \leq i \leq m$.
It is easy to check that 
$A = \{ae_{1}, \ldots, ae_{m} \} \preceq a$.
By a similar argument, 
$B = \{be_{1}, \ldots, be_{m}\} \preceq b$.
Now $ae_{i} \cong be_{i}$.
Let $C_{i} = \{c_{i1}, \ldots, c_{in}\}$ be a common cover of $ae_{i}$ and $be_{i}$.
We are interested in the set $\{c_{ij}\}$.
It is a subset of both $a^{\downarrow}$ and $b^{\downarrow}$.
Now $A$ is a cover of $a$
and for each element $ae_{i} \in A$ we have that $C_{i}$ is a cover of $ae_{i}$.
It follows by `transitivity of covers' that $C$ is a cover of $a$.
By symmetry, $C$ is a cover of $b$.
It follows that $a$ and $b$ have a common cover and so $a \cong b$.
\end{proof}

We denote by $\mu \colon S \rightarrow S/\cong$ the natural map.
The following is proved in \cite{LL2}.

\begin{proposition}\label{prop:three} Let $S$ be an inverse semigroup and let  $\theta \colon S \rightarrow T$ 
be a tight homomorphism to a distributive inverse semigroup $T$.
Then there is a unique tight homomorphism $\bar{\theta} \colon S/\cong \longrightarrow T$
such that $\bar{\theta} \mu = \theta$.
\end{proposition}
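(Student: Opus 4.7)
The plan is to define $\bar{\theta}\colon S/\cong \to T$ by the only possible formula, $\bar{\theta}([a]) = \theta(a)$, and then verify four things: well-definedness, the homomorphism axiom, tightness, and uniqueness. Uniqueness is forced by surjectivity of $\mu$; the homomorphism property is routine once $\bar{\theta}$ is well-defined, since $\mu$ is itself a surjective homomorphism. Well-definedness is immediate from tightness of $\theta$: if $a \cong b$, take a common cover $\{c_1, \ldots, c_n\} \preceq a, b$, and then tightness gives $\theta(a) = \bigvee_i \theta(c_i) = \theta(b)$.

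The real work lies in proving that $\bar{\theta}$ is itself tight. Given a cover $\{[b_1], \ldots, [b_m]\} \preceq [a]$ in $S/\cong$, I would first observe that $[b_i] \leq [a]$ translates to $[b_i] = [a\, b_i^{-1}b_i]$, and since $\bar{\theta}$ is already known to be well-defined I may replace each $b_i$ by $a\, b_i^{-1}b_i$ without changing $\theta(b_i)$. This reduces the situation to the case $b_i \leq a$ in $S$. The central claim is then that $\{b_1, \ldots, b_m\} \preceq a$ already in $S$; once this is established, tightness of $\theta$ gives $\theta(a) = \bigvee_i \theta(b_i)$, which rereads as $\bar{\theta}([a]) = \bigvee_i \bar{\theta}([b_i])$.

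To establish this concrete cover in $S$, let $0 \neq x \leq a$. Since $\cong$ is $0$-restricted (by the lemma preceding Proposition~\ref{prop:three}), we have $[x] \neq [0]$, and $[x] \leq [a]$, so the hypothesis supplies an index $i$ with $[x] \wedge [b_i] \neq [0]$. Because $x$ and $b_i$ share the upper bound $a$, the meet $x \wedge b_i$ exists in $S$; the same preceding lemma says $[x \wedge b_i] = [x] \wedge [b_i]$, which is nonzero, and a second appeal to $0$-restrictedness forces $x \wedge b_i \neq 0$.

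The main obstacle I anticipate is precisely this bridge between covers in the quotient and covers in $S$. Everything else is bookkeeping, but one must be careful that meets can be computed \emph{inside} $S$ (rather than only in some completion), which is why the reduction to $b_i \leq a$ is crucial: with a common upper bound, meets in $S$ exist and are transported faithfully into $S/\cong$. The proof therefore hinges on the twin properties of $\cong$ established just before Proposition~\ref{prop:three}, namely its $0$-restrictedness and its compatibility with meets taken beneath a common upper bound; with these in hand the argument is a matter of assembling the pieces in the right order.
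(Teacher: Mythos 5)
Your proposal is correct. Note that the paper itself gives no proof of Proposition~\ref{prop:three} --- it simply cites \cite{LL2} --- so there is no in-text argument to compare against; your write-up stands as a self-contained proof. The one genuinely delicate point, lifting a cover of $[a]$ in $S/\cong$ to a cover of $a$ in $S$, is handled correctly: replacing each $b_i$ by $ab_i^{-1}b_i$ (legitimate since $[b_i]\leq[a]$ forces $[b_i]=[ab_i^{-1}b_i]$ and well-definedness keeps $\theta(b_i)$ unchanged) puts everything below the common upper bound $a$, so that the meets $x\wedge b_i$ exist in $S$ and the preceding lemma's identity $[x\wedge b_i]=[x]\wedge[b_i]$ together with $0$-restrictedness of $\cong$ transports nonvanishing of meets back down to $S$. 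After that, tightness of $\theta$ applied to the cover $\{b_1,\dots,b_m\}\preceq a$ in $S$ gives $\bar{\theta}([a])=\bigvee_i\bar{\theta}([b_i])$, and well-definedness, the homomorphism property, and uniqueness are, as you say, routine.
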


By the lemma above, we need only construct the tight completion in the separative case.
The lemma below will provide the connection with idempotent-purity.

\begin{lemma}\label{le:fred} Let $S$ be a separative inverse semigroup.
Let $A \preceq a$ where all elements of $A$ are idempotents.
Then $a$ is an idempotent.
\end{lemma}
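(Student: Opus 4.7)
The plan is to reduce the statement to an application of Lemma~\ref{le:coverage}(5), which says covers are preserved by the domain map $\mathbf{d}$, combined with the definition of separativity.

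The key observation is that if every element of $A$ is an idempotent, then $A$ coincides with $\mathbf{d}(A)$: for an idempotent $e$ we have $\mathbf{d}(e) = e^{-1}e = e$. So applying Lemma~\ref{le:coverage}(5) to the hypothesis $A \preceq a$ yields $A = \mathbf{d}(A) \preceq \mathbf{d}(a) = a^{-1}a$. Thus $A$ is simultaneously a cover of $a$ and of the idempotent $a^{-1}a$. By the definition of $\cong$ (common cover), this means $a \cong a^{-1}a$. Since $S$ is separative, $\cong$ is the equality relation, so $a = a^{-1}a$, which is an idempotent.

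The only substantive point that needs checking is that the elements of $A$ really do lie below both $a$ and $a^{-1}a$ (so that $A$ is genuinely a subset of $(a^{-1}a)^{\downarrow}$, as required to speak of a cover). But this is immediate: if $e \in A$ is idempotent with $e \leq a$, then taking $\mathbf{d}$ of both sides gives $e = \mathbf{d}(e) \leq \mathbf{d}(a) = a^{-1}a$. So no obstacle arises here.

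There is no serious difficulty in the argument; the proof is essentially a one-line application of Lemma~\ref{le:coverage}(5) together with the definition of separativity, with the only mild subtlety being the initial observation that $\mathbf{d}(A) = A$ precisely because the elements of $A$ are idempotent. This is the hinge on which everything turns.
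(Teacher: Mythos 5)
Your proof is correct and is essentially identical to the paper's: both arguments observe that $\mathbf{d}(A)=A$ since the elements of $A$ are idempotent, apply the fact that covers are preserved by $\mathbf{d}$ to get $A \preceq \mathbf{d}(a)$, conclude $a \cong \mathbf{d}(a)$, and invoke separativity. Your extra check that $A \subseteq (a^{-1}a)^{\downarrow}$ is a small but welcome addition of rigour that the paper leaves implicit.
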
 
\begin{proof} Observe that $\mathbf{d}(A) = A$ and so $A \preceq \mathbf{d}(a)$.
It follows that $a \cong \mathbf{d}(a)$.
Thus $a = \mathbf{d}(a)$ by separativity.
\end{proof}

\begin{lemma} Let $S$ be an inverse semigroup and 
$A = \{a_{1}, \ldots, a_{m} \}^{\downarrow} \subseteq B = \{b_{1}, \ldots, b_{n} \}^{\downarrow}$
both be elements of $\mathsf{D}(S)$.
Then $A \preceq B$ if and only if $\{b_{1}, \ldots, b_{n} \} \rightarrow \{a_{1}, \ldots, a_{m}\}$.
\end{lemma}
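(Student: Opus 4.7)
The plan is to unpack the definitions of both $\preceq$ (essential cover in $\mathsf{D}(S)$) and the arrow relation $\to$, and observe that each direction is an essentially immediate translation once we recall how meets work in the Schein completion. The key ancillary observation is that when two order ideals $X$ and $Y$ are both contained in a common order ideal of $\mathsf{D}(S)$, they are automatically compatible, and their meet in $\mathsf{D}(S)$ is the set-theoretic intersection $X \cap Y$ (which is again a finitely generated compatible order ideal).

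For the forward implication, I would assume $A \preceq B$ and fix any $b_i$ together with some $0 \neq x \leq b_i$. Then $x^{\downarrow}$ is a nonzero element of $\mathsf{D}(S)$ contained in $B$, so essential containment gives $x^{\downarrow} \wedge A \neq 0$ in $\mathsf{D}(S)$. Since $x^{\downarrow}$ and $A$ are both sub-ideals of $B$, their meet equals $x^{\downarrow} \cap A$, so there exists a nonzero $y \leq x$ with $y \leq a_j$ for some $j$; equivalently $x^{\downarrow} \cap a_j^{\downarrow} \neq 0$. This is precisely $b_i \to \{a_1,\ldots,a_m\}$, and as $i$ was arbitrary we obtain $\{b_1,\ldots,b_n\} \to \{a_1,\ldots,a_m\}$.

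For the reverse implication, I would assume $\{b_1,\ldots,b_n\} \to \{a_1,\ldots,a_m\}$ and let $X$ be an arbitrary nonzero element of $\mathsf{D}(S)$ with $X \leq B$. Since $X \subseteq B$ as an order ideal and $X$ is nonzero, pick any nonzero $x \in X$; because $X \subseteq B$, we have $x \leq b_i$ for some $i$. The arrow hypothesis then yields a nonzero $y$ with $y \leq x$ and $y \leq a_j$ for some $j$, hence $y \in X \cap A$. As in the first direction, $X \cap A$ is the meet $X \wedge A$ in $\mathsf{D}(S)$, so $X \wedge A \neq 0$. This shows $A \preceq B$.

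I expect no significant obstacle here; the only point requiring care is the identification of meets in $\mathsf{D}(S)$ with intersections when the two ideals share a common upper bound, which is a basic property of the finitary Schein completion recalled just before the lemma.
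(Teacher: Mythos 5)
Your proof is correct and takes essentially the same route as the paper's: both directions are direct unpackings of the definitions of $\preceq$ and $\rightarrow$, producing a nonzero element of $\mathsf{D}(S)$ below both the given ideal and $A$ to witness a nonzero meet. The only cosmetic difference is that the paper works with an explicit nonzero common lower bound $C$ (and, in the converse, builds one from all the generators) rather than invoking the identification of the meet with the set-theoretic intersection; these amount to the same thing.
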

\begin{proof} Suppose first that $A \preceq B$.
Let $0 \neq x \leq b_{i}$.
Then $0 \neq x^{\downarrow} \leq B$.
By assumption there exists $0 \neq C \leq x^{\downarrow}, A$.
Let $0 \neq c \in C$.
Then $c \leq x, a_{j}$ for some $j$.
To prove the converse, suppose that
 $\{b_{1}, \ldots, b_{n} \} \rightarrow \{a_{1}, \ldots, a_{m}\}$.
Let $0 \neq C \leq B$ where $C = \{c_{1}, \ldots, c_{p} \}^{\downarrow}$.
By assumption, for each $k$ we may find $x_{k}$ such that $0 \neq x_{k} \leq c_{k}, a_{i_{k}}$.
Put $X = \{x_{1}, \ldots, x_{p} \}^{\downarrow}$.
Then $X \neq 0$, $X \leq C$ and $X \leq A$.
\end{proof}         
 
We now have the following.

\begin{lemma}\label{le:daisy} Let $S$ be a separative inverse semigroup.
Then the congruence $\equiv$ is idempotent-pure on $\mathsf{D}(S)$.
\end{lemma}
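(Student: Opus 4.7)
The plan is to apply Proposition~\ref{prop:one} to the distributive inverse semigroup $\mathsf{D}(S)$, which reduces the idempotent-purity of $\equiv$ to the following condition: whenever $E$ is an idempotent of $\mathsf{D}(S)$ and $E \preceq A$, then $A$ is itself an idempotent. So first I would unpack this in terms of generators. Writing $E = \{e_1, \ldots, e_m\}^{\downarrow}$ with each $e_j \in E(S)$ (since the idempotents of $\mathsf{D}(S)$ are precisely the order ideals of idempotents of $S$) and $A = \{a_1, \ldots, a_n\}^{\downarrow}$, the lemma immediately preceding this one translates $E \preceq A$ in $\mathsf{D}(S)$ into the condition $\{a_1, \ldots, a_n\} \rightarrow \{e_1, \ldots, e_m\}$ in $S$: for each $i$ and each non-zero $x \leq a_i$ there exist $j$ and a non-zero $y \leq x$ with $y \leq e_j$, and such a $y$ is automatically an idempotent. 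Since $A$ is an idempotent of $\mathsf{D}(S)$ exactly when each $a_i$ is an idempotent of $S$, the problem reduces to showing each fixed $a_i$ is idempotent.

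The strategy for a fixed $i$ is to produce a finite cover of $a_i$ by idempotents of $S$ and invoke Lemma~\ref{le:fred}, whose proof bakes in the separativity of $S$. My candidates are built from the idempotents $f_j = \mathbf{d}(a_i) \wedge e_j$ of $E(S)$ (the meet exists because idempotents always have meets). Each $a_i f_j$ lies below $a_i$ with $\mathbf{d}(a_i f_j) = f_j$, and one checks routinely that $\{a_i f_1, \ldots, a_i f_m\}$ is an essential cover of $a_i$ in $S$: given $0 \neq x \leq a_i$ and a hypothesis-witness idempotent $0 \neq y \leq x, e_j$, one has $y \leq \mathbf{d}(a_i) \wedge e_j = f_j$ and, $y$ being an idempotent $\leq a_i$, $y = a_i y \leq a_i f_j$, so $y \leq x \wedge a_i f_j$.

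The main obstacle, and the place where separativity is decisive, is converting this into a cover by genuine idempotents. The key refinement is that each $a_i f_j$ inherits a strengthened form of the density condition, namely $a_i f_j \rightarrow \{e_j\}$ (a single-element target), because any witness $y$ for a non-zero $z \leq a_i f_j$ automatically satisfies $y \leq \mathbf{d}(a_i f_j) = f_j \leq e_j$. Thus the idempotent fixed points of $a_i$ lying inside $f_j$ are essentially dense below both $a_i f_j$ and its domain $f_j$; extracting from this density a common finite cover of $a_i f_j$ and $f_j$ and invoking Lemma~\ref{le:fred} forces $a_i f_j = f_j$, so that $f_j$ is itself a fixed idempotent of $a_i$. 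Then $\{f_1, \ldots, f_m\}$ is the desired finite idempotent cover of $a_i$ in $S$, and one last appeal to Lemma~\ref{le:fred} yields that $a_i$ is an idempotent, completing the verification and hence the proof.
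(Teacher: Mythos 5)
Your setup is sound and runs parallel to the paper's own argument: the reduction via Proposition~\ref{prop:one} to the implication that $E \preceq A$ with $E$ idempotent forces $A$ idempotent, the translation of $E \preceq A$ into $\{a_1,\ldots,a_n\}\rightarrow\{e_1,\ldots,e_m\}$, and the verification that $\{a_i f_1,\ldots,a_i f_m\}$ with $f_j=\mathbf{d}(a_i)\wedge e_j$ is an essential cover of $a_i$ are all correct. The gap is the final step, where you propose ``extracting from this density a common finite cover of $a_i f_j$ and $f_j$.'' What you have actually established is that every non-zero $z\leq a_i f_j$ has \emph{some} non-zero idempotent beneath it; that hands you one witness per $z$, and nothing in a general inverse semigroup allows you to replace this (possibly infinite) family of witnesses by a single finite subfamily that still meets every $z$. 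Covers are finite by definition, and separativity --- via Lemma~\ref{le:fred} --- only sees finite covers, so the density statement cannot be fed into it. As written, the claim $a_i f_j = f_j$ is not proved.

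The ingredient you are missing is the part of the hypothesis $E\preceq A$ that your argument never uses: $E\leq A$, i.e.\ $E\subseteq A$ as order ideals, so each $e_j$ lies below some generator $a_{i'}$ of $A$. Since the generators of the compatible order ideal $A$ are pairwise compatible, the meet $a_i\wedge a_{i'}=a_i\mathbf{d}(a_{i'})=a_{i'}\mathbf{d}(a_i)$ exists, and then
\[
a_i f_j = a_i\mathbf{d}(a_i)e_j = a_i\mathbf{d}(a_{i'})e_j = a_{i'}\mathbf{d}(a_i)e_j = a_{i'}e_j\,\mathbf{d}(a_i) = e_j\,\mathbf{d}(a_i) = f_j,
\]
using $e_j\leq\mathbf{d}(a_{i'})$ and $a_{i'}e_j=e_j$. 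So your cover of $a_i$ already consists of idempotents, and a single application of Lemma~\ref{le:fred} (which is where separativity genuinely enters) yields that $a_i$ is an idempotent; no density argument is needed. This is in substance what the paper's proof does when it forms $C\wedge a_j^{\downarrow}$ ``using the fact that $C$ is a compatible order ideal'': the compatibility is precisely what makes the meets $a_j\wedge e_k=e_k\mathbf{d}(a_j)$ exist and be idempotents.
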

\begin{proof} Let $A = \{a_{1}, \ldots, a_{m} \}^{\downarrow}$ and $B = \{b_{1}, \ldots b_{n}\}^{\downarrow}$.
Then $A \equiv B$ if and only if there exists $C \in \mathsf{D}(S)$ such that $C \preceq A$ and $C \preceq B$.
Suppose that $B$ is an idempotent.
Then $C$ is an idempotent.
But  $C \preceq A$.
Let $C = \{e_{1}, \ldots, e_{p}\}^{\downarrow}$ where the $e_{i}$ are idempotents.
We use the fact that $C$ is a compatible order ideal.
It follows that for each $a_{j}$ we have that $C \wedge a_{j}^{\downarrow}$ is defined and that
$C \wedge a_{j}^{\downarrow} \preceq a_{j}^{\downarrow}$.
But all the elements of $C \wedge a_{j}^{\downarrow}$ are idempotents and so by 
Lemma~\ref{le:fred}, we have that $a_{j}$ is an idempotent.
Thus $A$ is an idempotent, as required.
\end{proof}

We now have all the elements needed for our first main theorem.

\begin{theorem}[Tight completion] Let $S$ be an inverse semigroup with zero.
Let $\mathbf{S}$ be its separative image.
Then $\mathsf{D}_{t}(S) = \mathsf{D}(\mathbf{S})/\equiv$.
\end{theorem}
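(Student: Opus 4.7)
The plan is to decompose the tight completion into three successive universal passages, each supplied by one of the three propositions just established. Given any tight homomorphism $\theta \colon S \to T$ into a distributive inverse semigroup, I would first invoke Proposition~\ref{prop:three} to obtain a unique factorization $\theta = \bar\theta\mu$ through the separative quotient $\mu \colon S \to \mathbf{S}$, with $\bar\theta$ still tight. This reduces the problem to constructing the tight completion of the separative inverse semigroup $\mathbf{S}$.

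Next, the universal property of the Schein completion (Proposition~\ref{prop:sc}) yields a unique morphism of inverse semigroups $\bar\theta^{\ast} \colon \mathsf{D}(\mathbf{S}) \to T$ with $\bar\theta^{\ast}\iota = \bar\theta$, and Proposition~\ref{prop:two} upgrades this to an essential morphism of distributive inverse semigroups. Finally, Proposition~\ref{prop:one} applied to $\mathsf{D}(\mathbf{S})$ produces the essential completion $\nu \colon \mathsf{D}(\mathbf{S}) \to \mathsf{D}(\mathbf{S})/\equiv$ together with a unique factorization $\bar\theta^{\ast} = \widehat\theta\,\nu$. Composing, the candidate universal tight homomorphism is $\tau = \nu\iota\mu \colon S \to \mathsf{D}(\mathbf{S})/\equiv$, and $\widehat\theta$ is the unique extension of $\theta$.

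The critical input enabling the last step is the hypothesis of Proposition~\ref{prop:one}: that an element essentially covered by an idempotent in $\mathsf{D}(\mathbf{S})$ must itself be idempotent. This is exactly Lemma~\ref{le:daisy}, whose separativity assumption is granted by the construction of $\mathbf{S}$. To verify that $\tau$ is actually tight, I would take $A = \{a_{1},\dots,a_{m}\} \preceq a$ in $S$ and descend the cover through each factor: $\mu$ transports the cover to $\mathbf{S}$ (using that $\cong$ is $0$-restricted, so non-zero elements below $\mu(a)$ lift to non-zero elements below $a$), Lemma~\ref{le:sputnik}(2) converts this into $\{\mu(a_{1}),\dots,\mu(a_{m})\}^{\downarrow} \preceq \{\mu(a)\}^{\downarrow}$ in $\mathsf{D}(\mathbf{S})$, and $\nu$ collapses this essential containment to equality in $\mathsf{D}(\mathbf{S})/\equiv$. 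Distributivity of $\mathsf{D}(\mathbf{S})/\equiv$ then turns the equality into the required join $\tau(a) = \bigvee_{i}\tau(a_{i})$.

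The main obstacle is the careful bookkeeping of three nested universal properties — ensuring in particular that the uniqueness at each stage composes into uniqueness of the overall factorization. Because the three arrows $\mu$, $\iota$, $\nu$ are each universal for a different class of target categories (separative inverse semigroups, distributive inverse semigroups, essentially closed distributive inverse semigroups), the argument cannot simply invoke a single adjunction but must thread the diagram one layer at a time. Once this is done, the identifications $\mathsf{D}_{t}(\mathbf{S}) = \mathsf{D}(\mathbf{S})/\equiv$ and $\mathsf{D}_{t}(S) = \mathsf{D}_{t}(\mathbf{S})$ combine to give the claim.
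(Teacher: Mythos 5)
Your proposal is correct and follows essentially the same route as the paper's own proof: factor a tight homomorphism through the separative quotient via Proposition~\ref{prop:three}, pass to the Schein completion via Proposition~\ref{prop:two}, and then use Lemma~\ref{le:daisy} to verify the hypothesis of Proposition~\ref{prop:one} so that the essential completion of $\mathsf{D}(\mathbf{S})$ is $\mathsf{D}(\mathbf{S})/\equiv$. The extra bookkeeping you supply on tightness of $\tau$ and on composing the three universal properties is consistent with, and slightly more explicit than, the paper's argument.
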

\begin{proof} Let $\theta \colon S \rightarrow T$ be a tight map to a distributive inverse semigroup.
Then by Proposition~\ref{prop:three}, we may replace $S$ by its separative image $\mathbf{S}$.
By Proposition~\ref{prop:two}, we may replace $\mathbf{S}$ by its Schein completion $\mathsf{D}(\mathbf{S})$.
But by Lemma~\ref{le:daisy}, the congruence $\equiv$ is idempotent-pure on $\mathsf{D}(\mathbf{S})$.
We may now apply Proposition~\ref{prop:one} to get the result.
\end{proof}

The following will apply to the application of tight completions that will interest us.

\begin{lemma}\label{le:binky} Let $S$ be an inverse monoid with zero.
\begin{enumerate}

\item If $S$ is a $\wedge$-monoid then $\{a_{1}, \ldots, a_{m}\}^{\downarrow} \equiv \{b_{1}, \ldots, b_{n}\}^{\downarrow}$ 
if, and only if, $\{a_{1}, \ldots, a_{m}\} \leftrightarrow \{b_{1}, \ldots, b_{n} \}$.

\item If $S$ is $E^{\ast}$-unitary, then $\equiv$ is idempotent-pure on $\mathsf{D}(S)$.

\end{enumerate}
\end{lemma}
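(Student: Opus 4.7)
The plan is to unwind both parts by using the characterization of $\preceq$ in $\mathsf{D}(S)$ from the unnamed lemma just before Lemma~\ref{le:daisy}: for $C \subseteq A$ in $\mathsf{D}(S)$, one has $C \preceq A$ exactly when $\{a_1, \ldots, a_m\} \rightarrow \{c_1, \ldots, c_p\}$ in $S$, where $C = \{c_1, \ldots, c_p\}^{\downarrow}$.

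For part~(1), write $A = \{a_1, \ldots, a_m\}^{\downarrow}$ and $B = \{b_1, \ldots, b_n\}^{\downarrow}$, and recall that $A \equiv B$ asserts the existence of $C \in \mathsf{D}(S)$ with $C \preceq A$ and $C \preceq B$, so in particular $C \subseteq A \cap B$. The forward direction needs no $\wedge$-monoid hypothesis: given any $0 \neq x \leq a_i$, $C \preceq A$ supplies a non-zero $c' \in C$ with $c' \leq x$, and $c' \in C \subseteq B$ forces $c' \leq b_j$ for some $j$, so $x^{\downarrow} \cap b_j^{\downarrow} \neq 0$ and hence $\{a_i\} \rightarrow \{b_j\}$; symmetry gives the other direction. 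For the converse I would take $C = \{a_i \wedge b_j : 1 \leq i \leq m, 1 \leq j \leq n\}^{\downarrow}$ (here is where the $\wedge$-monoid hypothesis is used). Then $C \subseteq A \cap B$, and given $0 \neq x \leq a_i$, the hypothesis $\{a_i\} \rightarrow \{b_j\}$ produces a non-zero $y$ with $y \leq x$ and $y \leq b_j$, so $y \leq a_i \wedge b_j \in C$, verifying $\{a_1, \ldots, a_m\} \rightarrow \{a_i \wedge b_j\}$ and hence $C \preceq A$; symmetrically $C \preceq B$.

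For part~(2), assume $S$ is $E^{\ast}$-unitary and $A \equiv F$ in $\mathsf{D}(S)$, where $F$ is an idempotent of $\mathsf{D}(S)$, that is, a compatible order ideal consisting entirely of idempotents of $S$. Choose $C$ with $C \preceq A$ and $C \preceq F$; since $C \subseteq F$, every generator $c_k$ of $C$ is an idempotent of $S$. Applying the relation $\{a_1, \ldots, a_m\} \rightarrow \{c_1, \ldots, c_p\}$ to $x = a_i$ produces a non-zero common lower bound of $a_i$ and some $c_k$, hence a non-zero idempotent beneath $a_i$; $E^{\ast}$-unitarity then forces $a_i$ itself to be an idempotent. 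Consequently every generator of $A$ is an idempotent, so $A$ is an order ideal of idempotents, i.e., an idempotent of $\mathsf{D}(S)$.

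The main obstacle is the reverse direction of part~(1): one has to produce an \emph{explicit} common essential cover of $A$ and $B$. The $\wedge$-monoid hypothesis makes the natural candidate $\{a_i \wedge b_j\}^{\downarrow}$ available and places it inside $A \cap B$, while the combinatorial hypothesis $\{a_i\} \leftrightarrow \{b_j\}$ is exactly strong enough to make this candidate essentially cover both sides. Elsewhere the argument is just careful bookkeeping between $\preceq$ on $\mathsf{D}(S)$ and the relation $\rightarrow$ on finite subsets of $S$.
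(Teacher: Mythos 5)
Your proposal is correct and follows essentially the same route as the paper: the forward direction of (1) unwinds the definition of a common essential cover $C\subseteq A\cap B$, the converse uses exactly the paper's witness $C=\{a_i\wedge b_j\}^{\downarrow}$, and part (2) is the same $E^{\ast}$-unitarity argument applied to the generators. Your added observations -- that the forward implication in (1) needs no $\wedge$-hypothesis, and that the $a_i\wedge b_j$ lie in $A$ and hence form a compatible order ideal -- are accurate refinements of the paper's terser write-up.
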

\begin{proof} (1) Suppose that
$A = \{a_{1}, \ldots, a_{m}\}^{\downarrow} \equiv \{b_{1}, \ldots, b_{n}\}^{\downarrow} = B$.
Then there exists 
$C = \{c_{1}, \ldots, c_{p}\}^{\downarrow}$ such that
$\{c_{1}, \ldots, c_{p}\}^{\downarrow} \preceq \{a_{1}, \ldots, a_{m}\}^{\downarrow}$
and 
 $\{c_{1}, \ldots, c_{p}\}^{\downarrow} \preceq \{b_{1}, \ldots, b_{n}\}^{\downarrow}$.
Let $0 \neq x \leq a_{i}$.
Then $0 \neq x^{\downarrow} \leq A$.
Thus $x^{\downarrow} \wedge C \neq 0$.
It follows that $x \wedge c_{k} \neq 0$ for some $k$.
But $c_{k} \leq b_{j}$ for some $j$.
Thus $x \wedge b_{j} \neq 0$ for some $j$.
By symmetry, it follows that  $\{a_{1}, \ldots, a_{m}\} \leftrightarrow \{b_{1}, \ldots, b_{n} \}$.

Conversely, suppose that $\{a_{1}, \ldots, a_{m}\} \leftrightarrow \{b_{1}, \ldots, b_{n} \}$.
Put $C= \{a_{i} \wedge b_{j} \colon 1 \leq i \leq m, 1 \leq j \leq n \}^{\downarrow}$.
It is now easy to check that $C \preceq A,B$.
It follows that $A \equiv B$, as required.

(2) Let $A,B \in \mathsf{D}(S)$ be non-zero elements such that $A$ is an idempotent and $A \equiv B$.
We prove that $B$ is an idempotent.
By definition, there is $C$ such that $C \preceq A,B$.
Since $C \subseteq A$, all the elements of $C$ are idempotents.
Let $b$ be one of the generators of $B$.
Then $b^{\downarrow} \leq B$.
Thus $b^{\downarrow}$ has a non-zero intersection with some element $e^{\downarrow}$ where $e \in C$.
It follows that $b$ is above a non-zero idempotent and so itself an idempotent, as required.
\end{proof}

\subsection{Tight completions of Bratteli inverse monoids are AF}

The goal of this section is to prove the following.

\begin{theorem} Let $B$ be a Bratteli diagram.
Then  $\mathsf{I}(B)$ is the tight completion of  $P_{B}^{\bullet}$.
\end{theorem}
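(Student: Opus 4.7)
The plan is to construct a natural tight homomorphism $\tau \colon P_B^\bullet \to \mathsf{I}(B)$ and then to identify $\mathsf{I}(B)$ with the tight completion $\mathsf{D}_t(P_B^\bullet)$ through it. Recall that $\mathsf{I}(B) = \varinjlim T_n$, where $T_n$ is the semisimple inverse monoid whose symmetric inverse factor $S_v$, for $v \in V(n)$, acts on the finite set $P_v$ of paths from $v$ to the root $v_0$. A non-zero element $xy^{-1} \in P_B^\bullet$ has $|x| = |y| = n$ for a unique $n$ and $x, y \in P_v$ for a unique $v \in V(n)$; I would define $\tau(xy^{-1})$ to be the matrix unit in $S_v \subseteq T_n$ whose rook-matrix representation has its single $1$-entry at row $x$ and column $y$, viewed in $\mathsf{I}(B)$, with $\tau(0) = 0$. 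The product formula of the graph inverse monoid then translates directly into rook-matrix multiplication, so $\tau$ is a well-defined homomorphism.

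To verify tightness of $\tau$, I would analyse essential covers in $P_B^\bullet$ explicitly. From the natural-order formula, the non-zero elements below $xy^{-1}$ are exactly those of the form $xp(yp)^{-1}$, where $p$ is a path whose target is the common source vertex $v$; two such elements are non-orthogonal precisely when their $p$-paths are comparable under extension at the non-$v$ end. Hence an essential cover $\{xp_i(yp_i)^{-1}\}_{i=1}^m$ of $xy^{-1}$ corresponds to a finite prefix-complete family $\{p_i\}$ of paths ending at $v$. By transitivity of covers (Lemma~\ref{le:coverage}(4)) it suffices to treat the case in which all the $p_i$ share a common length $k$; then $\{xp_i(yp_i)^{-1}\}_i$ enumerates all the matrix units at level $n+k$ beneath $\tau(xy^{-1})$, and their orthogonal join in $T_{n+k} \subseteq \mathsf{I}(B)$ is precisely $\tau(xy^{-1})$ by construction of the standard morphism $T_n \hookrightarrow T_{n+k}$.

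To deploy the universal property, I need a concrete description of $\mathsf{D}_t(P_B^\bullet)$. By Proposition~\ref{prop:boozy}, $P_B^\bullet$ is an $E^*$-unitary combinatorial $\wedge$-monoid with unambiguous idempotent semilattice, so by Lemma~\ref{le:plod} its natural partial order is unambiguous, and by Lemma~\ref{le:binky}(2) the congruence $\equiv$ is idempotent-pure on $\mathsf{D}(P_B^\bullet)$. Direct inspection of the partial-order formula moreover shows that any two elements of $P_B^\bullet$ sharing a non-zero lower bound are already comparable, from which $P_B^\bullet$ is separative. Hence Proposition~\ref{prop:one} identifies the tight completion as $\mathsf{D}_t(P_B^\bullet) = \mathsf{D}(P_B^\bullet)/\equiv$, whose elements (by Lemma~\ref{le:unambig}) are represented by downsets of finite orthogonal families in $P_B^\bullet$.

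By Proposition~\ref{prop:two} and the tightness of $\tau$, the natural extension $\widehat\tau \colon \mathsf{D}(P_B^\bullet) \to \mathsf{I}(B)$ is essential, so it descends to $\bar\tau \colon \mathsf{D}(P_B^\bullet)/\equiv \to \mathsf{I}(B)$. Surjectivity is immediate, since every element of $T_n$ is an orthogonal join of matrix units, each of which lifts to an element of $P_B^\bullet$ at level $n$. The main obstacle is injectivity. Given two finite orthogonal families $\{a_i\}$ and $\{b_j\}$ in $P_B^\bullet$ whose $\bar\tau$-images agree in some $T_n$, I would simultaneously refine both families to a common uniform level $n' \geq n$ using the essential covers analysed above; at level $n'$, equality of the joins in $T_{n'}$ forces equality of the underlying matrix-unit sets, hence literal equality of the refined families in $P_B^\bullet$. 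Since each refinement is itself an essential cover of the corresponding unrefined element, Lemma~\ref{le:binky}(1) then certifies that the original families are $\equiv$-equivalent. The hardest part will be the bookkeeping that consistently matches refinements across different vertices and levels, but this is controlled by the same level-uniformization machinery that underlies the construction of standard morphisms between semisimple inverse monoids.
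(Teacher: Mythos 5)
Your proposal follows essentially the same route as the paper: both identify the tight completion with $\mathsf{D}(P_{B}^{\bullet})/\equiv$ using $E^{\ast}$-unitarity and unambiguity of the order, both match the uniform-level elements (generated by orthogonal families of a common weight) with the semisimple monoids $T_{n}$ via matrix units, and both finish by refining arbitrary orthogonal generating families to a common level --- which is exactly the paper's Lengthening and Homogenizing lemmas together with its computation of the maps $\varepsilon_{p}$ as the standard morphisms. The only real difference is packaging: you argue externally, through an explicit tight map $\tau$ and the universal property, whereas the paper argues internally, exhibiting the homogeneous orthogonal completion as the $\omega$-chain $S/\mathscr{Z}$ and then checking that $\equiv$ implements precisely the direct-limit identifications.

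One assertion in your third paragraph is false, though fortunately dispensable: $P_{B}^{\bullet}$ is not in general separative. Unambiguity tells you that two elements with a common cover are comparable, not that they are equal. Concretely, if $v$ is a vertex of $B$ that receives exactly one edge $e$ from the next level, then for any path $x$ from $v$ to the root the singleton $\{xe(xe)^{-1}\}$ is an essential cover of $xx^{-1}$ (every nonzero element below $xx^{-1}$ other than $xx^{-1}$ itself factors through $e$, and $P_{B}^{\bullet}$ is $E^{\ast}$-unitary, so it suffices to check idempotents), whence $xe(xe)^{-1}\cong xx^{-1}$ while these idempotents are distinct. This does not damage your proof, because separativity is not what you need: idempotent-purity of $\equiv$ on $\mathsf{D}(P_{B}^{\bullet})$, which you correctly derive from $E^{\ast}$-unitarity via Lemma~\ref{le:binky}, is all that Proposition~\ref{prop:one} requires, and that is exactly how the paper bypasses the passage to the separative image. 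Delete the separativity claim and the rest of your argument stands.
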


Since $P_{B}^{\bullet}$ is $E^{\ast}$-unitary, 
it follows by Lemma~\ref{le:binky}, that we need to show $\mathsf{I}(B)$ is isomorphic to $\mathsf{D}(P_{B}^{\bullet})/\equiv$.
By Lemma~\ref{le:unambig},
Proposition~\ref{prop:boozy}
and
Lemma~\ref{le:plod},
the inverse monoid $P_{B}^{\bullet}$
has an unambiguous natural partial order and so every finitely generated
compatible order ideal is generated by pairwise orthogonal elements.

We now describe, informally, why $\mathsf{D}(P_{B}^{\bullet})/\equiv$ is isomorphic to $\mathsf{I}(B)$.
Let $v$ be a vertex of the Bratteli diagram $B$.
Denote by $P_{v}$ all paths from $v$ to the root.
Denote by $G_{v}$ the groupoid of all elements $xy^{-1}$ where $x,y \in P_{v}$.
These are just the non-zero $\mathscr{D}$-classes and so $P_{B}^{\bullet}$ is a disjoint union of them.
Think of these groupoids as attached to the appropriate vertices of the Bratteli diagram.
Elements at one level can only be above, in the natural partial order, elements at lower levels,
this order being mediated by edges since $xx^{-1} \leq yy^{-1}$ if, and only if, $x = yp$ for some path $p$.
The key point is that if we adjoin a zero to $G_{v}$ and take its distributive completion, we get a symmetric inverse monoid with letters the set $P_{v}$.
In particular, a symmetric inverse monoid of the correct size associated with the vertex $v$ of the Bratteli diagram $B$.
The disjoint union of groupoids at one level is therefore inflated to a direct product of symmetric inverse monoids of the correct sizes
for that level of the Bratteli diagram.
The standard maps between adjacent levels are induced by the natural partial order linking elements at one level to the elements immediately below it.
The elements lower down are essential in the elements immediately above and so are identified in the tight completion.
It remains to convert this informal description into a detailed proof.

An element of $\mathsf{D}(P_{B}^{\bullet})$ is said to be {\em ($p$-)homogeneous} if it is generated by pairwise orthogonal elements that all have the same weight $p$.
Define $\mathsf{D}^{h}(P_{B}^{\bullet})$ to be the subset of $D$ consisting of all homogeneous elements of $D$ and zero.
If two elements of weight $p$ are multiplied together the result is either zero or an element of weight $p$.
It follows that $\mathsf{D}^{h}(P_{B}^{\cdot})$ forms an inverse submonoid of $\mathsf{D}(P_{B}^{\bullet})$ that we call the {\em homogeneous orthogonal completion}.

We now show how (tight) covers of idempotents in $P_{B}^{\bullet}$ may arise.

\begin{lemma}[Lengthening]\label{le:lengthening} Let $x$ be a path from the vertex $v$, at level $n$, to the root $v_{0}$.
Let $e_{1}, \ldots, e_{q}$ be all the edges at level $n+1$ that end in $v$.
Then the set $A = \{ xe_{1}(xe_{1})^{-1}, \ldots, xe_{q}(xe_{q})^{-1}\}$ is a cover of $xx^{-1}$,
and all the elements of $A$ have length $n+1$.
\end{lemma}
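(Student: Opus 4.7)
The plan is to verify directly the definition of an essential cover. Two bookkeeping points come first: each $xe_i(xe_i)^{-1}$ lies in $(xx^{-1})^{\downarrow}$ by the explicit description of the natural partial order on a graph inverse semigroup (extending $x$ by the edge $e_i$ produces a path $xe_i$, and the witness $p = e_i$ shows $xe_i(xe_i)^{-1} \leq xx^{-1}$); and each element of $A$ has weight $n+1$ for the weight function of Proposition~\ref{prop:boozy}, because $|xe_i| = n + 1$.

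For the cover property itself, let $0 \neq z \leq xx^{-1}$. Since $xx^{-1}$ is an idempotent and any element of an inverse semigroup beneath an idempotent is a product of two commuting idempotents and hence itself idempotent, $z$ is idempotent, so $z = yy^{-1}$ for some path $y$ in $P_B^{\bullet}$. Unwinding the definition of the natural partial order, $yy^{-1} \leq xx^{-1}$ forces $y$ to be an extension of $x$ by some (possibly empty) path $w$. If $w$ is empty, then $y = x$, hence $z = xx^{-1}$, and any $i$ will do: $z \wedge xe_i(xe_i)^{-1} = xe_i(xe_i)^{-1} \neq 0$. If $w$ is non-empty, then the level structure of $B$ forces the edge of $w$ incident to the vertex $v$ (where $w$ joins $x$) to lie at level $n+1$ and to terminate at $v$, so it equals some $e_j$. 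Decomposing the extension accordingly gives a factorisation $y = (xe_j) w'$ with $w'$ strictly shorter, whence $yy^{-1} \leq xe_j(xe_j)^{-1}$ and therefore $z \wedge xe_j(xe_j)^{-1} = z \neq 0$, which is the required cover condition.

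The only real subtlety is bookkeeping: one must track the convention for how edges concatenate in the graph inverse semigroup and the fact that the edges of $B$ are effectively reversed in the construction of $P_B^{\bullet}$, so as to be sure that the edge of $w$ ``adjacent to $x$'' is literally one of the $e_i$ listed in the hypothesis. Once this is pinned down via Proposition~\ref{prop:boozy} and the definition of a Bratteli diagram, the rest of the argument is purely combinatorial, and no substantive property of $P_B^{\bullet}$ beyond its description as a graph inverse semigroup is needed.
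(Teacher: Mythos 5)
Your proposal is correct and follows essentially the same route as the paper's own (much terser) proof: observe that every non-zero element below the idempotent $xx^{-1}$ is an idempotent $yy^{-1}$ with $y = xw$, and that either $w$ is empty or its edge adjacent to $v$ is one of the $e_i$, giving $yy^{-1} \leq xe_i(xe_i)^{-1}$. Your explicit handling of the case $y = x$ (which needs $q \geq 1$, guaranteed by the Bratteli diagram axioms) is a detail the paper glosses over, but the substance is identical.
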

\begin{proof} By the definition of a Bratteli diagram, there is at least one such edge.
Any non-zero idempotent $yy^{-1} \leq xx^{-1}$ must be such that $y = xe_{i}p$ for some edge $e_{i}$ and path $p$.
Thus $yy^{-1} \leq xe_{i}(xe_{i})^{-1}$.
\end{proof}

The above process may be iterated.
Observe that, from the definition of a Bratteli diagram, a path of length $p$ can always be lengthened
to a path of length $p+1$.
The proof of the following is now almost immediate.

\begin{lemma}[Homogenizing]\label{le:homogenizing} Every inhomogeneous element of  $\mathsf{D}(P_{B}^{\bullet})$ is $\equiv$-related to a homogeneous one.
\end{lemma}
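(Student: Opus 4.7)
The plan is, given an inhomogeneous element $A = \{a_1,\ldots,a_m\}^{\downarrow} \in \mathsf{D}(P_B^{\bullet})$, where by the discussion preceding the lemma we may assume $a_1,\ldots,a_m$ to be pairwise orthogonal, to push each generator downward in the natural order to a common sufficiently deep level so that the resulting generators all share the same weight. Set $p = \max_{i}\mu(a_i)$. I will construct a homogeneous $A' \in \mathsf{D}^{h}(P_B^{\bullet})$ of weight $p$ with $A' \preceq A$; taking the witness $C = A'$ in the definition of $\equiv$ (legitimate since $\preceq$ is reflexive by Lemma~\ref{le:essential_stuff}(1)) will then yield $A \equiv A'$.

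The main ingredient is a generalization of Lemma~\ref{le:lengthening} from idempotents to arbitrary elements: for $a = xy^{-1} \in P_B^{\bullet}$ with $x, y$ paths of length $n$ starting at the common vertex $v$, and $e_1,\ldots,e_q$ the edges at level $n+1$ ending in $v$, the set $\{xe_i(ye_i)^{-1}\}_{i=1}^{q}$ is a cover of $a$. Indeed, its image under $\mathbf{d}$ is $\{ye_i(ye_i)^{-1}\}_{i=1}^{q}$, which covers $\mathbf{d}(a) = yy^{-1}$ by Lemma~\ref{le:lengthening}; Lemma~\ref{le:coverage}(6) then delivers the cover of $a$. Since every non-root vertex of a Bratteli diagram is the target of an edge from the next level, this lengthening can be iterated to any required depth, and transitivity of covers (Lemma~\ref{le:coverage}(4)) assembles the iterations into, for each $i$, a cover $C_i = \{c_{i1},\ldots,c_{i r_i}\} \preceq a_i$ whose members all have weight exactly $p$ (taking $C_i = \{a_i\}$ when $\mu(a_i) = p$ already). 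Pairwise orthogonality is preserved throughout: elements within a single $C_i$ are orthogonal because they are indexed by distinct incoming edges at $v$, and elements belonging to $C_i$ and $C_j$ with $i \neq j$ are orthogonal because they lie beneath the orthogonal $a_i$ and $a_j$.

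Set $A' = \bigl(\bigcup_{i} C_i\bigr)^{\downarrow}$. The orthogonality just established makes $A'$ a well-defined element of $\mathsf{D}^{h}(P_B^{\bullet})$, homogeneous of weight $p$. Each $P_B^{\bullet}$-cover $C_i \preceq a_i$ promotes via Lemma~\ref{le:sputnik}(2) to a cover $\{\{c_{ij}\}^{\downarrow}\}_j \preceq \{a_i\}^{\downarrow}$ inside $\mathsf{D}(P_B^{\bullet})$, while $\{\{a_1\}^{\downarrow},\ldots,\{a_m\}^{\downarrow}\} \preceq A$ holds trivially there. Transitivity of covers, applied now inside $\mathsf{D}(P_B^{\bullet})$, yields $\{\{c_{ij}\}^{\downarrow}\}_{i,j} \preceq A$, and Lemma~\ref{le:sputnik}(1) converts this into $A' \preceq A$, completing the construction. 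The main obstacle I foresee is purely bookkeeping: tracking pairwise orthogonality through repeated lengthenings and correctly invoking the cover machinery at the appropriate level, in $P_B^{\bullet}$ or in $\mathsf{D}(P_B^{\bullet})$, at each step.
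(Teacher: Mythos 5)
Your proposal is correct and follows exactly the route the paper intends: the paper declares the lemma ``almost immediate'' from iterated application of the Lengthening lemma, and you have simply supplied the bookkeeping --- transporting covers from idempotents to general elements via $\mathbf{d}$ and Lemma~\ref{le:coverage}(6), tracking orthogonality, and passing between covers in $P_{B}^{\bullet}$ and in $\mathsf{D}(P_{B}^{\bullet})$ via Lemma~\ref{le:sputnik}. The final step, taking the homogeneous element $A'$ itself as the common witness for $\equiv$ using reflexivity of $\preceq$, is exactly right.
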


\begin{lemma}\label{le:distinctions} Let $A$ and $B$ be two $p$-homogeneous elements of $\mathsf{D}(P_{B}^{\bullet})$.
Then $A \equiv B$ implies that $A = B$.
\end{lemma}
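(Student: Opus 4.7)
The plan is to reduce $A \equiv B$ to a symmetric arrow condition on the pairwise orthogonal generators furnished by $p$-homogeneity, and then exploit the combination of unambiguity of the natural partial order with the weight function on $P_{B}^{\bullet}$ to pin each generator of $A$ down to an equal generator of $B$.

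First I would write $A = \{a_{1}, \ldots, a_{m}\}^{\downarrow}$ and $B = \{b_{1}, \ldots, b_{n}\}^{\downarrow}$ where the $a_{i}$ and the $b_{j}$ are each pairwise orthogonal of weight $p$. Since $P_{B}^{\bullet}$ is a graph inverse monoid, hence a $\wedge$-monoid, Lemma~\ref{le:binky}(1) applies and translates the hypothesis $A \equiv B$ into the cover-arrow condition $\{a_{1}, \ldots, a_{m}\} \leftrightarrow \{b_{1}, \ldots, b_{n}\}$.

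Next I would argue that each $a_{i}$ equals some $b_{j}$. Fix $a_{i}$. Applying $a_{i} \rightarrow \{b_{1}, \ldots, b_{n}\}$ with the particular choice $x = a_{i}$ yields a nonzero element $c$ with $c \leq a_{i}$ and $c \leq b_{j}$ for some $j$. Now the natural partial order on $P_{B}^{\bullet}$ is unambiguous: by Proposition~\ref{prop:boozy} the monoid $P_{B}^{\bullet}$ is $E^{\ast}$-unitary and its semilattice of idempotents is unambiguous, and Lemma~\ref{le:plod} then passes unambiguity from the idempotents to all of $P_{B}^{\bullet}$. Since $c \neq 0$ is a common lower bound of $a_{i}$ and $b_{j}$, unambiguity forces $a_{i} \leq b_{j}$ or $b_{j} \leq a_{i}$. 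But both have weight $p$, and axiom (W1) of the weight function (Proposition~\ref{prop:boozy}) prohibits a strict inequality between elements of equal weight, so $a_{i} = b_{j}$.

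By symmetry every $b_{j}$ coincides with some $a_{i}$, so the two orthogonal generating sets are equal as subsets of $P_{B}^{\bullet}$, and therefore $A = B$ as order ideals. There is no real obstacle here: the content of the lemma is entirely packaged in the unambiguity of the order together with rigidity of weights, and the role of $p$-homogeneity is precisely to make the weight argument trigger. The only place demanding any care is the invocation of Lemma~\ref{le:binky}(1), where one must remember that $P_{B}^{\bullet}$ is a $\wedge$-monoid so that the equivalence of $\equiv$ with $\leftrightarrow$ is available.
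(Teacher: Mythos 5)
Your proposal is correct and follows essentially the same route as the paper's own (much terser) proof: a non-zero common lower bound of $a_{i}$ and $b_{j}$ forces comparability by unambiguity of the natural partial order, and comparable elements of equal weight $p$ must coincide by (W1). The only cosmetic difference is that you obtain the common lower bounds via Lemma~\ref{le:binky}(1) and the arrow condition, whereas the paper works directly from the common essential cover furnished by the definition of $\equiv$; both are valid.
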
 
\begin{proof} The result follows from the following observations.
We are working in an inverse monoid with an unambiguous natural partial order.
Two elements which have a non-zero lower bound must be comparable.
But comparable elements of the same weight must be equal.
\end{proof}

Thus the structure of $\mathsf{D}(P_{B}^{\bullet})/\equiv$ will be strongly influenced by the structure of  $\mathsf{D}^{h}(P_{B}^{\bullet})$. 
In addition, the congruence $\equiv$ cannot identity two $p$-homogeneous elements.

Consider now level $p$ of the Bratteli diagram $B$.
Let the vertices be $v_{1}, \ldots, v_{k}$.
Let the sizes of these vertices be $m_{1}, \ldots, m_{k}$, respectively.
Then the $p$-homogeneous elements are in bijective correspondence with
the non-zero  elements of $I_{m_{1}} \times \ldots \times I_{m_{k}}$.
This is because the distributive completion of the groupoid $G_{v_{j}}$ with a zero adjoined is just $I_{m_{j}}$.
Thus the $p$-homogeneous elements of  $\mathsf{D}^{h}(P_{B}^{\bullet})$ with an adjoined zero form the {\em correct} semisimple inverse monoid.
We now describe how successive levels are related.

For convenience, we put $S = P_{B}^{\bullet}$.
Let $s \in S$ be any non-zero element.
Define
$$\varepsilon (s) = \{t \in S \colon t \leq s \mbox{ and } \beta (t) = \beta (s) + 1\}.$$
This set consists of all elements {\em immediately below} $s$.
This is a finite set since $\mu^{-1}(p)$ is finite for any $p$.
This is a compatible subset because all elements are bounded above by $s$.
This is an orthogonal set  since two elements which are of the same weight and compatible are either equal or orthogonal.
Define $\varepsilon (0) = \{0\}$.

\begin{lemma}\label{le:essential} \mbox{}
\begin{enumerate}

\item $\varepsilon (st)^{\downarrow} = \varepsilon (s)^{\downarrow} \varepsilon (t)^{\downarrow}$.

\item $\varepsilon (s \wedge t)^{\downarrow} = \varepsilon (s)^{\downarrow} \wedge \varepsilon (t)^{\downarrow}$.

\item $\varepsilon (s)^{\downarrow} \preceq s^{\downarrow}$.

\end{enumerate}
\end{lemma}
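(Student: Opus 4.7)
My plan is to exploit the explicit combinatorial description of elements of the Bratteli inverse monoid. For nonzero $s = xy^{-1} \in P_B^{\bullet}$ with $x, y$ sharing common source vertex $w$, the proof of Lemma~\ref{le:lengthening} shows
$$\varepsilon(s) = \{(xe)(ye)^{-1} : e \text{ is an edge ending at } w\},$$
a nonempty (every non-root vertex receives an edge from the level above), finite set of pairwise orthogonal elements each of weight $\beta(s)+1$. I use throughout that the natural partial order on $P_B^{\bullet}$ is unambiguous (by Lemma~\ref{le:plod} applied to Proposition~\ref{prop:boozy}), that $\beta(st) = \max\{\beta(s), \beta(t)\}$ when $st \neq 0$, and the stratification property of Proposition~\ref{prop:boozy}(1): $s < t$ implies $s \leq t' < t$ for some $t'$ with $\beta(t') = \beta(t)+1$.

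\emph{Part (3).} Given $0 \neq x \leq s$: if $x = s$ then $x^{\downarrow} \cap \varepsilon(s)^{\downarrow}$ contains any member of $\varepsilon(s)$; if $x < s$ then stratification produces $t' \in \varepsilon(s)$ with $x \leq t'$, so $x$ itself lies in $x^{\downarrow} \cap (t')^{\downarrow}$. Unfolding the definition of $\preceq$ in $\mathsf{D}(S)$ yields $\varepsilon(s)^{\downarrow} \preceq s^{\downarrow}$.

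\emph{Part (1).} If $st = 0$ both sides are $\{0\}$. Assume $st \neq 0$. The inclusion $\varepsilon(s)^{\downarrow} \varepsilon(t)^{\downarrow} \subseteq \varepsilon(st)^{\downarrow}$ is a routine weight count: any nonzero $s_1 t_1$ with $s_1 \in \varepsilon(s), t_1 \in \varepsilon(t)$ satisfies $s_1 t_1 \leq st$ with weight $\max\{\beta(s)+1, \beta(t)+1\} = \beta(st)+1$, so $s_1 t_1 \in \varepsilon(st)$. The reverse inclusion is the substantial step: writing $s = xy^{-1}$ and $t = uv^{-1}$, the non-vanishing of $st$ gives $u = yp$ for some path $p$ (the case $y = up$ is symmetric under inversion), so $st = xpv^{-1}$. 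A typical element of $\varepsilon(st)$ has the form $w^{*} = (xp\epsilon)(v\epsilon)^{-1}$ for an edge $\epsilon$ ending at the source of $v$. A direct computation using the graph inverse monoid product formula shows $w^{*} = s_1 t_1$, with $s_1 = (xe)(ye)^{-1} \in \varepsilon(s)$ where $e$ is the final edge of $p$ (or $e = \epsilon$ when $p$ is trivial) and $t_1 = (u\epsilon)(v\epsilon)^{-1} \in \varepsilon(t)$.

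\emph{Part (2).} Unambiguity forces $s \wedge t \in \{s, t, 0\}$ according to whether $s \leq t$, $t \leq s$, or there is no common nonzero lower bound; the meet of order ideals in $\mathsf{D}(P_B^{\bullet})$ is their intersection. If $s \leq t$, every $s_1 \in \varepsilon(s)$ sits strictly below $t$ with weight $> \beta(t)$, so stratification puts it below some element of $\varepsilon(t)$; thus $\varepsilon(s)^{\downarrow} \subseteq \varepsilon(t)^{\downarrow}$ and the intersection equals $\varepsilon(s)^{\downarrow} = \varepsilon(s \wedge t)^{\downarrow}$. If $s, t$ are incomparable, any nonzero element of $\varepsilon(s)^{\downarrow} \cap \varepsilon(t)^{\downarrow}$ would be a common nonzero lower bound of $s$ and $t$, so the intersection is $\{0\} = \varepsilon(s \wedge t)^{\downarrow}$.

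The main obstacle is the reverse inclusion in part~(1): verifying $w^{*} = s_1 t_1$ requires meticulous edge bookkeeping in the graph inverse monoid product. The conceptual choice is taking $e$ to be the final edge of $p$; once this is identified, the verification is routine but tedious.
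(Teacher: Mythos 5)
Your proposal is correct, but for part (1) it takes a genuinely different route from the paper. The paper's own proof of the reverse inclusion is a short algebraic factorization that never mentions paths: given $0\neq a\leq st$ of weight at least $\beta(st)+1$, one observes that $0\neq at^{-1}\leq stt^{-1}=s$ has weight $\beta(a)>\beta(s)$, hence lies in $\varepsilon(s)^{\downarrow}$ by stratification, and then recovers $a$ as the product of $at^{-1}$ with a restriction of $t$ (which lies in $\varepsilon(t)^{\downarrow}$ for the same weight reason). This avoids your explicit decomposition $st=(xp)v^{-1}$, the case split over $u=yp$ versus $y=up$, and the identification of the correct edge $e$ of $p$ --- which is exactly the step you flag as requiring ``meticulous edge bookkeeping.'' Your combinatorial argument does go through (with $e$ the edge of $p$ adjacent to $y$, i.e.\ the last edge of $p$ traversed on the way to the root, and $e=\epsilon$ when $p$ is trivial), and it has the virtue of making the mechanism visible at the level of the Bratteli diagram; but the paper's version is shorter, uniform in the two cases, and uses only the weight function, unambiguity, and the identity $\beta(st)=\max\{\beta(s),\beta(t)\}$. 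Your forward inclusion in (1) coincides with the paper's. For parts (2) and (3) the paper simply asserts the claims are immediate; your arguments via unambiguity (forcing $s\wedge t\in\{s,t,0\}$) and stratification are exactly the details being elided, and they are sound.
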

\begin{proof} (1) We suppose that $st \neq 0$ and that $\beta (s) \geq \beta (t)$, without loss of generality.
Clearly, 
$\varepsilon (s)^{\downarrow} \varepsilon (t)^{\downarrow} \subseteq \varepsilon (st)^{\downarrow}$
since multiplying two elements together is never weight-decreasing.
Let $a \leq st$ of weight at least $\beta (s) + 1$.
Then $0 \neq at^{-1} \leq stt^{-1} = s$.
Thus $at^{-1} \in \varepsilon (s)^{\downarrow}$.
Hence $a \in \varepsilon (s)^{\downarrow} (s^{-1}st)$ and $s^{-1}st \in \varepsilon (t)^{\downarrow}$.

(2) This is immediate.

(3) immediate.

\end{proof}

Define $\eta (s) = \varepsilon (s)^{\downarrow}$.
Put $\mathbf{e}_{1} = \varepsilon (1)^{\downarrow}$.
Then we have defined a morphism $\eta \colon S \rightarrow \mathbf{e}_{1}\mathsf{D}(S)\mathbf{e}_{1}$.
Clearly, $e\mathsf{D}(S)e$ is a distributive inverse monoid and so
the map extends uniquely to a map $\eta \colon \mathsf{D}(S) \rightarrow \mathbf{e}_{1}\mathsf{D}(S)\mathbf{e}_{1}$.
This map is injective essentially by unambiguity.
We therefore obtain a strictly decreasing sequence of idempotents
$\mathbf{e}_{1}  >  \mathbf{e}_{2}  >   \mathbf{e}_{3}  >  \ldots$.
It follows that, by restriction, we get injective morphisms
$\eta_{p} \colon  \mathbf{e}_{p}\mathsf{D}(S)\mathbf{e}_{p} \rightarrow  \mathbf{e}_{p+1}\mathsf{D}(S)\mathbf{e}_{p+1}$
between local submonoids.
The elements of maximum weight in $\mathbf{e}_{p}\mathsf{D}(S)\mathbf{e}_{p}$ consist of the $p$-homogenous elements of $\mathsf{D}(S)$.
Denote these by $D_{p}$.
It follows that we get a map $\varepsilon_{p} \colon D_{p}  \rightarrow D_{p+1}$ by restriction.
We now calculate what this map does.
Let the vertices at level $p$ be $v_{1}^{p}, \ldots, v_{k}^{p}$.
Let $(\mathbf{s}_{1}, \ldots, \mathbf{s}_{k})$ be a $k$-tuple of $p$-homogeneous elements.
Where $\mathbf{s}_{j} \in G_{v_{j}^{p}}$.  
We calculate the $i$th term of the image of this element under $\varepsilon_{p}$.
There are $s_{ij}$ edges joining vertex $v_{j}^{p}$ and vertex $v_{i}^{p+1}$.
We denote these edges by $e_{ij}^{\nu}$.

We now make a simple observation.
If $xy^{-1} \in G_{v}$ and $e$ is an edge that connects $v$ and $v'$, where $v'$ is a vertex at level $p+1$.
Then $(xe)(ye)^{-1} \leq xy^{-1}$ and belongs to $G_{v'}$.
We say that $(xe)(ye)^{-1}$ is obtained from $xy^{-1}$ by an {\em edge-adjunction using the edge $e$}.

It follows that the effect of $\varepsilon_{p}$ is to carry out all possible edge-adjunctions
on the given $k$-tuple of elements of weight $p$ using all the edges joining level $p$ to level $p+1$. 
It follows that on non-zero elements the effect of $\varepsilon_{p}$ is that of the corresponding standard map.
The proof of the following is now almost immediate.

\begin{proposition} Let $B$ be a Bratteli diagram and let $S_{0} \rightarrow S_{1} \rightarrow S_{2} \rightarrow \ldots$ be
the associated sequence of semisimple monoids and injective morphisms.
Construct from this the $\omega$-chain of inverse monoids $S$ and factor out by the ideal $\mathscr{Z}$.
Then the homogeneous orthogonal completion of $P_{B}^{\bullet}$ is isomorphic to $S/\mathscr{Z}$.
\end{proposition}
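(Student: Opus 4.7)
The plan is to exhibit a bijection $\Phi \colon S/\mathscr{Z} \to \mathsf{D}^{h}(P_{B}^{\bullet})$ level by level, and then to verify it is a monoid homomorphism by using the fact, already established in the preceding paragraphs, that the edge-adjunction map $\varepsilon_{p}$ on $D_{p}$ coincides with the standard morphism $\sigma_{p} \colon S_{p} \to S_{p+1}$ under the canonical identification.

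First I would set up the level-wise bijection. For each vertex $v_{j}$ at level $p$ of the Bratteli diagram, the set $P_{v_{j}}$ of paths from $v_{j}$ to the root $v_{0}$ has exactly $m_{j} = s_{v_{j}}$ elements, and the adjunction of a zero to the groupoid $G_{v_{j}}$ yields (through its distributive completion) the symmetric inverse monoid $I_{P_{v_{j}}} \cong I_{m_{j}}$. By Proposition~\ref{prop:boozy} and Lemma~\ref{le:unambig}, every $p$-homogeneous element is generated by pairwise orthogonal weight-$p$ elements, and since a nonzero product of weight-$p$ elements $x y^{-1}$ and $uv^{-1}$ forces $x,y,u,v$ to begin at the same level-$p$ vertex, such a generating set decomposes canonically into a $k$-tuple of partial bijections, one at each level-$p$ vertex. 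This gives a bijection $\psi_{p} \colon S_{p} \setminus \{0\} \to D_{p}$, which I use to define $\Phi$: send the zero class to $0$, and send the class of a nonzero $a \in S_{p}$ to $\psi_{p}(a)$. Injectivity holds trivially because the weight of the generators recovers the level $p$, and surjectivity holds because every nonzero element of $\mathsf{D}^{h}(P_{B}^{\bullet})$ is $p$-homogeneous for some $p$.

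Next I would check multiplicativity. Take nonzero classes represented by $a \in S_{p}$ and $b \in S_{q}$; without loss of generality assume $p \leq q$. In the $\omega$-chain, $a \cdot b = \tau^{q}_{p}(a) \cdot b$, computed entirely in $S_{q}$. On the $\mathsf{D}$-side, the generators of $\psi_{p}(a)$ have weight $p$ and those of $\psi_{q}(b)$ have weight $q$; by the weight-function lemma any nonzero product has weight $\max(p,q) = q$, so $\psi_{p}(a)\psi_{q}(b) \in D_{q} \cup \{0\}$. A direct path-arithmetic computation shows that $x_{i}y_{i}^{-1} \cdot u_{j}v_{j}^{-1}$ is nonzero precisely when $u_{j} = y_{i}z$ for a path $z$ of length $q-p$, in which case the product equals $(x_{i}z)v_{j}^{-1}$. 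Recognizing $(x_{i}z)(y_{i}z)^{-1}$ as the typical edge-adjunction extension of $x_{i}y_{i}^{-1}$, this shows the product on the $\mathsf{D}$-side is obtained by first applying the iterated edge-adjunction map $\varepsilon_{p}^{q-p}$ to $\psi_{p}(a)$ and then multiplying with $\psi_{q}(b)$ inside $D_{q}$. By the preceding identification of $\varepsilon_{p}$ with the standard morphism $\sigma_{p}$, this agrees with $\psi_{q}(\tau^{q}_{p}(a) \cdot b) = \Phi(a \cdot b)$. Preservation of the identity is automatic since $1^{\downarrow}$ is $0$-homogeneous and corresponds to the identity of $S_{0}$, and preservation of inverses is automatic for any semigroup homomorphism between inverse semigroups.

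The hard part will be the multiplicativity step: one must be careful about the fact that in $\mathsf{D}^{h}$ (as opposed to its tight quotient $\mathsf{D}^{h}/\equiv$) the element $\psi_{p}(a)$ and its lengthening $\psi_{q}(\tau^{q}_{p}(a))$ are genuinely distinct—mirroring the fact that $a$ and $\tau^{q}_{p}(a)$ are distinct in $S/\mathscr{Z}$. The cross-level product formula must therefore be verified by an explicit path calculation rather than by appealing to any identification of elements at different levels, but once the edge-adjunction description of $\varepsilon_{p}$ is in hand this calculation reduces to bookkeeping.
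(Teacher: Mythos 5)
Your proposal is correct and follows essentially the same route as the paper: the paper establishes exactly the two ingredients you rely on --- the level-wise identification of the $p$-homogeneous elements (with a zero adjoined) with the semisimple monoid $S_{p}$, and the identification of the edge-adjunction map $\varepsilon_{p}$ with the corresponding standard morphism --- and then declares the proposition ``almost immediate.'' Your explicit cross-level path calculation, showing that the product of a $p$-homogeneous and a $q$-homogeneous element is obtained by first applying $\varepsilon_{p}^{q-p}$ and then multiplying in $D_{q}$, just makes precise the bookkeeping the paper leaves to the reader.
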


We now describe the effect of glueing elements together under $\equiv$.
By Lemma~\ref{le:motivation}, this relation is trivial on Boolean inverse monoids.
It follows that it can only be non-trivial between the layers of $\mathsf{D}^{h}(P_{B}^{\bullet})$. 
By Lemma~\ref{le:essential},  if one element is mapped to another by $\varepsilon_{p}$ then they will be identified by $\equiv$
and the congruence $\equiv$ is generated by these identifications.

\subsection{The associated groupoid}

From the theory described in \cite{Law5}, with each Boolean inverse $\wedge$-monoid we may associate a Hausdorff \'etale topological groupoid
under non-commutative Stone duality.
We shall describe the groupoid associated with $\mathsf{I}(B)$.
This looks like a difficult question.
However, we have proved that this inverse monoid is constructed from $P_{B}^{\bullet}$.
Again, by the theory described in \cite{Law5}, we will get the same groupoid if we start from the much simpler monoid $P_{B}^{\bullet}$.
The elements of the groupoid are the ultrafilters in  $P_{B}^{\bullet}$.
Since this inverse monoid is $E^{\ast}$-unitary,
the identities of the groupoid correspond to the idempotent ultrafilters and these are in bijective correspondence with the infinite paths in the
Bratteli diagram $B$ that end at the root.
The non-identity elements of the groupoid are essentially cosets,
and may be identified with triples $(yw,yx^{-1},xw)$ where $w$ is an infinite path in $B$ to the root and $x$ and $y$ are two paths that start from the same vertex and end at the root.
It follows that the groupoid is just {\em tail-equivalence} \cite{ER}.



\begin{thebibliography}{99}

\bibitem{Akin1999} E. Akin, Measures on Cantor space, {\em Topology Proceedings} {\bf 24} (1999), 1--34.

\bibitem{Akin2004} E. Akin, Good measures on Cantor space, {\em Trans Amer. Math. Soc.} {\bf 357} (2004), 2681--2722.

\bibitem{ADMY} E. Akin, R. Dougherty, R. D. Mauldin, A. Yingst, Which Bernoulli measures are good measures?, Preprint, 2005.

\bibitem{Ara} P. Ara, The realization problem for von Neumann regular rings, arXiv:0802.1872v1.

\bibitem{AH} C. J. Ash, T. E. Hall, Inverse semigroups on graphs, {\em Semigroup Forum} {\bf 11} (1975), 140--145.

\bibitem{Baer} R. Baer, Sums of groups and their generalizations. An analysis of the associative law, {\em Amer. J. Math.} {\bf 71} (1949), 706--742.

\bibitem{Behrends} E. Behrends, {\em Ma{\ss}- und Integrationstheorie}, Springer-Verlag, 1987.

\bibitem{BF} M. K. Bennett, D. J. Foulis, Phi-symmetric effect algebras, {\em Foundations of Physics} {\bf 25} (1995), 1699--1722.

\bibitem{BR} V. Berth\'e, M. Rigo,  Combinatorics on Bratteli diagrams and dynamical systems,  in  {\em Combinatorics, automata and number theory} (eds V. Berth\'e, M. Rigo), CUP, (2010), 338--386. 

\bibitem{BH} S. Bezuglyi, D. Handelman, Measures on Cantor sets: the good, the ugly, the bad, arXiv:1201.1953v1, 2012.

\bibitem{Birget} J.-C. Birget, The groups of Richard Thompson and complexity, {\em IJAC} {\bf 14} (2004), 569--626.

\bibitem{Bratteli} O.~Bratteli, Inductive limits of finite dimensional $C^{\ast}$-algebras, {\em Trans Amer. Math. Soc.} {\bf 171} (1972), 195--234.

\bibitem{BJ} O. Bratteli, P. E. T. Jorgensen, {\em Iterated function systems and permutation representations of the Cuntz algebra}, AMS, (1999),

\bibitem{CSGH} T. Ceccherini-Silberstein, R. Grigorchuk, P. de la Harpe, Amenability and paradoxical decompositions for pseudogroups and for discrete metric spaces, {\em Proc. Steklov Inst. Math.}  {\bf 224} (1999), 57--97.

\bibitem{Chang} C. C. Chang, Algebraic analysis of many valued logic, {\em Trans Amer. Math. Soc.} {\bf 88} (1958), 467--490.

\bibitem{CDM} R. Cignoli, I. M. L. D'Ottaviano, D. Mundici, {\em Algebraic foundations of many-valued reasoning}, Trends in Logic, Vol. 7, Kluwer, Dordrecht, 2000.

\bibitem{DM} A. Dudko, K. Medynets, On characters of inductive limits of symmetric groups, {\em ESI} 31st May, 2011.

\bibitem{DP} A. Dvure\v{c}enskij, S. Pulmannov\'a, {\em New trends on quantum structures}, Kluer Acad. Publ. Dordrecht/Boston/London and Ister Science, Bratislava, (2000).

\bibitem{Effros} E. G. Effros, {\em Dimensions and $C^{\ast}$-algebras}, AMS, 1981.

\bibitem{Elliott} G. A. Elliott, On the classification of inductive limits of sequences of semisimple finite-dimensional algebras, {\em J. A.} {\bf 38} (1976), 29--44.

\bibitem{Exel1} R.~Exel, Inverse semigroups and combinatorial $C^{\ast}$-algebras, {\em Bull. Braz. Math. Soc. (N.S.)} {\bf 39} (2008), 191--313.

\bibitem{Exel2} R.~Exel, Tight representations of semilattices and inverse semigroups, \textit{Semigroup forum} {\bf 79} (2009), 159--182.

\bibitem{ER} R. Exel, J. Renault, AF-algebras and the tail-equivalence relation, {\em Proc. A. M. S.} {\bf 134} (2006), 193--206.

\bibitem{Foulis2000} D. J. Foulis, MV and Heyting effect algebras, {\em Foundations of Physics} {\bf 30} (2000), 1687--1706.

\bibitem{FB1994} D. J. Foulis, M. K. Bennett, Effect algebras and unsharp quantum logics, {\em Foundations of Physics} {\bf 24} (1994), 1331--1352.


\bibitem{Goodearl} K. R. Goodearl, {\em Notes on real and complex $C^{\ast}$-algebras}, Shiva Publishing Limited, 1982.

\bibitem{GW} K. R. Goodearl, F. Wehrung, {\em The complete dimension theory of partially ordered systems with equivalence and orthogonality}, Memoirs A. M. S. {\bf 176}, (2005).

\bibitem{Jacobs} B. Jacobs, New directions in categorical logic, for classical, probabilistic, and quantum logic, arXiv: 1205.3940v3, June, 2014.

\bibitem{J} G. Jen\v{c}a, Boolean algebras $R$-generated by MV-effect algebras, {\em Fuzzy Sets and Systems} {\bf 145} (2004), 279--285.

\bibitem{JP} A. Jen\v{c}ov\'a, S. Pulmannov\'a, A note on effect algebras and dimension theory of AF $C^{\ast}$-algebras, {\em Reports on Mathematical Physics} {\bf 62} (2008), 205--218.

\bibitem{Jones} D. G . Jones, Polycyclic monoids and their generalisations, PhD Thesis, Heriot-Watt University, 2011.

\bibitem{JL1} D. G. Jones, M. V. Lawson, Strong representation of the polycyclic inverse monoids: cycles and atoms, {\em Periodica Math. Hung.} {\bf 64} (2012), 53--87.

\bibitem{JL} D. G. Jones, M. V. Lawson, Graph inverse semigroups: their characterization and completion, accepted by {\em J. Alg.}

\bibitem{Kellendonk} J. Kellendonk, The local structure of tilings and their integer group of coinvariants, {\em Comm. Math. Phys} {\bf 187} (1997), 115--157.

\bibitem{KS} N. K. Kroshko, V. I. Sushchansky, Direct limits of symmetric and alternating groups with strictly diagonal embeddings, {\em Archiv Math.} {\bf 71} (1998), 173--182.

\bibitem{K} A.~Kumjian, On localizations and simple $C^{\ast}$-algebras, {\em Pacific J. Math.} {\bf 112} (1984), 141--192.

\bibitem{LN} Y. Lavrenyuk, V. Nekrashevych, On classification of inductive limits of direct products of alternating groups, {\em J. London Math. Soc.} {\bf 75} (2007), 146--162.

\bibitem{Law1} M.~V.~Lawson, {\em Inverse semigroups: the theory of partial symmetries}, World Scientific, 1998.

\bibitem{Law2007a} M. V. Lawson, Orthogonal completions of the polycyclic monoids, {\em Comms Alg.} {\bf 35} (2007), 1651--1660.

 \bibitem{Law2007b} M. V. Lawson, The polycyclic monoids $P_{n}$ and the Thompson groups $V_{n,1}$, {\em Comms Alg.} {\bf 35} (2007), 4068--4087.

\bibitem{Law2009} M. V. Lawson, Primitive partial permutation representations of the polycyclic monoids and branching function systems, {\em Periodica Math. Hung.} {\bf 52} (2009), 189--207.

\bibitem{Law3} M.~V.~Lawson, A non-commutative generalization of Stone duality, {\em J. Aust. Math. Soc.} {\bf 88} (2010), 385--404.

\bibitem{Law4} M.~V.~Lawson, Compactable semilattices, {\em Semigroup Forum} {\bf 81} (2010), 187--199.

\bibitem{Law5} M.~V.~Lawson, Non-commutative Stone duality: inverse semigroups, topological groupoids and $C^{\ast}$-algebras,  {\em IJAC} {\bf 22}, 1250058 (2012) DOI:10.1142/S0218196712500580. 

\bibitem{LL1} M. V. Lawson, D. H. Lenz, Pseudogroups and their \'etale groupoids, {\em Advances in Maths} {\bf 244} (2013), 117--170.

\bibitem{LL2} M. V. Lawson, D. H. Lenz, Distributive inverse semigroups and non-commutative Stone dualities, preprint, arXiv:1302.3032v1.

\bibitem{Lenz} D.~H.~Lenz, On an order-based construction of a topological groupoid from an inverse semigroup, \emph{Proc. Edinb. Math. Soc.} {\bf 51} (2008), 387--406.

\bibitem{Meakin} J. Meakin, The partially ordered set of $\mathscr{J}$-classes of a semigroup, {\em J. Lond. Math. Soc. (2),} {\bf 21} (1980), 244--256.

\bibitem{MS} J. Meakin, M. Sapir, Congruences on free monoids and submonoids of polycyclic monoids, {\em J. Austral. Math, Soc. (Series A)} {\bf 54} (1993), 236--253.

\bibitem{M1} D. Mundici, Interpretation of AF $C^{\ast}$-algebras in \L ukasiewicz sentential calculus,  {\em J. Func. Analysis} {\bf 65} (1986), 15--63.

\bibitem{M2} D. Mundici, Logic of infinite quantum systems, {\em I. J. Theor. Phys.} {\bf 32} (1993), 1941--1955.

\bibitem{M3} D. Mundici, MV-algebras, May 26th, 2007, \begin{verbatim}http://www.matematica.uns.edu.ar/IXCongresoMonteiro/Comunicaciones/Mundici_tutorial.pdf\end{verbatim}

\bibitem{MP} D. Mundici, G. Panti, Extending addition in Elliot's local semigroup, {\em J. Func. Analysis} {\bf 117} (1993), 461--472.

\bibitem{vN1} J. v. Neumann, Continuous Geometries, {\em Proc. Nat. Acad. Sciences (USA)}, Vol. 22 (1936),  92-100.

\bibitem{vN2} J. v. Neumann, Examples of Continuous Geometries, {\em Proc. Nat. Acad. Sciences (USA)} Vol. 22 (1936),  101-108.

\bibitem{PP} D. Perrin, J.-E. Pin, {\em Infinite words}, Elsevier, 2004.

\bibitem{PZ} J. R. Peters, R. J. Zerr, Partial dynamical systems and AF $C^{\ast}$-algebras, arXiv:math/0301337v1.

\bibitem{Pul} S. Pulmannov\'a, Effect algebras with the Riesz decomposition property and AF $C^{\ast}$-algebras, {\em Foundations of Physics} {\bf 29} (1999), 1389--1401.

\bibitem{R} K. Ravindran, {\em On a structure theory of effect algebras}, PhD dissertation, Kansas State University, Manhattan, Kansas, 1996.

\bibitem{Ren} J.~Renault, {\it A groupoid approach to $ C^*$-algebras},  Lecture Notes in Mathematics,  {\bf 793}, Springer, 1980.

\bibitem{Solomon} L. Solomon, Representations of the rook monoid, {\em J. Alg.} {\bf 256} (2002), 309--342.

\bibitem{Wallis} A. R. Wallis, {\em Semigroup and category-theoretic approaches to partial symmetry}, PhD Thesis, 2013, Heriot-Watt University, Edinburgh, UK.

\bibitem{Wehrung} F. Wehrung, The dimension monoid of a lattice, {\em Alg. Univ.} {\bf 40} (1998), 247--411.

\end{thebibliography}
\end{document}